\documentclass[10pt]{amsart}
\usepackage[all]{xy}
\usepackage{amsmath}
\usepackage{amsfonts}
\usepackage{amssymb}
\usepackage{amscd}
\usepackage{amsthm}
\usepackage{latexsym}
\usepackage{amsbsy}
\usepackage{graphicx}
\usepackage{epstopdf}
\usepackage{seqsplit}
\usepackage{color}
\usepackage{float}
\usepackage{calligra}
\usepackage[T1]{fontenc}
\epstopdfDeclareGraphicsRule{.gif}{png}{.png}{%
convert gif:#1 png:\OutputFile}
\AppendGraphicsExtensions{.gif}
\addtolength{\textwidth}{0.1in}

\newtheorem{lemma}{Lemma}[section]
\newtheorem{proposition}{Proposition}[section]
\newtheorem{theorem}{Theorem}[section]
\newtheorem{corollary}{Corollary}[section]
\newtheorem{definition}{Definition}[section]

\usepackage[T1]{fontenc}
\usepackage[latin1]{inputenc} 
\usepackage{bbm}

\newcommand{\Tr}{\textrm{Tr}}
\newcommand{\ncttwo}{\mathbb{T}_\theta^2}
\newcommand{\ancttwo}{C(\mathbb{T}_\theta^2)}
\newcommand{\sncttwo}{C^\infty(\mathbb{T}_\theta^2)}
\newcommand{\nctfour}{\mathbb{T}_\Theta^4}

\newcommand{\snctfour}{C^\infty(\mathbb{T}_\Theta^4)}
\newcommand{\nctm}{\mathbb{T}_\Theta^m}
\newcommand{\anctm}{C(\mathbb{T}_\Theta^m)}
\newcommand{\snctm}{C^\infty(\mathbb{T}_\Theta^m)}
\newcommand{\Aa}{\mathcal{A}}

\newcommand{\C}{\mathbbm{C}}

\newcommand{\LieG}{\mathfrak{g}}

\newcommand{\R}{\mathbbm{R}}

\newcommand{\Z}{\mathbbm{Z}}
\newcommand{\ddt}{\frac{d}{dt}\bigm|_{t= 0}}
\newcommand{\ddsone}{\partial_1}
\newcommand{\ddstwo}{\partial_2}
\newcommand{\ddsthree}{\partial_3}
\newcommand{\NT}{\mathbb{T}_\theta^2}
\newcommand{\NTp}{\mathbb{T}_{\theta'}^2}
\newcommand{\CNTp}{C^\infty(\mathbb{T}_{\theta'}^2)}
\newcommand{\NTpp}{\mathbb{T}_{\theta''}^2}
\newcommand{\CNTpp}{C^\infty(\mathbb{T}_{\theta''}^2)}
\newcommand{\CNT}{C^\infty(\mathbb{T}_\theta^2)}

\newcommand{\nab}{\nabla}

\newcommand{\dep}{\frac{d}{d\varepsilon}\bigm|_{\varepsilon = 0}}
\newcommand{\vep}{\varepsilon}
\newcommand{\vphi}{\varphi}
\newcommand{\nabep}{\nabla_{\varepsilon}}
\newcommand{\TR}{{\rm TR}}
\newcommand{\tr}{{\rm tr}}
\newcommand{\ricden}{{\bf Ric}}
\newcommand{\ricfun}{\mathcal{R}ic}
\newcommand{\ric}{{\rm Ric}}
\newcommand{\riem}{{\rm Riem}}
\newcommand{\riemop}{ Riem}
\newcommand{\scalar}{R}

\newcommand{\res}{\rm res}
\newcommand{\Res}{\rm Res}
\newcommand{\pr}{{ Q}}
\newcommand{\A}{A_\theta}
\newcommand{\Ai}{A_\theta^\infty}

\newcommand{\End}{{\rm End}}
\newcommand{\lap}{\triangle}
\newcommand{\conn}{\bigtriangledown }
\newcommand{\nctorus}[1][2]{\mathbb{T}_\theta^{#1}}
\def\Int{\int\hspace{-0.35cm}{-} \,}
\def \ord{{\rm ord}}
\newcommand{\compcent}[1]{\vcenter{\hbox{$#1\circ$}}}
\newcommand{\comp}{\mathbin{\mathchoice
{\compcent\scriptstyle}{\compcent\scriptstyle}
{\compcent\scriptscriptstyle}{\compcent\scriptscriptstyle}}} 
\makeatletter
\newcommand{\extp}{\@ifnextchar^\@extp{\@extp^{\,}}}
\def\@extp^#1{\mathop{\bigwedge\nolimits^{\!#1}}}
\renewcommand{\vec}[1]{\boldsymbol{#1}}
\renewcommand{\dim}{d}

\title{Curvature in Noncommutative Geometry}

\author{Farzad Fathizadeh and Masoud Khalkhali} 
 
\begin{document}  
 
\maketitle 
 
\vspace{0.1cm}

{\centering  \it Dedicated  to Alain Connes with admiration, affection, and much appreciation\par}

\date{}

\begin{abstract}
Our understanding of the notion of curvature in a noncommutative 
setting has progressed substantially in the past 10 years. This new episode 
in  noncommutative geometry started when a Gauss-Bonnet theorem was 
proved by  Connes and Tretkoff for a curved noncommutative two torus. 
Ideas from spectral  geometry and heat kernel asymptotic expansions suggest 
a general way of defining  local curvature invariants for noncommutative Riemannian 
type spaces where the  metric structure is encoded by a Dirac type operator. 
To carry explicit computations however one needs quite intriguing new ideas. 
We give an account of the most recent  developments on the notion of curvature 
in noncommutative geometry in this paper.
 \end{abstract}


\medskip
\medskip
\noindent

\medskip
\noindent

\tableofcontents{}
\allowdisplaybreaks


\section{{\bf Introduction}}

Broadly speaking,    the progress of  {\it noncommutative geometry} in the last  four  decades   can be  divided into  three phases: {\it topological, spectral,} and {\it arithmetical.} One can also notice the pervasive influence of quantum  physics in all aspects of the subject. 
Needless to say each of these facets of the subject  is  still evolving, and there are    many deep connections among them.
 
 In its topological phase,  noncommutative geometry  was largely informed  by index theory and a real  need to extend index theorems beyond their classical realm of   smooth manifolds,   to what  we collectively call {\it noncommutative spaces}.   Thus $K$-theory,  $K$-homology, and $KK$-theory in general,  were brought in  and with the discovery of cyclic cohomology by Connes \cite{ConSepcHom, MR777584},   a suitable framework  was created by him to formulate noncommutative index theorems.  With the appearance of the ground breaking and now classical paper of Connes \cite{MR823176}, results of which were already announced in Oberwolfach in 1981 \cite{ConSepcHom},  this phase of the theory was essentially completed. In particular a noncommutative Chern-Weil theory of characteristic classes was created with  Chern character maps for both    $K$-theory and $K$-homology with values in cyclic (co)homology.   To define all these a notion of Fredholm module (bounded or unbouded, finitely summable or theta summable) was introduced which essentially captures and brings in  many aspects of smooth manifolds  into the  noncommutative world. These results were applied to noncommutative quotient spaces such  as the space of leaves of  a foliation, or the unitary dual of noncompact and nonabelian Lie  groups. Ideas and tools   from global analysis, differential topology, operator
algebras, representation theory, and quantum statistical mechanics, were crucial.   One of the  main applications  of this resulting noncommutative index theory was 
to settle some long standing conjectures such as the Novikov conjecture, and the
Baum-Connes conjecture for specific and large classes of groups.

Next came the study of the geometry of  noncommutative spaces and the impact of {\it spectral geometry}.  Geometry,  as we understand it here, historically has dealt   with the study of spaces of increasing  complexity   and metric measurements  within such spaces. Thus in classical differential geometry one learns how to measure distances and volumes, as well as various types of curvature   of  Riemannian manifolds of arbitrary  
dimension. One can say the two notions of Riemannian metric  and the Riemann curvature tensor are hallmarks of classical differential geometry in general. 
This should be contrasted with topology where one studies spaces only from a rather soft  homotopy theoretic  point of view.   A similar
division is at work  in noncommutative geometry. Thus,  as we mentioned briefly above,  while in its earlier stage of development  noncommutative geometry  was mostly concerned with the  development of topological invariants like cyclic cohomology, Connes-Chern character maps,  and index theory,  starting in about  ten  years ago noncommutative geometry  entered a new truly geometric phase where one tries to seriously understand what a {\it curved noncommutative space} is and how to define and compute  curvature invariants for such a noncommutative space.  

This  episode in noncommutative geometry started when a  Gauss-Bonnet theorem was proved  by Connes and Cohen for a {\it curved noncommutative torus} in \cite{MR2907006} (see 
also the MPI preprint \cite{CohCon} where many ideas are already laid out). This paper was immediately followed   in \cite{MR2956317} where the Gauss-Bonnet was proved for general conformal structures. 
The metric structure of a  noncommutative space is encoded in a (twisted) spectral triple. 
Giving a state of the art report on  developments  following these works,  and  on the notion of curvature  in noncommutative geometry,  is the purpose of our present review.

\vskip 0.7cm
 
\begin{center}
\includegraphics[scale=0.2]{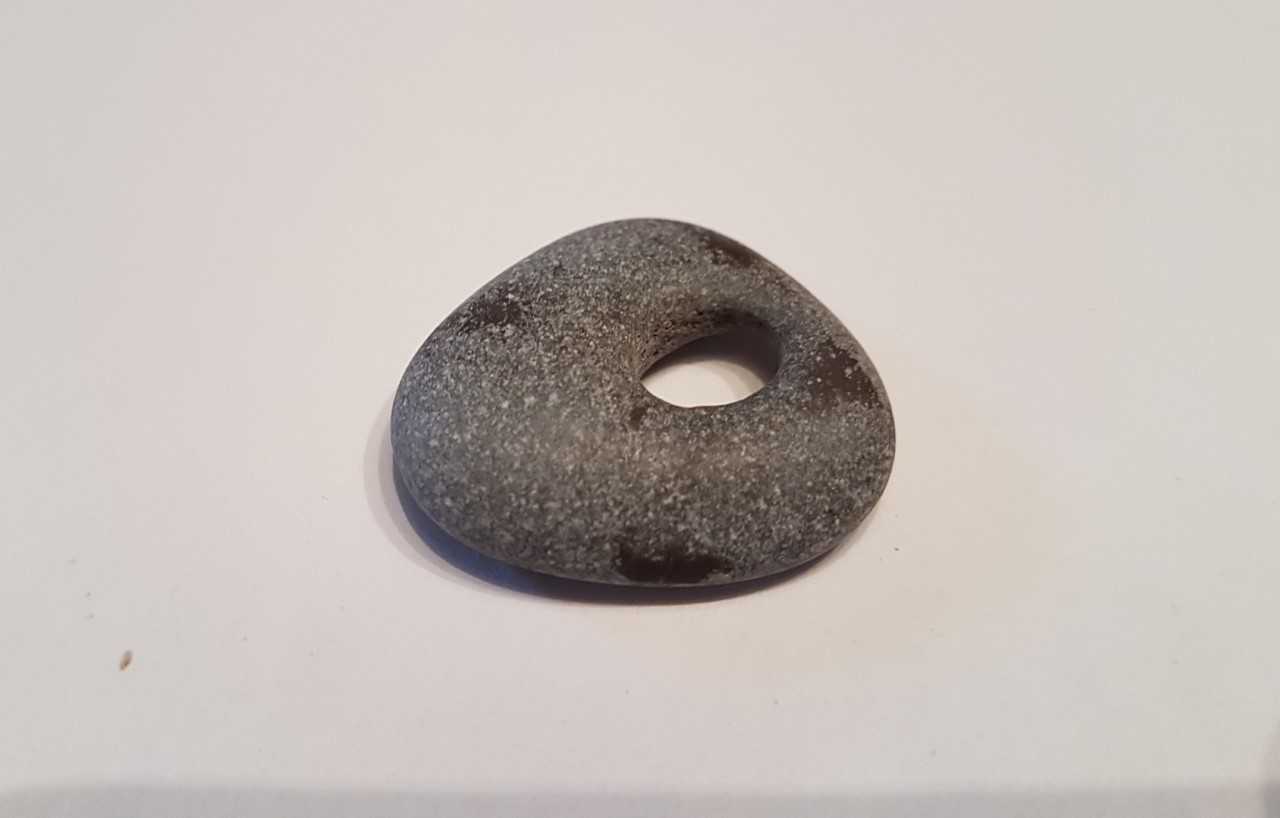}\\
\smallskip 
\smallskip
{\textcolor{red}{\LARGE{\calligra{This is not a quantum curved torus}}}}
\end{center}

\vskip 0.5 cm

Classically,  geometric invariants are usually defined explicitly and algebraically  in a local coordinate system,    in terms of a metric tensor or a connection  on the given manifold. However, methods based on local coordinates, or algebraic methods  based on commutative algebra,  have no chance of being useful  in  a noncommutative setting, in general.    But  other methods, more analytic and more subtle, based on ideas of spectral  geometry are available. In fact, thanks to  spectral geometry, we know that  there are intricate relations between  Riemannian invariants and spectra of naturally defined elliptic operators like Laplace or Dirac operators on the given manifold. A prototypical example is the celebrated {\it Weyl's law} on the asymptotic distribution of eigenvalues of the Laplacian of a closed Riemannian manifold $ M^n$ in terms of its volume:
\begin{equation} N (\lambda) \sim \frac{ \omega_n \text{Vol}(M)}{ (2\pi)^n} \lambda^{\frac{n}{2}} \quad \quad \lambda \to \infty.  \label{Weyl1}
\end{equation}
Here  $N(\lambda)$ is the number of eigenvalues of the Laplacian  in the interval $[0, \lambda]$ and $\omega_n$ is the volume of the unit ball in $\mathbb{R}^n$. 
 In the spirit of Marc Kac's article  \cite{MR0201237},
  one  says  
 one can hear the volume of a manifold. But one can ask what else about a Riemannian manifold can be heard? Or even  we can ask: what  can  we learn  by listening to a noncommutative manifold? Results so far indicate that one can effectively define and compute, not only the volume, but in fact  the scalar and   Ricci curvatures of noncommutative curved spaces, at least in many example. 

 In his Gibbs lecture of 1948, {\it Ramifications, old  and new, of the eigenvalue problem}, Hermann Weyl had this to say about possible extensions of his asymptotic law \eqref{Weyl1}: {\it I feel that these informations about the proper oscillations of a
membrane, valuable as they are, are still very incomplete. I have certain
conjectures on what a complete analysis of their asymptotic behavior
should aim at; but since for more than 35 years I have made
no serious attempt to prove them, I think I had better keep them to
myself.}

 One of the  most elaborate results  in spectral geometry,  is Gilkey's theorem that gives the first four nonzero terms in the  asymptotic expansion of the heat kernel of Laplace type  operators    in terms of covariant derivatives of the metric tensor and the Riemann curvature tensor \cite{MR1396308}. 
More precisely, if 
 $P$ is a Laplace type operator, then the heat operator  $e^{-tP}$ is a smoothing operator with a smooth kernel $k(t, x, y),$ and there is 
  an asymptotic expansion near $t=0$ for the heat kernel  restricted to  the diagonal of $M\times M$: 
$$k(t, x, x) \sim \frac{1}{(4 \pi t)^{m/2}}  (a_0 (x, P) + a_2(x, P)t + a_4 (x, P)t^2 + \cdots), $$
where  $a_i (x, P) $ are known as the  Gilkey-Seeley-DeWitt coefficients. The first term $ a_0 (x, P)$   is a constant. It was first calculated by Minakshisundaram and Pleijel
  \cite{MR0031145} for $P=\Delta$ the Laplace operator.  Using Karamata's Tauberian  theorem, one immediately obtains  Weyl's law for closed Riemannian manifolds. Note that Weyl's original proof was for bounded domains with a regular  boundary in Euclidean space and does not extend to manifolds in general. The next term $a_2 (x, P)$, for $P= \Delta$,  was calculated  by MacKean and Singer \cite{MR0217739} and it was shown that it gives the scalar curvature:
$$a_2 (x, \triangle) = \frac{1}{6}S(x).$$
This immediately shows that the scalar curvature has a spectral nature and in particular the total scalar curvature is a  spectral invariant. This result, or rather its localized version to be recalled later, is at the heart of the noncommutative geometry approach to the definition of scalar curvature. The expressions for  $a_{2k} (x, P)$  get rapidly complicated as $k$ grows, although in principle they can be recursively computed in a normal coordinate chart. They are reproduced up to term $a_6$  in the next section.

It is this  analytic point of view on geometric invariants that play an important role in   understanding  the  geometry of curved noncommutative spaces.  The  algebraic approach almost completely breaks down in the noncommutative case. Our experience so far in the past  few  years has been that
in the noncommutative case spectral and hard analytic methods based on pseudodifferential operators yield results that are in no way possible to guess or arrive at from their commutative counterparts by algebraic methods. One just needs  to take a look at our formulas for scalar, and now Ricci  curvature,  in dimensions two, three and four, in later sections   to believe in this statement. The fact that in the first step we had to rely on heavy symbolic computer calculations to start the analysis,   shows the formidable nature of this material. Surely computations, both symbolic and analytic, are quite hard and are done on a case by case basis, but the surprising end results totally justify the effort.

The spectral geometry of a {\it curved  noncommutative two torus} has been the subject of intensive studies in recent years. As we said earlier, 
this whole episode started when a  Gauss-Bonnet theorem was proved  by Connes and Tretkoff (formerly Cohen) in \cite{MR2907006} (see 
also \cite{CohCon} for an earlier version), and for general conformal structures in \cite{MR2956317}.  A natural question then was to define and compute  the scalar curvature of a curved noncommutative torus. This was done, independently,  by  Connes-Moscovici   \cite{MR3194491}, and Fathizadeh-Khalkhali  \cite{MR3148618}.  The next term in the expansion, namely the term $a_4$, which in the classical case contains explicit 
 information about the analogue of the Riemann tensor, is calculated and studied in \cite{arXiv1611.09815}.   
 A  version of the Riemann-Roch theorem is proven in 
\cite{MR3282309} and the study of local spectral invariants is extended to all finite projective modules on noncommutative two tori  in \cite{MR3540454}.
 
 A  key idea  to  define a curved noncommutative space in the above works is to conformally perturb a flat spectral triple
 by  introducing a noncommutative Weyl factor. 
The complex geometry of the noncommutative two torus, on the other hand, provides a Dirac operator which, in analogy with the classical case, originates from the Dolbeault complex. 
By perturbing this spectral triple, one can construct a  (twisted) spectral triple that can be used to study the geometry of the conformally perturbed flat metric on the noncommutative two torus.  
Then, using the pseudodifferrential operator theory for $C^\ast$-dynamical systems developed by Connes in \cite{MR572645}, the computation is performed and explicit formulas are obtained. The spectral geometry and study of scalar curvature  of  noncommutative tori has been pursued further in  
\cite{MR3402793, MR3359018,  MR3369894}.

 Finally,   for the latest  on interactions between noncommutative geometry, number theory,  and arithmetic algebraic geometry, the reader  can start with the  article by Connes and Consani  \cite{arXiv1805.10501} in this volume and references therein.

\section{{\bf Curvature in noncommutative geometry}}
This section is  of an introductory nature and  is meant to set the stage for later sections and to motivate the evolution of the concept of curvature 
in noncommutative geometry from its beginnings  to   its present form. Clearly we have no intention of giving even a brief sketch of the history of the development of the curvature concept in differential geometry. That would require a separate long article, if not a book. We shall simply highlight some key concepts that have impacted the development of the idea of curvature in noncommutative geometry.

\subsection{A  brief history of curvature}
Curvature,  as understood in classical differential geometry, is one of the most important 
features of a geometric space. It is here that geometry and topology differ in the ways they probe  a space.
To talk about curvature we need more than just topology or smooth structure on a space. The extra piece of structure is usually encoded in a (pseudo-)Riemannian metric, or at least a connection on the tangent bundle, or on a principal $G$-bundle. 
It is remarkable that  Greek geometers missed the curvature concept altogether, even for simple curves like a circle,  which  they studied so intensely. The earliest quantitative understanding of curvature, at least for circles, is due to Nicole Oresme  in fourteenth century. In his treatise, {\it De configurationibus}, he correctly 
gives the inverse of raidus as the curvature of  a circle. 
The concept had to wait for Descartes' analytic geometry and the Newton-Leibniz calculus  before  to be developed and  fully understood.   In fact the first definitions of the (signed) curvature $ \kappa$  of a plane curve $y= y(x)$  are  due to Newton, Leibniz and Huygens  in 17th century:  
$$ \kappa = \frac{y''}{(1+ y'^2)^{3/2}}.$$
  It is important to note that this is not an intrinsic concept. Intrinsically any one dimensional Riemannian manifold is locally 
isometric to $\mathbb{R}$ with its flat Euclidean metric and hence  its intrinsic curvature is zero. 

Thus the first  major case to be understood was the curvature of a surface embedded in   a  three dimensional Euclidean space with its induced metric. In his magnificient paper of 1828 entitled  {\it  disquisitiones generales circa superficies curvas,} Gauss first defines the curvature of a surface in an {\it extrinsic} way, using the Gauss map  and then he proves  his {\it theorema egregium}: the curvature so defined is in fact an {\it intrisic} concept and can solely be defined in terms of the first fundamental form.  That is the Gaussian curvature is an isometry invariant, or in Gauss' own words:
\begin{center}
\textcolor{blue}{{\it Thus the formula of the preceding article leads itself to the remarkable Theorem. If a curved surface is developed upon any other surface whatever, the measure of curvature in each point remains unchanged.}}
\end{center}
Now the first fundamental form   is just the induced Riemannian metric in more modern language. As we shall see, in the hands of Riemann, Theorema Egregium  opened the way for the idea of intrinsic geometry of spaces in general. Surfaces, and manifolds in general,  have  an intrinsic geometry defined solely by metric relations within the space itself,  independent of any ambient space. 

If $g =e^h (dx^2+ dy^2)$ is a locally conformally flat  metric, then its Gaussian curvature is given by
$$ K= -\frac{1}{2}e^{-h}{\Delta h},$$
where $\Delta$ is the flat Laplacian. We shall see later in this paper that the analogous formula in the noncommutative case, first obtained in  \cite{MR3194491, MR3148618},  takes a much more complicated form, with remarkable similarities and differences.

Another major result of Gauss' surface theory was his {\it local uniformization theorem}, which amounts to existence of {\it isothermal coordinates}: any analytic  Riemannian metric in two dimensions is locally conformally flat.  The result holds for all smooth metrics in two dimensions, but Gauss' proof only covers  analytic metrics. Since conformal classes of metrics on a two torus are parametrized by the upper half plane modulo the action of the modular group, this justifies the initial choice of metrics for noncommutative tori by Connes and Cohen  in their Gauss-Bonnet theorem in  \cite{MR2907006}, and for general conformal structures in our paper \cite{MR2956317}.  By all chances, in the noncommutatve case  one needs to go beyond the class of localy conformally flat metrics. For recent results in this direction see \cite{arXiv1811.04004}. 

  A third major achievement  of Gauss in differential geometry  is  his   local {\it Gauss-Bonnet theorem}: for any geodesic triangle drawn on a surface  with interior angles $\alpha, \beta, \gamma,$ we have 
$$\alpha +\beta +\gamma  -\pi = \int K dA,$$
where $K$ denotes the Gauss curvature and $dA$ is the surface area element. By using a geodesic triangulation 
of the surface, one can then easily prove  the global Gauss-Bonnet theorem for a closed Riemannian surface: 
$$\frac{1}{2\pi}\int_M KdA =  \chi (M),$$
where $\chi (M)$ is the Euler characteristic of the closed surface $M$. It is hard to overemphasize the importance of this result which connects geometry with topology.  It  is  the first example of an index theorem and the theory of characteristic classes. 

To find a true  analogue of the Gauss-Bonnet theorem  in a noncommutative setting  was the  motivation for Connes and Tretkoff in their ground breaking  work \cite{MR2907006}.  After conformally perturbing the flat metric of a noncommutative torus, they noticed that while the above classical  formulation   has no clear analogue in the noncommutative case, its spectral formulation 
\[ \zeta(0) + 1 = \frac{1}{12 \pi} \int_{M} K dA = \frac{1}{6}
\chi (M), \nonumber \]
makes perfect sense. Here 
\begin{equation}
\zeta (s) = \sum \lambda_{j}^{-s}, \,\,\, \quad \textnormal{Re}(s) > 1,
\end{equation}
is the {\it spectral zeta function} of the scalar Laplacian  $\triangle_{g} = d^*d$ of  $(M, g)$.  
The spectral zeta function has a meromorphic continuation to $\mathbb{C}$ with a
 unique (simple) pole at $s=1$. In particular $\zeta (0)$ is defined.
 Thus $\zeta (0)$ is a
topological invariant, and,   in particular, it remains invariant under  the conformal perturbation $g \to e^h g$ of the metric. This result was then extended to all conformal classes in the upper half plane in our paper  \cite{MR2956317}.

 After the work of Gauss, a decisive giant step was taken by Riemann in his  epoch-making  paper {\it Ueber die Hypothesen, welche der Geometrie zu Grunde liegen,}
  which is a text of his {\it Habilitationsvortrag} of June 1854.  The notion of {\it space}, as an entity that exists on its own, without any reference to an ambient space or external world,  was first conceived by Riemann. Riemannian geometry is intrinsic from the beginning.  In Riemann's conception,   a space, which he called a {\it mannigfaltigkeit}, manifold  in English, can be   discrete or  continuous, finite or  infinite dimensional. The idea of a geometric space as an abstract {\it set}  endowed with some extra structure was born in this paper of Riemann.  Local coordinates are just labels without any  intrinsic meaning,  and thus one must always make sure that the definitions are independent of the choice of coordinates. This is the {\it general principle of relativity}, which later came to be regarded as a cornerstone of modern theories of spacetime and Einstein's theory of gravitation.  This idea quickly led to the development of tensor calculus, also known as  the {\it absolute differential calculus}, by the Italian school of Ricci and his brilliant student Levi-Civita. 
  
  Riemann  also introduced the idea of a Riemannian metric  and  understood that to define the bending or curvature  
 of a space one just needs a Riemannian metric. This was of course  directly inspired by Gauss' theorema egregium. 
 In fact he gave two definitions  for  curvature. His {\it sectional} curvature is defined as the Gaussian  curvature of   two dimensional submanifolds defined via the  geodesic flow for each two dimensional subspace  of the tangent space at each point. For  his second definition he introduced the  geodesic coordinate systems and considered the Taylor expansion of the metric  components $g_{ij}(x) $ in a geodesic coordinate. Let 
 $$ c_{ij, kl}=\frac{1}{2}\frac{\partial^2 g_{ij}}{\partial x^k \partial x^l}.$$
 He shows that sectional curvature is determined by the components $c_{ij, kl} $,  and vice-versa. Also, one knows that the components $c_{ij, kl} $ are closely related to Riemann curvature tensor.

   The Riemann  curvature tensor, in modern notation,  is defined as 
 $$ R(X,Y) = \conn_X\conn_Y-\conn_Y\conn_X-\conn_{[X,Y]},$$
 where $\nabla$ is the Levi-Civita connection of the metric, and  $X$ and $Y$ are vector fields on the manifold. The analogue of this  curvature tensor of rank four is still an illusive concept in the noncommutative case. However, the components of the Riemann tensor appear in the term $a_4$ in the small time 
heat kernel expansion of the Laplacian of the metric, the analogue of which was calculated and studied in \cite{arXiv1611.09815} for noncommutative two tori and for noncommutative four tori with product geometries.

 \vskip 0.7cm
 
\begin{center}
\centering
\includegraphics[scale=0.4]{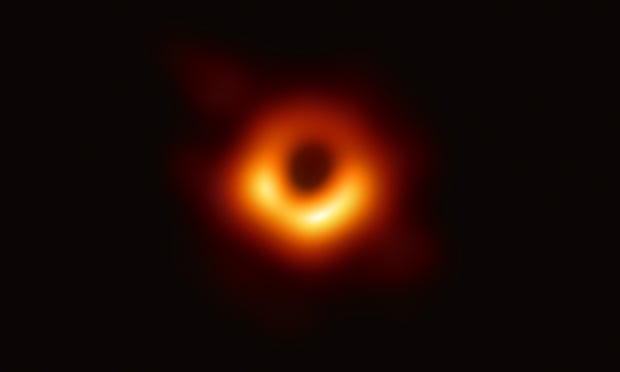}\\
\smallskip 
\smallskip
The first black hole image by Event Horizon Telescope, April 2019. 
\end{center}

\vskip 0.5 cm

 It is hard to exaggerate the importance of the {\it Ricci curvature} in  geometry and  physics. For example it plays an  indispensable role in Einstein's  theory of gravity and his field equations, and directly leads, thanks to  
 Schwarzschild solution,  to the prediction of black holes. It is also fundamental for  the Ricci flow. Ricci curvature  can be formulated  
 in spectral terms and this opened up the possibility of defining it in noncommutative settings \cite{arXiv1612.06688}. The reader should consult later sections in this survey for more on this.

Although  they  won't be a subject for the present exposition,  let us briefly mention some other aspects of curvature that have found their analogues  in noncommutative settings. These are mostly {\it  linear} aspects of curvature, and have much to do with representation theory of groups.  They include Chern-Weil theory of characteristic classes and specially the Chern-Connes character maps for both $K$-theory and $K$-homology, Chern-Simons theory, and Yang-Mills theory.  Riemannian curvature, whose noncommutative analogue we are concerned with here, is a {\it nonlinear} theory and from our point of view that is why it took  so long to find its proper formulation and first calculations in a noncommutative setting.

\subsection{Laplace type operators and Gilkey's theorem}
At the heart of spectral geometry,
  Gilkey's theorem \cite{MR1396308} gives the most precise information on asymptotic expansion of heat kernels for a large class of elliptic PDE's. Since this result and its noncommutative analogue  plays such an important role in defining and computing curvature invariants in noncommutative geometry, we shall explain it  briefly in this section. Let $M$ be a smooth closed manifold with 
  a Riemannian metric $g$  and a vector bundle $V$ on $M$.  An operator
   $P: \Gamma (M, V) \to \Gamma (M, V)$ on smooth sections of $V$  is called a  Laplace
    type operator if in local coordinates it  looks like
$$ P= -g^{ij}\partial_i \partial_j + \text{lower orders}.$$
Examples of Laplace type operators include Laplacian on forms
$$\Delta =(d+ d^*)^2: \Omega^p(M) \to \Omega^p(M),$$ and the Dirac Laplacians $\Delta= D^* D$, where 
$ D: \Gamma (S) \to \Gamma (S)$
is a generalized Dirac operator. 

Now if 
 $P$ is a Laplace type operator, 
then there exists a unique connection $\nabla$ on the vector bundle $V$ and an endomorphism $E\in {\rm End}(V)$ such that   
\begin{equation*}\label{decomposition}
P = \nabla^*\nabla - E.
\end{equation*}  
Here $\nabla^*\nabla$ is the connection Laplacian  which is locally given by $-g^{ij}\nabla_{i}\nabla_{j}$.
For example the  Lichnerowicz formula for the Dirac operator,  $D^2 =\nabla^*\nabla-\frac{1}{4}R$,  gives 
\[
E=\frac{1}{4} R,
\] 
where $R$ is the scalar curvature. 
Now $e^{-tP}$ is a smoothing operator with a smooth kernel $k(t, x, y).$ 
 There is an asymptotic expansion near $t=0$ 
$$k(t, x, x) \sim \frac{1}{(4 \pi t)^{m/2}}  (a_0 (x, P) + a_2 (x, P)t + a_4 (x, P)t^2 + \cdots), $$
where  $a_{2k} (x, P) $ are known as the  Gilkey-Seeley-De Witt coefficients. Gilkey's theorem asserts that 
$a_{2k}(x, P)$ can be expressed in terms of universal polynomials in the metric $g$ and its covariant derivatives. 
Gilkey has computed the frist  four nonzero  terms and they are as follows:
\begin{eqnarray*}
a_0 (x,P)&=& tr(1),\\
a_2(x,P)&=&  tr\big (E-\frac{1}{6}R \big ), \nonumber\\
a_4(x,P) &=& \frac{1}{360} 
tr \Big ( \big(-12 R_{;kk} + 5 R^2  - 2 R_{jk} R_{jk} +2 R_{ijkl} R_{ijkl}\big) \nonumber  \\
&&\qquad\qquad  - 60 R E+ 180 E^2+ 60 E_{;kk} + 30 \Omega_{ij} \Omega_{ij}\Big ). 
\end{eqnarray*}

\begin{align*}
a_6(x,P)&=  
tr
\Big\{
\frac{1}{7!} \Big ( 
-18 R_{; kkll} +17R_{;k} R_{;k}-2R_{jk;l}R_{jk;l} -4 R_{jk;l} R_{jl;k} \nonumber \\
&\qquad\qquad\qquad+ 9 R_{ijku;l}R_{ijku;l}+28R R_{;ll} -8 R_{jk} R_{jk;ll}+24 R_{jk} R_{jl; kl} \nonumber \\
& \qquad\qquad\qquad+12 R_{ijkl}R_{ijkl;uu} \Big ) \nonumber \\
&  + \frac{1}{9 \cdot 7!} \Big ( -35 R^3 + 42 R R_{lp} R_{lp}  
 -42 R R_{klpq} R_{klpq} +208 R_{jk} R_{jl} R_{kl}  \nonumber \\
&\qquad\qquad -192 R_{jk} R_{ul} R_{jukl}+48 R_{jk} R_{julp} R_{kulp} -44 R_{ijku} R_{ijlp} R_{kulp} \nonumber \\
&\qquad\qquad -80 R_{ijku} R_{ilkp} R_{jlup} \Big) \nonumber \\
& + \frac{1}{360} \Big ( 
8 \Omega_{ij;k} \Omega_{ij;k} +2 \Omega_{ij;j} \Omega_{ik;k} +12 
\Omega_{ij} \Omega_{ij;kk}  -12 \Omega_{ij} \Omega_{jk} \Omega_{ki}  \nonumber \\
&\qquad\qquad -6 R_{ijkl} \Omega_{ij} \Omega_{kl}  + 4 R_{jk} \Omega_{jl} \Omega_{kl} 
-5 R \Omega_{kl} \Omega_{kl} 
\Big ) \nonumber \\
&+ \frac{1}{360} \Big( 
6E_{;iijj} +60 EE_{;ii} +30 E_{;i} E_{;i} +60 E^3+30 E \Omega_{ij} \Omega_{ij}   -10 RE_{;kk}\nonumber \\
&\qquad\qquad -4R_{jk} E_{;jk}-12 R_{;k}E_{;k} -30R E^2  -12 R_{;kk} E + 5 R^2 E  \nonumber \\
&\qquad\qquad  -2 R_{jk} R_{jk} E + 
2 R_{ijkl} R_{ijkl} E
\Big )
\Big \}. \nonumber 
\end{align*}
Here $R_{ijkl}$ is the Riemann curvature tensor, $R$ is the scalar curvature, $\Omega$ is the curvature matrix of two  forms, and $;$ denotes the covariant derivative operator.

 As we shall later see in this survey, the first two  terms in the above list   allow us to  define   the scalar and Ricci curvatures in terms of heat kernel coefficients and extend them to noncommutative settings.  

Alternatively,  one can use  spectral zeta functions to extract information from the spectrum. Heat trace and spectral zeta functions  are related via  Mellin transform. 
For a  concrete example, let $\triangle$  denote  the Laplacian on functions on an $m$-dimensional closed Riemannian manifold. Define
$$ \zeta_{\triangle} (s) = \sum \lambda_i^{-s}   \quad \quad \text{Re}(s) >\frac{m}{2}.$$ 
The spectral invariants $a_i$ in the heat trace asymptotic expansion
$$ \text{Trace} (e^{-t \triangle}) \sim
  (4 \pi t)^{\frac{-m}{2}} \sum_{j=0}^\infty a_{j} t^j \quad \qquad  \quad (t \to 0^+)
$$
are related to residues of spectral zeta function by 
$$  \text{Res}_{s=  \alpha}\zeta_{\triangle} (s)=  (4 \pi )^{-\frac{m}{2}}\frac{ a_{\frac{m}{2}-\alpha}}{\Gamma (\alpha)}, \quad \quad   \alpha = \frac{m}{2}-j>0.$$
To get to the local invariants like scalar curvature  we can consider localized zeta functions. 
 Let $\zeta_f (s):= \text{Tr}\, (f\triangle^{-s}),  \, \,  f \in C^{\infty}(M)$. Then we have

$$\text{Res}\, \zeta_f (s)|_{s= \frac{m}{2}-1}= \frac{(4 \pi)^{-m/2}}{\Gamma (m/2-1)}\int_M f (x) R  (x) d vol_x,  \quad m \geq 3,$$
$$\zeta_f (s)|_{s=0} =  \frac{1}{4\pi}\int_M f (x) R  (x) d vol_x    - \text{Tr} (f P),   \quad \quad   m=2,$$
where $P$ is projection onto zero eigenmodes of $\triangle$. Thus the scalar curvature $R$ appears as the density function for the  localized spectral zeta function.

\subsection{Noncommutative Chern-Weil theory} Although it is not our intention to review  this  subject in  the present survey, we shall nevertheless  explain some ideas of noncommutative Chern-Weil theory here.  Many aspects of Chern-Weil theory of characteristic classes for vector bundles and principal bundles over smooth manifolds can be cast in an algebraic formalism and as such is even used in commutative algebra and algebraic geometry \cite{MR3643159}. Thus one can formulate notions like de Rham cohomology, connection, curvature, Chern classes and Chern character,  over a commutative algebra and then for a scheme. This is a commutative theory which is more or less straightforward in the characteristic zero case. But  there  seemed to be  no obvious extension of de Rham theory and the rest of Chern-Weil theory  to the noncommutatuve case. 

In \cite{MR572645} Connes realized that many aspects of Chern-Weil theory  can  be implemented in a noncommutative setting. The crucial ingredient was the discovery of cyclic cohomology that replaces de Rham homology of currents in a noncommutative setting \cite{MR777584, MR823176}. 
Let $A$ be a not necessarily commutative   algebra over the field of complex numbers.   By a \index{noncommutative differential calculus}{\it noncommutative differential calculus} on $A$ we mean
a triple   $(\Omega, \, d, \rho)$ such that $(\Omega, \, d)$ is  a
differential graded algebra and $\rho : A \to \Omega^0$ is an algebra
homomorphism. Given a  right $A$-module
$\mathcal{E}$, a {\it connection} on $\mathcal{E}$ is a
$\mathbb{C}$-linear map
$\nabla :  \mathcal{E} \longrightarrow \mathcal{E} \otimes_A \Omega^1  $
 satisfying the Leibniz rule
$\nabla ( \xi a)= \nabla (\xi)a + \xi \otimes da,$
for all  $\xi \in \mathcal{E}$  and $a\in A$. Let $ \hat{\nabla} : \mathcal{E}\otimes_{\mathcal{A}} \Omega^{\bullet} \to \mathcal{E}\otimes_{\mathcal{A}} \Omega^{\bullet +1}$
be the (necessarily unique) extension of $\nabla$  which satisfies the  graded Leibniz rule
$\hat{\nabla} (\xi \omega) = \hat{\nabla} (\xi) \omega + (-1)^{\text{deg}\, \xi}\xi
d\omega$ with respect to the right $\Omega$-module structure on
$\mathcal{E}\otimes_{\mathcal{A}} \Omega$. 
The
{\it curvature}  of $\nabla$ is the operator of degree 2,    $\hat{\nabla}^2: \mathcal{E} \otimes_A \Omega^{\bullet} \to
\mathcal{E} \otimes_A \Omega^{\bullet}, $  which
can be
 easily checked to be $\Omega$-linear. 

Now to obtain Connes' Chern character pairing between $K$-theory and cyclic cohomology,  $K_0(A) \otimes  HC^{2n}(A) \to \mathbb{C},$ one can proceed as follows. Given a finite projective $A$-module $\mathcal{E}$, one can always equip $\mathcal{E}$ with a connection over the universal differential calculus $\Omega A$. An element of $HC^{2n}(A)$ can be represented by a closed graded trace  $\tau$ on $\Omega^{2n} A.$    The value of the pairing is then simply $\tau (\hat{\nabla}^{2n})$. Here we used the same symbol $\tau$ to denote the extension of $\tau$ to the ring $ End_{ \Omega^{\bullet}}(\mathcal{E} \otimes_A \Omega^{\bullet}).$ One checks that this definition is independent of all choices that we made \cite{MR823176}. Connes in fact initially developed the more sophisticated Chern-Connes pairing in $K$-homology with explicit formulas that do  not have a commutative counterpart. For all this and more the reader should check, Connes' book and  his above cited article \cite{MR823176,MR1303779} as well as the book \cite{MR3134494}.

\subsection{From spectral geometry to spectral triples}

 The very notion of Riemannian manifold itself  is now subsumed and vastly generalized through 
Connes'  notion of {\it  spectral triples}, which is a centerpiece of noncommutative  geometry
and applications of noncommutative geometry to particle physics.

Let us first  motivate the definition of a spectral triple. During the course of their heat equation proof of the index theorem, it was discovered by Atiyah-Bott-Patodi \cite{MR0650828} that it is enough to prove the theorem for  Dirac operators twisted by vector bundles. The reason is that these twisted  Dirac operators in fact generate the whole K-homology group of a spin manifold and thus it suffices to prove the theorem only for these first order elliptic operators. This indicates the preeminence of Dirac operators in topology. As we shall see below,  Dirac operators also encode metric information of a Riemannian manifold in a succint way. Broadly speaking, spectral triples,  suitably enhanced, are noncommutative spin manifolds and form a backbone  of noncommutative geometry, specially its metric aspects.  One precise formulation of this idea is  {\it Connes' reconstruction theorem} \cite{MR3032810}  which  states that a commutative spectral triple satisfying some natural conditions is in fact the standard spectral triple of a $\text{spin}^c$ manifold  described below.

Recall that the Dirac operator $D$ 
  on a
compact Riemannian $\text{spin}^c$  manifold  acts  as an unbounded
selfadjoint operator on  the Hilbert space $L^2(M,S)$ of $L^2$-spinors on $M$. If we  let
$C^{\infty}(M)$ act  on $L^2(M,S)$ by multiplication operators, then  one can check that for any smooth
function $f$, the commutator $[D,f]=Df-fD$ extends to a bounded
operator on  $L^2(M,S)$. The metric $d$ on $M$, that is   the geodesic distance of  $M$, can be recovered thanks to the 
{\it distance formula} of Connes \cite{MR1303779}:
\[d(p,q) = \textnormal{Sup} \{ |f(p)-f(q)|; \parallel [D,f] \parallel \leq 1\}.
\nonumber \]
The triple $(C^{\infty}(M), L^2(M,S), {D \hspace{-7pt} \slash})$ is a commutative example of
a spectral triple. 

The  general definition of a spectral triple, in the odd case, is as
follows.
\begin{definition}
Let $A$ be a unital algebra. An odd spectral triple on $ A$ is a triple $(A,
\mathcal{H}, D)$ consisting  of a Hilbert space $\mathcal{H}$, a selfadjoint unbounded operator
{\rm $D:  \text{Dom} (D) \subset \mathcal{H} \to \mathcal{H}$} with
compact resolvent, i.e.,
$(D - \lambda)^{-1} \in \mathcal{K}(\mathcal{H}),  \text{for all} \,
\lambda \notin \mathbb{R},$
 and a
representation  $\pi: A \to \mathcal{L}(\mathcal{H})$  of $A$ such that for
all $a\in A$, the commutator $[D, \pi (a)]$ is defined on {\rm $ \text{Dom} (D)$} and extends to  a bounded operator on $\mathcal{H}$.
\end{definition}
A  spectral
triple is called {\it finitely summable}  if for some $n\geq 1$
\begin{equation*}\label{stfs} |D|^{-n} \in \mathcal{L}^{1, \, \infty}(\mathcal{H}).
\end{equation*}
Here $\mathcal{L}^{1, \, \infty}(\mathcal{H})$ is the Dixmier ideal. It is an ideal of compact operators which is slightly bigger than the ideal of trace class operators and is the natural domain of the Dixmier trace. 
Spectral triples provide a refinement of Fredholm modules. Going from Fredholm modules to spectral triples is
similar to going from the conformal class of a Riemannian metric to the
metric itself. Spectral triples simultaneously  provide a notion of
{\it Dirac operator} in noncommutative geometry, as well as a Riemannian
type {\it distance function} for noncommutative spaces. In later sections we shall define and work with concrete examples of spectral triples and their conformal perturbations.

\section{{\bf Pseudodifferential calculus and heat expansion}}
\label{pseudoandheatexpsec}

In this section we discuss the classical 
pseudodifferential calculus on the Euclidean space 
and will then provide practical details of the 
pseudodifferential calculus of \cite{MR572645} that we use 
for heat kernel calculations on noncommutative tori. 

\subsection{Classical pseudodifferential calculus}

In the Euclidean case we follow the notations and 
conventions of \cite{MR1396308} as follows. For any 
multi-index $\alpha=(\alpha_1, \dots, \alpha_m)$ of 
non-negative integers and coordinates 
$x=(x_1, \dots, x_m) \in \mathbb{R}^m$ we set: 
\begin{equation*}
|\alpha| = \alpha_1 + \cdots + \alpha_m, 
\qquad 
\alpha! = \alpha_1 ! \cdots \alpha_m !,  
\qquad
x^\alpha = x_1^{\alpha_1} \cdots x_m^{\alpha_m}, 
\end{equation*}
\begin{equation*}
\partial_x^\alpha
= 
\left ( \frac{\partial}{\partial x_1}  \right )^{\alpha_1}  
\cdots 
\left ( \frac{\partial}{\partial x_m}  \right )^{\alpha_m}, 
\qquad 
D_x^\alpha = (-i)^{|\alpha|}   \partial_x^\alpha.  
\end{equation*}
Also we normalize the Lebesgue measure on 
$\mathbb{R}^m$ by a multiplicative factor of 
$(2 \pi)^{-m/2}$ and still denote it by $dx$. Therefore 
we have: 
\begin{equation*}
\int_{\mathbb{R}^m} \exp \left ( -\frac{1}{2} |x|^2 \right ) \, dx=1. 
\end{equation*}

The main idea behind pseudodifferential calculus 
is that it uses the {\it  Fourier transform} to turn a 
differential operator into multiplication by a function, 
namely the {\it symbol} of the 
differential operator. The Fourier transform 
$\hat f$ of a {\it Schwartz function } $f$ 
on $\mathbb{R}^m$ is defined by the following 
integration: 
\begin{equation*}
\hat f (\xi) = \int_{\mathbb{R}^m} e^{- i x \cdot \xi} f(x) \, dx, \qquad \xi \in \mathbb{R}^m. 
\end{equation*}
This integral is convergent because, by definition, 
the set of Schwartz functions 
$\mathcal{S}(\mathbb{R}^m)$ consists of all 
complex-valued smooth functions 
$f$ on the Euclidean space such that for any 
multi-indices $\alpha$ and $\beta$ of non-negative 
integers 
\begin{equation*}
\sup_{x \in \mathbb{R}^m} |x^\alpha D^\beta f (x)|  < \infty.  
\end{equation*}
It turns out that the Fourier transform preserves 
the $L^2$-norm, hence it extends to a 
unitary operator on $L^2(\mathbb{R}^m)$.

The differential operator $D_x^\alpha$ turns in 
the Fourier mode to multiplication by 
the monomial $\xi^\alpha$, in the sense that: 
\begin{equation*}
\widehat{ (D_x^\alpha f )}(\xi) = \xi^ \alpha  \hat f (\xi). 
\end{equation*}
The monomial $\xi^\alpha$ is therefore called 
the symbol of the differential operator 
$D_x^\alpha$. Then,  the {\it Fourier inversion formula,}
\begin{equation*}
f(x) = \int_{\mathbb{R}^m} e^{i \xi \cdot x} \hat f (\xi) \, d\xi, \qquad f \in \mathcal{S}(\mathbb{R}^m), 
\end{equation*}
implies that 
\begin{equation} \label{symboldiffoperatorformula}
D_x^\alpha f(x) = \int_{\mathbb{R}^m} e^{i x \cdot \xi} \xi^\alpha \hat f(\xi) \, d\xi 
= \int_{\mathbb{R}^m} \int_{\mathbb{R}^m} e^{i(x-y) \cdot \xi} \xi^\alpha f(y) \, dy \, d\xi. 
\end{equation}

 It is now clear from the above facts that the 
 symbol of any differential operator, given by 
 a finite sum of the form $\sum a_{\alpha}(x) D_x^\alpha$, 
 is the polynomial in $\xi$ of the form 
 $\sum a_{\alpha}(x) \xi^\alpha,$ whose coefficients are 
the functions $a_\alpha(x)$ (which we assume to be 
smooth). Using the notation 
$\sigma(\cdot)$ for the symbol it is an easy exercise 
to see that given two differential 
operators $P_1$ and $P_2$, the symbol of their 
composition $\sigma \left ( P_1 \circ P_2 \right ) $
is given by the following expression: 
\begin{equation} \label{compositiondiffsymbols}
\sum_{\alpha \in \mathbb{Z}_{\geq 0}^m} \frac{1}{\alpha !} \partial_\xi^\alpha \sigma(P_1) \, D_x^{\alpha} \sigma(P_2),  
\end{equation}
which is a finite sum because only finitely many 
of the summands are non-zero. 

By considering a wider family of symbols, 
one obtains a larger family of operators which are 
called {\it pseudodifferential operators}. A smooth function $p : \mathbb{R}^m \times \mathbb{R}^m \to \mathbb{C}$ 
is {\it a pseudodifferential symbol of order $d \in \mathbb{R}$} if it satisfies the following 
conditions: 
\begin{itemize}
\item $p(x, \xi)$ has compact support in $x$, 
\item for any multi-indices $\alpha, \beta \in \mathbb{Z}_{\geq 0}^m$, there 
exists a constant $C_{\alpha, \beta}$ such that
\begin{equation} \label{classicalsymboldef}
|\partial_\xi^\beta \partial_x^\alpha p (x, \xi) | \leq C_{\alpha, \beta} (1 + |\xi|)^{d-|\beta|}.  
\end{equation}
\end{itemize}
Clearly the space of pseudodifferential symbols 
possesses a filtration because,  denoting  
the space of symbols of order $d$ by $S^d$, we have: 
\begin{equation*}
d_1 \leq d_2   \implies S^{d_1} \subset S^{d_2}.  
\end{equation*}
Existence of symbols of arbitrary orders can be 
assured by observing that 
for any $d \in \mathbb{R}$ and any compactly 
supported function $f_0$, the 
function $p(x, \xi) = f_0(x) (1 + |\xi|^2)^{d/2}$ 
belongs to $S^d$. 

Given a symbol $p \in S^d$, inspired by formula \eqref{symboldiffoperatorformula}, the corresponding 
pseudodifferential operator $P$ is defined by 
\begin{equation} \label{pseudodiffformula}
P f(x) = \int_{\mathbb{R}^m} e^{i x \cdot \xi} p(x, \xi) \hat f(\xi) \, d\xi,  
\qquad f \in \mathcal{S}(\mathbb{R}^m).  
\end{equation}
The space of pseudodifferential operators 
associated with symbols of order $d$ 
is denoted by $\Psi^d(\mathbb{R}^m)$. 
Search for an analog of formula 
\eqref{compositiondiffsymbols} for general 
pseudodifferential operators 
leads to a complicated analysis which, at the 
end, gives {\it an asymptotic expansion} 
for the symbol of the composition of such operators. 
The formula is written as 
\begin{equation} \label{symbolcompositionformulaeuclidean}
\sigma \left ( P_1 P_2 \right ) \sim 
\sum_{\alpha \in \mathbb{Z}_{\geq 0}^m} 
\frac{1}{\alpha !} \partial_\xi^\alpha \sigma(P_1) \, D_x^{\alpha} \sigma(P_2).   
\end{equation}

It is important to put in order some explanations about this formula. 
If $\sigma(P_1) \in S^{d_1}$ and $\sigma(P_2) \in S^{d_2}$ then 
there is a symbol in $S^{d_1 + d_2}$ that gives $P_1 \circ P_2$ 
via formula \eqref{pseudodiffformula}.  However $\sigma(P_1 \circ P_2)$ has a complicated 
formula which involves integrals, which can be seen  by writing the formulas 
directly. The trick is then to use Taylor series and to perform analytic 
manipulations on the closed formula for  $\sigma(P_1 \circ P_2)$ 
to derive the expansion \eqref{symbolcompositionformulaeuclidean}. The error terms in the Taylor series 
that one uses in the manipulations are responsible for having 
an asymptotic expansion rather than a strict identity. The precise 
meaning of this expansion is that given any $d \in \mathbb{R}$, there 
exists a positive integer $N$ such that 
\begin{equation*}
\sigma \left ( P_1  P_2 \right ) - 
\sum_{|\alpha| \leq N } 
\frac{1}{\alpha !} \partial_\xi^\alpha \sigma(P_1) \, D_x^{\alpha} \sigma(P_2) 
\in S^{d}.   
\end{equation*} 
Therefore, as one subtracts the terms 
$\frac{1}{\alpha !} \partial_\xi^\alpha \sigma(P_1) \, D_x^{\alpha} \sigma(P_2) $ 
from $\sigma(P_1 \circ P_2)$, the orders of the resulting symbols tend to 
$- \infty$. Regarding this, it is convenient to introduce the space 
$S^{-\infty} = \cap_{d\in \mathbb{R}} S^d$ of 
the {\it  infinitely smoothing pseudodifferential symbols}.  
For example for any compactly supported function $f_0$, the symbol 
$p(x, \xi) = f_0(x) e^{-|\xi|^2}$ belongs to $S^{-\infty}$. 

The composition rule \eqref{symbolcompositionformulaeuclidean} is a very useful tool. For instance it can 
be used to find a {\it  parametrix} for {\it elliptic} pseudodifferential operators.   
Important geometric operators such as Laplacians are elliptic, and by 
finding a parametrix, as we shall explain, one finds an approximation of 
the fundamental solution of the partial differential equation defined by 
such an important operator. Intuitively, a pseudodifferential symbol $p(x, \xi)$ 
of order $d \in \mathbb{R}$ is {\it elliptic} if it is non-zero when $\xi$ is away from 
the origin (or invertible in the case of matrix-valued symbols), and $|p(x, \xi)^{-1}|$ 
is bounded by a constant times $(1+|\xi|)^{-d}$ as $\xi \to \infty$. For our purposes, 
it suffices to know that a differential operator $D = \sum a_\alpha(x) D_x^\alpha$ 
of order $d = \max_\alpha |\alpha| $ is elliptic if its {\it leading symbol}, 
\begin{equation*}
\sigma_L(D) = \sum_{|\alpha|= d} a_\alpha(x) \xi^\alpha,  
\end{equation*} 
is non-zero (or invertible) for $\xi  \neq 0$. Given such an elliptic differential operator 
one can use formula \eqref{symbolcompositionformulaeuclidean} to find an 
inverse for $D$, called a {\it  parametrix}, 
in the quotient $\Psi/ \Psi^{-\infty}$ 
of the algebra of pseudodifferential operators $\Psi$ by infinitely smoothing operators 
$\Psi^{-\infty}$. This process can be described as follows. One makes the natural 
assumption that the symbol of the parametrix has an expansion starting with a leading term 
of order $-d$ and other terms whose orders descend to $-\infty$, namely terms of orders $-d-1$, $-d-2$, $\dots$, and one continues as follows. 
The formula given by \eqref{symbolcompositionformulaeuclidean} can be used to 
find these terms recursively and thereby find a parametrix $R$ such that 
\begin{equation*}
DR -I \sim RD - I \sim 0. 
\end{equation*}
We will illustrate this carefully in \S \ref{heatexpsubsec} in a slightly more complicated situation, 
where a parameter $\lambda$ and a parametric pseudodifferential calculus is involved in 
deriving heat kernel expansions. We just mention that invertibility of $\sigma_L(D)$ is the 
crucial point that allows one to start the recursive process, and to continue on to find the 
parametrix $R$.

\subsection{Small-time heat kernel expansion} \label{heatexpsubsec}

For simplicity and practical purposes we assume that $P$ is a positive elliptic 
differential operator of order $2$ with 
\begin{equation*}
\sigma(P) = p_2(x, \xi) + p_1(x, \xi) + p_0(x, \xi), 
\end{equation*}
where each $p_j$ is (homogeneous) of order $j$ in $\xi$. We know that 
$p_2(x, \xi)$ is non-zero (or invertible) for non-zero $\xi$. The first step in 
deriving a small time asymptotic expansion for $\textrm{Tr}(\exp(-t P))$ as 
$t \to 0^+$, is to use the Cauchy integral formula to write 
\begin{equation} \label{CauchyIntformula}
e^{-t P} = \frac{1}{2 \pi i} \int_\gamma e^{-t \lambda} (P- \lambda)^{-1} \, d\lambda, 
\end{equation}
where the contour $\gamma$ goes clockwise around the non-negative real axis, 
where the eigenvalues of $P$ are located. The term $(P- \lambda)^{-1}$ in the 
above integral can now be approximated by pseudodifferential operators as follows. 
We look for an approximation $R_\lambda$ of $(P- \lambda)^{-1}$ such that 
\begin{equation*}
\sigma(R_\lambda) \sim r_0(x, \xi, \lambda) + r_1(x, \xi, \lambda) + r_2(x, \xi, \lambda) +\cdots,  
\end{equation*}
where each $r_j$ is a symbol of order $-2-j$ in the parametric sense which we will 
elaborate on later. For now one can use formula \eqref{symbolcompositionformulaeuclidean} to find the $r_j$ recursively 
out of the equation 
\begin{equation*}
R_\lambda  (P- \lambda) \sim I. 
\end{equation*}

This means that the terms $r_j$ in the expansion should satisfy 
\begin{equation} \label{parametrixeq}
\sum_j r_j \circ \left (  (p_2 - \lambda) + p_1 + p_0\right ) \sim 1, 
\end{equation}
where the composition $\circ$ is given by \eqref{symbolcompositionformulaeuclidean}. 
By writing the expansion 
one can see that there is only one leading term, which is of order 0, namely 
$r_0 (p_2 - \lambda)$ and needs to be set equal to 1 so that it matches the corresponding 
(and the only term) on the right hand side of the equation \eqref{parametrixeq}. 
Therefore the leading 
term $r_0$ is found to be 
\begin{equation} \label{r0formula}
r_0 = (p_2 - \lambda)^{-1}. 
\end{equation}
Here the ellipticity plays an important role, because we need to be 
ensured that the inverse of $p_2 - \lambda$ exists. Since, in our 
examples, $P$ will be a Laplace type operator, the leading term 
$p_2$ is a positive number  (or a positive invertible matrix in the vector bundle 
case) for any  $\xi \neq 0$. 
Therefore for any $\lambda$ on the contour  $\gamma$, we 
know that $p_2 - \lambda$ is 
invertible. One can then proceed by considering the term that 
is homogeneous of order $-1$ in the expansion 
of the left hand side of \eqref{parametrixeq} and set it equal to 0 since 
there is no term of order $-1$ on the right hand side. This will yield a 
formula for the next term $r_1$. By 
continuing this process one finds recursively that for $n=1,2,3, \dots$, we have 
\begin{equation}  \label{rnformula}
r_n = - \left ( 
\sum_{\substack{|\alpha|+j+2-k=n, \\ 0 \leq j<n, \, 0 \leq k \leq 2}} \frac{1}{\alpha!}
\partial_\xi^\alpha r_j \, D_x^\alpha p_k  
\right ) 
r_0. 
\end{equation}
It turns out that the $r_n$ calculated by this formula have the following homogeneity property:
\begin{equation*}
r_n (x, t \xi, t^2 \lambda) = t^{-2-n} r_n(t, \xi, \lambda). 
\end{equation*}

Having an approximation of the resolvent $R_\lambda \sim (P- \lambda)^{-1}$ 
via the symbols $r_n$, one can use the formulas \eqref{CauchyIntformula} and \eqref{pseudodiffformula} to approximate 
the kernel $K_t$ of the operator $e^{-tP}$, namely the unique smooth function such 
that 
\begin{equation*}
e^{-t P} f (x) = \int K_t(x, y) \, f(y) \, dy, \qquad f \in \mathcal{S}(\mathbb{R}^m). 
\end{equation*}  
Since $\Tr(e^{-t P})$ can be calculated by integrating the kernel on the diagonal, 
\begin{equation*}
\Tr \left ( e^{-t P} \right ) = \int K_t(x, x) \, dx, 
\end{equation*}
the integration of the approximation of the kernel obtained by going through 
the procedure described above leads to an asymptotic expansion of the 
following form: 
\begin{equation} \label{heatexpgeneralform}
\Tr \left ( e^{-t P} \right ) \sim_{t \to 0^+} t^{-m/2} \sum_{n=0}^\infty a_{2n} (P)\, t^n, 
\end{equation}
where each coefficient $a_{2n}$ is the integral of a density $a_{2n}(x, P)$ given by 
\begin{equation*}
a_{2n}(x, P) =   \frac{1}{2 \pi i} \int \int_\gamma e^{-\lambda} \textrm{tr} ( r_{2n} (x, \xi, \lambda) ) \, d\lambda \, d\xi.  
\end{equation*}
In this integrand, the $\textrm{tr}$ denotes the matrix trace which needs to be considered 
in the case of vector bundles.

It is a known fact that when $P$ is a geometric operator such as 
the Laplacian of a metric, each $a_{2n}(x, P)$ can be written 
in terms of the Riemann curvature tensor, its contractions, and 
covariant derivatives, see for example \cite{MR2371808}. However, 
in practice, as $n$ grows, these terms 
become so complicated rapidly. One can refer to \cite{MR1396308} for the formulas 
for the terms up $a_6$ derived using invariant theory.

\subsection{Pseudodifferential calculus and heat kernel expansion for noncommutative tori} 
\label{nctorusheatexpsubsec}

Now that we have illustrated the derivation of the heat kernel expansion \eqref{heatexpgeneralform}, we explain 
briefly in this subsection that using the pseudodifferential calculus developed in \cite{MR572645} for 
$C^*$-dynamical systems, heat 
kernel expansions of Laplacians on noncommutative tori can be derived by taking a parallel 
approach.  We note that, in \cite{MR3825195}, for {\it toric manifolds,} the Widom pseudodifferential calculus is adapted 
to their noncommutative deformations and it is used for the derivation of heat kernel expansions.

We first recall the pseudodifferential calculus on the algebra of noncommutative 
$m$-torus. A {\it pseudodifferential symbol} of order $d \in \mathbb{Z}$ on $\nctm$ 
is a smooth mapping $\rho : \mathbb{R}^m \to \snctm$ such that for any multi-indices 
$\alpha$ and $\beta$ of non-negative integers, there exists a constant $C_{\alpha, \beta}$ 
such that 
\begin{equation*}
|| \partial_\xi^\beta \delta^\alpha \rho (\xi)|| \leq C_{\alpha, \beta} (1 + |\xi|)^{d - |\beta|}. 
\end{equation*}
Here $||\cdot||$ denotes the $C^*$-algebra norm, which is the equivalent of the 
supremum norm in the commutative setting. Therefore this definition is the noncommutative 
analog of the definition given by \eqref{classicalsymboldef} in the classical case. A symbol of order $d$ is 
{\it elliptic} if $\rho(\xi)$ is invertible for large enough $\xi$ and there exists a constant 
$C_\rho > 0$ such that 
\begin{equation*}
||\rho(\xi)^{-1}|| \leq C_\rho (1+ |\xi|)^{-d}. 
\end{equation*}

Given a pseudodifferential symbol on $\nctm$ the 
corresponding pseudodifferential operator $P_\rho : \snctm \to \snctm$ is defined in \cite{MR572645} by 
the oscillatory integral
\begin{equation} \label{nctoruspseudodiff}
P_\rho(a) = \iint e^{-is \cdot \xi} \rho(\xi) \, \alpha_s(a) \, ds \, d\xi , \qquad a \in \snctm, 
\end{equation}
where $\alpha_s$ is the dynamics given by 
\[
\alpha_s(U^\alpha) = e^{i s\cdot \alpha} U^\alpha.
\] For example, the symbol of 
a differential operator of the form $\sum_{|\alpha| \leq d} a_\alpha \delta^\alpha$, $a_\alpha \in \snctm$, 
is $\sum_{|\alpha| \leq d} a_\alpha \xi^\alpha$.

Given a positive elliptic operator $P$ of order 2 acting on $\snctm$, 
such as the Laplacian of a metric, in order to 
derive an asymptotic expansion for $\Tr(e^{-tP})$ one can start by writing 
the Cauchy integral formula as we did in formula \eqref{CauchyIntformula}. However now one 
has to use the pseudodifferential calculus given by \eqref{nctoruspseudodiff} to write $P-\lambda$ 
in terms of its symbol and thereby approximate its inverse. In this calculus, 
if $\rho_1$ and $\rho_2$ are respectively symbols of orders $d_1$ and $d_2$, then 
the composition $P_{\rho_1} P_{\rho_2} $ has a symbol of order $d_1 + d_2$ with 
 the following asymptotic expansion: 
 \begin{equation} \label{pseudonctoruscompositionrule}
\sigma \left ( P_{\rho_1}  P_{\rho_2} \right ) \sim 
\rho_1 \circ \rho_2  := 
\sum_{\alpha \in \mathbb{Z}_{\geq 0}^m} 
\frac{1}{\alpha !} \partial_\xi^\alpha \rho_1 \, \delta^{\alpha} \rho_2.   
\end{equation}

Having these tools available, one can then perform calculations as in the 
process illustrated in \S \ref{heatexpsubsec} to derive an asymptotic expansion for 
$\Tr(e^{-tP})$. That is, one writes $\sigma(P)= p_2 + p_1+p_0$, where 
each $p_j$ is homogeneous  of order $j$, and finds recursively the terms 
$r_j$, $j=0,1,2, \dots$, that are homogeneous of order $-2-j$ and 
\begin{equation*}
\sum_j r_j \circ \left (  (p_2 - \lambda) + p_1 + p_0\right ) \sim 1.
\end{equation*}
This means that we are using the composition rule \eqref{pseudonctoruscompositionrule} to 
approximate the inverse of $P- \lambda$. The result of this process is 
a recursive formula  similar to the one given by \eqref{r0formula} and \eqref{rnformula}.  
That is, one finds that 
\begin{equation} \label{r0nctorusformula}
r_0 = (p_2 - \lambda)^{-1}. 
\end{equation}
and for $n=1,2,3, \dots,$
\begin{equation} \label{rnnctorusformula}
r_n = - \left ( 
\sum_{\substack{|\alpha|+j+2-k=n, \\ 0 \leq j<n, \, 0 \leq k \leq 2}} \frac{1}{\alpha!}
\partial_\xi^\alpha r_j \, \delta^\alpha p_k  
\right ) 
r_0. 
\end{equation}

Then one finds the small asymptotic expansion
\begin{equation*}
\Tr(e^{-t P}) \sim_{t \to 0^+} t^{-m/2} \sum_{n=0}^\infty \varphi_0(a_{2n}) t^n, 
\end{equation*} 
where $\varphi_0$ is the canonical trace 
\[
\varphi_0 \left ( \sum_{\alpha \in \Z^m}a_\alpha U^\alpha \right  ) = a_0
\] providing us with integration on the 
noncommutative torus $\nctm$. The terms $a_{2n} \in \snctm$ can be calculated 
using \eqref{r0nctorusformula} and \eqref{rnnctorusformula} as follows: 
\begin{equation} \label{nctorusa2nformula}
a_{2n} = \frac{1}{2 \pi i} \int_{\mathbb{R}^m} \int_\gamma 
e^{-\lambda} r_{2n} (\xi, \lambda)\, d\lambda \, d\xi. 
\end{equation}
We shall see in \S \ref{GBSCnc2section} that in order to perform this type of integrals 
in the noncommutative setting one encounters noncommutative features 
which will lead to the appearance of a functional calculus with a modular 
automorphism  in the outcome of the integrals.

\section{{\bf Gauss-Bonnet theorem and curvature for  noncommutative 2-tori}}
\label{GBSCnc2section}

The Gauss-Bonnet theorem for smooth oriented surfaces is a fundamental 
result that establishes a bridge between topology and differential geometry of surfaces. 
Given a surface, its Euler characteristic is a topological invariant which can 
be calculated by choosing an arbitrary triangulation on the surface and 
forming an alternating summation on the number of its vertices, edges and 
faces. It is quite remarkable that the Euler characteristic is independent of 
the choice of  triangulation and depends only on the genus of the surface. 
Clearly, under a diffeomorphism, or roughly 
speaking under changes on the surface that do not change the genus, 
the Euler characteristic remains unchanged. However the scalar curvature 
of the surface changes under such changes by 
diffeomorphisms, say when the surface is embedded in the 3-dimensional 
Euclidean space and has inherited the metric of the ambient space. However, 
the striking fact, namely the statement of the Gauss-Bonnet theorem, is that 
the change of curvature on the surface occurs in a way that, the increase 
and decrease of curvature over the surface compensate for each other 
to the effect that the curvature integrates to the Euler characteristic,  up to multiplication 
by a universal 
constant that is independent of the surface. Hence, the total curvature, namely 
the integral of the scalar 
curvature over the surface,  is a topological invariant. 

\subsection{Scalar curvature and Gauss-Bonnet theorem for $\ncttwo$}
\label{SCandGB2subsec}

In noncommutative geometry, the analog of the Gauss-Bonnet theorem 
has been investigated for the noncommutative two torus. In this 
setting, the flat geometry of  $\ncttwo$ was conformally perturbed by means 
of a conformal factor $e^{-h}$, where $h$ is a selfadjoint element in $\sncttwo$. 
In late 1980's, a heavy calculation was performed by P. Tretkoff and A. Connes 
to find an expression for the 
analog of the total curvature  of the perturbed 
metric on $\ncttwo$. The expression had a heavy dependence on the element 
$h$ used for changing the metric, therefore it was not clear whether the analog 
of the Gauss-Bonnet theorem holds for $\ncttwo$, and they just recorded the 
result of their calculations in an MPI preprint \cite{CohCon}. However, following calculations 
for the spectral action in the presence of a dilaton \cite{MR2239979} and developments in the 
theory of twisted spectral triples \cite{MR2427588}, there were indications that the complicated expression 
for the total curvature has to be independent of the element $h$. By further calculations, 
simplifications and using symmetries in the result, it was shown in \cite{MR2907006} that the terms 
in the complicated expression for the total curvature indeed cancel 
each other out  to $0$, hence the analog of the Gauss-Bonnet theorem for $\ncttwo$. 
The conformal class of metrics that was used in \cite{MR2907006} is associated with the simplest 
translation-invariant complex structure on $\ncttwo$, namely the complex structure 
associated with $i=\sqrt{-1}$. The Gauss-Bonnet theorem for $\ncttwo$ for 
the complex structure associated with an arbitrary complex number $\tau$ in the 
upper-half plane was established in \cite{MR2956317}.

After considering a general complex number $\tau$ in the upper half-plane 
to induce a complex structure and thereby a conformal structure on $\ncttwo$, 
and by conformally perturbing the flat metric in this class by a fixed conformal 
factor $e^{-h}$, $h=h^* \in \sncttwo$, the Laplacian of the curved metric is 
shown \cite{MR2907006, MR2956317} to be anti-unitarily equivalent to the operator 
\[
\triangle_{\tau, h} = e^{h/2} \triangle_{\tau, 0} \,e^{h/2},  
\]
where 
\[
\triangle_{\tau, 0} = \delta_1^2 + 2 \tau_1 \delta_1 \delta_2 + |\tau|^2 \delta_2^2
\]
is the Laplacian of the flat metric in the conformal class determined by 
$\tau=\tau_1 + i \tau_2$ in the upper half-plane.  
The pseudodifferential symbol of $\triangle_{\tau, h} $ is the sum of the following 
homogeneous components of order 2, 1 and 0, in which we use $k=h/2$ for 
simplicity:  
\begin{eqnarray*}  
p_2(\xi)&=& \xi_1^2k^2+|\tau|^2\xi_2^2k^2+2\tau_1\xi_1\xi_2k^2, \\ 
p_1(\xi)&=&2\xi_1k\delta_1(k) + 2|\tau|^2\xi_2k\delta_2(k) +
2\tau_1\xi_1k\delta_2(k)+2\tau_1\xi_2k\delta_1(k),\\
p_0(\xi)&=& k\delta_1^2(k)+ |\tau|^2k\delta_2^2(k) + 2\tau_1k\delta_1\delta_2(k).
\end{eqnarray*}

The analog of the scalar curvature is then the term $a_2(\triangle_{\tau, h}) \in \sncttwo$ 
appearing in the small time ($t \to 0^+$) asymptotic expansion
\begin{equation}
\label{heatexpnc2ttauh}
\Tr(a e^{-t \triangle_{\tau, h}}) 
\sim 
t^{-1} \sum_{n=0}^\infty \varphi_0 \left (a \, a_{2n}(\triangle_{\tau, h}) \right ) \, t^n, 
\qquad a \in \sncttwo. 
\end{equation}

By going through the process illustrated in \S \ref{nctorusheatexpsubsec} one can calculate $a_2$. However, there 
is a purely noncommutative obstruction for the calculation of the involved integrals 
in formula \eqref{nctorusa2nformula}, 
namely one encounters integration of $C^*$-algebra valued functions defined on the 
Euclidean space, $\mathbb{R}^2$ in this case. By passing to a suitable variation of the 
polar coordinates, the angular integration can be performed easily, and the main 
obstruction remains in the radial integration which can be overcome by the following 
rearrangment lemma \cite{MR2907006, MR2947960, MR3194491, MR3626561}: 

\begin{lemma} \label{rearrlemma}
For any tuple $m=(m_0,m_1, \dots, m_\ell) \in \mathbb{Z}^{\ell+1}_{>0}$ 
and elements $\rho_1, \dots, \rho_\ell \in C^\infty(\mathbb{T}_\theta^2)$,  
one has 
\[
\int_0^\infty \frac{u^{|m| -2}}{  (e^hu+1)^{m_0}} \prod_{1}^\ell 
\rho_j (e^hu+1)^{-m_j} \,du 
= e^{-(|m| -1)h} F_m (\Delta_{(1)}, \dots, \Delta_{(\ell)}) \Big (
\prod_1^\ell \rho_j \Big), 
\]
where
\[
 F_m(u_1, \dots, u_\ell) 
=\int_0^\infty \frac{x^{|m| -2}}{(x+1)^{m_0}}  \prod_1^\ell \Big (
x \prod_{1}^j u_h+1 \Big)^{-m_j}\, dx, 
\]
and $\Delta$ is the modular automorphism
\[
\Delta(a)=e^{-h}ae^h, \qquad a \in C(\mathbb{T}_\theta^2).
\]
\end{lemma}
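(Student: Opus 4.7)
The plan is to reduce the identity to the special case where each $\rho_j$ is a joint eigenvector of the modular automorphism $\Delta$, in which case the apparent rearrangement becomes a genuine scalar change of variables, and then to recover the general statement by continuity and functional calculus. Realized on the GNS Hilbert space $L^2(\mathbb{T}_\theta^2)$ associated with $\varphi_0$, one has $\Delta=L_{e^{-h}}R_{e^h}$, the product of two commuting positive selfadjoint operators, hence $\Delta$ is itself positive selfadjoint with a well-defined spectral calculus. It therefore suffices to verify the identity when $\Delta(\rho_j)=\lambda_j\rho_j$ for some $\lambda_j>0$, which is equivalent to the quasi-commutation $\rho_j e^h=\lambda_j e^h\rho_j$, and hence $\rho_j f(e^h)=f(\lambda_j e^h)\rho_j$ for any reasonable scalar function $f$.

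Assuming the $\rho_j$ are $\Delta$-eigenvectors, I would push them one at a time to the right end of the integrand. Pulling $\rho_1$ past $(e^hu+1)^{-m_1}$ produces $(\lambda_1 e^hu+1)^{-m_1}$; the product $\rho_1\rho_2$ then satisfies $\rho_1\rho_2\,e^h=\lambda_1\lambda_2 e^h\rho_1\rho_2$ (since $\Delta$ is an automorphism), so pulling it past $(e^hu+1)^{-m_2}$ gives $(\lambda_1\lambda_2 e^hu+1)^{-m_2}$, and so on. After all the $\rho_j$'s have collected at the right, the integrand takes the form $g(e^hu)\,\rho_1\rho_2\cdots\rho_\ell$, where
\[
g(t)=\frac{1}{(t+1)^{m_0}}\prod_{j=1}^\ell\frac{1}{(\lambda_1\cdots\lambda_j\,t+1)^{m_j}}
\]
is a scalar function of $t$. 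Because $e^h$ is a positive operator, the scalar substitution $v=\lambda u$ applied spectrally (one fibre at a time) gives
\[
\int_0^\infty u^{|m|-2}g(\lambda u)\,du = \lambda^{-(|m|-1)}\int_0^\infty v^{|m|-2}g(v)\,dv,
\]
and functional calculus in $e^h$ yields $\int_0^\infty u^{|m|-2}g(e^hu)\,du = e^{-(|m|-1)h}F_m(\lambda_1,\ldots,\lambda_\ell)$. Since each $\Delta_{(k)}$ acts on $\rho_1\cdots\rho_\ell$ as multiplication by $\lambda_k$, the right-hand side equals $e^{-(|m|-1)h}F_m(\Delta_{(1)},\ldots,\Delta_{(\ell)})(\rho_1\cdots\rho_\ell)$, establishing the identity for eigenvectors.

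The main technical obstacle is the extension from joint $\Delta$-eigenvectors to arbitrary smooth $\rho_j$. For this one argues that both sides of the identity are continuous in each $\rho_j$ in a topology in which the joint eigenvectors of $\Delta_{(1)},\ldots,\Delta_{(\ell)}$ are dense, and one justifies the interchange of the $u$-integral with the spectral resolution of $\Delta$. These are standard once one observes that for $|m|\geq\ell+1\geq 2$ the integrand is absolutely integrable both near $u=0$ (behaving like $u^{|m|-2}$) and at infinity (behaving like $u^{-2}$), uniformly over any compact spectral slice of $e^h$; dominated convergence then delivers the required continuity and allows the formula to extend to general $\rho_j\in C^\infty(\mathbb{T}_\theta^2)$ by density, finishing the proof.
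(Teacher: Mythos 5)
Your overall strategy---exploit the quasi-commutation $\rho e^h = \lambda e^h\rho$ to push the $\rho_j$ to one side and turn the $u$-integral into a scalar change of variables, then identify the resulting scalar as $F_m$ evaluated on the modular operators---is exactly the right heuristic, and the eigenvector computation itself is correct. The gap is in the reduction: you assert that the identity extends from joint $\Delta_{(k)}$-eigenvectors to arbitrary smooth $\rho_j$ ``by density,'' but the modular operator $\Delta=L_{e^{-h}}R_{e^h}$ on the GNS space has, in general, \emph{continuous} spectrum (its spectrum is the set of ratios $\lambda/\mu$ for $\lambda,\mu\in\mathrm{spec}(e^h)$, which is typically an interval when $h$ is not a scalar), so there need not be any nonzero eigenvectors of $\Delta$ in $C^\infty(\mathbb{T}_\theta^2)$ at all, let alone a dense supply of them. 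As written, the density step has nothing to latch onto.

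The fix, and the route actually taken in the literature the paper cites (Connes--Tretkoff, Connes--Moscovici, and especially Lesch's treatment, which the paper reproduces in its Section 9 ``Rearrangement lemma revisited''), is to run the same fibre-by-fibre argument against the \emph{spectral resolution} of the commuting positive operators $\Delta_{(1)},\dots,\Delta_{(\ell)}$ rather than against eigenvectors. Concretely, one writes $F_m(\Delta_{(1)},\dots,\Delta_{(\ell)})$ via the spectral theorem (or, equivalently, uses a Fourier representation $F(h_{(0)},\dots,h_{(\ell)})(\cdot)=\int e^{i\xi_0 h}\,\rho_1\, e^{i\xi_1 h}\cdots \hat f(\xi)\,d\xi$ as in Lesch), verifies the pointwise scalar identity on each spectral fibre exactly as in your eigenvector computation, and then interchanges the $u$-integral with the spectral (or Fourier) integral using the same absolute-integrability estimate you already give. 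That carries your argument through without ever invoking discreteness of the spectrum. So the proposal needs that one substitution---spectral measure in place of eigenbasis---but the analytic core is sound.
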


After applying this lemma to the numerous integrands with the help of 
computer programming, the result for the scalar curvature $a_2(\triangle_{\tau, h})$ 
was calculated in \cite{MR3194491, MR3148618}:

\begin{figure}[H]
\includegraphics[scale=0.6]{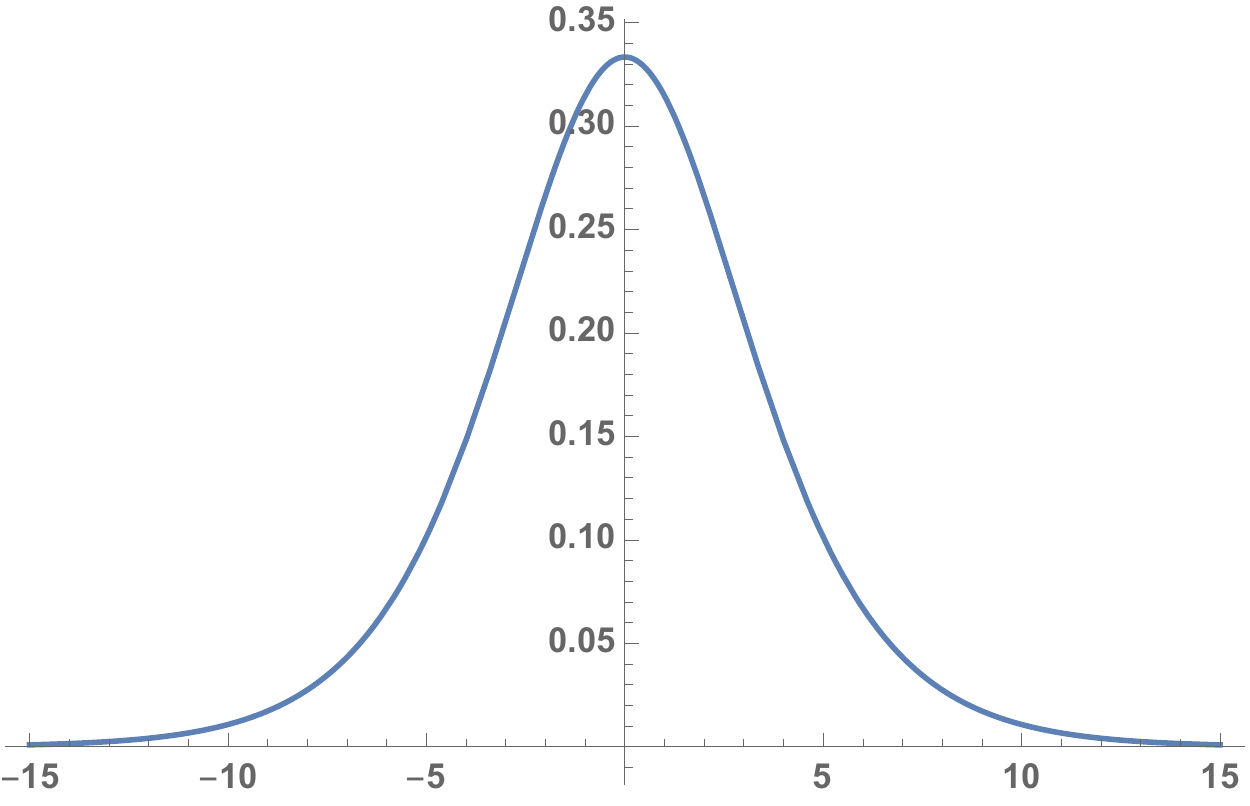}
\caption{Graph of $K$ given by \eqref{1varsc2}.}
\label{Kfor2torus}
\end{figure}

\begin{theorem}
The scalar curvature $a_2(\triangle_{\tau, h}) \in \sncttwo$ of a general metric 
in the conformal class associated with a complex number $\tau = \tau_1 + i \tau_2$ 
in the upper half-plane is given by 

\begin{eqnarray}\label{localexp}
&&a_2(\triangle_{\tau, h}) 
=K(\nabla) \big (\delta_1^2(\frac{h}{2}) + |\tau|^2 \delta_2^2(\frac{h}{2}) +2 \tau_1 \delta_1\delta_2(\frac{h}{2}) \big )  \nonumber \\
&& \, \, +  H(\nabla, \nabla) \big ( \delta_1(\frac{h}{2}) \delta_1(\frac{h}{2}) + |\tau|^2 \delta_2(\frac{h}{2}) \delta_2(\frac{h}{2}) +  \tau_1 \delta_1(\frac{h}{2}) \delta_2(\frac{h}{2}) + \tau_1 \delta_2(\frac{h}{2}) \delta_1(\frac{h}{2}) \big), \nonumber 
\end{eqnarray}
where
\begin{equation} \label{1varsc2}
K(x) = \frac{2 e^{x/2} (2 + e^x (-2 + x) + x)}{(-1 + e^x)^2 x} ,
\end{equation}
and
\begin{eqnarray} \label{2varsc2}
H(s, t) = 
\end{eqnarray}
\begin{eqnarray} \label{H}
-\frac{-t (s + t) \cosh{s} +  s (s + t) \cosh{t} - (s - t) (s + t + \sinh{s} + \sinh{t} - \sinh(s + t))}{s t (s + t)\sinh(s/2) \sinh(t/2) \sinh^2 ((s + t)/2) }.\nonumber 
\end{eqnarray}
Here the flat metric is conformally perturbed by $e^{-h}$, where $h=h^* \in \sncttwo$, and $\nabla$ 
is the logarithm of the modular automorphism $\Delta(a) = e^{-h} a e^{h},$ hence the derivation given by taking 
commutator with $-h$. 
\end{theorem}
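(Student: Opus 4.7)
The plan is to follow the general machinery of \S\ref{nctorusheatexpsubsec} applied to the specific Laplacian $\triangle_{\tau,h} = e^{h/2}\triangle_{\tau,0}e^{h/2}$, whose homogeneous symbol components $p_2, p_1, p_0$ are already listed in the excerpt. The starting point is the Cauchy integral representation $e^{-t\triangle_{\tau,h}} = \frac{1}{2\pi i}\int_\gamma e^{-t\lambda}(\triangle_{\tau,h} - \lambda)^{-1}\,d\lambda$ and the identification
\[
a_2(\triangle_{\tau,h}) = \frac{1}{2\pi i}\int_{\mathbb{R}^2}\int_\gamma e^{-\lambda}\, r_2(\xi,\lambda)\,d\lambda\,d\xi,
\]
where $r_0, r_1, r_2$ are the first three terms of the parametrix symbol, determined by the recursion \eqref{r0nctorusformula}--\eqref{rnnctorusformula}.

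Concretely, I would first set $r_0 = (p_2(\xi)-\lambda)^{-1}$, which is where the positivity of $k=h/2$ is essential: since $p_2(\xi)$ is a positive real-quadratic form in $\xi$ conjugated by $k$ on both sides, $r_0$ is well-defined for $\lambda$ on the contour $\gamma$. Then I would compute $r_1$ and $r_2$ using \eqref{rnnctorusformula}, which requires evaluating $\partial_{\xi_i}r_0$, $\delta_i(p_j)$, and products thereof. At this stage no simplification has been done and the expressions are already lengthy; by the homogeneity $r_n(t\xi, t^2\lambda)=t^{-2-n}r_n(\xi,\lambda)$ one may rescale $\lambda \mapsto |\xi|^2\lambda$ and then pass to suitably chosen polar coordinates in the $\xi$-plane adapted to the conformal class determined by $\tau$. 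The angular integration produces elementary constants, and the remaining $\lambda$-integral produces rational functions of $e^h$ via contour integration.

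The heart of the computation is the radial integral, where the non-commutativity of $h$ with $\delta_i(h)$ prevents any naive factorization. This is exactly the place where the rearrangement Lemma \ref{rearrlemma} is applied: each radial integrand is a product of factors of the form $(e^h u + 1)^{-m_j}$ alternating with $\rho_j \in \{\delta_i(h), \delta_i\delta_j(h)\}$, and the lemma rewrites such products as a function of commuting copies of the modular automorphism $\Delta$ acting on the tensor product of the $\rho_j$, with $e^{-(|m|-1)h}$ factored out on the left. Collecting all terms and grouping them according to whether they involve a single second derivative $\delta_i\delta_j(k)$ or a product $\delta_i(k)\delta_j(k)$ yields, after reindexing $\Delta = e^{-\nabla}$ and substituting $k=h/2$, an expression of the form
\[
a_2(\triangle_{\tau,h}) = K(\nabla)\bigl(\triangle_{\tau,0}(h/2)\bigr) + H(\nabla_{(1)}, \nabla_{(2)})\bigl(\text{quadratic in } \delta_i(h/2)\bigr).
\]

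The main obstacle is twofold: first, the sheer bookkeeping of the non-commutative expansion of $r_2$ into dozens of ordered monomials in $h, \delta_i(h), \delta_i\delta_j(h)$, which in \cite{MR3194491, MR3148618} required symbolic computer algebra; second, recognizing that the resulting sums of $F_m$-type integrals, which a priori look like a disorderly collection of rational and exponential expressions in one and two variables, collapse to the clean closed forms \eqref{1varsc2} and \eqref{H}. The one-variable function $K$ emerges relatively quickly from the terms carrying a single second-order derivative, but $H$ requires combining contributions from $p_1 \circ p_1$-type terms with cross terms coming from $\partial_\xi r_0 \cdot \delta p_2$ and using the symmetry $H(s,t)=H(t,s)$ (which must hold by the identity $\delta_i(h)\delta_j(h) = \delta_j(h)\delta_i(h)$ after tracing) as a consistency check. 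Independence of the result from the choice of polar coordinates adapted to $\tau$ provides a final sanity test that the coefficients in front of $\tau_1$ and $|\tau|^2$ assemble correctly.
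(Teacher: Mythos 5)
Your proposal is the same approach the paper takes: the parametrix recursion from the Cauchy formula, rescaling by homogeneity, passing to polar coordinates adapted to $\tau$, invoking the rearrangement lemma (Lemma \ref{rearrlemma}) for the radial integral, and collapsing the resulting mass of $F_m$--integrals to the closed forms $K$ and $H$ with computer algebra. None of that differs from what was done in \cite{MR3194491, MR3148618}.

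One concrete error in your consistency checking: you assert that the symmetry $H(s,t)=H(t,s)$ must hold, citing the ``identity'' $\delta_i(h)\delta_j(h)=\delta_j(h)\delta_i(h)$ after tracing. That element-level identity is false in a noncommutative algebra, and the inference is wrong: the two-variable function is in fact \emph{anti-symmetric}, $H(s,t)=-H(t,s)$, as one can read off directly from \eqref{H} -- its numerator $N$ satisfies $N(t,s)=-N(s,t)$ while its denominator is symmetric, so in particular $H$ vanishes on the diagonal $s=t$. This anti-symmetry is not a defect but a structural feature: writing $H(\nabla_{(1)},\nabla_{(2)})(a\cdot b)=\sum c_{ij}\nabla^i(a)\nabla^j(b)$ with $c_{ij}=-c_{ji}$, the term $H(\nabla,\nabla)\bigl(\delta_i(\tfrac{h}{2})\delta_i(\tfrac{h}{2})\bigr)$ is a sum of commutators $[\nabla^i(\delta_i(h/2)),\nabla^j(\delta_i(h/2))]$, which vanishes as $\theta\to 0$ -- exactly as required by the classical limit formula $K=-\tfrac12 e^{-h}\Delta h$, which has no first-derivative terms. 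Using the symmetry you propose as a sanity check would falsely flag the correct answer. The genuine conceptual constraints on $K$ and $H$ are the more intricate functional relations (connected to the variational identity \eqref{GradientIdentity} and the trace property of $\varphi_0$) of the kind studied in \S\ref{functionalrelatioinssubsec} and in \cite{MR3194491}, not the naive symmetry you assumed.
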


\begin{figure}[H]
\includegraphics[scale=0.6]{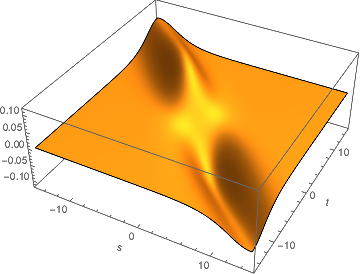}
\caption{Graph of $H$ given by \eqref{2varsc2}.}
\label{Hfor2torus}
\end{figure}

Using the symmetries of these functions describing the term $a_2(\triangle_{\tau, h})$ 
integrates to 0, 
hence the analog of the Gauss-Bonnet theorem. This result was proved in 
\cite{MR2907006, MR2956317} in a kind of simpler manner as by exploiting the trace property of 
$\varphi_0$ from the beginning of the symbolic calculations, only a one variable 
function was necessary to describe $\varphi_0(a_2 (\triangle_{\tau, h}))$. However, for the description 
of $a_2$ one needs both one and two variable functions, which are given by \eqref{1varsc2} and \eqref{2varsc2}. So we can state the Gauss-Bonnet theorem for $\ncttwo$ from \cite{MR2907006, MR2956317} as follows. 

\begin{theorem}
For any choice of the complex number $\tau$ in the upper half-plane and 
any conformal factor $e^{-h}$, where $h=h^* \in \sncttwo$, one has 
\[
\varphi_0(a_2(\triangle_{\tau, h})) = 0. 
\]
Hence the total curvature of $\ncttwo$ is independent of $\tau$ and $h$ defining the metric. 
\end{theorem}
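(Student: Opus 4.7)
My plan is to substitute the explicit description of $a_2(\triangle_{\tau,h})$ from the preceding theorem into $\varphi_0$ and collapse the result to zero by exploiting two structural facts about the trace: first, $\varphi_0\circ \delta_i = 0$ for $i=1,2$, which permits integration by parts; and second, the modular-twist identity
\[
\varphi_0\bigl(f(\nabla)(a)\,b\bigr)=\varphi_0\bigl(a\,f(-\nabla)(b)\bigr),
\]
which follows from $\nabla=\log\Delta$ with $\Delta(a)=e^{-h}ae^{h}$ combined with the trace property of $\varphi_0$ (since $\varphi_0(e^{-h}ae^{h}\cdot b)=\varphi_0(a\cdot e^{h}be^{-h})=\varphi_0(a\,\Delta^{-1}(b))$).

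The first main step is to process the $K(\nabla)$-term, which contains the second derivatives $\delta_i\delta_j(k)$ (with $k=h/2$). Expanding $K$ in its spectral variable and using the commutation rule $\delta_i\circ\nabla=\nabla\circ\delta_i+\ad(\delta_i h)$, each monomial can be integrated by parts; the boundary-free terms produce first-derivative pairs of the form $\delta_i(k)\,G(\nabla,\nabla)(\delta_j(k))$ for an explicit two-variable function $G$ built from $K$, while the commutator corrections arising from the non-commutation of $\delta_i$ with $\nabla$ contribute further terms of the same type. Next I would invoke the modular-twist identity to relocate all functional calculus of $\nabla$ into a common position, thereby bringing $\varphi_0(a_2(\triangle_{\tau,h}))$ into the canonical form
\[
\varphi_0\bigl(a_2(\triangle_{\tau,h})\bigr)=\sum_{i,j}\varphi_0\!\left(\delta_i(k)\,\Phi_{ij}(\nabla,\nabla)\bigl(\delta_j(k)\bigr)\right)
\]
for explicit two-variable functions $\Phi_{ij}$ constructed from $K$, $H$, and the metric parameter $\tau$.

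A second application of cyclicity permutes the two factors in each summand and transposes the arguments of $\Phi_{ij}$, so the theorem reduces to the purely analytic identity $\Phi_{ij}(s,t)+\Phi_{ji}(t,s)=0$ on the joint spectrum of $\nabla$. The main obstacle will be twofold: tracking the commutator correction terms produced when $\delta_i$ is pushed through $\nabla$, and then checking the final closed-form identity using the rather intricate expressions \eqref{1varsc2} and \eqref{2varsc2} for $K$ and $H$. If the bookkeeping becomes unmanageable along this direct route, I would instead follow the strategy of \cite{MR2907006, MR2956317} and exploit cyclicity of $\varphi_0$ already at the symbolic level: within the contour and momentum integrals \eqref{nctorusa2nformula} defining $a_2$, applying the trace to $r_{2n}$ inside the integrand causes many terms to combine before the rearrangement Lemma \ref{rearrlemma} is invoked, so that the two-variable function $H$ never materializes, and only a single one-variable functional identity — verifiable by inspection from the symmetries of $K$ — remains.
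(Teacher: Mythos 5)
Your overall strategy — plug the explicit $K$/$H$ formulas into $\varphi_0$ and close the argument using the trace property together with the modular twist identity $\varphi_0\bigl(f(\nabla)(a)\,b\bigr)=\varphi_0\bigl(a\,f(-\nabla)(b)\bigr)$ — is correct and is exactly the mechanism behind the paper's remark that ``the symmetries of these functions'' force $a_2$ to integrate to zero. But you make the $K(\nabla)$-term far harder than it is, and as a consequence misjudge where the real work lies. No integration by parts and no tracking of the corrections from $\delta_i\circ\nabla=\nabla\circ\delta_i-\ad(\delta_i h)$ (note also that your sign on $\ad(\delta_i h)$ is wrong) is needed: since $\nabla(a)=[-h,a]$ is a commutator and $\varphi_0$ is a trace, $\varphi_0\circ\nabla=0$, hence $\varphi_0\bigl(K(\nabla)(X)\bigr)=K(0)\,\varphi_0(X)$ for any $X$; and since here $X$ is a second derivative of $h$ and $\varphi_0\circ\delta_i=0$, the entire $K$-term dies on the spot. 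The ``first-derivative pairs'' and ``commutator corrections'' you propose to generate are extraneous terms you are creating, not terms the problem forces on you.

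With the $K$-term disposed of, the twist identity collapses $\varphi_0\bigl(H(\nabla,\nabla)(\delta_i(k)\cdot\delta_j(k))\bigr)$ to $\varphi_0\bigl(\delta_i(k)\,G(\nabla)\,\delta_j(k)\bigr)$ with the \emph{one-variable} function $G(t):=H(-t,t)$, and cyclicity plus one more application of the twist shows that only the even part of $G$ survives under $\varphi_0$ in each of the four $H$-summands. The ``rather intricate'' closed-form identity you worry about is therefore nothing but the statement that $G$ is odd, i.e.\ $H(s,t)=-H(t,s)$ — and that antisymmetry is visible by inspection from \eqref{2varsc2}, whose numerator is antisymmetric under $s\leftrightarrow t$ while the denominator is symmetric. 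So the obstacle you flag dissolves once the $K$-term is handled correctly. Your fallback route — exploiting the trace at the symbolic stage so that a two-variable function never materializes — is exactly what was done in \cite{MR2907006, MR2956317}, as the paper notes; it buys a simpler one-variable verification at the cost of forgoing the explicit local formula for $a_2$ itself.
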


As we mentioned earlier, the validity of the Gauss-Bonnet theorem for $\ncttwo$ was suggested 
by developments on the spectral action in the presence of a dilaton \cite{MR1463819} and also studies on 
twisted spectral triples \cite{MR2427588}. In harmony with these developments, in fact a 
non-computational proof of the Gauss-Bonnet theorem can be given, as written in \cite{MR3194491}, 
in the spirit of conformal invariance of the value at the origin of the spectral zeta function of 
conformally 
covariant operators \cite{MR869104}. The argument is based on a variational technique: 
one can write a formula for the variation of the heat coefficients  
as one varies the metric conformally with $e^{-sh}$, where $h$ is a dilaton, 
and the real parameter $s$ goes from 0 to 1.
However, the non-computational proof does not 
lead to an explicit formula for the curvature term $a_2(\triangle_{\tau, h})$.   
Hence the remarkable achievements in \cite{MR2907006,MR2956317, MR3194491, MR3148618} after heavy computer aided 
calculations include the explicit expression for the scalar curvature of 
$\ncttwo$ and the fact that the analog of the Gauss-Bonnet theorem holds for it.

\subsection{The Laplacian on $(1, 0)$-forms on $\ncttwo$ with curved metric}
The analog of the Laplacian on $(1, 0)$-forms is also considered in \cite{MR3194491, MR3148618} and the 
second term in its small time heat kernel expansion is calculated. The operator is 
anti-unitarily equivalent to the operator 
 $\triangle_{\tau, h}^{(1,0)} = \bar \partial e^h \partial$, where 
 $\partial = \delta_1 + \bar \tau \delta_2$ and $\bar \partial = \delta_1 +  \tau \delta_2$. 
 The symbol of this Laplacian is equal to  
 $c_2(\xi) + c_1(\xi)$ where 
\begin{eqnarray*}
c_2(\xi) &= & \xi_1^2 k^2 + 2 \tau_1 \xi_1 \xi_2 k^2 + |\tau|^2 \xi_2^2 k^2, \\
c_1(\xi) &=&  (\delta_1(k^2)  + \tau \delta_2(k^2) )\xi_1 + (\bar \tau \delta_1(k^2)  + |\tau|^2 \delta_2(k^2) )\xi_2.
 \end{eqnarray*}
Therefore by using the same strategy of using computer aided symbol calculations one 
can calculate the terms appearing in the following heat kernel expansion: 
\[
\Tr \left (a e^{-t \triangle_{\tau, h}^{(1, 0)}} \right ) 
\sim 
t^{-1} \sum_{n=0}^\infty \varphi_0 \left (a \, a_{2n}(\triangle_{\tau, h}^{(1,0)}) \right ) \, t^n, 
\qquad a \in \sncttwo. 
\]

The result for the second term  in this expansion is that \cite{MR3194491, MR3148618}
\begin{eqnarray*}\label{localexp}
&& a_{2}(\triangle_{\tau, h}^{(1,0)}) = S(\nabla) \big (\delta_1^2(\frac{h}{2}) + |\tau|^2 \delta_2^2(\frac{h}{2}) +2 \tau_1 \delta_1\delta_2(\frac{h}{2}) \big )   \\
&& \quad + T(\nabla, \nabla) \big ( \delta_1(\frac{h}{2}) \delta_1(\frac{h}{2}) + |\tau|^2 \delta_2(\frac{h}{2}) \delta_2(\frac{h}{2}) +  \tau_1 \delta_1(\frac{h}{2}) \delta_2(\frac{h}{2}) + \tau_1 \delta_2(\frac{h}{2}) \delta_1(\frac{h}{2}) \big)  \\
&& \quad - i \tau_2 W(\nabla, \nabla) \big (  \delta_1(\frac{h}{2}) \delta_2 (\frac{h}{2}) - \delta_2(\frac{h}{2}) \delta_1(\frac{h}{2})   \big ), 
\end{eqnarray*}
where
\begin{equation} \label{S}
S(x)  = -\frac{4 e^x (-x + \sinh x)}{(-1 + e^{x/2})^2 (1 + e^{x/2})^2 x}, \nonumber
\end{equation}

\begin{eqnarray} \label{T}
T(s, t)  = - \cosh ((s+t)/2) \times \qquad \qquad \qquad \qquad \qquad \qquad \qquad \qquad \qquad \qquad \quad \quad \quad \quad \nonumber\\
 \frac{-t (s + t) \cosh{s} +  s (s + t) \cosh{t} - (s - t) (s + t + \sinh{s} + \sinh{t} - \sinh(s + t))}{s t (s + t)\sinh(s/2) \sinh(t/2) \sinh^2 ((s + t)/2) }, \nonumber 
\end{eqnarray}
and
\begin{eqnarray} \label{W}
W(s,t)
&=& \frac{-s - t + t \cosh s + s \cosh t + \sinh s + \sinh t - \sinh (s + t)}{st\sinh (s/2) \sinh (t/2) \sinh ((s + t)/2)}. \nonumber 
\end{eqnarray}

\begin{figure}[H]
\includegraphics[scale=0.6]{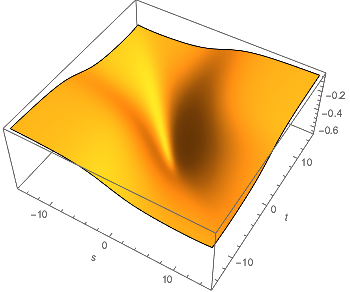}
\caption{Graph of $W$.}
\label{Wfor2torus}
\end{figure}

Using a simple iso-spectrality argument for the operators 
$\triangle_{\tau, h}$ and $\triangle_{\tau, h}^{(1, 0)}$ one can 
argue that $\varphi_0 \left (a_{2}(\triangle_{\tau, h}^{(1,0)}) \right ) =0 $, based 
on the Gauss-Bonnet theorem proved in \cite{MR2907006,MR2956317}. 
However, one can also use 
properties of the functions $S, T, W$ to prove this directly.

\section{{\bf Noncommutative residues for noncommutative tori 
and curvature of  noncommutative 4-tori}} 

In this section we discuss noncommutative residues and illustrate an 
application of a noncommutative residue defined for noncommutative 
tori in calculating the scalar curvature of the noncommutative 4-torus 
in a convenient way with certain advantages. 

\subsection{Noncommutative residues}
Noncommutative residues  are trace functionals on algebras of 
pseudodifferential operators, which were first discovered by 
Adler and Manin in dimension $1$ \cite{MR520927, MR0501136}. In order to illustrate their construction 
in dimension $1$ we consider the algebra 
$C^\infty(\mathbb{S}^1)$ of smooth functions on the circle  
$\mathbb{S}^1 = \mathbb{R}/\mathbb{Z}$, and the differentiation 
$(-i) d/dx$, whose pseudodifferential symbol is $\sigma(\xi) = \xi$. 
We then consider the algebra of pseudodifferential symbols 
of the form 
\[
\sum_{n=-\infty}^{N} a_n(x)  \xi^n, \qquad a_n(x) \in C^\infty(\mathbb{S}^1), 
\qquad N \in \mathbb{Z}.   
\]
The product rule of this algebra can be deduced from the following relations: 
\[
\xi a(x) = a(x) \xi + a'(x), \qquad a_n(x) \in C^\infty(\mathbb{S}^1), 
\]
which are dictated by the Leibniz property of differentiation. The Adler-Manin trace 
is the linear functional defined by 
\[
\sum_{n=-\infty}^{N} a_n(x)  \xi^n \mapsto \int_{\mathbb{S}^1} a_{-1}(x) \, dx, 
\]
which is shown to be a trace functional on the algebra of pseudodifferential 
symbols on the circle \cite{MR520927, MR0501136}. 
A twisted version of this trace was worked out in \cite{MR2720254}, 
motivated by the notion of twisted spectral triples \cite{MR2427588}.  

Wodzicki generalized this functional, in a remarkable work, to higher dimensions 
\cite{MR923140}. 
Consider a closed manifold $M$ of dimension $m$ and the algebra of classical 
pseudodifferential operators $M$. A classical pseudodifferential symbol $\sigma$ 
of order $d$ has an expansion with homogeneous terms, of the form 
\[
 \sigma(x, \xi) \sim \sum_{j=0}^\infty \sigma_{d-j} (x, \xi), 
 \]
where $\sigma_{d-j}(x, t \xi) = t^{d-j} \sigma_{d-j}(x, \xi)$ for any $t >0$. The composition 
rule of this algebra is induced by the composition rule for the symbol of 
pseudodifferential operators: 
\[
 \sigma_{P_1 P_2}(x, \xi) 
 \sim 
 \sum_{\alpha \in \mathbb{Z}_{\geq 0}^m}
 \frac{(-i)^{|\alpha|}}{\alpha!} \partial_\xi^a  
 \sigma_{P_1}(x, \xi) \, \partial_x^\alpha \sigma_{P_2}(x, \xi),  
 \]
 which we mentioned and used in \S \ref{pseudoandheatexpsec} as well. 
Wodzicki's noncommutative residue WRes is the linear functional defined on the 
algebra of classical pseudodifferential symbols by 
\begin{equation} \label{WodResformula}
\textrm{WRes} \left ( \sum_{j=0}^\infty \sigma_{d-j} (x, \xi) \right ) = 
\int_{S^*M} \textnormal{tr}(\sigma_{-m}(x, \xi)) \, 
d^{m-1}\xi \, d^mx, 
\end{equation}
where $S^*M$ is the cosphere bundle of the manifold with respect to a 
Riemannian metric. We stress that in this formula $m$ is the dimension of the manifold 
$M$. It is proved that WRes is the unique trace functional on the algebra of 
classical pseudodifferential symbols on $M$ \cite{MR923140}. 

The noncommutative residue has a spectral formulation as well. 
That is, one can fix a Laplacian $\triangle$ on $M$ and define the 
noncommutative residue of a pseudodifferential operator $P_\sigma$ 
to be the residue at $s=0$ of the meromorphic extension of the 
zeta function defined, for complex numbers $s$ with large enough 
real parts, by 
\[
s \mapsto \Tr(P_\sigma \triangle^{-s}).
\] 
This formulation is 
used in noncommutative geometry, when one works with the 
algebra of pseudodifferential operators associated with a 
spectral triple \cite{MR1334867}.

For noncommutative tori, the analog of formula \eqref{WodResformula} can be written and it was 
shown in \cite{MR2831659} that it gives the unique {\it continuous} trace functional on the 
algebra of classical pseudodifferential operators on the noncommutative 2-torus. 
Although the argument  written in \cite{MR2831659} is for dimension 2, but it is general enough 
that works for any dimension, see for example \cite{MR3359018} for the illustration in dimension 4. 
Given a classical pseudodifferential symbol $\rho: \R^m \to \snctm$
of order $d$ on the noncommutative $m$-torus, by definition, there is an 
asymptotic expansion for $\xi \to \infty$ of the form 
\[
\rho (\xi) \sim \sum_{j=0}^\infty \rho_{d-j}(\xi), 
\]
where each $\rho_{d-j}$ is positively homogeneous of order $d-j$. One can 
define the noncommutative residue Res of the corresponding pseudodifferential 
symbol as 
\begin{equation} \label{nctorusncresidue}
\textrm{Res}(P_\rho) = \int_{\mathbb{S}^{m-1}} 
\varphi_0 \left ( \rho_{-m} \right )   \, d\Omega, 
\end{equation}
where $\varphi_0$ is the canonical trace on $\anctm$ and $d\Omega$ 
is the volume form of the round metric on the $(m-1)$-dimensional sphere in $\R^m$. 
The same argument as the one given in \cite{MR2831659} shows that Res is the 
unique continuous trace on the algebra of classical pseudodifferential 
symbols on $\nctm$.

\subsection{Scalar curvature of the noncommutative 4-torus}
The Laplacian associated with the flat geometry of the noncommutative four 
torus $\nctfour$ is simply given by the sum of the squares of the canonical 
derivatives, namely: 
\[
\triangle_0 = \delta_1^2 + \delta_2^2 +\delta_3^2 +\delta_4^2. 
\]
After conformally perturbing the flat metric on $\nctfour$ by means of a conformal 
factor $e^{-h}$, for a fixed $h=h^* \in \snctfour$, the perturbed Laplacian is shown 
in \cite{MR3359018} to be anti-unitarily equivalent to the operator 
\[
\triangle_h= 
e^{h} \bar \partial_1 e^{-h} \partial_1 e^{h} + 
e^{h} \partial_1 e^{-h} \bar \partial_1 e^{h} + 
e^{h} \bar \partial_2 e^{-h} \partial_2 e^{h} + 
e^{h}  \partial_2 e^{-h} \bar \partial_2 e^{h}, 
\]
where
\[ 
\partial_1 =  \delta_1 - i \delta_3, \qquad 
\partial_2=  \delta_2 - i \delta_4,
\]
\[ \bar \partial_1 =  \delta_1 + i \delta_3, 
\qquad \bar \partial_2= \delta_2 + i \delta_4.
\]
The latter are the analogues of the Dolbeault operators.  

The scalar curvature of the metric on $\nctfour$ encoded in $\triangle_h$ 
is the term $a_2 \in \snctfour$ appearing in the following small time 
asymptotic expansion: 
\[
\Tr(a e^{-t \triangle_h}) \sim t^{-2} \sum_{n=0}^{\infty} 
\varphi_0(a \,a_{2n})
t^n, \qquad a \in \snctfour. 
\]
The curvature term $a_2 \in \snctfour$ was calculated in \cite{MR3359018} by going through the procedure 
explained in \S \ref{nctorusheatexpsubsec}. As we explained earlier, there is a purely noncommutative obstruction 
in this procedure that needs to be overcome by Lemma \ref{rearrlemma}, the so-called rearrangement 
lemma. That is, one encounters integration over the Euclidean space of $C^*$-algebra valued 
functions.  For this type of integrations, one can pass to polar coordinates and take care of 
the angular integrations with no problem. However, the redial integration brings forth 
the necessity of the rearrangement lemma. 

Striking is the fact that after applying the 
rearrangement lemma to hundreds of terms, each of which involves a function from 
this lemma to appear in the calculations,  the final formula for the curvature 
simplifies significantly with computer aid. In \cite{MR3369894}, by using properties of the noncommutative 
residue \eqref{nctorusncresidue}, it was shown that the curvature term $a_2 \in \snctfour$ can be calculated 
as the integral over the 3-sphere of a homogeneous symbol. Therefore, with this method, the 
calculation of $a_2$ does not require radial integration, hence the calculation without 
using the rearrangement lemma and clarification of the reason for the significant simplifications. 
In fact, in \cite{MR3369894}, the term is shown to be a scalar multiple of 
$\int_{\mathbb{S}^3} b_2(\xi)  \, d\Omega$, where $b_2$ is the homogeneous term  of order 
$-4$ in the expansion of the symbol of the parametrix of $\triangle_h$. The result, in agreement 
with the calculation of \cite{MR3359018}, is that
\begin{eqnarray}  \label{nc4sc}
a_2 = e^{-h} K(\nabla) \Big (\sum_{i=1}^4 \delta_i^2(h) \Big ) +e^{-h} 
H(\nabla, \nabla)\Big (\sum_{i=1}^4 \delta_i(h)^2 \Big ) \in \snctfour, 
\end{eqnarray}
where $\nabla = [-h, \cdot]$, and 
\begin{eqnarray}  \label{nc4scfunctions}
K(x)&=& \frac{1-e^{-x}}{2 x}, \nonumber \\
H(s,t)&=&-\frac{e^{-s-t} \left(\left(-e^s-3\right) s \left(e^t-1\right)+
\left(e^s-1\right) \left(3 e^t+1\right) t\right)}{4 s t (s+t)}.  
\end{eqnarray}

The simplicity of this calculation also revealed in \cite{MR3369894} the following 
functional relation between the functions $K$ and $H$. 

\begin{figure}[H]
\includegraphics[scale=0.6]{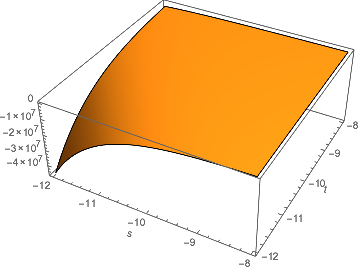}
\caption{Graph of $H$ given by \eqref{nc4scfunctions}.}
\label{Hfor4torus}
\end{figure}

\begin{theorem}
Let $\tilde K(s)= e^s K(s)$ and $\tilde H(s, t) = e^{s+t} H(s, t)$, where the 
function $K$ and $H$ are given by \eqref{nc4scfunctions}. Then 
\[
\tilde H(s, t) = 2 \frac{\tilde K(s+t) - \tilde K(s)}{t} + \frac{3}{2} \tilde K(s) \tilde K(t). 
\]
\end{theorem}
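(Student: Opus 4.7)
The identity asserts an algebraic relation among three explicit elementary functions of $s$ and $t$, so the natural plan is to substitute the closed forms and reduce the identity to a polynomial identity that can be checked by direct expansion.

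First I would simplify the one–variable side. Since $K(x)=(1-e^{-x})/(2x)$, the twist $\tilde K(s)=e^sK(s)$ collapses to the very clean expression
\[
\tilde K(s)=\frac{e^{s}-1}{2s},
\]
so that $\tilde K(s+t)-\tilde K(s)$ on the right–hand side becomes $\bigl(s(e^{s+t}-1)-(s+t)(e^{s}-1)\bigr)/\bigl(2s(s+t)\bigr)$, and $\tilde K(s)\tilde K(t)=(e^{s}-1)(e^{t}-1)/(4st)$. A common denominator of $8st(s+t)$ then puts the right–hand side in the form
\[
\frac{4\bigl(s(e^{s+t}-1)-(s+t)(e^{s}-1)\bigr)+\tfrac{3}{2}(e^{s}-1)(e^{t}-1)(s+t)}{8st(s+t)}\,.
\]

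Next I would do the same for the left–hand side. Absorbing the prefactor $e^{s+t}$ into the parenthesis of $H$ gives
\[
\tilde H(s,t)=\frac{(e^{s}+3)\,s(e^{t}-1)-(e^{s}-1)(3e^{t}+1)\,t}{4st(s+t)},
\]
so once I multiply through by $8st(s+t)$, the entire identity becomes a \emph{polynomial equality} in the four quantities $s$, $t$, $e^{s}$, $e^{t}$. Since the exponentials $1,e^{s},e^{t},e^{s+t}=e^{s}e^{t}$ are linearly independent over $\mathbb R(s,t)$, I can split both sides according to these four ``exponential sectors'' and match the resulting polynomial coefficients in $s$ and $t$ sector by sector. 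That reduces the theorem to four short polynomial checks, each of which is a monomial identity in $s$ and $t$ of low degree.

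The hard part is purely bookkeeping: the coefficients $(e^{s}+3)$ and $(3e^{t}+1)$ on the left, versus the mixed terms $(e^{s+t}-1)$, $(e^{s}-1)$, $(e^{t}-1)$ on the right, produce enough cross terms that doing the computation by hand is error–prone, so I would carry it out with computer algebra and verify the four sector identities simultaneously. If one prefers a computation–free sanity check before grinding through the verification, an alternative route is to expand both sides as formal power series in $(s,t)$ about the origin using $\tilde K(s)=\sum_{n\geq0}s^{n}/\bigl(2(n{+}1)!\bigr)$ and match coefficients; the recursive structure of that series would also explain, rather than merely confirm, why the particular linear combination $2(\tilde K(s+t)-\tilde K(s))/t+\tfrac{3}{2}\tilde K(s)\tilde K(t)$ is the one that reproduces $\tilde H$, and would suggest whether an analogous identity should be expected for the higher–dimensional heat coefficients discussed in the paper.
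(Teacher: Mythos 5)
Your plan — substitute the closed forms, clear denominators, and match coefficients across the basis $1,e^{s},e^{t},e^{s+t}$ over $\mathbb{R}(s,t)$ — is the right one, and the paper itself offers no proof to compare against (it only cites \cite{MR3369894}). But when you say the hard part is ``purely bookkeeping'' and then defer the bookkeeping to a computer, you have not actually proved anything, and in this case the missing step is exactly where the trouble is.

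First, a small arithmetic slip: putting $2(\tilde K(s+t)-\tilde K(s))/t+\tfrac32\tilde K(s)\tilde K(t)$ over the common denominator $8st(s+t)$ gives numerator
\[
8\bigl(s(e^{s+t}-1)-(s+t)(e^s-1)\bigr)+3(e^s-1)(e^t-1)(s+t),
\]
twice what you wrote; you dropped a factor of $2$.

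More seriously, if you carry out the sector comparison you promised, it fails. Your $\tilde H$ has numerator (over $4st(s+t)$)
\[
(s-3t)e^{s+t}+(-s-t)e^{s}+(3s+3t)e^{t}+(-3s+t),
\]
while $2(\tilde K(s+t)-\tilde K(s))/t+\tfrac32\tilde K(s)\tilde K(t)$ has numerator (also over $4st(s+t)$)
\[
\bigl(\tfrac{11}{2}s+\tfrac32 t\bigr)e^{s+t}-\tfrac{11}{2}(s+t)e^{s}-\tfrac32(s+t)e^{t}+\bigl(\tfrac32 s+\tfrac{11}{2}t\bigr).
\]
These do not agree in any sector. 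A one-line sanity check confirms this: setting $s=t$ and using $\tilde K(s)=(e^s-1)/(2s)$ gives $\tilde H(s,s)=-\tfrac14\bigl((e^s-1)/s\bigr)^2$, whereas the right-hand side equals $\tfrac78\bigl((e^s-1)/s\bigr)^2$.

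What the expansion actually shows is that, with the formulas \eqref{nc4scfunctions} as printed in this survey, the correct relation is
\[
\tilde H(s,t)=2\,\frac{\tilde K(s+t)-\tilde K(s)}{t}-3\,\tilde K(s)\tilde K(t),
\]
i.e.\ the printed coefficient $+\tfrac32$ should be $-3$ (with $b=-3$ all four sector identities $2a+b=1$, $-b=3$ hold with $a=2$). So the discrepancy is a transcription error somewhere between \eqref{nc4scfunctions} and the theorem statement, not a flaw in the substitute-and-expand idea — but it is precisely the kind of thing that a proof of this type must detect rather than assert that it will ``verify the four sector identities simultaneously.'' Do the expansion, report which coefficient comes out, and flag the inconsistency with the printed statement instead of outsourcing the step where the content lives.
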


Another important result that we wish to recall from \cite{MR3359018}  is about 
the extrema of the analog of the Einstein-Hilbert action for $\nctfour$, namely 
$\varphi_0(a_2)$: 

\begin{theorem}
For any conformal factor $e^{-h}$, where $h=h^* \in \snctfour$, 
\[
\varphi_0(a_2) \leq 0,  
\]
where $a_2 \in \snctfour$ is the scalar curvature given by \eqref{nc4sc}. 
Moreover, we have $\varphi_0(a_2) =0$ if and only if $h$ is a scalar. 
\end{theorem}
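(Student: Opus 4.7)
The plan is to reduce $\varphi_0(a_2)$ to a closed-form expression in the self-adjoint fields $\xi_i := \delta_i(h)$, $i=1,\dots,4$, and exhibit it as a manifestly sign-definite quadratic form. Two standard facts underlie the reduction: (i) $\varphi_0$ is a faithful trace killed by each derivation $\delta_i$, so integration by parts is freely available; (ii) since $\varphi_0 \circ \Delta = \varphi_0$ where $\Delta = e^{-h}(\cdot)e^{h}$ is the modular automorphism and $\nabla = \log \Delta$, one has the adjoint relation $\varphi_0(a\,f(\nabla)(b)) = \varphi_0(f(-\nabla)(a)\,b)$, which together with $\nabla(e^{-h}) = 0$ yields the contraction $\varphi_0(e^{-h}\,f(\nabla)(x)) = f(0)\,\varphi_0(e^{-h}\,x)$. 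Applying this with $f = K$ (where $K(0)=1/2$), then integrating by parts once, and invoking the Duhamel identity $\delta_i(e^{-h}) = -2\,e^{-h}\,K(\nabla)(\xi_i)$ (whose structural content is precisely the defining formula for $K$), puts the first summand of $\varphi_0(a_2)$ in quadratic shape $\sum_i \varphi_0(e^{-h}\,K(\nabla)(\xi_i)\,\xi_i)$.

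Next I would decompose each $\xi_i$ spectrally with respect to $\nabla$: formally $\xi_i = \sum_\lambda \xi_i^{(\lambda)}$ with $\nabla(\xi_i^{(\lambda)}) = \lambda\,\xi_i^{(\lambda)}$, and $(\xi_i^{(\lambda)})^* = \xi_i^{(-\lambda)}$ by self-adjointness. Modular invariance $\varphi_0 \circ \Delta = \varphi_0$ forces $\varphi_0(e^{-h}\,\xi_i^{(\lambda)}\,\xi_i^{(\mu)}) = 0$ unless $\lambda + \mu = 0$, so only diagonal pairings survive. Writing $T^{(i)}_\lambda := \varphi_0(e^{-h}\,\xi_i^{(\lambda)}\,\xi_i^{(-\lambda)})$, cyclicity together with the factorisation $e^{-h} = e^{-h/2}\,e^{-h/2}$ gives $T^{(i)}_\lambda \ge 0$. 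Assembling both summands of $a_2$ in this spectral frame produces
\[
\varphi_0(a_2) \;=\; \sum_{i=1}^{4}\sum_{\lambda} F(\lambda)\,T^{(i)}_\lambda,
\qquad F(\lambda) := K(\lambda) + H(\lambda,-\lambda),
\]
and the functional identity $\tilde H(s,t) = 2(\tilde K(s+t)-\tilde K(s))/t + \tfrac{3}{2}\tilde K(s)\tilde K(t)$ of the preceding theorem is precisely what permits the evaluation of $H(\lambda,-\lambda)$ in closed form compatible with $K(\lambda)$. Specialising this identity at $t = -s$ and simplifying expresses $F(\lambda)$ as a sign-definite real-analytic function with isolated zeros on $\mathbb{R}$ — concretely a non-positive scalar multiple of a manifest sum of squares built from $\tilde K$.

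Combining this sign with the non-negativity of the weights $T^{(i)}_\lambda$ yields $\varphi_0(a_2) \le 0$. For rigidity, $\varphi_0(a_2) = 0$ forces $T^{(i)}_\lambda = 0$ for $\lambda$ outside the isolated zero set of $F$; since $T^{(i)}_\lambda = \varphi_0(e^{-h/2}\,\xi_i^{(\lambda)}\,(\xi_i^{(\lambda)})^*\,e^{-h/2})$ and $\varphi_0$ is faithful, this forces $\xi_i^{(\lambda)} = 0$ on a dense set, hence $\xi_i = \delta_i(h) = 0$ for every $i$. Since the joint kernel of $\delta_1,\dots,\delta_4$ on $\snctfour$ is $\mathbb{C}\cdot 1$, $h$ must be a scalar; the converse is immediate as $h$ scalar gives $\delta_i(h) = \delta_i^2(h) = 0$ and thus $a_2 = 0$. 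The main obstacle is the last analytic step: recognising $F(\lambda)$ as sign-definite and writing it as a sum of squares. The functional identity between $\tilde K$ and $\tilde H$ is engineered precisely for this reduction, and once applied the closed-form factorisation becomes visible; without it, $F$ is an opaque combination of transcendental building blocks whose sign cannot be read off.
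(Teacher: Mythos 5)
Your overall strategy — reduce $\varphi_0(a_2)$ to a single quadratic expression $\sum_i\varphi_0\bigl(e^{-h}[K(\nabla)+H(\nabla,-\nabla)](\xi_i)\,\xi_i\bigr)$ and then use the modular invariance of $\varphi_0(e^{-h}\,\cdot)$ to extract one‑variable weights — is the right one and is essentially the structure of the original argument. However, there is a systematic sign error at the very first line that propagates through the sign analysis: in the convention of the paper, $\delta_j(U^\alpha)=\alpha_j U^\alpha$, so each $\delta_j$ satisfies $\delta_j(a^*)=-\delta_j(a)^*$. Consequently, for $h=h^*$ the fields $\xi_i=\delta_i(h)$ are \emph{anti}-selfadjoint, $\xi_i^*=-\xi_i$, not selfadjoint as you assert. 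Since $\nabla(a)^*=-\nabla(a^*)$, the spectral components of an anti-selfadjoint $\xi_i$ obey $(\xi_i^{(\lambda)})^*=-\xi_i^{(-\lambda)}$, and hence
\[
T^{(i)}_\lambda \;=\; \varphi_0\bigl(e^{-h}\xi_i^{(\lambda)}\xi_i^{(-\lambda)}\bigr)
\;=\;-\varphi_0\Bigl(\bigl(e^{-h/2}\xi_i^{(\lambda)}\bigr)\bigl(e^{-h/2}\xi_i^{(\lambda)}\bigr)^*\Bigr)
\;\leq\;0,
\]
the opposite of what you claim. Correspondingly, the one-variable function $F(\lambda)=K(\lambda)+H(\lambda,-\lambda)$ is in fact \emph{strictly positive}, not ``a non-positive multiple of a sum of squares.'' Taking the $t\to -s$ limit of the explicit formula for $H$ in \eqref{nc4scfunctions} by L'H\^{o}pital gives
\[
H(s,-s)=\frac{e^{s}-3e^{-s}-4s+2}{4s^{2}},
\qquad
F(s)=\frac{e^{s}-3e^{-s}-2se^{-s}-2s+2}{4s^{2}},
\]
and the numerator $G(s)$ of $F$ satisfies $G(0)=G'(0)=0$ and $G''(s)=e^{s}+(1-2s)e^{-s}>0$ for all $s$, so $G$ is strictly convex and positive away from $0$ and $F\equiv G/(4s^2)>0$ with $F(0)=1/4$. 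Your two sign errors cancel in the product $F(\lambda)\,T^{(i)}_\lambda$, so the estimate $\varphi_0(a_2)\leq 0$ comes out right, but for the wrong reasons — and note that $H(s,-s)$ by itself \emph{changes sign} (it tends to $-1/4$ at $0$), so the positivity of $F$ is not a term‑by‑term observation; the two summands are genuinely needed together.

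Two smaller cautions. First, be wary of using the $\tilde K$–$\tilde H$ functional identity stated just before the theorem as a black box for this step: plugging the explicit $K$ and $H$ of \eqref{nc4scfunctions} into it and evaluating at, say, $(s,t)=(1,1)$ does not produce an identity, so there is a transcription inconsistency somewhere in that display — compute $H(s,-s)$ directly as above instead. Second, the ``spectral decomposition'' $\xi_i=\sum_\lambda\xi_i^{(\lambda)}$ is a formal shorthand for the spectral measure of $\log\Delta$ in the GNS Hilbert space; since the spectrum is generally continuous, the inequality $T^{(i)}_\lambda\leq 0$ should be read as non-positivity of the relevant spectral measure, not of a discrete sum. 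This is not a genuine gap, but it should be phrased as such. Since $F$ actually has \emph{no} zeros, the rigidity direction is simpler than you write: $\varphi_0(a_2)=0$ forces the entire spectral measure of each $\xi_i$ to vanish, i.e.\ $\delta_i(h)=0$ for all $i$, and $h\in\mathbb{C}\cdot1$ follows from ergodicity of the torus action — there is no exceptional zero set to excise.
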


\section{ {\bf The Riemann curvature tensor and the term $a_4$ for noncommutative tori}}

The Riemann curvature tensor appears in the term $a_4$ in the heat kernel expansion 
for the Laplacian of any closed Riemannian manifold $M$. That is, if $\Delta_g$ is the Laplacian 
of a Riemannian metric $g$, which acts on $C^\infty(M)$, then 
\[
a_4(x, \Delta_g) = (4\pi)^{-1} (1/360) (-12 \Delta_g R(x) +5R(x)^2 - 2|Ric(x)|^2 + 2|Riem(x)|^2).  
\]

In this section we recall from \cite{arXiv1611.09815} the formula obtained for the analog of the term 
$a_4$ in a noncommutative setting. Recall that in \S \ref{SCandGB2subsec}, we discussed the term $a_2$, 
namely the analog of the scalar curvature, for the noncommutative two torus when the 
flat metric is perturbed by a positive invertible element $e^{-h} \in \sncttwo$, where 
$h=h^*$. These geometric terms appear in the expansion given by \eqref{heatexpnc2ttauh}.  Setting, 
\[
\ell = \frac{h}{2}
\]
for the simplest conformal class (associated with $\tau=i$), the main calculation of 
\cite{arXiv1611.09815} gives the term $a_4$ by a formula of the following form: 
\begin{equation}
\label{a4closedformula}
a_4(h)=
\end{equation}
\begin{center}
$
- e^{2 \ell} \Big ( K_1(\nabla) \left ( \delta _1^2 \delta _2^2 ( \ell ) \right ) 
+ 
K_2 (\nabla) \left (   \delta _1^4( \ell )+\delta _2^4 (\ell ) \right )  
+
K_3 (\nabla, \nabla) \left (
 \left(\delta _1 \delta _2(\ell
   )\right) \cdot \left(\delta _1 \delta _2(\ell
   )\right) 
   \right )
+ 
K_4 (\nabla, \nabla) \left (  \delta _1^2(\ell ) \cdot \delta _2^2(\ell )+\delta
   _2^2(\ell ) \cdot \delta _1^2(\ell )\right )  
+
K_5 (\nabla, \nabla) \left ( \delta _1^2( \ell )\cdot \delta _1^2(\ell )+\delta
   _2^2(\ell ) \cdot \delta _2^2(\ell ) \right ) 
+ 
K_6 (\nabla, \nabla) \left ( 
\delta _1(\ell )\cdot \delta _1^3(\ell )+\delta_1(\ell )\cdot \left(\delta _1 \delta _2^2 (\ell
   )\right)+\delta _2(\ell )\cdot \delta _2^3(\ell
   )+\delta _2(\ell )\cdot \left(\delta _1^2
   \delta _2(\ell )\right)
 \right ) 
+ 
K_7 (\nabla, \nabla) \left ( 
\delta _1^3(\ell )\cdot \delta _1(\ell
   )+\left(\delta _1 \delta _2^2(\ell
   )\right)\cdot \delta _1(\ell )+\delta _2^3(\ell
   )\cdot \delta _2(\ell )+\left(\delta _1^2
   \delta _2(\ell )\right)\cdot \delta _2(\ell )
 \right ) 
+ 
K_8 (\nabla, \nabla, \nabla) \left ( 
\delta _1(\ell ) \cdot \delta _1(\ell )\cdot \delta
   _2^2(\ell )+\delta _2(\ell )\cdot \delta
   _2(\ell )\cdot \delta _1^2(\ell )
\right ) 
+ 
K_9 (\nabla, \nabla, \nabla) \left ( 
\delta _1(\ell )\cdot \delta _2(\ell
   )\cdot \left(\delta _1 \delta _2(\ell
   )\right)+\delta _2(\ell )\cdot \delta _1(\ell
   )\cdot \left(\delta _1 \delta _2(\ell )\right)
\right )  
+ 
K_{10} (\nabla, \nabla, \nabla) \left ( 
\delta _1(\ell )\cdot \left(\delta _1 \delta
   _2(\ell )\right)\cdot \delta _2(\ell )+\delta
   _2(\ell )\cdot  \left(\delta _1 \delta _2(\ell
   )\right) \cdot  \delta _1(\ell )
\right ) 
+ 
K_{11} (\nabla, \nabla, \nabla) \left ( 
\delta _1(\ell )\cdot \delta _2^2(\ell ) \cdot \delta
   _1(\ell )+\delta _2(\ell )\cdot \delta
   _1^2(\ell ) \cdot \delta _2(\ell )
\right ) 
+ 
K_{12} (\nabla, \nabla, \nabla) \left ( 
\delta _1^2(\ell ) \cdot \delta _2(\ell )\cdot \delta
   _2(\ell )+\delta _2^2( \ell ) \cdot \delta
   _1(\ell )\cdot \delta _1(\ell )
\right ) 
+ 
K_{13} (\nabla, \nabla, \nabla) \left ( 
\left(\delta _1 \delta _2(\ell
   )\right)\cdot \delta _1(\ell )\cdot \delta _2(\ell
   )+\left(\delta _1 \delta _2(\ell
   )\right) \cdot \delta _2(\ell )\cdot \delta _1(\ell )
\right )  
+ 
K_{14} (\nabla, \nabla, \nabla) \left ( 
\delta _1^2(\ell ) \cdot \delta _1(\ell )\cdot 
\delta_1(\ell )+\delta _2^2(\ell )\cdot \delta_2(\ell )\cdot \delta _2(\ell )
\right ) 
+ 
K_{15} (\nabla, \nabla, \nabla) \left ( 
\delta _1(\ell ) \cdot \delta _1(\ell )\cdot \delta
   _1^2(\ell )+\delta _2(\ell )\cdot \delta
   _2(\ell )\cdot \delta _2^2(\ell )
\right ) 
+ 
K_{16} (\nabla, \nabla, \nabla) \left ( 
\delta _1(\ell )\cdot \delta _1^2(\ell ) \cdot \delta
   _1(\ell )+\delta _2(\ell )\cdot \delta
   _2^2(\ell )\cdot \delta _2(\ell )
\right ) 
+ 
K_{17} (\nabla, \nabla, \nabla, \nabla ) \left ( 
\delta _1(\ell )\cdot \delta _1(\ell )\cdot \delta
   _2(\ell )\cdot \delta _2(\ell )+\delta _2(\ell
   )\cdot \delta _2(\ell )\cdot \delta _1(\ell )\cdot \delta
   _1(\ell )
\right ) 
+ 
K_{18} (\nabla, \nabla, \nabla, \nabla ) \left ( 
\delta _1(\ell )\cdot \delta _2(\ell )\cdot \delta
   _1(\ell )\cdot \delta _2(\ell )+\delta _2(\ell
   )\cdot \delta _1(\ell )\cdot \delta _2(\ell )\cdot \delta
   _1(\ell )
\right ) 
+ 
K_{19} (\nabla, \nabla, \nabla, \nabla ) \left ( 
\delta _1(\ell )\cdot \delta _2(\ell )\cdot \delta
   _2(\ell )\cdot \delta _1(\ell )+\delta _2(\ell
   )\cdot \delta _1(\ell )\cdot \delta _1(\ell )\cdot \delta
   _2(\ell )
\right ) 
+ 
K_{20} (\nabla, \nabla, \nabla, \nabla ) \left ( 
\delta _1(\ell )\cdot \delta _1(\ell )\cdot \delta
   _1(\ell )\cdot \delta _1(\ell )+\delta _2(\ell
   )\cdot \delta _2(\ell )\cdot \delta _2(\ell )\cdot \delta
   _2(\ell )
\right ) \Big ). 
$
\end{center}

We provide the explicit formulas for a few of the functions appearing in 
\eqref{a4closedformula}, and we refer the reader to \cite{arXiv1611.09815} 
for the remaining functions, most of which have lengthy expressions.  
We have for example: 

\begin{figure}[H]
\includegraphics[scale=0.6]{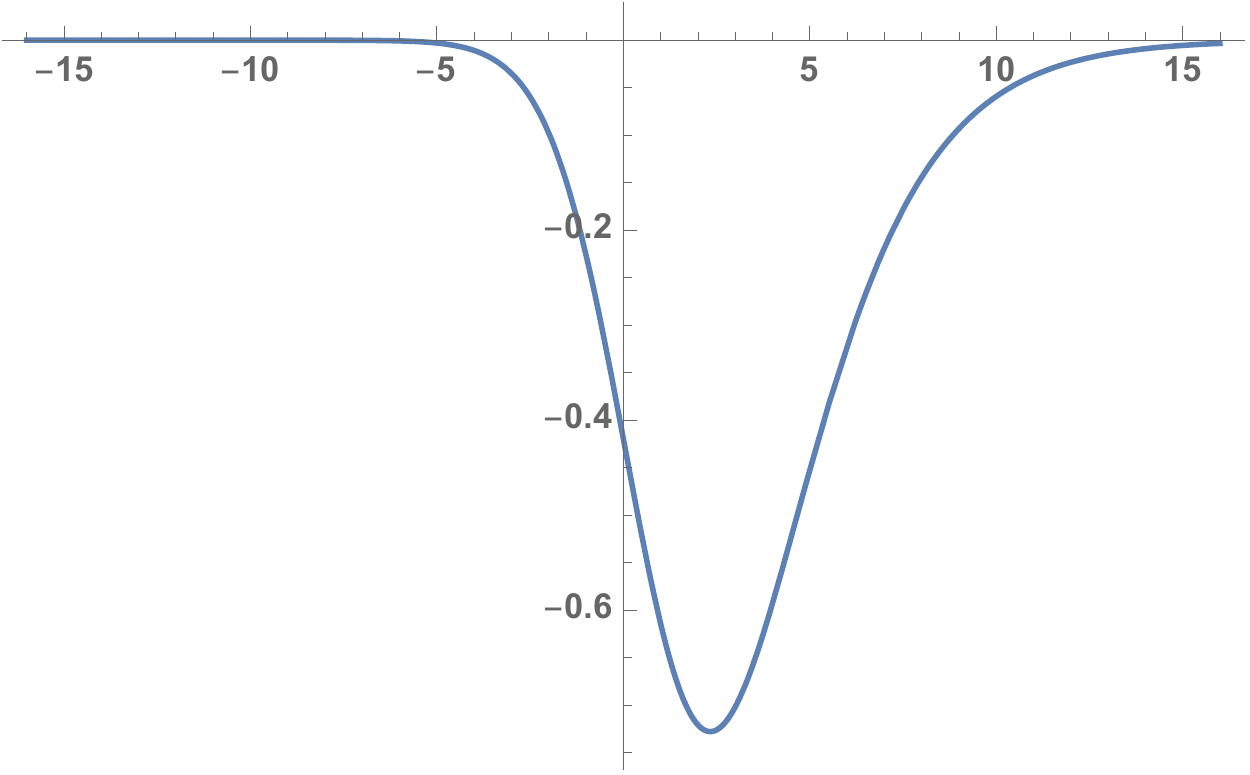}
\caption{Graph of $K_1$.}
\label{K1graph}
\end{figure}

\begin{equation} \label{K_1explicitformula}
K_1(s_1)= -\frac{4 \pi  e^{\frac{3 s_1}{2}}
   \left(\left(4 e^{s_1}+e^{2 s_1}+1\right)
   s_1-3 e^{2
   s_1}+3\right)}{\left(e^{s_1}-1\right){}^4
   s_1}, 
\end{equation}
and 
\begin{equation} \label{K_3explicitformula}
K_3(s_1, s_2) = \frac{K_3^{\text{num}}(s_1, s_2)}{
\left(e^{s_1}-1\right){}^2 \left(e^{s_2}-1\right){}^2
   \left(e^{s_1+s_2}-1\right){}^4 s_1 s_2 \left(s_1+s_2\right)}, 
   \end{equation}
where the numerator  is given by 
\begin{eqnarray*}
  K_3^{\text{num}}(s_1, s_2)&=&
16 \, e^{\frac{3 s_1}{2}+\frac{3 s_2}{2}} \pi \Big  [\left(e^{s_1}-1\right) \left(e^{s_2}-1\right) \left(e^{s_1+s_2}-1\right)  \big \{ 
\\
&&
\left(-5 e^{s_1}-e^{s_2}+6 e^{s_1+s_2}-e^{2 s_1+s_2}-5 e^{s_1+2 s_2}+3 e^{2 s_1+2 s_2}+3\right) s_1+
\\
&&
\left(e^{s_1}+5 e^{s_2}-6 e^{s_1+s_2}+5 e^{2 s_1+s_2}+e^{s_1+2 s_2}-3 e^{2 s_1+2 s_2}-3\right)s_2 \big \}
\\
&&
  -2 \left(e^{s_1}-e^{s_2}\right) \left(e^{s_1+s_2}-1\right)
\\
&&
\left(-e^{s_1}-e^{s_2}-e^{2 s_1+s_2}-e^{s_1+2 s_2}+2 e^{2 s_1+2 s_2}+2\right) s_1 s_2
\\
&&
+2 e^{s_1} \left(e^{s_2}-1\right){}^3 \left(e^{s_1}-e^{s_1+s_2}+2 e^{2 s_1+s_2}-2\right) s_1^2 
\\
&&
-2 e^{s_2} \left(e^{s_1}-1\right){}^3\left(e^{s_2}-e^{s_1+s_2}+2 e^{s_1+2 s_2}-2\right) s_2^2 \Big ].
\end{eqnarray*}

\begin{figure}[H]
\includegraphics[scale=0.6]{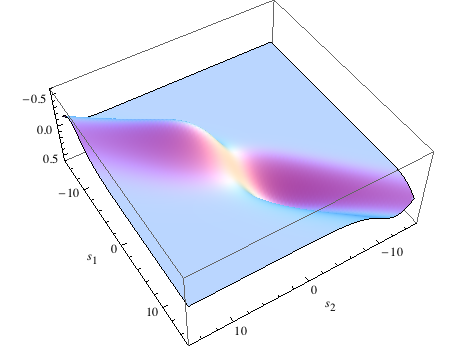}
\caption{Graph of $K_3$.}
\label{K3graph}
\end{figure}

\subsection{Functional relations}
\label{functionalrelatioinssubsec}
One of the main results of \cite{arXiv1611.09815} is the derivation of a family of 
conceptually predicted functional relations among the functions 
$K_1,$ $ \dots,$ $ K_{20}$ appearing in \eqref{a4closedformula}. As we shall see shortly 
the functional relations are highly nontrivial. There are two main reasons 
for the derivation of the relations, both of which are extremely important. 
First, the calculation of the term $a_4$ involves a really heavy computer 
aided calculation, hence, the need for a way of confirming the validity 
of the outcome by checking that the expected functional relations are 
satisfied. Second, patterns and structural properties of the relations 
give significant information that help one to obtain conceptual understandings 
about the structure of the complicated functions appearing in the formula for 
$a_4$. In order to present the relations, we need to consider the modification 
of each function $K_j$ in \eqref{a4closedformula} to a new function 
$\widetilde K_j$ by the formula 
\[
\widetilde K_j(s_1, \dots, s_n) 
= 
\frac{1}{2^n} \frac{\sinh \left ( (s_1+\dots+s_n)/2 \right )}{(s_1+\dots+s_n)/2} K_j(s_1, \dots, s_n), 
\]
where $n \in \{1, 2, 3, 4 \}$ is the number of variables, on which $K_j$ depends. We also 
need to introduce the restriction of the functions $K_j$ to certain hyperplanes by 
setting
\[
k_j(s_1, \dots, s_{n-1}) = K_j(s_1,  \dots, s_{n-1}, -s_1-\cdots -s_{n-1}).  
\]
We shall explain shortly how these functional relations are predicted, using 
fundamental identities and lemmas  \cite{MR3194491, arXiv1611.09815}.

Let us first list a few of the 
functional relations in which some auxiliary functions $G_n(s_1, \dots, s_n)$ 
appear. These functions are mainly useful for relating the derivatives of $e^h$ 
and those of $h$ and we recall from  \cite{arXiv1611.09815} their recursive formula:  
\begin{lemma} \label{BabyFunctions1} The functions $G_n(s_1, \dots, s_n)$ 
are given recursively by 
\[
G_0=1, 
\]
and 
\[
G_n(s_1, \dots, s_n) 
= 
\int_{0}^1 r^{n-1} e^{s_1 r} \,G_{n-1} (r s_2, r s_3, \dots, r s_n)  \, dr. 
\]
\end{lemma}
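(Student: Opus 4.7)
The plan is to establish the recursion by first obtaining an explicit multi-dimensional integral representation of $G_n$ over an ordered $n$-simplex, and then to deduce the stated recursion by a simple change of variables. The functions $G_n$ arise from iterating Duhamel's formula when one expresses derivatives of $e^h$ in terms of derivatives of $h$. For a single derivation one has
\[
\delta(e^h)=\int_0^1 e^{rh}\,\delta(h)\,e^{(1-r)h}\,dr
=\Bigl(\int_0^1 e^{r\nabla}(\delta h)\,dr\Bigr)\,e^h
=G_1(\nabla)(\delta h)\cdot e^h,
\]
where $\nabla=[h,\cdot]$ is the modular derivation. For several derivations $\delta_1,\ldots,\delta_n$ the coefficient of the "top-order" contribution, in which every derivation lands on a single copy of $h$ and the resulting $e^h$ is commuted to the right, is by definition the multilinear functional $G_n(\nabla_1,\ldots,\nabla_n)$, with each $\nabla_i$ acting in the $i$-th slot.

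First I would establish the simplex representation
\[
G_n(s_1,\ldots,s_n)=\int_{0\le u_n\le u_{n-1}\le\cdots\le u_1\le 1}
e^{\,u_1 s_1+u_2 s_2+\cdots+u_n s_n}\,du_1\cdots du_n,
\]
by induction on $n$, starting from $G_0=1$ and the Duhamel identity displayed above. At each inductive step, applying Duhamel once more to the outermost exponential factor produces a new variable of integration $u_1\in[0,1]$, rescales the previously-nested $e^{u_j h}$ factors into $e^{u_1 u'_j h}$ after commuting $e^{u_1 h}$ past the inserted $\delta h$'s via $\nabla$, and nests the remaining ordered integral inside $[0,u_1]$. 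Collecting the rescaled variables reproduces the ordered simplex $\{0\le u_n\le\cdots\le u_1\le 1\}$ and the exponent $\sum u_i s_i$.

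Next I would deduce the recursion by change of variables. Setting $u_1=r$ and $v_i=u_i/r$ for $i=2,\ldots,n$, the domain becomes $r\in[0,1]$ together with $0\le v_n\le\cdots\le v_2\le 1$; the Jacobian produces a factor $r^{n-1}$; and the exponent rewrites as $r s_1+r(v_2 s_2+\cdots+v_n s_n)$. The inner $(n-1)$-fold integral over the $v_i$ is then precisely $G_{n-1}(rs_2,\ldots,rs_n)$ by the simplex representation, yielding
\[
G_n(s_1,\ldots,s_n)=\int_0^1 r^{n-1}e^{s_1 r}\,G_{n-1}(rs_2,\ldots,rs_n)\,dr,
\]
as claimed.

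The main obstacle is the inductive derivation of the simplex representation. Among the many terms generated by iterated Duhamel expansion of $\delta_n\cdots\delta_1(e^h)$, one must isolate precisely the contribution in which all $n$ derivations eventually land on the same single copy of $h$, commute all resulting exponentials past the inserted factors using the modular derivations $\nabla_i$, and verify that the combinatorics of successively nested Duhamel intervals gives exactly the ordered domain $\{0\le u_n\le\cdots\le u_1\le 1\}$ with exponent $\sum u_i s_i$. Once this bookkeeping is in place, the change of variables in the second step is essentially a one-line computation, and the recursive formula follows immediately.
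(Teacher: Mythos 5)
Your change-of-variables argument is correct, and it is the heart of the proof. Writing $G_n(s_1,\dots,s_n)=\int_{0\le u_n\le\cdots\le u_1\le 1}e^{\sum u_i s_i}\,du_1\cdots du_n$ and peeling off $u_1=r$, the remaining simplex $\{0\le u_n\le\cdots\le u_2\le r\}$ rescales under $u_i=rv_i$ to the unit simplex with Jacobian $r^{n-1}$ and exponent $r s_1 + r\sum_{i\ge2}v_i s_i$, and the inner integral is exactly $G_{n-1}(rs_2,\dots,rs_n)$; this is flawless. As a sanity check, your simplex representation reproduces the explicit formulas in the paper's display \eqref{ExplicitBabyFunctions}: for $n=1$ it gives $(e^{s_1}-1)/s_1$, and for $n=2$ a direct computation yields $\bigl(e^{s_1}((e^{s_2}-1)s_1-s_2)+s_2\bigr)/\bigl(s_1 s_2(s_1+s_2)\bigr)$, matching the stated $G_2$.

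The one real gap is the step you yourself flag: passing from the Duhamel (expansional) definition of the $G_n$ to the ordered-simplex integral is only sketched, and the bookkeeping of which term in the iterated Duhamel expansion carries the $G_n$ coefficient is nontrivial and not carried out. Note, however, that the paper under review simply \emph{recalls} this lemma from \cite{arXiv1611.09815} without proof, so there is no proof here to compare against. In practice a cleaner route is available: since $G_0=1$ together with the recursion determines the $G_n$ uniquely, and since the simplex integral satisfies both (the recursion by your change of variables, and the base case trivially), the identification with whatever definition is in force reduces to matching one nontrivial case, e.g.\ $G_1$ via the one-step Duhamel formula $\delta(e^h)=\int_0^1 e^{rh}\delta(h)e^{(1-r)h}\,dr$. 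That sidesteps the multi-step Duhamel combinatorics entirely and turns your change-of-variables computation into a complete proof.
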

Explicitly, for $n=1, 2, 3$, one has: 

\begin{eqnarray}  \label{ExplicitBabyFunctions}
G_1(s_1)&=&\frac{e^{s_1}-1}{s_1}, \\ 
G_2(s_1, s_2) &=& \frac{e^{s_1} \left(\left(e^{s_2}-1\right) s_1-s_2\right)+s_2}{s_1 s_2 \left(s_1+s_2\right)}, \nonumber \\
G_3(s_1, s_2, s_3) &=& \nonumber
\end{eqnarray}
\[
\frac{e^{s_1} \left(e^{s_2+s_3} s_1 s_2 \left(s_1+s_2\right)+\left(s_1+s_2+s_3\right) \left(\left(s_1+s_2\right) s_3-e^{s_2} s_1 \left(s_2+s_3\right)\right)\right)-s_2 s_3 \left(s_2+s_3\right)}{s_1 s_2 \left(s_1+s_2\right) s_3 \left(s_2+s_3\right) \left(s_1+s_2+s_3\right)}.  
\]

We can now write the relations. 
The functional relation associated with the function $K_1$ is given by
\begin{equation} \label{basicK1eqn}
 \widetilde K_1(s_1) =
\end{equation}
\begin{center} 
\begin{math}
-\frac{1}{15} \pi  G_1\left(s_1\right)+\frac{1}{4} e^{s_1} k_3\left(-s_1\right)+\frac{1}{4} k_3\left(s_1\right)+\frac{1}{2} e^{s_1} k_4\left(-s_1\right)+\frac{1}{2} k_4\left(s_1\right)-\frac{1}{2} e^{s_1} k_6\left(-s_1\right)-\frac{1}{2} k_6\left(s_1\right)-\frac{1}{2} e^{s_1} k_7\left(-s_1\right)-\frac{1}{2} k_7\left(s_1\right)-\frac{\pi  \left(e^{s_1}-1\right)}{15 s_1}. 
\end{math}
\end{center}
It is quite remarkable that such a nontrivial relation should exist among the 
functions, and it gets even more interesting when one looks at the case associated  
 with a 2-variable function. For  $K_3$ one finds the associated 
relation to be: 
\begin{equation} \label{basicK3eqn}
\widetilde K_3(s_1, s_2) =
\end{equation}
\begin{center}
\begin{math} 
 \frac{1}{15} (-4) \pi  G_2\left(s_1,s_2\right)+\frac{1}{2} k_8\left(s_1,s_2\right)+\frac{1}{4} k_9\left(s_1,s_2\right)-\frac{1}{4} e^{s_1+s_2} k_9\left(-s_1-s_2,s_1\right)-\frac{1}{4} e^{s_1} k_9\left(s_2,-s_1-s_2\right)-\frac{1}{4} k_{10}\left(s_1,s_2\right)-\frac{1}{4} e^{s_1+s_2} k_{10}\left(-s_1-s_2,s_1\right)+\frac{1}{4} e^{s_1} k_{10}\left(s_2,-s_1-s_2\right)+\frac{1}{2} e^{s_1} k_{11}\left(s_2,-s_1-s_2\right)+\frac{1}{2} e^{s_1+s_2} k_{12}\left(-s_1-s_2,s_1\right)-\frac{1}{4} k_{13}\left(s_1,s_2\right)+\frac{1}{4} e^{s_1+s_2} k_{13}\left(-s_1-s_2,s_1\right)-\frac{1}{4} e^{s_1} k_{13}\left(s_2,-s_1-s_2\right)+\frac{1}{4} e^{s_2} G_1\left(s_1\right) k_3\left(-s_2\right)+\frac{1}{4} G_1\left(s_1\right) k_3\left(s_2\right)-G_1\left(s_1\right) k_6\left(s_2\right)-e^{s_2} G_1\left(s_1\right) k_7\left(-s_2\right)+\frac{\left(e^{s_1+s_2}-1\right) k_3\left(s_1\right)}{4 \left(s_1+s_2\right)}+\frac{k_3\left(s_2\right)-k_3\left(s_1+s_2\right)}{4 s_1}+\frac{k_3\left(s_1+s_2\right)-k_3\left(s_1\right)}{4 s_2}+\frac{k_6\left(s_1\right)-k_6\left(s_1+s_2\right)}{s_2}+\frac{k_6\left(s_1+s_2\right)-k_6\left(s_2\right)}{s_1}+\frac{e^{s_1} \left(k_7\left(-s_1\right)-e^{s_2} k_7\left(-s_1-s_2\right)\right)}{s_2}+\frac{e^{s_2} \left(e^{s_1} k_7\left(-s_1-s_2\right)-k_7\left(-s_2\right)\right)}{s_1}-\frac{e^{s_2} \left(e^{s_1} k_3\left(-s_1-s_2\right)-k_3\left(-s_2\right)\right)}{4 s_1}-\frac{e^{s_1} \left(k_3\left(-s_1\right)-e^{s_2} k_3\left(-s_1-s_2\right)\right)}{4 s_2}-\frac{e^{s_1} \left(k_3\left(-s_1\right)+e^{s_2} k_3\left(s_1\right)-e^{s_2} k_3\left(-s_2\right)-k_3\left(s_2\right)\right)}{4 \left(s_1+s_2\right)}.  
\end{math}
\end{center}

The rapid pace in growing complexity of the functional relations can be seen in the higher 
variable cases as for example the functional relation corresponding to the 3-variable 
function $K_8$ is the following expression: 
\begin{equation} \label{basicK8eqn}
\widetilde K_8(s_1, s_2, s_3) =
\end{equation}
\begin{center}
\begin{math}
 \frac{1}{15} (-2) \pi  G_3\left(s_1,s_2,s_3\right)+\frac{1}{2} e^{s_3} G_2\left(s_1,s_2\right) k_4\left(-s_3\right)-\frac{e^{s_3} \left(e^{s_2} s_1 k_4\left(-s_2-s_3\right)+e^{s_2} s_2 k_4\left(-s_2-s_3\right)-e^{s_1+s_2} s_2 k_4\left(-s_1-s_2-s_3\right)-s_1 k_4\left(-s_3\right)\right)}{2 s_1 s_2 \left(s_1+s_2\right)}+\frac{1}{2} G_2\left(s_1,s_2\right) k_4\left(s_3\right)+\frac{G_1\left(s_1\right) \left(k_4\left(s_3\right)-k_4\left(s_2+s_3\right)\right)}{2 s_2}+\frac{s_1 k_4\left(s_3\right)-s_1 k_4\left(s_2+s_3\right)-s_2 k_4\left(s_2+s_3\right)+s_2 k_4\left(s_1+s_2+s_3\right)}{2 s_1 s_2 \left(s_1+s_2\right)}-\frac{1}{2} G_2\left(s_1,s_2\right) k_6\left(s_3\right)+\frac{G_1\left(s_1\right) \left(k_6\left(s_2\right)-k_6\left(s_2+s_3\right)\right)}{4 s_3}+\frac{k_6\left(s_2\right)-k_6\left(s_1+s_2\right)-k_6\left(s_2+s_3\right)+k_6\left(s_1+s_2+s_3\right)}{4 s_1 s_3}+\frac{-s_3 k_6\left(s_1\right)+s_2 k_6\left(s_1+s_2\right)+s_3 k_6\left(s_1+s_2\right)-s_2 k_6\left(s_1+s_2+s_3\right)}{4 s_2 s_3 \left(s_2+s_3\right)}+\frac{-s_1 k_6\left(s_3\right)+s_1 k_6\left(s_2+s_3\right)+s_2 k_6\left(s_2+s_3\right)-s_2 k_6\left(s_1+s_2+s_3\right)}{2 s_1 s_2 \left(s_1+s_2\right)}+\frac{e^{s_2} G_1\left(s_1\right) \left(k_7\left(-s_2\right)-e^{s_3} k_7\left(-s_2-s_3\right)\right)}{4 s_3}-\frac{e^{s_1} \left(s_3 k_7\left(-s_1\right)-e^{s_2} s_2 k_7\left(-s_1-s_2\right)-e^{s_2} s_3 k_7\left(-s_1-s_2\right)+e^{s_2+s_3} s_2 k_7\left(-s_1-s_2-s_3\right)\right)}{4 s_2 s_3 \left(s_2+s_3\right)}-\frac{e^{s_2} \left(e^{s_1} k_7\left(-s_1-s_2\right)-k_7\left(-s_2\right)+e^{s_3} k_7\left(-s_2-s_3\right)-e^{s_1+s_3} k_7\left(-s_1-s_2-s_3\right)\right)}{4 s_1 s_3}+\frac{e^{s_3} G_1\left(s_1\right) \left(e^{s_2} k_7\left(-s_2-s_3\right)-k_7\left(-s_3\right)\right)}{2 s_2}-\frac{1}{2} e^{s_3} G_2\left(s_1,s_2\right) k_7\left(-s_3\right)+\frac{e^{s_3} \left(e^{s_2} s_1 k_7\left(-s_2-s_3\right)+e^{s_2} s_2 k_7\left(-s_2-s_3\right)-e^{s_1+s_2} s_2 k_7\left(-s_1-s_2-s_3\right)-s_1 k_7\left(-s_3\right)\right)}{2 s_1 s_2 \left(s_1+s_2\right)}+\frac{\left(-1+e^{s_1+s_2+s_3}\right) k_8\left(s_1,s_2\right)}{8 \left(s_1+s_2+s_3\right)}+\frac{k_8\left(s_1,s_2+s_3\right)-k_8\left(s_1,s_2\right)}{8 s_3}-\frac{1}{8} e^{s_2+s_3} G_1\left(s_1\right) k_8\left(-s_2-s_3,s_2\right)+\frac{e^{s_1+s_2+s_3} \left(k_8\left(-s_1-s_2-s_3,s_1\right)-k_8\left(-s_1-s_2-s_3,s_1+s_2\right)\right)}{8 s_2}+\frac{1}{8} G_1\left(s_1\right) k_9\left(s_2,s_3\right)+\frac{k_9\left(s_2,s_3\right)-k_9\left(s_1+s_2,s_3\right)}{8 s_1}+\frac{k_9\left(s_1+s_2,s_3\right)-k_9\left(s_1,s_2+s_3\right)}{8 s_2}+\frac{1}{8} e^{s_2} G_1\left(s_1\right) k_{10}\left(s_3,-s_2-s_3\right)+\frac{e^{s_2} \left(k_{10}\left(s_3,-s_2-s_3\right)-e^{s_1} k_{10}\left(s_3,-s_1-s_2-s_3\right)\right)}{8 s_1}+\frac{e^{s_1} \left(e^{s_2} k_{10}\left(s_3,-s_1-s_2-s_3\right)-k_{10}\left(s_2+s_3,-s_1-s_2-s_3\right)\right)}{8 s_2}-\frac{1}{8} G_1\left(s_1\right) k_{11}\left(s_2,s_3\right)+\frac{k_{11}\left(s_1,s_2+s_3\right)-k_{11}\left(s_1+s_2,s_3\right)}{8 s_2}+\frac{k_{11}\left(s_1+s_2,s_3\right)-k_{11}\left(s_2,s_3\right)}{8 s_1}-\frac{1}{8} e^{s_2} G_1\left(s_1\right) k_{12}\left(s_3,-s_2-s_3\right)+\frac{1}{8} e^{s_2+s_3} G_1\left(s_1\right) k_{13}\left(-s_2-s_3,s_2\right)+\frac{e^{s_2+s_3} \left(k_{13}\left(-s_2-s_3,s_2\right)-e^{s_1} k_{13}\left(-s_1-s_2-s_3,s_1+s_2\right)\right)}{8 s_1}-\frac{1}{16} k_{17}\left(s_1,s_2,s_3\right)-\frac{1}{16} e^{s_1+s_2} k_{17}\left(s_3,-s_1-s_2-s_3,s_1\right)-\frac{1}{16} e^{s_1} k_{19}\left(s_2,s_3,-s_1-s_2-s_3\right)-\frac{1}{16} e^{s_1+s_2+s_3} k_{19}\left(-s_1-s_2-s_3,s_1,s_2\right)-\frac{e^{s_2+s_3} \left(k_8\left(-s_2-s_3,s_2\right)-e^{s_1} k_8\left(-s_1-s_2-s_3,s_1+s_2\right)\right)}{8 s_1}-\frac{e^{s_2} \left(k_{12}\left(s_3,-s_2-s_3\right)-e^{s_1} k_{12}\left(s_3,-s_1-s_2-s_3\right)\right)}{8 s_1}-\frac{e^{s_3} G_1\left(s_1\right) \left(e^{s_2} k_4\left(-s_2-s_3\right)-k_4\left(-s_3\right)\right)}{2 s_2}-\frac{G_1\left(s_1\right) \left(k_6\left(s_3\right)-k_6\left(s_2+s_3\right)\right)}{2 s_2}-\frac{e^{s_1} \left(e^{s_2} k_{12}\left(s_3,-s_1-s_2-s_3\right)-k_{12}\left(s_2+s_3,-s_1-s_2-s_3\right)\right)}{8 s_2}-\frac{e^{s_1+s_2+s_3} \left(k_{13}\left(-s_1-s_2-s_3,s_1\right)-k_{13}\left(-s_1-s_2-s_3,s_1+s_2\right)\right)}{8 s_2}-\frac{e^{s_1} \left(k_{11}\left(s_2,-s_1-s_2\right)-k_{11}\left(s_2+s_3,-s_1-s_2-s_3\right)\right)}{8 s_3}-\frac{e^{s_1+s_2} \left(k_{12}\left(-s_1-s_2,s_1\right)-e^{s_3} k_{12}\left(-s_1-s_2-s_3,s_1\right)\right)}{8 s_3}-\frac{e^{s_1+s_2+s_3} \left(k_8\left(s_1,s_2\right)-k_8\left(-s_2-s_3,s_2\right)\right)}{8 \left(s_1+s_2+s_3\right)}-\frac{e^{s_1} \left(k_{11}\left(s_2,-s_1-s_2\right)-k_{11}\left(s_2,s_3\right)\right)}{8 \left(s_1+s_2+s_3\right)}-\frac{e^{s_1+s_2} \left(k_{12}\left(-s_1-s_2,s_1\right)-k_{12}\left(s_3,-s_2-s_3\right)\right)}{8 \left(s_1+s_2+s_3\right)}. 
\end{math}
\end{center} 
The interested reader can refer to \cite{arXiv1611.09815} to see that the functional relations of the 
4-variable functions get even more complicated. The main point, which will be 
elaborated further, is that all these functional relations are derived conceptually, 
and by checking that our calculated functions $K_1, \dots, K_{20}$ satisfy 
these relations, the validity of the calculations and their outcome, such as 
the explicit formulas \eqref{K_1explicitformula}, \eqref{K_3explicitformula}, is confirmed.

\subsection{A partial differential system associated with the functional relations}
When one takes a close look at the functional relations, one notices that there are 
terms in the right hand sides (in the finite difference 
expressions)  with $s_1+\cdots+ s_n$ in their denominators. 
For example in \eqref{basicK3eqn} one can see that there is a term with $s_1+s_2$ in the denominator. 
The question answered in \cite{arXiv1611.09815}, which leads to a differential system with interesting 
properties, is what happens when one restricts the functional relations to the 
hyperplanes $s_1+\cdots +s_n=0$ by setting  $s_1+\cdots +s_n = \varepsilon$ and letting 
$\varepsilon \to 0$. For example the restriction of the functional relation \eqref{basicK3eqn} to the 
hyperplane $s_1+s_2 =0$ yields: 

\begin{equation} \label{diffeqK3}
\frac{1}{4} e^{s_1} k_3'\left(-s_1\right)-\frac{1}{4} k_3'\left(s_1\right) = 
\end{equation}
\begin{center}
\begin{math}
\frac{1}{60 s_1} \Big ( 16 \pi  s_1 G_2(s_1,-s_1)-30 s_1 k_8(s_1,-s_1)+15 s_1 k_9(0,s_1)+15 e^{s_1} s_1 k_9(-s_1,0)-15 s_1 k_9(s_1,-s_1)+15 s_1 k_{10}(0,s_1)-15 e^{s_1} s_1 k_{10}(-s_1,0)+15 s_1 k_{10}(s_1,-s_1)-30 e^{s_1} s_1 k_{11}(-s_1,0)-30 s_1 k_{12}(0,s_1)-15 s_1 k_{13}(0,s_1)+15 e^{s_1} s_1 k_{13}(-s_1,0)+15 s_1 k_{13}(s_1,-s_1)-15 s_1 G_1(s_1) k_3(-s_1)-15 e^{-s_1} s_1 G_1(s_1) k_3(s_1)+60 s_1 G_1(s_1) k_6(-s_1)+60 e^{-s_1} s_1 G_1(s_1) k_7(s_1)-15 e^{s_1} k_3(-s_1)-15 k_3(-s_1)-15 e^{-s_1} k_3(s_1)-15 k_3(s_1)+60 k_6(-s_1)+60 k_6(s_1)+60 e^{s_1} k_7(-s_1)+60 e^{-s_1} k_7(s_1)+60 k_3(0)-120 k_6(0)-120 k_7(0)   \Big ).  
\end{math}
\end{center}

The restriction of the functional relation \eqref{basicK8eqn} to the hyperplane 
$s_1+s_2+s_3 =0$ yields
\begin{equation} \label{diffeqK8}
\frac{1}{8} e^{s_1} \ddstwo k_{11}{}\left(s_2,-s_1-s_2\right)
-\frac{1}{8} e^{s_1+s_2} \ddstwo k_{12}{}\left(-s_1-s_2,s_1\right)
\end{equation}
\[
-\frac{1}{8} \ddsone k_8{} \left(s_1,s_2\right)+\frac{1}{8} e^{s_1+s_2} \ddsone k_{12}{}\left(-s_1-s_2,s_1\right) = 
\]
\begin{center}
\begin{math}
-\Big ( \widetilde K_{8}(s_1, s_2, s_3) -   \widetilde K_{8, \,\textnormal{s}}(s_1, s_2, s_3)
\Big )\bigm|_{s_3 = -s_1-s_2},  
\end{math}
\end{center}
where  
\[
\widetilde K_{8, \,\textnormal{s}}(s_1, s_2, s_3) = 
\]
\[
\frac{1}{8 \left(s_1+s_2+s_3\right)} \Big (  -k_8\left(s_1,s_2\right)+e^{s_1+s_2+s_3} k_8\left(-s_2-s_3,s_2\right)-e^{s_1} k_{11}\left(s_2,-s_1-s_2\right)\]
\[+e^{s_1} k_{11}\left(s_2,s_3\right)-e^{s_1+s_2} k_{12}\left(-s_1-s_2,s_1\right)+e^{s_1+s_2} k_{12}\left(s_3,-s_2-s_3\right) \Big ). 
\]

In order to see the general structure in a  4-variable case, we just mention that  the restriction to the hyperplane 
$s_1+s_2+s_3+s_4=0$ of the functional relation corresponding to the function $\widetilde K_{17}$ gives 
\begin{equation} \label{diffeqK17}
-\frac{1}{16} e^{s_1+s_2} \ddsthree k_{17}{}\left(s_3,-s_1-s_2-s_3,s_1\right)+\frac{1}{16} e^{s_1} \ddsthree k_{19}{}\left(s_2,s_3,-s_1-s_2-s_3\right)
\end{equation}
\[
+\frac{1}{16} e^{s_1+s_2} \ddstwo k_{17}{}\left(s_3,-s_1-s_2-s_3,s_1\right)-\frac{1}{16} e^{s_1+s_2+s_3} \ddstwo k_{19}{}\left(-s_1-s_2-s_3,s_1,s_2\right)
\]
\[
-\frac{1}{16} \ddsone k_{17}{}\left(s_1,s_2,s_3\right)+\frac{1}{16} e^{s_1+s_2+s_3} \ddsone k_{19}{}\left(-s_1-s_2-s_3,s_1,s_2\right) = 
\] 
\begin{center}
\begin{math}
- 
\Big ( \widetilde K_{17}(s_1, s_2, s_3, s_4) -   \widetilde K_{17, \,\textnormal{s}}(s_1, s_2, s_3, s_4)
\Big )\bigm|_{s_4 = -s_1-s_2-s_3}, 
\end{math}
\end{center}
where
\[
 \widetilde K_{17, \,\textnormal{s}}(s_1, s_2, s_3, s_4) =
\]
\[
\frac{1}{16 \left(s_1+s_2+s_3+s_4\right)} \Big (   
-k_{17}\left(s_1,s_2,s_3\right)-e^{s_1+s_2} k_{17}\left(s_3,-s_1-s_2-s_3,s_1\right)
\]
\[
+e^{s_1+s_2} k_{17}\left(s_3,s_4,-s_2-s_3-s_4\right)
+e^{s_1+s_2+s_3+s_4} k_{17}\left(-s_2-s_3-s_4,s_2,s_3\right)
\]
\[
-e^{s_1} k_{19}\left(s_2,s_3,-s_1-s_2-s_3\right)+e^{s_1} k_{19}\left(s_2,s_3,s_4\right)
\]
\[
-e^{s_1+s_2+s_3} k_{19}\left(-s_1-s_2-s_3,s_1,s_2\right)+e^{s_1+s_2+s_3} k_{19}\left(s_4,-s_2-s_3-s_4,s_2\right) \Big ). 
\]

\subsection{Action of cyclic groups in the differential system, invariant expressions 
and simple flow of the system}
In the partial differential system of the form given by \eqref{diffeqK3}, \eqref{diffeqK8}, \eqref{diffeqK17} 
the action of the cyclic groups $\mathbb{Z}/2 \mathbb{Z}$, $\mathbb{Z}/3 \mathbb{Z}$, 
$\mathbb{Z}/4 \mathbb{Z}$  is involved. For example, in \eqref{diffeqK3} one can see very 
easily that  $\mathbb{Z}/2 \mathbb{Z}$ is acting by 
\[
T_2(s_1) = -s_1, \qquad s_1 \in \R. 
\]  
Using this fact, in \cite{arXiv1611.09815}, symmetries of some lengthy expressions are 
explored, which we recall in this subsection. 
\begin{theorem} \label{k1tok7eqnsinvariance}
For any  integers $j_0, j_1$ in $\{3, 4, 5, 6, 7\}$, 
\[
e^{-\frac{s_1}{2}} \big ( - (k'_{j_0}(s_1) + k'_{j_1}(s_1) )+ e^{s_1} \left( k'_{j_0}(-s_1)+ k'_{j_1}(-s_1) \right ) \big ), 
\]
is in the kernel of $1+T_2$. Moreover,  considering the finite difference expressions 
in the differential system corresponding to the following cases, one can find explicitly finite differences of the $k_j$ that are in the kernel of $1+T_2$: 
\begin{enumerate}
\item When $(j_0, j_1) = (3, 3)$.
\item When $(j_0, j_1) = (4, 4)$.
\item When $(j_0, j_1) = (5, 5)$.
\item When $(j_0, j_1) = (6, 7)$.
\end{enumerate}
\end{theorem}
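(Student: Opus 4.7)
\medskip

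\noindent\emph{Proof proposal.} The first assertion is purely algebraic and I would verify it by direct substitution. Write the displayed expression as $E_{j_0,j_1}(s_1) = e^{-s_1/2}(-A(s_1) + e^{s_1}A(-s_1))$, where $A(s_1) = k'_{j_0}(s_1) + k'_{j_1}(s_1)$. Applying $T_2$ (i.e.\ replacing $s_1$ by $-s_1$) converts $e^{-s_1/2}$ into $e^{s_1/2}$ and swaps the arguments of $A$, giving $T_2 E_{j_0,j_1}(s_1) = e^{s_1/2}(-A(-s_1) + e^{-s_1}A(s_1))$. Summing the two lines yields
\[
(1 + T_2)E_{j_0,j_1}(s_1) = \bigl(-e^{-s_1/2} + e^{-s_1/2}\bigr) A(s_1) + \bigl(e^{s_1/2} - e^{s_1/2}\bigr) A(-s_1) = 0,
\]
so $E_{j_0,j_1}$ lies in $\ker(1 + T_2)$ unconditionally.

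For the moreover part, the strategy is to identify, for each pair $(j_0,j_1)$, a single equation in the partial differential system of the previous subsection whose left-hand side equals a nonzero scalar multiple of $E_{j_0,j_1}(s_1)$ (after multiplication by $e^{-s_1/2}$), and then read off the right-hand side as the desired finite-difference expression. Concretely, for case (1) I would use equation~\eqref{diffeqK3}: its left-hand side $\frac{1}{4}(e^{s_1}k'_3(-s_1) - k'_3(s_1))$ is exactly $\frac{1}{8} e^{s_1/2} E_{3,3}(s_1)$, so multiplying its right-hand side by $8 e^{-s_1/2}$ expresses $E_{3,3}$ as an explicit combination of finite differences of $k_3, k_6, k_7, k_8, \dots, k_{13}$ and $G$-functions evaluated at various points. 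That combination automatically lies in $\ker(1+T_2)$ by the first part. For cases (2) and (3), I would use the analogous restrictions to $s_1+s_2=0$ of the functional relations for $K_4$ and $K_5$ (derived in \cite{arXiv1611.09815} by the same mechanism as \eqref{diffeqK3}), whose left-hand sides will have the shape $c_j(e^{s_1}k'_j(-s_1) - k'_j(s_1))$ for $j=4,5$, matching $E_{4,4}$ and $E_{5,5}$ respectively.

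Case (4) is the most delicate and, I expect, the main obstacle. Because $K_1$ is already a one-variable function, there is no hyperplane to restrict to; instead I would differentiate the functional relation \eqref{basicK1eqn} for $\widetilde K_1(s_1)$ once in $s_1$. The four terms $-\tfrac12 e^{s_1}k_6(-s_1) - \tfrac12 k_6(s_1) - \tfrac12 e^{s_1}k_7(-s_1) - \tfrac12 k_7(s_1)$ contribute, after differentiation, precisely the combination $\tfrac12 e^{s_1/2} E_{6,7}(s_1)$ modulo $k_j$-level (non-derivative) terms, but the same differentiation also produces $e^{s_1/2}$-multiples of $E_{3,3}$ and $E_{4,4}$ from the $k_3$ and $k_4$ terms. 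The plan is to subtract the analogous finite-difference identities already obtained in cases (1) and (2) (rescaled appropriately) so as to isolate the $(6,7)$ piece. The resulting equation then expresses $E_{6,7}$ as a finite-difference combination of $k_j$'s that, by the first part, must belong to $\ker(1+T_2)$. The bookkeeping in this subtraction (tracking the correct scalar factors and the $G_1$ auxiliary terms produced by differentiation) is where the real care is needed; once done, no further conceptual input is required.
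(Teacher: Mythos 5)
Your verification of the first assertion is correct: the expression $E_{j_0,j_1}(s_1)= e^{-s_1/2}\bigl(-A(s_1)+e^{s_1}A(-s_1)\bigr)$ satisfies $(1+T_2)E_{j_0,j_1}=0$ by direct substitution, and—as you note—this is an unconditional algebraic identity valid for any pair of one-variable functions, so the real content of the theorem is the \emph{moreover} part. Your treatment of cases (1)--(3) is also on the right track: the restricted equation \eqref{diffeqK3} has left-hand side $\tfrac18 e^{s_1/2}E_{3,3}(s_1)$, so multiplication by $8e^{-s_1/2}$ turns its right-hand side into the desired finite-difference expression, and the analogous restrictions of the $\widetilde K_4$ and $\widetilde K_5$ relations produce $E_{4,4}$ and $E_{5,5}$.

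The gap is in case (4). You have built your argument on the premise that, because $K_1$ is one-variable, there is no hyperplane restriction available for the pair $(6,7)$, and you therefore propose to differentiate \eqref{basicK1eqn}. But the pair $(6,7)$ does \emph{not} come from $\widetilde K_1$ — it comes from the restriction to $s_1+s_2=0$ of the two-variable functional relations for $\widetilde K_6$ and $\widetilde K_7$, exactly in parallel with cases (1)--(3). The grouping of indices in this theorem reflects which $k_j$'s the $0/0$ terms (the terms with $s_1+\cdots+s_n$ in the denominator) mix together when the relation is restricted; compare Theorem~\ref{k8tok16eqnsinvariance}, where the triple $(8,12,11)$ is precisely the set of indices whose derivatives appear in the left-hand side of \eqref{diffeqK8}, because the $(s_1+s_2+s_3)$-denominator terms of the $\widetilde K_8$ relation mix $k_8$, $k_{11}$, $k_{12}$. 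By the same mechanism, the $(s_1+s_2)$-denominator terms of the $\widetilde K_6$ and $\widetilde K_7$ relations involve both $k_6$ and $k_7$ (this is why there are five restricted equations from $K_3,\dots,K_7$ yet only four distinct left-hand-side shapes), and the resulting restricted left-hand side is a scalar multiple of $e^{s_1/2}E_{6,7}(s_1)$; the corresponding right-hand side is the finite-difference expression one wants. Case (4) is thus not exceptional at all.

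Beyond being the wrong equation, the differentiation of \eqref{basicK1eqn} also fails to deliver the conclusion as stated. Differentiating the right-hand side produces, in addition to the $E_{3,3}$, $E_{4,4}$, $E_{6,7}$ combinations, derivatives of $G_1$, the $\pi$-terms, and $e^{s_1}$-multiplied values $k_j(\pm s_1)$ that are not wrapped in finite differences; after subtracting the case-(1) and case-(2) identities, the residual expression for $E_{6,7}$ still contains $\widetilde K_1'$ and $G_1'$, which are not finite differences of the $k_j$'s. The theorem asks for finite-difference expressions in $\ker(1+T_2)$; only the restricted two-variable relations produce those directly on the right-hand side.
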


In \eqref{diffeqK8}, the action of $\mathbb{Z}/3 \mathbb{Z}$ is involved as we have 
the following transformation acting on the variables: 
\begin{equation} \label{T3actionformula}
T_3(s_1,s_2) = (-s_1-s_2,s_1).   
\end{equation}
Using the latter, symmetries of more complicated expressions are 
discovered in \cite{arXiv1611.09815}:

\begin{theorem} \label{k8tok16eqnsinvariance}
for any integers $j_0, j_1, j_2$ in $\{8, 9, \dots, 16 \}$,   
\begin{eqnarray*}
&&e^{-\frac{2s_1}{3}-\frac{s_2}{3} } \Big ( 
 -\ddsone (k_{j_0}+k_{j_1}+k_{j_2})\left(s_1,s_2\right) \\
 && \qquad \qquad - e^{s_1+s_2} (\ddstwo - \ddsone )(k_{j_0}+k_{j_1}+k_{j_2}) 
\left(-s_1-s_2,s_1\right) \\
&& \qquad \qquad + e^{s_1} \ddstwo (k_{j_0}+k_{j_1}+k_{j_2})\left(s_2,-s_1-s_2\right) \Big ) 
\end{eqnarray*}
is in the kernel of $1+T_3+T_3^2$ . 
Also there are finite differences of the functions $k_j$ associated with the 
following cases that are in the kernel of $1+T_3+T_3^2$: 
\begin{enumerate}
\item  When $(j_0, j_1, j_2)=( 8, 12, 11)$. 
\item When $(j_0, j_1, j_2)= ( 9, 13, 10)$. 
\item When $(j_0, j_1, j_2)= ( 14, 16, 15)$. 
\end{enumerate}
\end{theorem}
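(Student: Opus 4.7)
The overall strategy for both assertions is to exploit the fact that the displayed expression is a $T_3$-twisted cyclic sum whose prefactor $\phi(s_1,s_2):=e^{-(2s_1+s_2)/3}$ satisfies the one-cocycle identity
\[
\phi(s_1,s_2)\cdot \phi(T_3(s_1,s_2))\cdot \phi(T_3^2(s_1,s_2))=1,
\]
equivalently that the three exponents $-\tfrac{2s_1+s_2}{3}$, $\tfrac{s_1+2s_2}{3}$, $\tfrac{s_1-s_2}{3}$ sum to zero. Combined with the twisting exponentials $1$, $e^{s_1+s_2}$, $e^{s_1}$ attached to the three summands, which evaluate $K:=k_{j_0}+k_{j_1}+k_{j_2}$ at the respective orbit points $s$, $T_3 s$, $T_3^2 s$, the prefactor $\phi$ is precisely tuned so that pairwise cancellations occur in $(1+T_3+T_3^2)F$, where $F$ denotes the displayed expression.

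For the first assertion, I would apply $1+T_3+T_3^2$ directly and classify contributions by ``slot,'' where a slot is a pair consisting of an orbit point $T_3^k s$ and a partial derivative index $\partial_i$: there are six slots in total. Each summand of $F$ is supported at a single orbit point and involves at most two of $\partial_1,\partial_2$, so after evaluating $F$, $T_3 F$, $T_3^2 F$ each slot receives exactly two nonzero contributions, coming from different summands. A direct exponent computation shows that in each slot the two contributions are negatives of each other. For example at the slot $\partial_1 K(s_1,s_2)$ the contributions are $-\phi(s_1,s_2)=-e^{-(2s_1+s_2)/3}$ from the first summand of $F(s_1,s_2)$ and $+\phi(T_3^2(s_1,s_2))\,e^{-s_1}=e^{-(2s_1+s_2)/3}$ from the middle summand of $F(T_3^2(s_1,s_2))$, and these cancel. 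The remaining five slots follow an identical pattern, proving $(1+T_3+T_3^2)F\equiv 0$ independently of $K$.

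For the second assertion, I would combine the first assertion with the differential equations obtained from restricting the functional relations \eqref{basicK8eqn} (and their analogs attached to $\widetilde K_9,\ldots,\widetilde K_{16}$) to the hyperplane $s_1+s_2+s_3=0$, as exemplified by \eqref{diffeqK8}. The first assertion, applied to the sum of the derivative-type left-hand sides over a cyclic orbit of functional relations, forces that total left-hand side to lie in the kernel of $1+T_3+T_3^2$, hence so does the corresponding total right-hand side. That total right-hand side decomposes into three pieces: finite differences of the three-variable functions $k_8,\ldots,k_{16}$; finite differences of the two-variable functions $k_3,\ldots,k_7$; and universal contributions built from the auxiliary functions $G_n$ of Lemma \ref{BabyFunctions1}. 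The two-variable pieces can be rearranged so as to fall within the scope of Theorem \ref{k1tok7eqnsinvariance} and are hence already in the kernel, while the $G_n$-contributions are cyclically symmetric and vanish under the cyclic sum by direct computation. The remaining three-variable finite-difference expression, organized by the cyclic orbits of the $\mathbb{Z}/3\mathbb{Z}$-action on the index set $\{8,\ldots,16\}$ (whose three orbits are exactly $\{8,11,12\}$, $\{9,10,13\}$, and $\{14,15,16\}$), yields precisely the three triples $(8,12,11)$, $(9,13,10)$, $(14,16,15)$ listed in the theorem.

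The main obstacle is not conceptual but combinatorial: the right-hand sides of \eqref{basicK8eqn} and its siblings are enormous, and separating their many finite-difference terms into the three groups above while tracking the exponential prefactors and their $T_3$-action requires careful bookkeeping, which is where essentially all of the labor of the second part sits. The first assertion, by contrast, reduces to the exponent arithmetic on $\phi$ and is essentially mechanical.
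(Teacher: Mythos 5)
Your proof of the first assertion is correct and I checked the exponent arithmetic directly. Writing $\phi(s_1,s_2) = e^{-(2s_1+s_2)/3}$, $T_3(s_1,s_2)=(-s_1-s_2,s_1)$, and expanding the displayed expression $F$ into its six ``slot'' contributions (orbit point $\times$ partial derivative), one finds that applying $1+T_3+T_3^2$ places exactly two contributions into each slot, and they are negatives of each other. For example at the slot $\partial_1 K(s_1,s_2)$ you get $-\phi(s_1,s_2)$ from $F$ and $+\phi(T_3^2(s_1,s_2))\,e^{-s_1}=+e^{-(2s_1+s_2)/3}$ from $T_3^2F$, and the remaining five slots cancel identically. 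This works for any two-variable $K$, not just $K=k_{j_0}+k_{j_1}+k_{j_2}$, which is consistent with the fact that the displayed expression is precisely (up to sign and an overall factor) the dot product $\frac{d}{dt}\big|_{t=0}\mathcal{O}K(s_1+t,s_2)\cdot\mathcal{O}\alpha_2(s_1,s_2)$ from the paper's orbit/flow organization of the differential system; your slot bookkeeping is a direct verification of the same mechanism.

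For the second assertion your high-level strategy --- sum the restricted functional relations over a $\Z/3\Z$ orbit of indices so that the summed left-hand side becomes the displayed expression with $K=k_{j_0}+k_{j_1}+k_{j_2}$, invoke the first assertion, and conclude that the summed right-hand side (a finite difference expression) lies in the kernel --- is sound, and in fact already suffices: once $\mathrm{LHS}_{\mathrm{total}}=\mathrm{RHS}_{\mathrm{total}}$ and $\mathrm{LHS}_{\mathrm{total}}\in\ker(1+T_3+T_3^2)$ by part one, the right-hand side is automatically in the kernel and is already an explicit finite difference expression. However, the two extra simplification steps you propose contain genuine errors. First, the one-variable pieces involving $k_3,\dots,k_7$ cannot be dispatched by Theorem~\ref{k1tok7eqnsinvariance}: that theorem concerns the action of $T_2$ on one-variable functions, an entirely different operator from $1+T_3+T_3^2$ acting on two-variable functions, so ``falling within its scope'' does not imply membership in $\ker(1+T_3+T_3^2)$. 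Second, if a quantity built from the $G_n$ were cyclically symmetric, i.e.\ $T_3$-invariant, then $1+T_3+T_3^2$ acts on it by multiplication by $3$, not by $0$; $T_3$-invariant nonzero functions are never in the kernel. So that sentence is self-contradictory as stated. You also omit from your decomposition the $k_{17},k_{19}$ terms visible on the right-hand side of \eqref{basicK8eqn}, so even the bookkeeping of the decomposition into ``three pieces'' is incomplete. If the goal is to isolate a finite difference expression involving only the $k_j$ from the relevant orbit, those auxiliary pieces must be shown to cancel after the cyclic sum, not to vanish individually, and this requires the combinatorics you defer.
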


The action of $\mathbb{Z}/4 \mathbb{Z}$ in the partial differential system 
can be seen in \eqref{diffeqK17} since the following transformation is involved:
\[
T_4(s_1,s_2,s_3) = (-s_1-s_2-s_3,s_1,s_2). 
\] 
The symmetries of the functions with respect to this action are also 
analysed in \cite{arXiv1611.09815}: 
\begin{theorem} \label{k17tok20eqnsinvariance}
For any pair of integers $j_0, j_1$ in $\{17, 18, 19, 20 \}$, 
\begin{eqnarray*}
&&e^{-\frac{3s_1}{4}-\frac{s_2}{2}-\frac{s_3}{4}} \Big (  -  \ddsone (k_{j_0}{} + k_{j_1})\left(s_1,s_2,s_3\right) \\
&& \qquad \qquad \qquad - e^{s_1+s_2+s_3} (\ddstwo - \ddsone) (k_{j_0}{} + k_{j_1}) (-s_1-s_2-s_3, s_1, s_2)  \\
&& \qquad \qquad \qquad - e^{s_1+s_2} (\ddsthree - \ddstwo) (k_{j_0}{} + k_{j_1}) (s_3, -s_1-s_2-s_3, s_1) \\
&& \qquad  \qquad \qquad +e^{s_1} \ddsthree (k_{j_0}{} + k_{j_1}) (s_2, s_3, -s_1-s_2-s_3) \Big ) 
\end{eqnarray*}
is in the kernel of $1 + T_4 + T_4^2 + T_4^3$.  
Moreover, there are expressions given by finite differences of the $k_j$ corresponding to the 
following cases that are 
in the kernel of $1 + T_4 + T_4^2 + T_4^3$: 
\begin{enumerate}
\item When $(j_0, j_1) = (17, 19)$.
\item When $(j_0, j_1) =(18, 18)$. 
\item When $(j_0, j_1)= (20, 20)$. 
\end{enumerate}
\end{theorem}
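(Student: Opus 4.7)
The plan is to adapt the strategy that underlies Theorems \ref{k1tok7eqnsinvariance} and \ref{k8tok16eqnsinvariance}, replacing $T_2$ and $T_3$ with the cyclic action of $T_4$, and to deduce the invariance from the restriction of the functional relation for $\widetilde K_{17}$ and its analogues for $\widetilde K_{18},\widetilde K_{19},\widetilde K_{20}$ to the hyperplane $s_1+s_2+s_3+s_4 = 0$. On this hyperplane the transformation $T_4$ has order four, so the symmetrizer $\Sigma_4 := 1+T_4+T_4^2+T_4^3$ satisfies
\[
\Sigma_4 \circ (1-T_4) \;=\; 1 - T_4^4 \;=\; 0,
\]
which means that every coboundary $g - T_4 g$ automatically belongs to $\ker \Sigma_4$. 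The strategy is therefore to exhibit the expression in the statement as such a coboundary.

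First I would inspect the left-hand side of the sample equation \eqref{diffeqK17}. It contains six summands organized as three pairs indexed by $\ddsone,\ddstwo,\ddsthree$; within each pair a term from $k_{17}$ is matched against a term from $k_{19}$ whose exponential prefactors differ by the shifts $e^{s_1}$, $e^{s_1+s_2}$, $e^{s_1+s_2+s_3}$. Multiplying through by the asymmetric weight $e^{-(3s_1+2s_2+s_3)/4}$, which is the arithmetic mean of the four linear forms $0,\, s_1,\, s_1+s_2,\, s_1+s_2+s_3$ (all equivalent on the hyperplane), each of the three pairs acquires precisely the shape $h - T_4 h$ with $h$ built from a single partial derivative of $k_{17}+k_{19}$. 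Summing the three pairs, the entire normalized left-hand side becomes a genuine coboundary, which proves the first assertion in the case $(j_0,j_1) = (17,19)$. The remaining pairs are handled in an identical manner, starting from the differential equation associated with $\widetilde K_{j_0}+\widetilde K_{j_1}$, whose LHS exhibits the same three-term telescoping structure by construction of the pseudodifferential parametrix.

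For the \emph{moreover} part, one extracts from the right-hand side of the differential equation, for the distinguished pairings $(17,19),\,(18,18),\,(20,20)$, an explicit finite-difference expression in the lower-variable functions $k_3,\ldots,k_{16}$. These particular pairings are exactly the ones compatible with the $\mathbb{Z}/4\mathbb{Z}$-action on the four positions of a degree-four $\delta_1,\delta_2$-word in \eqref{a4closedformula}: the cyclic shift interchanges $\delta_1\delta_1\delta_2\delta_2$ and $\delta_1\delta_2\delta_2\delta_1$, accounting for $(17,19)$, while $\delta_1\delta_2\delta_1\delta_2$ and $\delta_1^4$ are self-paired, accounting for $(18,18)$ and $(20,20)$. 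For these pairings the singular correction $\widetilde K_{j_0,\textnormal{s}}+\widetilde K_{j_1,\textnormal{s}}$ collapses, on the hyperplane, into an explicit combination of $k_3,\ldots,k_{16}$ that can be seen, either by a direct telescoping or by invoking Theorems \ref{k1tok7eqnsinvariance} and \ref{k8tok16eqnsinvariance} applied to the lower-order functions, to lie in $\ker \Sigma_4$.

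The main obstacle is the coboundary verification in the second paragraph. One has to keep track simultaneously of how $T_4$ acts on the arguments and on the exponential weights, and to confirm that the normalization $e^{-(3s_1+2s_2+s_3)/4}$ — uniquely forced by matching the weights of $h$ and $T_4 h$ across all three partial-derivative pairs — works \emph{simultaneously} for every pair $(j_0,j_1) \in \{17,18,19,20\}^2$. In practice this requires writing out the three differential equations associated with the pairs involving $k_{18}$ and $k_{20}$ and checking that the same three-derivative telescoping persists; once this pattern is established, the rest of the argument follows mechanically from $\Sigma_4(1-T_4)=0$.
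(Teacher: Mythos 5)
Your proposal is correct and uses essentially the mechanism the paper has in mind (the paper refers the reader to \cite{arXiv1611.09815} and only sketches it via the orbit $\mathcal{O}k$ and the auxiliary weight $\alpha_n$; the coboundary-and-symmetrizer language you adopt is a transparent reformulation of exactly that). The heart of the matter is the observation you build on, which is worth stating once explicitly: with $w(s_1,s_2,s_3)=-\frac{3s_1}{4}-\frac{s_2}{2}-\frac{s_3}{4}$ one has
\[
w\circ T_4 = w + (s_1+s_2+s_3),\qquad
w\circ T_4^2 = w + (s_1+s_2),\qquad
w\circ T_4^3 = w + s_1,
\]
so that if one defines, for $k=k_{j_0}+k_{j_1}$,
\[
g_j(s_1,s_2,s_3)= -\,e^{\,w\circ T_4^{\,j-1}}\;\partial_j k\big(T_4^{\,j-1}(s_1,s_2,s_3)\big)\qquad (j=1,2,3),
\]
then the three natural pairs in the display — the $\partial_1$ pair at positions $\mathrm{id},T_4$; the $\partial_2$ pair at $T_4,T_4^2$; and the $\partial_3$ pair at $T_4^2,T_4^3$ — are literally $g_j - T_4 g_j$, and the whole normalized expression is $(g_1+g_2+g_3)-T_4(g_1+g_2+g_3)\in\ker\Sigma_4$ by $\Sigma_4(1-T_4)=1-T_4^4=0$. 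Two small points of precision. First, $T_4$ has order four on all of $\mathbb{R}^3$; no hyperplane restriction is needed for that algebraic identity (the hyperplane $\sum s_i=0$ is only where equation \eqref{diffeqK17} was \emph{derived} from the four-variable functional relation). Second, the telescoping argument above is independent of $j_0,j_1$ and of any functional relation, which is why the first assertion holds for \emph{every} pair in $\{17,\dots,20\}$. The functional relations — and the cyclic structure of the degree-four $\delta_1,\delta_2$-words, which you correctly identify as pairing $1122\leftrightarrow 1221$, $1212\leftrightarrow 1212$, $1111\leftrightarrow 1111$ — enter only in the \emph{moreover} part, where one equates this coboundary with the finite-difference right-hand side and concludes the latter is also in $\ker\Sigma_4$.
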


Moreover, in \cite{arXiv1611.09815}, it is shown that a very simple flow defined by 
\[
(s_1, s_2, \dots, s_n) \mapsto (s_1 +t , s_2, \dots, s_n), 
\] 
combined with the action of the cyclic groups as described above, 
can be used to write the differential part of the 
partial differential system. In order to illustrate the idea, we just mention 
that for example in the case that the action of $\Z/3\Z$ is involved via 
the transformation \eqref{T3actionformula}, one defines the orbit $\mathcal{O}k$ of any 2-variable 
function $k$ by  
\[
\mathcal{O}k (s_1, s_2) = \left (  k(s_1, s_2), k(-s_1-s_2, s_1), k(s_2, -s_1 -s_2)  \right ). 
\]
Then one has to use the auxiliary function 
\[
\alpha_2(s_1, s_2) = e^{-\frac{2s_1}{3}-\frac{s_2}{3} },
\] 
to write 
\[
 \left ( \ddt   \mathcal{O} k (s_1 +t, s_2) \right )  \cdot  \left (\mathcal{O} \alpha_2 (s_1, s_2) \right )
 \]
as a finite difference expression when  $k= k_{j_0} + k_{j_1} + k_{j_2}$ and 
$(j_0, j_1, j_2)$ is either $( 8, 12, 11)$, $( 9, 13, 10)$, or  $( 14, 16, 15)$.  One can refer 
to \S 4.3 of \cite{arXiv1611.09815} for more details and to see the treatment of all cases in detail. 

\subsection{Gradient calculations leading to functional relations}

Here we explain how the functional relations written in \S \ref{functionalrelatioinssubsec} 
were derived in \cite{arXiv1611.09815}. In fact the idea comes from \cite{MR3194491}, where a fundamental identity was 
proved and by means of a  functional relation, the 2-variable function 
of the scalar curvature term $a_2$ was written in terms of its 1-variable function.  
The main identity to use from \cite{MR3194491} is that, if we consider the conformally 
perturbed Laplacian, 
\[
\triangle_h = e^{h/2} \triangle e^{h/2}.    
\]
then for the spectral zeta function defined by 
\[
\zeta_{h}(a, s) = \Tr(a \, \triangle_h^{-s}), 
\qquad s \in \C, \,\, \Re(s) \gg 0, 
\]
one has 
\begin{equation} \label{GradientIdentity}
\dep \zeta_{h + \vep a}(1, s) 
=  
-\frac{s}{2}\, \zeta_h \left( \widetilde a , s \right),  
\end{equation}
where 
\[
\widetilde a = \int_{-1}^1 e^{uh/2} a e^{-uh/2} \, du. 
\]
One can then see that 
\[
\zeta_h(a, -1) = - \vphi_0(a \, a_4(h)), \qquad a \in \CNT, \,\, h=h^* \in \CNT.  
\]
Therefore, it follows from \eqref{GradientIdentity} that 
\begin{equation} \label{tildea4start}
\dep \vphi_0(a_4(h + \vep a)) 
= 
-\frac{1}{2} \zeta_h (\widetilde a, -1) 
=
\frac{1}{2} \vphi_0(\widetilde a \, a_4(h)) 
=
- \vphi_0 \left ( a e^{h} \, \widetilde a_4(h) \right). 
\end{equation} 
where $\widetilde a_4(h)$ is given by the same formula as \eqref{a4closedformula} when 
the functions  $K_j(s_1, \dots, s_n)$  are replaced by 
\[
\widetilde K_j(s_1, \dots, s_n) 
= 
\frac{1}{2^n} \frac{\sinh \left ( (s_1+\dots+s_n)/2 \right )}{(s_1+\dots+s_n)/2} K_j(s_1, \dots, s_n).  
\]
Hence, the gradient $\dep \vphi_0(a_4(h + \vep a))  $ can be calculated mainly by using the 
important identity \eqref{tildea4start}. 

There is a second way of calculating the gradient $\dep \vphi_0(a_4(h + \vep a))  $ 
which yields finite difference expressions. For this approach a series of lemmas were 
necessary as proved in \cite{arXiv1611.09815}, which are of the following type. 

\begin{lemma}  \label{GrofFuncCalc3}  
For any smooth function $L(s_1, s_2, s_3)$ and any elements $x_1, x_2, x_3, x_4$ in $C(\NT)$, 
under the trace $\varphi_0$, one has: 
\begin{eqnarray*}
&& e^h  \left ( \dep L(\nabep, \nabep, \nabep)(x_1 \cdot x_2 \cdot x_3) \right )  x_4 \\
&=& a e^h L^\vep_{3,1}(\nab, \nab, \nab, \nab)(x_1 \cdot x_2 \cdot x_3 \cdot x_4) 
+
a e^h L^\vep_{3,2}(\nab, \nab, \nab, \nab)(x_2 \cdot x_3 \cdot x_4 \cdot x_1) \\
&&+
a e^h L^\vep_{3,3}(\nab, \nab, \nab, \nab)(x_3 \cdot x_4 \cdot x_1 \cdot x_2) 
+
a e^h L^\vep_{3,4}(\nab, \nab, \nab, \nab)(x_4 \cdot x_1 \cdot x_2 \cdot x_3),    
\end{eqnarray*}
where 
\begin{eqnarray*}
L^\vep_{3,1}(s_1, s_2, s_3, s_4) &:=& e^{s_1+s_2+s_3+s_4} \frac{L(-s_2-s_3-s_4, s_2, s_3) - L(s_1, s_2, s_3)}{s_1+s_2+s_3+s_4}, \\
L^\vep_{3,2}(s_1, s_2, s_3, s_4) &:=& e^{s_1+s_2+s_3} \frac{L(s_4, - s_2- s_3 -s_4, s_2) - L(-s_1 - s_2 -s_3, s_1, s_2)}{s_1+s_2+s_3+s_4}, \\
L^\vep_{3,3}(s_1, s_2, s_3, s_4) &:=& e^{s_1+s_2} \frac{L( s_3, s_4, -s_2 -s_3 -s_4) - L(s_3, -s_1-s_2-s_3, s_1)}{s_1+s_2+s_3+s_4}, \\
L^\vep_{3,4}(s_1, s_2, s_3, s_4) &:=& e^{s_1} \frac{L(s_2, s_3, s_4) - L(s_2, s_3, -s_1-s_2-s_3)}{s_1+s_2+s_3+s_4}. 
\end{eqnarray*}
\end{lemma}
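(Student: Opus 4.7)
The plan is to reduce the identity to the case where $L$ is a pure exponential, perform a Duhamel computation in that case, and then exploit cyclicity of $\varphi_0$ to group the resulting terms into the four cyclic configurations on the right. First, since both sides are linear and continuous in $L$, and since any smooth $L$ can be represented (in the Schwartz/Fourier framework that underlies the multivariable $\nab$-functional calculus) as a superposition of exponentials $e^{t_1 s_1 + t_2 s_2 + t_3 s_3}$, it suffices to verify the identity for a single exponential symbol $L(s_1, s_2, s_3) = e^{t_1 s_1 + t_2 s_2 + t_3 s_3}$.

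For such an $L$, setting $H := h + \vep a$, the functional calculus unfolds as
\[
L(\nabep,\nabep,\nabep)(x_1\cdot x_2\cdot x_3) = e^{-t_1 H}\, x_1\, e^{(t_1-t_2)H}\, x_2\, e^{(t_2-t_3)H}\, x_3\, e^{t_3 H},
\]
so that $e^h\, L(\nabep,\nabep,\nabep)(x_1\cdot x_2\cdot x_3)\, x_4$ is a product of four exponentials of $H$ interleaved with the fixed elements $x_1,\ldots,x_4$ (with $e^h$ on the far left). Applying $\dep$ and the Duhamel formula $\dep e^{\alpha H} = \alpha \int_0^1 e^{\alpha v h}\, a\, e^{\alpha(1-v) h}\, dv$ to each of these four exponentials produces exactly four integral terms, each carrying a single factor of $a$ located in one of the four cyclic gaps between consecutive $x_k$'s (reading cyclically $x_1, x_2, x_3, x_4$).

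For each of the four terms, use $\varphi_0(uv)=\varphi_0(vu)$ to cycle the newly produced factor $a$ (together with the adjacent $e^h$) to the far left. The string remaining after $a\, e^h$ is then a product of exponentials in $h$ separated by $x_{\sigma(1)}, x_{\sigma(2)}, x_{\sigma(3)}, x_{\sigma(4)}$, where $\sigma$ is the cyclic permutation determined by the gap in which $a$ was inserted; the four orderings obtained are precisely $x_1\cdot x_2\cdot x_3\cdot x_4$, $x_2\cdot x_3\cdot x_4\cdot x_1$, $x_3\cdot x_4\cdot x_1\cdot x_2$ and $x_4\cdot x_1\cdot x_2\cdot x_3$. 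The Frullani-type identity $\int_0^1 e^{\alpha v}\, e^{\beta(1-v)}\, dv = (e^\alpha - e^\beta)/(\alpha-\beta)$ then turns each Duhamel integral into a finite difference of exponentials, which inspection identifies with $L^\vep_{3,k}$ evaluated on the relevant exponential. The overall prefactor $e^{s_1+s_2+s_3+s_4}$ in $L^\vep_{3,k}$ emerges from the $e^h$ on the far left combined with the telescoping $e^{t_j h}$ factors produced by collapsing consecutive exponentials after the cyclic rearrangement.

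The principal obstacle is the combinatorial bookkeeping in this last identification: one must match the specific shifts in the arguments of $L$ inside each $L^\vep_{3,k}$ (for example $s_1 \mapsto -s_2-s_3-s_4$ in $L^\vep_{3,1}$, and the less symmetric shifts appearing in $L^\vep_{3,2}$) with what one obtains by collapsing the pair of exponentials flanking the differentiated $e^{\alpha H}$ after cycling under $\varphi_0$, and to keep straight the sign conventions coming from $\nab = \ad(-h)$. Once this matching is verified term by term for an exponential $L$, the general case follows immediately by linearity and the Fourier representation argument of the first step.
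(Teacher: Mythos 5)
Your proposal is correct, and it uses the approach that underlies the reference the paper cites for this lemma: reduce to exponential symbols by Fourier superposition, apply the Duhamel formula \(\dep e^{\alpha(h+\vep a)} = \alpha\int_0^1 e^{\alpha v h}\,a\,e^{\alpha(1-v)h}\,dv\) at each of the four gaps, cycle the resulting \(a\) to the front under \(\varphi_0\), and recognize the Duhamel integral as a divided difference (this is precisely the mechanism behind equation \eqref{generalduahmmel} and Theorem~\ref{derivationoffunctionofh} elsewhere in the survey). The paper itself only states the lemma as proved in \cite{arXiv1611.09815}, but your argument — in particular the observation that the four cyclic orderings \(x_1x_2x_3x_4,\ x_2x_3x_4x_1,\ x_3x_4x_1x_2,\ x_4x_1x_2x_3\) arise from the four Duhamel insertion points, and that the \(v\leftrightarrow 1-v\) symmetry of the integral reconciles the two expressions — is exactly the intended proof.
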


Also, in order to perform necessary manipulations in the second calculation of the 
gradient $\dep \vphi_0(a_4(h + \vep a))  $, one needs lemmas of this type:

\begin{lemma} \label{DerFuncCalc3}
For any smooth function $L(s_1, s_2, s_3)$ and any elements 
$x_1, x_2, x_3$ in  $C(\NT)$, one has: 
\begin{eqnarray*}
&&\delta_j \left (  L(\nab, \nab, \nab )(x_1 \cdot x_2 \cdot x_3) \right ) \\
&=& 
L(\nab, \nab, \nab)( \delta_j(x_1) \cdot x_2 \cdot x_3) 
+ L(\nab, \nab, \nab)( x_1 \cdot \delta_j(x_2) \cdot x_3) \\
&&
+ L(\nab, \nab, \nab)( x_1 \cdot x_2 \cdot \delta_j(x_3))
+ L^{\delta}_{3, 1}(\nab, \nab, \nab, \nab) (\delta_j(h) \cdot x_1 \cdot x_2 \cdot x_3) \\
&& 
+ L^{\delta}_{3, 2}(\nab, \nab, \nab, \nab) (  x_1 \cdot \delta_j(h) \cdot x_2 \cdot x_3) 
+ L^{\delta}_{3, 3}(\nab, \nab, \nab, \nab) (  x_1  \cdot x_2 \cdot \delta_j(h) \cdot x_3) \\
&& 
+ L^{\delta}_{3, 4}(\nab, \nab, \nab, \nab) (  x_1  \cdot x_2 \cdot x_3 \cdot \delta_j(h) ),  
\end{eqnarray*}
where 
\begin{eqnarray*}
L^{\delta}_{3, 1}(s_1, s_2, s_3, s_4) &:=& \frac{L(s_2, s_3, s_4) - L(s_1+s_2, s_3, s_4)}{s_1}, 
\\
L^{\delta}_{3, 2}(s_1, s_2, s_3, s_4) &:=& \frac{L(s_1+s_2, s_3, s_4) - L(s_1, s_2+s_3, s_4)}{s_2}, \\
L^{\delta}_{3, 3}(s_1, s_2, s_3, s_4) &:=& \frac{L(s_1, s_2+s_3, s_4) - L(s_1, s_2, s_3+s_4)}{s_3}, \\
L^{\delta}_{3, 4}(s_1, s_2, s_3, s_4) &:=& \frac{L(s_1, s_2, s_3+s_4) - L(s_1, s_2, s_3)}{s_4}. 
\end{eqnarray*}

\end{lemma}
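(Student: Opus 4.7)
The plan is to reduce to the case where $L$ is a pure exponential monomial, where the functional calculus $L(\nab,\nab,\nab)$ becomes an explicit product of modular automorphisms, and then to apply $\del_j$ directly using the Leibniz rule together with Duhamel's formula. Recall from the text that $\nab = \log \Delta$ where $\Delta(a) = e^{-h}ae^{h}$, so $e^{s\nab}(a) = e^{-sh}\,a\,e^{sh}$ for all real $s$. For a monomial $L(s_1,s_2,s_3)=e^{m_1 s_1 + m_2 s_2 + m_3 s_3}$ one therefore has
\[
L(\nab,\nab,\nab)(x_1\cdot x_2\cdot x_3) = e^{-m_1 h}\, x_1\, e^{(m_1-m_2)h}\, x_2\, e^{(m_2-m_3)h}\, x_3\, e^{m_3 h},
\]
a product of seven factors alternating between exponentials of $h$ and the $x_i$. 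The main step is to apply $\del_j$ to this product using the ordinary Leibniz rule.

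The three terms in which $\del_j$ lands on an $x_i$ recombine, by the same calculation read backwards, to give precisely
\[
L(\nab,\nab,\nab)(\del_j(x_1)\cdot x_2\cdot x_3) + L(\nab,\nab,\nab)(x_1\cdot \del_j(x_2)\cdot x_3) + L(\nab,\nab,\nab)(x_1\cdot x_2\cdot \del_j(x_3)),
\]
which accounts for the first three terms on the right-hand side of the claimed identity. For each of the four remaining terms, where $\del_j$ hits one of the exponentials $e^{\alpha h}$ with $\alpha\in\{-m_1,\,m_1-m_2,\,m_2-m_3,\,m_3\}$, I would apply the Duhamel formula
\[
\del_j(e^{\alpha h}) = \alpha \int_0^1 e^{\alpha(1-\tau)h}\, \del_j(h)\, e^{\alpha\tau h}\, d\tau,
\]
which inserts a factor $\del_j(h)$ sandwiched between two exponentials whose exponents add up to $\alpha$. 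The integration parameter plays exactly the role of the merging variable in the divided difference defining $L^\delta_{3,k}$; the elementary identity $\frac{e^{as}-e^{bs}}{s}=\int_0^1 (a-b)\, e^{((1-\tau)b+\tau a)s}\,d\tau$ shows that, for the exponential monomial $L$, the expression $L^\delta_{3,k}(\nab,\nab,\nab,\nab)$ evaluated on the appropriate insertion of $\del_j(h)$ reproduces precisely the term arising from differentiating the $k$-th exponential factor. Summing the four contributions and comparing exponents slot by slot then gives the desired identity for every pure exponential $L$.

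To finish, I would extend from monomials to general smooth $L$ by linearity and continuity: any smooth $L$ of sufficient decay can be represented as an integral (Fourier/Laplace transform) of exponentials $e^{m_1 s_1+m_2 s_2+m_3 s_3}$, both sides of the asserted identity are linear in $L$, and when $L$ is applied to $C^\infty(\NT)$-valued arguments the resulting expressions depend continuously on $L$ in the natural Fr\'echet topology controlling the functional calculus. Hence the monomial identity propagates to arbitrary $L$. The main obstacle is the careful bookkeeping in the matching step: one must verify that the change of variable $\tau\mapsto 1-\tau$, the regrouping of the exponentials $e^{-m_1 h},\,e^{(m_1-m_2)h},\,e^{(m_2-m_3)h},\,e^{m_3 h}$ into modular actions on the four slots $\del_j(h)\cdot x_1\cdot x_2\cdot x_3$, $x_1\cdot \del_j(h)\cdot x_2\cdot x_3$, etc., and the specific placement of the divided-difference variable in each $L^\delta_{3,k}$ line up exactly as dictated by the definitions. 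Once this indexing is set up consistently, the identity follows termwise and the extension to smooth $L$ is routine.
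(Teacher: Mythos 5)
Your proof is correct and uses the expected method: reduce to exponential monomials $L(s_1,s_2,s_3)=e^{m_1 s_1+m_2 s_2+m_3 s_3}$, for which $L(\nabla,\nabla,\nabla)(x_1\cdot x_2\cdot x_3)=e^{-m_1h}x_1e^{(m_1-m_2)h}x_2e^{(m_2-m_3)h}x_3e^{m_3h}$, apply the Leibniz rule together with Duhamel's formula to each exponential factor, and check that each of the four $\delta_j(h)$-insertion terms coincides with the corresponding $L^{\delta}_{3,k}(\nabla,\nabla,\nabla,\nabla)$ applied to the shifted product. The slot-by-slot bookkeeping you flag as the main obstacle indeed works out (each $L^\delta_{3,k}$, on a monomial, factors as exponentials in the unaffected slots times a single $\tfrac{e^{as}-e^{bs}}{s}$ in the $k$-th slot, which your integral identity matches to the Duhamel insertion), and the extension to general smooth $L$ by linearity and continuity is routine in this modular functional calculus.
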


After performing the second gradient calculation in \cite{arXiv1611.09815}, and comparing it with the 
first calculation based on \eqref{tildea4start}, the functional relations were derived conceptually.

\subsection{The term $a_4$ for non-conformally flat metrics on noncommutative four tori}
It was illustrated in \cite{arXiv1611.09815} that, having the calculation of the term $a_4$ for the noncommutative 
two torus in place, one can conveniently write a formula for the term $a_4$ of a non-conformally 
flat metric on the noncommutative four torus that is the product of two noncommutative two tori. 
The metric is the noncommutative analog of the 
following metric. Let 
$(x_1, y_1, x_2, y_2) \in$  $\mathbb{T}^4 = (\R/ 2\pi \Z)^4$ be the coordiantes of the ordinary 
four torus and consider the metric 
\begin{equation*} 
g =  e^{-h_1(x_1, y_1)} \left ( dx_1^2 + dy_1^2\right ) + e^{-h_2(x_2, y_2)} \left ( dx_2^2 + dy_2^2\right ),  
\end{equation*}
where $h_1$ and $h_2$ are smooth real valued functions. The Weyl tensor is conformally invariant, 
and one can assure that the above metric is not conformally flat by calculating the components of 
its Weyl tensor and observing that they do not all vanish. The non-vanishing components are:  
\[
C_{1212} =
\]
\begin{center}
\begin{math}
\frac{1}{6} e^{-h_1\left(x_1,y_1\right)} \partial_{y_1}^2 h_1{}\left(x_1,y_1\right)
+\frac{1}{6} e^{h_2\left(x_2,y_2\right) -2 h_1\left(x_1,y_1\right)}  \partial_{y_2}^2 h_2{}\left(x_2,y_2\right)
+\frac{1}{6} e^{-h_1\left(x_1,y_1\right)}  \partial_{x_1}^2 h_1\left(x_1,y_1\right)
+\frac{1}{6} e^{h_2\left(x_2,y_2\right) -2 h_1\left(x_1,y_1\right)}  \partial_{x_2}^2h_2\left(x_2,y_2\right), 
\end{math}
\end{center}
\[
C_{1313}= -\frac{1}{2}e^{-h_2\left(x_2,y_2\right) + h_1\left(x_1,y_1\right)} C_{1212}, 
\]
\[
C_{2424} = C_{2323} = C_{1414}= C_{1313}, 
\]
\[
C_{3434} = e^{-2 h_2\left(x_2,y_2\right) +2 h_1\left(x_1,y_1\right)} C_{1212}. 
\]

Now, one can consider a noncommutative four torus of the form 
$\NTp \times \NTpp$ that is the product of two noncommutative two tori. Its 
 algebra has four unitary generators 
$U_1, V_1, U_2, V_2$ with the following relations:  each element of the pair $(U_1, V_1)$ 
commutes with each element of the pair $(U_2, V_2)$, and there are fixed 
irrational real numbers $\theta'$ and $\theta''$ such that: 
\[
V_1 \, U_1 = e^{2 \pi i \theta'}\, U_1 \,V_1, 
\qquad 
V_2 \,U_2= e^{2 \pi i \theta''}\, U_2\, V_2.    
\] 
One can then choose conformal factors  
$e^{-h'}$ and $e^{-h''}$, where $h'$ and $h''$ are 
selfadjoint elements in $\CNTp$ and $\CNTpp$, respectively, 
and use them to perturb the flat metric of each component conformally. 
Then the Laplacian of the  product geometry is given by 
\[
\triangle_{\vphi', \vphi''} 
= 
\triangle_{\vphi'} \otimes 1 + 1 \otimes \triangle_{\vphi''}, 
\]
where 
$\triangle_{\vphi'}$ and $\triangle_{\vphi''}$ are respectively 
the Laplacians of the perturbed metrics on $\NTp$ and $\NTpp$. 
Now one can use a simple Kunneth formula to find the term 
$a_4$ in the asymptotic expansion 
\begin{eqnarray} \label{heatexp4d}
&& \Tr(a \exp ( -t \triangle_{\vphi', \vphi''}))  \sim \nonumber  \\
&& \qquad \qquad  t^{-2} \left ( ( \vphi'_0 \otimes \vphi''_0 ) (a\, a_0) + (\vphi'_0 \otimes \vphi''_0 ) (a\, a_2)\, t + ( \vphi'_0 \otimes \vphi''_0) (a\, a_4)\, t^2 + \cdots \right )   
\end{eqnarray}
in terms of the known terms appearing in the following expansions: 
\begin{eqnarray*}
\Tr( a' \exp (-t \triangle_{\vphi'}) )  
&\sim& 
t^{-1} \left ( \vphi'_0(a' \, a'_0) + \vphi'_0( a'\, a'_2) \, t + \vphi'_0(a'\, a'_4) \,t^2 + \cdots \right ),  \\
\Tr( a'' \exp ( -t \triangle_{\vphi''}) )  
&\sim& 
t^{-1} \left ( \vphi''_0(a'' \, a''_0) + \vphi''_0(a''\, a''_2) \, t + \vphi''_0(a''\, a''_4)\, t^2 + \cdots \right ). 
\end{eqnarray*}
The general formula is
\[
a_{2n} = \sum_{i=0}^{n} a'_{2i} \otimes a''_{2(n-i)}   \in   C^\infty(\NTp \times \NTpp),  
\]
hence an explicit formula for $a_4$ of the noncommutative four torus with the product 
geometry explained above since there are explicit formulas for its ingredients.  

In this case of the non-conformally flat metric on the product geometry,  
two modular automorphisms are involved in the formulas for the geometric invariants 
and this motivates further systematic research on {\it twistings} that involve two dimensional modular 
structures, cf.  \cite{MR866491}.

\section{{\bf Twisted spectral triples and Chern-Gauss-Bonnet 
theorem for ergodic $C^*$-dynamical systems}}.  

This section is devoted to the notion of twisted spectral triples 
and some details of their appearance in the context of noncommutative 
conformal geometry. In particular we explain construction of twisted 
spectral triples for ergodic $C^*$-dynamical systems and the validity 
of the Chern-Gauss-Bonnet theorem in this vast setting.

\subsection{Twisted spectral triples }

The notion of twisted spectral triples was introduced in \cite{MR2427588} to incorporate 
the study of type III algebras using noncommutative differential geometric 
techniques. In the definition of this notion, in addition to a triple $(A, H, D)$ 
of a $*$-algebra $A$, a Hilbert space $H$, and an unbounded operator 
$D$ on $H$ which plays the role of the Dirac operator, one has to bring 
into the picture an automorphism $\sigma$ of $A$ which interacts  with 
the data as follows. Instead of the ordinary commutators $[D, a]$ as in the 
definition of an ordinary spectral triple, in the twisted case one asks for the 
boundedness of the twisted commutators $[D, a]_\sigma = Da - \sigma(a) D$. 
More precisely, here also one assumes a representation of $A$ by bounded 
operators on $H$ such that the operator $D a - \sigma (a) D$ is defined on 
the domain of $D$ for any $a \in A$, and that it extends by continuity to a 
bounded operator on $H$. 

This twisted notion of a spectral triple is essential for type III examples as 
this type of algebras do not possess non-zero trace functionals, and ordinary 
spectral triples with suitable properties cannot be constructed over them for the 
following reason \cite{MR2427588}. If $(A, H, D)$ is an $m^+$-summable ordinary spectral triple 
then the following linear functional on $A$ defined by 
\[
a \mapsto \Tr_\omega(a |D|^{-m}) 
\]
gives a trace. The main reason for this is that the kernel of the Dixmier trace $\Tr_\omega$ is  
a large kernel that contains all operators of the form  $|D|^{-m} a - a |D|^{-m} $, $a \in A$, 
if the ordinary commutators are bounded. In fact we are using the {\it regularity} 
assumption on the spectral triple, which in particular requires boundedness of the 
commutators of elements of $A$ with $|D|$ as well as with $D$ (indeed this is a natural 
condition and the main point is that one is using ordinary commutators).   
Hence, trace-less algebras cannot fit into the paradigm of ordinary spectral triples. 

It is quite amazing that in \cite{MR2427588}, examples are provided where one can obtain 
boundedness of twisted commutators $D a - \sigma(a) D$ and $|D|a - \sigma(a) |D|$ 
for all elements $a$ of the algebra by means of an algebra automorphism 
$\sigma$, where the Dirac operator $D$ has the $m^+$-summability property.  
Then they use the boundedness of the twisted commutators to show that operators 
of the form $|D|^{-m} a - \sigma^{-m}(a) |D|^{-m} $ are in the kernel of the Dixmier trace 
and the linear functional $a \mapsto \Tr_\omega(a |D|^{-m})$ yields a twisted trace on 
$A$.

\subsection{Conformal perturbation of a spectral triple}
One of the main examples in \cite{MR2427588} that demonstrates the need 
for the notion of twisted spectral triples in noncommutative geometry is 
related to conformal perturbation of Riemannian metrics. That is, 
if $D$ is the Dirac operator of a spin manifold equipped with a Riemannian 
metric $g$, then, after a conformal perturbation of the metric to 
$\tilde g = e^{-4h} g$ by means of a smooth real valued function $h$ 
on the manifold, the Dirac operator 
of the perturbed metric $\tilde g$ is unitarily equivalent to the operator 
\[
\tilde D= e^h D e^h. 
\]
So this suggests that given an ordinary spectral triple $(A, H, D)$ with 
a noncommutative algebra $A$, 
since the metric is encoded in the analog $D$ of the Dirac operator, 
one can implement conformal perturbation of the metric by fixing a 
self-adjoint element $h \in A$ and by then replacing $D$ with $\tilde D= e^h D e^h. $ 
However, it turns out that the tripe $(A, H, \tilde D)$ is not necessarily 
a spectral triple any more, since, because of noncommutativity of $A$, 
the commutators $[\tilde D, a]$, $a \in A$, are not necessarily bounded 
operators. Despite this, interestingly, the remedy brought forth in \cite{MR2427588} is 
to introduce the automorphism   
\[
\sigma (a) = e^{2h} a e^{-2h}, \qquad a \in A, 
\]
which yields the bounded twisted commutators 
\[
[\tilde D, a]_\sigma = \tilde D a - \sigma(a) \tilde D, \qquad a \in A. 
\]

\subsection{Conformal perturbation of the flat metric on $\ncttwo$} \label{conformalnc2torustwistedspec}

Another important example, which is given in \cite{MR2907006}, shows that twisted 
spectral triples can arise in a more intrinsic manner, compared to the 
example we just illustrated, when a conformal perturbation is implemented. 
In \cite{MR2907006}, the flat geometry of $\ncttwo$ is perturbed by a fixed conformal 
factor $e^{-h}$, where $h=h^* \in \sncttwo$. This is done by replacing the 
canonical trace $\varphi_0$ on $\ancttwo$ (playing the role of the 
volume form) by the tracial state $\varphi(a) = \varphi_0(a e^{-h})$, $a \in \ancttwo$.  
In order to represent the opposite algebra of $\ancttwo$ on the Hilbert space 
$\mathcal{H}_\varphi$, obtained from $\ancttwo$ by the GNS construction, 
one has to modify the ordinary action induced by right multiplication.  That is, 
one has to consider the action defined by 
\[
a^{op} \cdot \xi = \xi e^{-h/2} a e^{h/2}.  
\]

It then turns out that with the new action, the ordinary commutators 
$[D, a]$, $a \in \sncttwo^{op}$, are not bounded any more, where 
$D$ is the Dirac operator 
\[ 
D=
\left(\begin{array}{c c}
0 & \partial_\varphi^* \\
\partial_\varphi & 0
\end{array}\right)
: \mathcal{H} \to \mathcal{H}. 
\] 
Here,  
\[
\partial_\varphi = \delta_1 +i \delta_2 : \mathcal{H}_\varphi \to \mathcal{H}^{(1,0)},
\] 
where  $\mathcal{H}^{(1,0)}$, 
the analogue of $(1, 0)$-forms, is the Hilbert space 
completion of finite sums $\sum a \partial(b), a, b 
\in A_\theta^\infty $, 
with respect to the inner product 
\[
(a \partial b, c \partial d) = \varphi_0 (c^* a (\partial b) (\partial d)^*), 
\]
and 
\[
\mathcal{H}= \mathcal{H}_\varphi \oplus \mathcal{H}^{(1,0)}. 
\]

The remedy for obtaining bounded commutators  is to  use  a 
twist given by the automorphism
\[
\sigma(a^{op}) = e^{-h/2} a e^{h/2},    
\]
which leads to bounded twisted commutators \cite{MR2907006} 
\[
[D, a^{op}]_\sigma = Da^{op} -\sigma(a^{op}) D,  \qquad a \in \sncttwo^{op}. 
\]

\subsection{Conformally twisted spectral triples for $C^*$-dynamical systems}
\label{ergodiccstartsubsec}

The example in \S \ref{conformalnc2torustwistedspec} 
inspired the construction of twisted spectral triples for 
general ergodic $C^*$-dynamical systems in \cite{MR3458156}. The Dirac operator 
used in this work, following more closely the geometric approach taken 
originally in \cite{MR572645}, is the analog of the de Rham operator. An important 
reason for this choice is that an important goal in \cite{MR3458156} was to confirm 
the validity of the analog of the Chern-Gauss-Bonnet theorem in the vast 
setting of ergodic $C^*$-dynamical systems.

In this subsection we consider a $C^*$-algebra $\Aa$ with a strongly continuous ergodic 
action $\alpha$ of a compact Lie group $G$ of dimension $n$, and we let  
$\Aa^\infty$ denote the smooth subalgebra of $\Aa$, which is defined as:
\[
\Aa^\infty = 
\{ a \in \Aa : \textnormal{the map }g \mapsto \alpha_g(a) \text{ is in }C^\infty (G, A) \}.
\]
Following closely the construction in \cite{MR572645}, we can define a space of 
{\it differential forms} on $\Aa$ by using the exterior powers of $\LieG^*$, 
namely that for  $k = 0, 1, 2, \dots, n, $ we set: 
\begin{equation}
\label{Eqn:DefDiffForms}
\Omega^k(\Aa, G) = \Aa \otimes \bigwedge^k \LieG^*,
\end{equation}
where $\LieG^*$ is the linear dual of the Lie algebra $\LieG$ of the Lie group $G$. 
We consider the inner product $\langle \cdot, \cdot \rangle$ on  $\LieG^*$ induced by the Killing form, and  
extend it to an inner product on $\bigwedge^k \LieG^*$ by setting
$$
\langle v_1 \wedge \cdots  \wedge  v_k, w_1 \wedge \cdots \wedge w_k \rangle = \det( \langle v_i, w_j \rangle ). 
$$

After fixing an orthonormal basis $(\omega_j)_{j = 1, \ldots , n}$ 
for $\LieG^*$, we equip the above differential forms with an exterior 
derivative $d : \Omega^k(\Aa, G) \to \Omega^{k+1}(\Aa, G)$ given by 
\begin{multline}
\label{Eqn:Defd}
d( a \otimes \omega_{i_1} \wedge \omega_{i_2} \wedge \cdots \wedge \omega_{i_k} ) =\sum_{j = 1}^n \partial _j(a) \otimes \omega_j \wedge \omega_{i_1} \wedge \omega_{i_2} \wedge \cdots \wedge \omega_{i_k} \\
 - \frac{1}{2} \sum_{j=1}^k \sum_{\alpha, \beta} (-1)^{j+1} c^{i_j}_{\alpha \beta} a \otimes \omega_\alpha \wedge \omega_\beta \wedge \omega_{i_1} \wedge \cdots \wedge \omega_{i_{j-1}} \wedge \omega_{i_{j+1}} \wedge \cdots  \wedge \omega_{i_k},
\end{multline}
where  the coefficients $c^{i}_{\alpha \beta}$ are the \emph{structure constants} 
of the Lie algebra $\LieG$ determined by the relations  
\[
[\partial _\alpha, \partial _\beta] = \sum_{i=1}^n c^i_{\alpha \beta} \partial _i
\] 
for the predual $(\partial_j)_{j=1, \dots, n}$ of $(\omega_j)_{j=1, \dots, n}$.  
This exterior derivative satisfies $d \circ d = 0$ on $\Omega^\bullet(\Aa, G)$, therefore we have a complex  $\left (\Omega^\bullet(\Aa, G) , d \right)$. This complex is called the Chevalley-Eilenberg cochain complex 
with coefficients in $\Aa$, one can refer to \cite{MR938524} for more details.

We now define an inner  product on $\Omega^k(\Aa, G)$,  for which we make use 
 of the unique $G$-invariant tracial state $\varphi_0$ on $\Aa$, see \cite{MR625345}. The inner 
 product is defined by  
 \begin{equation}
(a \otimes v_1 \wedge \cdots \wedge v_k, a' \otimes w_1 \wedge \cdots \wedge w_k )_0= \varphi_0( a^* a' ) \det( \langle v_i, w_j \rangle ). 
\end{equation}
We denote the Hilbert space completion of $\Omega^k(\Aa, G)$ with respect 
to this inner product by $\mathcal{H}_{k,0}$. 

In order to implement a conformal perturbation, we fix a selfadjoint element 
$h \in \Aa^\infty$, define the following new inner product on $\Omega^k(\Aa, G)$: 
\begin{equation}
\label{Eqn:ScalProd}
(a \otimes v_1 \wedge \cdots \wedge v_k, a' \otimes w_1 \wedge \cdots \wedge w_k )_h = \varphi_0( a^* a' e^{(n/2-k)h}) \det( \langle v_i, w_j \rangle ), 
\end{equation}
and denote the associated Hilbert space by $\mathcal{H}_{k, h}$.

One of the goals is to construct ordinary and twisted spectral triples 
by using the unbounded operator $d+ d^*$, the analog of the de Rham 
operator, acting on the direct sum of all $\mathcal{H}_{k, h}$.   
Here the adjoint $d^*$ of $d$ is of course taken with respect to the conformally  
perturbed inner product $(\cdot, \cdot)_h$.  
The Hilbert spaces are simply related by the unitary maps $U_k \colon \mathcal{H}_{k, 0} \to \mathcal{H}_{k, h} $ given on degree $k$ forms by:
\[
U_k( a \otimes v_1 \wedge \cdots \wedge v_k ) = a e^{-(n/2-k)h/2} \otimes v_1 \wedge \cdots \wedge v_k.
\]
Therefore, for simplicity, we use these unitary maps to transfer the 
operator $d+d^*$ to an unbounded operator $D$ acting on the Hilbert space 
$\mathcal{H}$ that is the direct sum of all $\mathcal{H}_{k,0}$.   
We can now state the following result from \cite{MR3458156}.   
 
\begin{theorem}
\label{thm:twistedspecdynamical}
Consider the above constructions associated with a $C^*$-algebra $\Aa$ 
with an ergodic action of an $n$-dimensional Lie group $G$. 
The operator $D$ has a selfadjoint extension which is $n^+$-summable. 
With the representation of $\Aa^\infty$ on 
$\mathcal{H} = \bigoplus_k \mathcal{H}_{k,0}$ induced by left multiplication,  
the triple $(\Aa^\infty, \mathcal{H}, D)$ is an ordinary spectral triple. 
However, when one represents the opposite algebra of $\Aa^\infty$ on 
$\mathcal{H}$ using multiplication from right, one obtains a twisted spectral 
triple with respect to the automorphism defined by $\beta(a^{op}) = e^{h } a e^{-h }$. 
\end{theorem}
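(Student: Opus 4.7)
The plan is to establish the three assertions of the theorem in order: self-adjointness and $n^+$-summability of $D$, boundedness of the ordinary commutators $[D, L_a]$ for $a \in \Aa^\infty$ acting on $\mathcal{H}$ by left multiplication, and boundedness of the twisted commutators $[D, R_{a^{op}}]_\beta = D R_{a^{op}} - R_{\beta(a^{op})} D$ for the right-multiplication representation of $\Aa^{\infty, op}$. The central structural input is the explicit description of $D$ as the transfer of $d+d^*$ through the unitaries $U_k : \mathcal{H}_{k,0} \to \mathcal{H}_{k,h}$. Since each $U_k$ is implemented by \emph{right} multiplication on the algebra factor by $e^{-(n/2-k)h/2}$, the transferred operator $U_{k+1}^{-1} \circ d \circ U_k$ is a sandwich of $d$ between two right-multiplication factors involving $h$, and a dual expression holds for $d^*$. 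This manifestly separates the analysis into a commutative part (the $h$-right-multiplication factors) and a derivation part (the Chevalley--Eilenberg operator $d$), which will drive the rest of the argument.

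For self-adjointness I would exploit the Peter--Weyl decomposition together with the Høegh-Krohn--Landstad--Størmer theorem for ergodic actions, which guarantees that $\Aa$ is the closure of a direct sum of finite-dimensional $G$-isotypic subspaces $\Aa_\pi$, $\pi \in \hat G$, with $\dim \Aa_\pi \leq (\dim \pi)^2$. Because $d$ in \eqref{Eqn:Defd} is built from the $G$-equivariant derivations $\partial_j$ together with constants of structure of $\LieG$, it is $G$-equivariant, so the $\pi$-isotypic decomposition of $\bigoplus_k \mathcal{H}_{k,0}$ is preserved by $D^2 = dd^* + d^*d$ (with $d^*$ the adjoint in the perturbed inner product transferred back via the $U_k$). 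On each finite-dimensional isotypic block $D$ acts as a symmetric matrix, so $D$ is essentially selfadjoint on the algebraic direct sum of isotypic blocks and admits a complete orthonormal basis of eigenvectors.

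For $n^+$-summability I would estimate the eigenvalues of $D^2$ block by block. Up to a bounded perturbation coming from the $h$-dependent conjugation factors and the structure-constant piece of \eqref{Eqn:Defd}, the leading part of $D^2$ agrees with the Laplace--Casimir operator acting on $\mathcal{H}$ through $\alpha$; hence the eigenvalues on the $\pi$-block are $c_\pi + O(1)$, where $c_\pi$ is the Casimir eigenvalue of $\pi$. The classical Weyl law for a compact Lie group of dimension $n$ gives $\sum_{c_\pi \le \Lambda} (\dim \pi)^2 \sim C_G \,\Lambda^{n/2}$, and since the multiplicity of the $\pi$-block in $\mathcal{H}$ is at most $2^n (\dim \pi)^2$, the eigenvalues $\lambda_k$ of $D^2$ satisfy $\lambda_k \sim c\, k^{2/n}$, which yields $|D|^{-n} \in \mathcal{L}^{1,\infty}(\mathcal{H})$.

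For the commutator bounds I would use that left multiplications and right multiplications commute. In the left case, $[D, L_a]$ reduces to $R_{(\cdots)} [d + d^*, L_a] R_{(\cdots)}$ with bounded right-multiplication factors, and $[d, L_a]$ is given by the derivations $\partial_j(a)$ extended to forms, hence is bounded since $a \in \Aa^\infty$. In the right-multiplication case, $R_{a^{op}}$ commutes with $d$ itself (since $d$ is built from left-acting derivations and the Chevalley--Eilenberg correction acts only on the form factor), so the only obstruction to boundedness of an \emph{untwisted} commutator comes from the asymmetric right-multiplication factors wrapped around $d$ in its transferred form. Commuting $R_{a^{op}}$ past these factors produces precisely the inner automorphism $\beta(a^{op}) = e^{h} a e^{-h}$ of $\Aa^{op}$, because the net change of right-multiplication factor across the degree-raising operator $d$ is right multiplication by $e^{h}$; the residual difference is expressible through finite-difference quotients involving $\partial_j(a)$ and $\partial_j(h)$, hence bounded.

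The main obstacle will be the $n^+$-summability, and more precisely the verification that the lower-order Chevalley--Eilenberg terms and the $h$-dependent conformal conjugation are genuinely subleading and do not disturb the Weyl asymptotics of the Casimir principal part. A secondary bookkeeping challenge is the precise tracking of the right-multiplication factors in the transferred $d$ and $d^*$, in order to confirm that the twist is exactly $\beta(a^{op}) = e^{h} a e^{-h}$ rather than some related inner automorphism, and that all the residual pieces arising in the twisted commutator are indeed bounded.
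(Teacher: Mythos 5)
The central gap is the $G$-equivariance claim that underlies both your self-adjointness and $n^+$-summability arguments. You reason that because $d$ is built from the $G$-equivariant derivations $\partial_j$ and the structure constants of $\mathfrak g$, the operator $D^2$ preserves the $\pi$-isotypic decomposition of $\bigoplus_k\mathcal H_{k,0}$. But $D$ is not $d+d^*_0$; it is $U^*(d+d^*_h)U$, and both $d^*_h$ and the unitaries $U_k=R_{c_k}$, $c_k=e^{-(n/2-k)h/2}$, involve a generic $h=h^*\in\Aa^\infty$ which has no reason whatsoever to be $G$-invariant. Right multiplication by $c_k$ does not commute with the $G$-action, so $D$ and $D^2$ do \emph{not} act blockwise on isotypic components, the ``symmetric matrix on each finite block'' reduction for self-adjointness fails, and the assertion that $D^2$ agrees with the Laplace--Casimir operator up to an $O(1)$ error is also false: one finds $D=R_{e^{-h/2}}\,D_0+(\text{bounded})$ with $D_0=d+d^*_0$, hence $D^2-R_{e^{-h}}D_0^2$ is an order-one operator, not a bounded one. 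The correct route is to first apply your Peter--Weyl\,/\,H\o{}egh-Krohn--Landstad--St\o{}rmer\,/\,Casimir argument to the genuinely $G$-equivariant $D_0$, which gives essential self-adjointness and the Weyl law $\lambda_k(D_0^2)\sim c\,k^{2/n}$ exactly as you describe, and then transfer to $D$ by a form-domain comparison: since $c_k$ and $c_k^{-1}$ are bounded, the quadratic forms of $D^2$ and $D_0^2$ are mutually bounded (up to a multiple of the identity) and have the same form domain, so the min--max principle carries the eigenvalue asymptotics, and therefore $n^+$-summability, over to $D^2$. Self-adjointness of $D$ itself is inherited from $D=U^*(d+d^*_h)U$ once $d+d^*_h$ is essentially self-adjoint on $\bigoplus_k\mathcal H_{k,h}$, which again follows from the equivalence of $(\cdot,\cdot)_h$ and $(\cdot,\cdot)_0$ rather than from isotypic blocks.

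Two smaller points. The parenthetical claim that ``$R_{a^{op}}$ commutes with $d$ itself'' is not correct: the graded Leibniz rule gives $[d,R_a]\xi=\pm\,\xi\wedge\bigl(\sum_j\partial_j(a)\otimes\omega_j\bigr)$, which for $a\in\Aa^\infty$ is bounded but certainly nonzero, so the right-action commutator picks up a bounded contribution already from $d$ in addition to the twist coming from the $U_k$; your conclusion survives, but for a different reason than you state. Second, re-examine the exponent in the twist. With the $U_k$ as defined in this section, the net right-multiplication factor across the degree-raising (or lowering) map is $c_kc_{k+1}^{-1}=e^{-h/2}$, which would yield $\beta(a^{op})=(e^{h/2}\,a\,e^{-h/2})^{op}$, not conjugation by $e^{h}$; the $e^h$ you obtain is what one gets if the $/2$ in $U_k$ is dropped. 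The stated theorem and the displayed $U_k$ are in tension on this point, so make sure you derive the twist from the $U_k$ you are actually using rather than fitting it to the stated conclusion.
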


The spectral triples described in the above theorem can in fact be equipped 
with the grading operator given by
\[
\gamma( a \otimes v_1 \wedge \cdots \wedge v_k) = (-1)^k ( a \otimes v_1 \wedge \cdots \wedge v_k). 
\] 
Related to this grading, it is interesting to study the Fredholm index of the 
operator $D$, which is unitarily equivalent to $d+d^*$, when viewed as 
an operator from the direct sum of all even differential forms to the direct sum 
of all odd differential forms. We shall discuss this issue shortly.

 \subsection{The Chern-Gauss-Bonnet theorem for $C^*$-dynamical systems}

 In \S \ref{GBSCnc2section} we briefly discussed the Gauss-Bonnet theorem for surfaces, which 
 states that for any  closed oriented two dimensional Riemannian 
 manifold $\Sigma$ with scalar curvature $R$, one has 
 \[
 \frac{1}{4 \pi} \int_\Sigma R  =  \chi(\Sigma), 
 \]
 where $\chi(\Sigma)$ is the Euler characteristic of $\Sigma$. 
 The Chern-Gauss-Bonnet theorem generalizes this result to higher 
 even dimensional manifolds. That is, in higher dimensions as well, 
 the Euler characteristic, which is a topological invariant, 
 coincides with the integral of a certain geometric invariant, namely 
 the {\it Pfaffian} of the curvature form. Given a closed oriented Riemannian 
 manifold $M$ of even dimension $n$, consider the Levi-Civita connection, which 
 is the unique torsion-free metric-compatible connection  on the tangent 
 bundle $TM$. Let us denote the matrix of local 2-forms representing the curvature of 
 this connection by $\Omega$. The Chern-Gauss-Bonnet theorem 
 states that the Pfaffian of $\Omega$ (the square root of the determinant defined on the space of anti-symmetric 
 matrices) integrates over the manifold to the Euler characteristic of the 
 manifold, up to multiplication by a universal  constant: 
 \[
 \frac{1}{(2 \pi)^n}\int_M \text{Pf}(\Omega) = \chi(M). 
 \]
 
 Interestingly, there is a spectral way of interpreting such relations between 
 local geometry and topology of manifolds. Relevant to our discussion is 
 indeed the Fredholm index of the de Rham operator $d+d^*$, where $d$ 
 is the de Rham exterior derivative and $d^*$ is its adjoint with respect to the 
 metric on the differential forms induced by the Riemannian metric. The Fredholm 
 index of $d+d^*$ should be calculated when the operator is viewed as a map 
 from the direct sum of all even differential forms to the direct sum of all odd 
 differential forms on $M$: 
 \[
 d+d^* : \Omega^{even} M  = \bigoplus \Omega^{2k}M \to \Omega^{odd} M= \bigoplus \Omega^{2k+1}M
 \]  
 The index of this operator is certainly an important geometric quantity since 
 the adjoint $d^*$ of $d$ heavily depends on the choice of metric on the manifold. 
 Amazingly, using the {\it Hodge decomposition theorem}, one can find a 
 canonical identification of the de Rham cohomology group $H^k(M)$ with  
 the kernel of the Laplacian $\triangle_k = d^* d + d d^* : \Omega^k M \to \Omega^k M$.  
 This can then be used to show that the index of $d+d^*$ is equal to the Euler 
 characteristic of $M$. Moreover, one can appeal to the {\it McKean-Singer 
 index theorem} to find curvature related invariants appearing in small time 
 heat kernel expansions associated with $d+d^*$ to see that the index is given 
 by the integral of curvature related invariants. 
 
 In \cite{MR3458156}, this spectral approach is taken to show that the analog of the 
 Chern-Gauss-Bonnet theorem can be established for ergodic $C^*$-dynamical 
 systems. Let us consider the set up and the constructions presented in 
 \S \ref{ergodiccstartsubsec} for a $C^*$-algebra $\Aa$ with an ergodic action of a 
 compact Lie group 
 $G$ of dimension $n$. Then, one of the main results proved in \cite{MR3458156} is the 
 following statement. Here, $d$ is given by \eqref{Eqn:Defd}, $h = h^* \in \Aa^\infty$ 
 is the element that was used to implement with $e^h$ a conformal perturbation of the metric, and the Hilbert space $\mathcal{H}_{k, h}$ is the completion of the $k$-differential forms $\Omega^k(\Aa, G)$ with respect to the perturbed metric.   
 
 \begin{theorem}
 The Fredholm index of the operator 
 \[
 d+d^*: \bigoplus_k \mathcal{H}_{2k, h} \to \bigoplus_k \mathcal{H}_{2k+1, h}
 \]
 is equal to the Euler characteristic $\chi(A, G)$ of the complex $(\Omega^\bullet(A, G), d)$. Since  $\chi(A, G) = \sum_k (-1)^k dim \left ( H^k(A, G)\right )$ is the alternating sum 
 of the dimensions of the cohomology groups,  the index of $d+d^*$ is independent of 
 the conformal factor $e^h$ used for perturbing the metric. 
 \end{theorem}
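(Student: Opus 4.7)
The plan is to mirror the classical Hodge-theoretic proof of Chern--Gauss--Bonnet inside the functional-analytic setup already established in Theorem~\ref{thm:twistedspecdynamical}. The crucial structural input is that the spectral triple $(\Aa^\infty,\mathcal{H},D)$ built on $d+d^*$ is $n^+$-summable, so $D$ has compact resolvent and hence discrete spectrum with finite-dimensional eigenspaces. In particular each Laplacian $\triangle_k = d^*d + dd^*$ acting on $\mathcal{H}_{k,h}$ is a non-negative self-adjoint operator with compact resolvent and a finite-dimensional kernel $\mathscr{H}^k_h := \ker\triangle_k$. This is the step that replaces compactness of the underlying manifold in the classical story, and it is essentially for free from what has already been proved.

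Next I would establish the analog of the Hodge decomposition. Using that $\mathrm{im}(d|_{\mathcal{H}_{k-1,h}}) \perp \ker(d^*|_{\mathcal{H}_{k,h}})$ and that $\mathrm{im}(d)$ and $\mathrm{im}(d^*)$ are closed (a consequence of $\triangle_k$ having a spectral gap above zero, which follows from discreteness of spectrum and finite-dimensionality of the kernel), one obtains the orthogonal decomposition
\begin{equation*}
\mathcal{H}_{k,h} \;=\; \mathscr{H}^k_h \;\oplus\; \overline{\mathrm{im}(d|_{\mathcal{H}_{k-1,h}})} \;\oplus\; \overline{\mathrm{im}(d^*|_{\mathcal{H}_{k+1,h}})}.
\end{equation*}
From this the standard argument identifies the harmonic space with cohomology: every class in $H^k(\Aa,G)$ has a unique harmonic representative, giving a canonical isomorphism $\mathscr{H}^k_h \cong H^k(\Aa,G)$. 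In particular $\dim\mathscr{H}^k_h$ is independent of $h$, even though the harmonic subspace itself does depend on the metric through $d^*$. Finite dimensionality of $H^k(\Aa,G)$ drops out at the same time.

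The remaining step is a McKean--Singer style computation. Because $D = d+d^*$ anticommutes with the $\Z/2$-grading $\gamma$ that is $+1$ on even forms and $-1$ on odd forms, and because $\ker(D) = \bigoplus_k \mathscr{H}^k_h$, one has
\begin{equation*}
\mathrm{ind}(D^+) \;=\; \dim\ker(D^+) - \dim\ker(D^-) \;=\; \sum_k (-1)^k \dim \mathscr{H}^k_h \;=\; \sum_k (-1)^k \dim H^k(\Aa,G) \;=\; \chi(\Aa,G).
\end{equation*}
Since the right-hand side is defined purely from the complex $(\Omega^\bullet(\Aa,G),d)$ and makes no reference to $h$, the stated independence of the index from the conformal factor is automatic.

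The main obstacle I foresee is the Hodge decomposition step, specifically establishing closedness of the ranges of $d$ and $d^*$ in the perturbed Hilbert spaces $\mathcal{H}_{k,h}$. In the commutative case this uses ellipticity of $\triangle_k$ together with compactness of the manifold; here the corresponding input must be the $n^+$-summability asserted in Theorem~\ref{thm:twistedspecdynamical} combined with the isotypical decomposition of $\Aa$ under the ergodic $G$-action (each isotypical component is finite-dimensional by the Høegh-Krohn--Landstad--Størmer theorem \cite{MR625345}), which provides enough regularity on $d$ and $d^*$ to run the argument. Once closedness of the ranges is in hand, the identification of harmonic forms with cohomology and the supertrace computation proceed exactly as in the classical case.
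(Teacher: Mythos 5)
Your proposal is correct and takes essentially the same route as the paper: the survey explicitly describes the proof in the cited reference \cite{MR3458156} as the Hodge-theoretic/McKean--Singer argument you outline, leaning on the $n^+$-summability established in Theorem~\ref{thm:twistedspecdynamical} (which in turn rests on the H\o{}egh-Krohn--Landstad--St\o{}rmer finiteness of isotypic components) to get compact resolvent, discreteness of the Laplacian spectrum, closed ranges, and hence the Hodge decomposition identifying harmonic forms with the cohomology $H^k(\Aa,G)$ independently of the conformal factor. The supertrace computation of the index is then exactly as you wrote, so there is no gap.
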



\section{{\bf The Ricci curvature}}

Classically, scalar curvature is only a deem shadow of the full Riemann  curvature tensor.  In fact there is no evidence that Riemann considered anything else but  the  full curvature tensor, and, equivalently, the sectional curvature. Both were defined by him for a Riemannian manifold. The Ricci and scalar curvatures were later defined  by  contracting the Riemann curvature tensor with the metric tensor.  Once the metric is given in a local coordinate chart,  all three curvature tensors can be computed  explicitly via algebraic formulas involving only partial derivatives of the metric tensor. This is a  purely algebraic process, with deep geometric and analytic implications. It is also a top down process, going from the full Riemann curvature tensor, to Ricci curvature,  and then  to scalar curvature. 

The situation in the noncommutative  case is reversed and we have to move up the ladder,  starting from  the scalar  curvature first, which is the easiest to define spectrally, being given by the second term of the heat expansion for the scalar Laplacian, the square of the Dirac operaor in general.  Thus after treating the scalar curvature,  which we recalled  in previous sections together with examples, one  should next  try to  define and possibly compute, in some cases,  a Ricci curvature tensor. But how? In \cite{arXiv1612.06688} and  motivated by the local formulas for the asymptotic expansion of heat kernels in spectral geometry,  the authors propose a definition of Ricci curvature in a noncommutative  setting.  One necessarily  has to use the asymptotic expansion of  Laplacians on functions and 1-forms and a version of the Weitzenb\"ock formula. 

As we shall see in this section, the Ricci operator of an oriented closed Riemannian manifold can be realized as a spectral functional, namely the functional defined by the zeta function of the full Laplacian of the de Rham complex, localized by smooth endomorphisms of the cotangent bundle and their trace. In the noncommutative case, this Ricci functional uniquely determines a density element, called the Ricci density, which plays the role of the Ricci operator. The main result of  \cite{arXiv1612.06688}  provides a general definition and an explicit computation of the Ricci density when the conformally flat geometry of the curved noncommutative two torus is encoded in the  de Rham spectral triple. In a  follow up  paper \cite{arXiv1808.02977},   the Ricci curvature of a   noncommutative  curved three torus is computed. In this section we  explain these recent developments in  more detail. \\

\subsection{A Weitzenb\"ock formula} 

The Weitzenb\"ock formula 
$$ \text{\bf Hodge - Bochner = Ricci}$$
in conjunction with Gilkey's asymptotic expansion gives an opening to define the Ricci curvature in spectral terms.   
Let $M$ be a closed oriented  Riemannian manifold. 
Consider the de Rham spectral triple
$$(C^\infty(M),L^2(\Omega^{ev}(M)) \oplus L^2(\Omega^{odd} (M)),d+\delta, \gamma),$$ which is the even spectral triple constructed from the de Rham complex.
Here $d$ is  the exterior derivative,   $\delta$  is its adjoint acting  on the exterior algebra, and $\gamma$ is the $\mathbb{Z}_2$-grading on  forms.  The  eigenspaces for eigenvalues 1 and -1 of $\gamma$ are even and odd forms, respectively.
 The full Laplacian on forms $\Delta =d\delta+\delta d$ is the Laplacian of the Dirac operator $d+\delta$, and is the direct sum of Laplacians on $p$-forms, $\Delta =\oplus \Delta_p$. As a Laplace type operator, $\Delta $ can be written as $\nabla^*\nabla -E$ by Weitzenb\"ock formula, where $\nabla$ is the Levi-Civita connection extended to all forms and 
\begin{equation*}
E=-\frac12c(dx^\mu)c(dx^\nu)\Omega(\partial_\mu,\partial_\nu).
\end{equation*} 
Here $c$ denotes the Clifford multiplication and $\Omega$ is the curvature operator of the Levi-Civita  connection acting on exterior algebra. The restriction of $E$ to one forms gives the Ricci operator.

\subsection{Ricci curvature as a spectral functional}
The Ricci curvature of a Riemannian manifold 
 $(M^m,g)$ is originaly defined as follows. 
Let $\conn$ be the Levi-Civita connection of the metric $g$.
The Riemannian operator and the curvature tensor are define for vector fields $X, Y, Z, W$  by
\begin{eqnarray*}
\riemop(X,Y)&:=&\conn_X\conn_Y-\conn_Y\conn_X-\conn_{[X,Y]},\\
\riem(X,Y,Z,W)&:=&g(\riemop(X,Y)Z,W).
\end{eqnarray*}
With respect to the coordinate frame  $\partial_\mu=\frac{\partial}{\partial x^\mu}$, the components 
of the curvature tensor are denoted by
$$\riem_{\mu\nu\rho\epsilon}:=\riem(\partial_\mu,\partial_\nu,\partial_\rho,\partial_\epsilon).$$
The components of  the Ricci tensor $Ric$ and scalar curvature $R$ are given by
 \begin{eqnarray*}
\ric_{\mu\nu}&:=&g^{\rho\epsilon}\riem_{\mu\rho\epsilon\nu},\\
\scalar&:=&g^{\mu\nu}\ric_{\mu\nu}=g^{\mu\nu}g^{\rho\epsilon}\riem_{\mu\rho\epsilon\nu}.
\end{eqnarray*}
Now these algebraic formulas have no chance to be extended to a noncommutative setting in general. One must thus look for an spectral alternative reformulation.

Let $P:C^\infty(V)\to C^\infty(V)$ be a positive elliptic differential operator of order two  acting on the sections of a smooth Hermitian vector bundle $V$ over $M$.
The heat trace $\Tr(e^{-tP})$  has  a short time   asymptotic expansion of the form 
\begin{eqnarray*}
\Tr(e^{-tP})\sim \sum_{n=0}^\infty a_n(P)t^{\frac{n-m}2},\qquad t\to 0^+,
\end{eqnarray*}
where $a_n(P)$  are integrals of local densities 
\begin{equation*}
a_n(P)=\int \tr(a_n(x,P))dx.
\end{equation*}
Here $dx = d  vol_x$ is the Riemannian volume form and $\tr$ is the fiberwise matrix trace. 
The endomorphism $a_n(x,P)$ can be uniquely determined by localizing the heat trace by an  smooth endomorphism $F$ of $V$.  It is easy to see that the asymptotic expansion of the 
 localized heat trace $\Tr(Fe^{-tP})$ is of the form 
\begin{eqnarray}\label{smearedheatasymp}
\Tr(Fe^{-tP})\sim \sum_{n=0}^\infty a_n(F,P)t^{\frac{n-m}2},
\end{eqnarray}
with 
\begin{equation}\label{localizedcoef}
a_n(F,P)=\int  \tr\big(F(x)a_n(x,P)\big) dx.
\end{equation}

If $P$ is a Laplace type operator i.e., its leading symbol is given by the metric tensor,  then the densities $a_n(x,P)$ can be expressed in terms of the Riemannian curvature, an endomorphism $E$, and their derivatives. 
The endomorphism $E$ measures how far the operator $P$ is from being the Laplacian $\conn^*\conn$ of a connection $\conn$ on $V$, that is  
\begin{equation}\label{decomtolapandend}
E=\conn^*\conn-P.
\end{equation}
The first two densities of the heat equation for such $P$ are given by \cite[Theorem 3.3.1]{MR1396308} 
\begin{eqnarray}
a_0(x,P)&=&(4\pi)^{-m/2}{\rm I},\label{a0}\\
a_2(x,P)&=&(4\pi)^{-m/2}\big(\frac{1}{6}\scalar(x)+E\big).
\end{eqnarray}

For the scalar Laplacian  $\lap_0$, the connection is the de Rham  differential  $d:C^\infty(M)\to \Omega^1(M)$, and  $E=0$. 
Hence the first two first terms  of  the heat kernel of $\lap_0$ are given by 
\begin{eqnarray}
a_0(x,\lap_0)&=&(4\pi)^{-m/2},\label{a0functions}\\
a_2(x,\lap_0)&=&(4\pi)^{-m/2}\,\frac{1}{6}{\scalar(x)}.\label{a2functions}
\end{eqnarray}

For Laplacian on one forms $\lap_1:\Omega^1(M)\to \Omega^1(M)$,  the Hodge-de Rham Laplacian, the connection in \eqref{decomtolapandend} is the Levi-Civita connection on the cotangent bundle. The endomorphism $E$ is the 
 negative of the Ricci operator, $E=-\ric$, on the cotangent bundle,  which is defined by raising the first index of the Ricci tensor (denoted by $\ric$ as well), 
\begin{equation*}
\ric_x(\alpha^\sharp,X)=\ric_x(\alpha)(X),\quad \alpha\in T_x^*M,\,  X\in T_xM.
\end{equation*}
Therefore, one has
\begin{eqnarray}
a_0(x,\lap_1)&=&(4\pi)^{-m/2}{\rm I},\\ \label{a01forms}
a_2(x,\lap_1)&=&(4\pi)^{-m/2}\big(\frac{1}{6}{\scalar(x)}-\ric_x\big). \label{a21forms}
\end{eqnarray}
We can use the function  $\tr(F)$  to localize the heat trace of the scalar Laplacian $\lap_0$ and get
the identity 
\begin{equation}\label{differenceofterms}
a_2(\tr(F),\lap_0)-a_2(F,\lap_1)=(4\pi)^{-m/2}\int_M \tr\big(F(x)\ric_x\big)dx. 
\end{equation}
 This motivates the following definition.
\begin{definition}[\cite{arXiv1612.06688}] \label{riccifunctional}
The { Ricci functional} of the  closed  Riemannian manifold $(M, g)$ is the functional 
on $C^\infty(\End(T^*M))$ defined as 
\begin{equation*}
\ricfun(F)=a_2(\tr(F),\lap_0)-a_2(F,\lap_1).
\end{equation*}
\end{definition} 
\begin{proposition} For a  closed Riemannian manifold $M$ of dimension $m$, we have the short time asymptotics
\begin{equation*}
\Tr\left(\tr(F)e^{-t\lap_0}\right)-\Tr\left(F e^{-t\lap_1}\right)\sim \ricfun(F)\, t^{1-\frac{m}{2}}.
\end{equation*}
\end{proposition}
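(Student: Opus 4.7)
The plan is to unpack both terms on the left-hand side using the localized heat trace asymptotic expansion \eqref{smearedheatasymp} and identify the first non-cancelling term in the difference. First I would write
\begin{equation*}
\Tr(\tr(F) e^{-t\lap_0}) \sim \sum_{n\geq 0} a_n(\tr(F), \lap_0)\, t^{(n-m)/2},
\end{equation*}
\begin{equation*}
\Tr(F e^{-t\lap_1}) \sim \sum_{n\geq 0} a_n(F, \lap_1)\, t^{(n-m)/2},
\end{equation*}
noting that $\tr(F)\in C^\infty(M)$ is a legitimate scalar localizer for $\lap_0$ while $F\in C^\infty(\End(T^*M))$ localizes $\lap_1$.

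The crucial step is to verify that the leading $t^{-m/2}$ coefficients cancel against each other in the subtraction. By \eqref{a0}, \eqref{a01forms} and the formula \eqref{localizedcoef} for the localized coefficients,
\begin{equation*}
a_0(\tr(F), \lap_0) = (4\pi)^{-m/2}\int_M \tr(F(x))\, dx,
\end{equation*}
\begin{equation*}
a_0(F, \lap_1) = (4\pi)^{-m/2}\int_M \tr\bigl(F(x)\cdot {\rm I}\bigr)\, dx = (4\pi)^{-m/2}\int_M \tr(F(x))\, dx,
\end{equation*}
so the two leading contributions are equal. Since $M$ is closed (has no boundary), all odd-indexed Gilkey--Seeley--DeWitt coefficients $a_{2k+1}(\,\cdot\,,P)$ vanish identically, so the next term in each expansion corresponds to $n=2$ and carries the power $t^{1-m/2}$.

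Finally, by Definition \ref{riccifunctional} of the Ricci functional,
\begin{equation*}
a_2(\tr(F), \lap_0) - a_2(F, \lap_1) = \ricfun(F),
\end{equation*}
which is precisely the Weitzenb\"ock identity \eqref{differenceofterms} rewritten using Definition \ref{riccifunctional}. Combining everything, the difference of the two heat expansions has vanishing leading term and first non-trivial term $\ricfun(F)\, t^{1-m/2}$, giving the claim. The only subtle point is the leading-term cancellation, which hinges on the fact that the principal symbol of $\lap_1$ is a scalar multiple of the identity endomorphism of $T^*M$, making the endomorphism-valued localization by $F$ on 1-forms compatible with the scalar localization by $\tr(F)$ on functions; once this matches, the statement is an immediate consequence of the definitions and the Gilkey expansion.
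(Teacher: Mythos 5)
Your proof is correct and follows essentially the same route as the paper: the key step — showing $a_0(\tr(F),\lap_0)=a_0(F,\lap_1)$ via \eqref{a0}, \eqref{a01forms} and \eqref{localizedcoef}, so that the $t^{-m/2}$ terms cancel — is exactly the observation recorded in equation \eqref{equalityoffirstterms} of the paper, and the identification of the surviving $t^{1-m/2}$ coefficient with $\ricfun(F)$ via Definition \ref{riccifunctional} is the paper's conclusion as well. The one thing you make explicit that the paper leaves implicit is the vanishing of the odd coefficients $a_{2k+1}$ on a closed manifold, which is a small but welcome clarification rather than a departure.
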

\begin{proof}
By \eqref{a0} and \eqref{localizedcoef}, we have $\tr(F)a_0(x,\lap_0)=\tr(F(x)a_0(x,\lap_1)).$
This implies that
\begin{equation}\label{equalityoffirstterms}
a_0(\tr(F),\lap_0)=a_0(F,\lap_1),\qquad F\in C^\infty(\End(T^*M)).
\end{equation} 
The   asymptotic expansion of the localized heat kernel \eqref{smearedheatasymp} then shows that the first terms will cancel each other. The  difference of the second terms, which are multiples of $t^{1-\frac{m}{2}}$, will become the first term in the asymptotic expansion of the differences of localized heat kernels.  
\end{proof}

\subsection{Spectral zeta function and the Ricci functional} \label{zetaformulationofricci} 

The  spectral zeta function of  a positive elliptic operator $P$ is defined as 
\begin{equation*}
\zeta(s,P)=\Tr(P^{-s}({\rm I}-\pr)),\quad \Re (s)\gg 0,
\end{equation*} 
where $\pr $ is the projection on the kernel of $P$. 
Its  localized version is $\zeta(s,F,P)=\Tr(FP^{-s}({\rm I}-\pr))$. 
These  function have  a meromorphic extension to the complex plane  with  isolated  simple poles. Using the Mellin transform, one finds explicit relation between 
 residue at the poles and  coefficients  of the heat kernel. This leads to following expression for  the Ricci functional in terms of zeta functions.
\begin{proposition}
For a  closed Riemannian manifold $M$ of dimension $m>2$, we have 
\begin{equation}\label{zetariccimnot2}
\ricfun(F)=
 \Gamma(\frac{m}{2}-1)\res_{s=\frac{m}{2}-1}\Big(\zeta(s,\tr(F),\lap_0)-\zeta(s,F,\lap_1)\Big).
\end{equation}
If  $M$ is two dimensional, then
\begin{equation}\label{zetariccim2}
\ricfun(F)=\zeta(0,\tr(F),\lap_0)-\zeta(0,F,\lap_1)+\Tr(\tr(F)\pr_0)-\Tr(F\pr_1),
\end{equation}
where $\pr_j$ is the projection on the kernel of Laplacian $\lap_j$, $j=0,1$.
\end{proposition}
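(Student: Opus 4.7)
The plan is to obtain both formulas by routing the localized heat coefficient $a_2$ through the Mellin transform identity
\[
\Gamma(s)\,\zeta(s,F,P) = \int_0^\infty t^{s-1} \Tr\!\left( F e^{-tP}(\mathrm{I}-\pr) \right) dt,
\qquad \Re(s) \gg 0,
\]
valid for either Laplacian $P = \lap_j$ and $F \in C^\infty(\End(T^*M))$ (interpreting $F$ as $\tr(F)$ when $P=\lap_0$). Splitting the integral at $t=1$, the piece $\int_1^\infty$ extends to an entire function of $s$, since $e^{-tP}(\mathrm{I}-\pr)$ decays exponentially as $t\to \infty$. All meromorphic structure therefore comes from the small-$t$ part, which I would analyze by inserting the heat expansion \eqref{smearedheatasymp}.

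Doing the $t$-integration term by term on $\int_0^1$ produces simple poles of $\Gamma(s)\zeta(s,F,P)$ at $s=(m-n)/2$ with residue $a_n(F,P)$ coming from the heat expansion, plus a pole at $s=0$ with residue $-\Tr(F\pr)$ coming from the subtracted kernel projection. Dividing by $\Gamma(s)$, whose zeros at non-positive integers cancel some of these, yields the two standard consequences I need: for $\alpha = (m-n)/2 > 0$ one has
\[
\res_{s=\alpha} \zeta(s,F,P) \;=\; \frac{a_n(F,P)}{\Gamma(\alpha)},
\]
while at $s=0$, using $\Gamma(s)=s^{-1}-\gamma+O(s)$ and $1/\Gamma(s)=s+\gamma s^{2}+O(s^{3})$, the regular value works out to
\[
\zeta(0,F,P) \;=\; a_m(F,P) - \Tr(F\pr).
\]

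For $m>2$, taking $n=2$ gives $\alpha = m/2-1 > 0$, so $\Gamma$ is regular there and
\[
\Gamma\!\left(\tfrac{m}{2}-1\right) \res_{s=\frac{m}{2}-1} \zeta(s,F,P) \;=\; a_2(F,P).
\]
Applying this to both $\zeta(s,\tr(F),\lap_0)$ and $\zeta(s,F,\lap_1)$ and subtracting, then invoking Definition \ref{riccifunctional}, yields \eqref{zetariccimnot2}. For $m=2$, the same coefficient $a_2(F,P)$ is now the coefficient of $t^0$ and is read off from $\zeta(0,F,P)$, giving $a_2(F,P) = \zeta(0,F,P) + \Tr(F\pr)$; substituting into Definition \ref{riccifunctional} for $\lap_0$ (with $F$ replaced by $\tr(F)$, $\pr$ by $\pr_0$) and $\lap_1$ (with projection $\pr_1$) and subtracting produces \eqref{zetariccim2}.

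The main subtlety — and the only place one must be careful — is the interplay between $\Gamma(s)$ and $\zeta(s,F,P)$ at $s=0$ when $m=2$: one must track that the $\Tr(F\pr)$ contribution from the large-$t$ subtraction survives as a genuine additive term in $\zeta(0,F,P)$ rather than being absorbed into a residue, which is precisely what produces the correction terms $\Tr(\tr(F)\pr_0) - \Tr(F\pr_1)$ in \eqref{zetariccim2}. Everything else is bookkeeping, and the heat expansions \eqref{a0functions}--\eqref{a21forms} are not needed for the derivation of these identities themselves — they enter only through the definition of $\ricfun$ and the fact that the $a_0$ contributions cancel as in \eqref{equalityoffirstterms}.
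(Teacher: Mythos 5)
Your proof is correct and is exactly the Mellin-transform argument the paper alludes to (the paper gives no explicit proof, only the remark that meromorphic structure follows from the Mellin transform). The key bookkeeping point — that subtracting the kernel projection yields a pole of $\Gamma(s)\,\zeta(s,F,P)$ at $s=0$ with residue $-\Tr(F\pr)$, so that dividing by $\Gamma(s)$ gives $\zeta(0,F,P)=a_m(F,P)-\Tr(F\pr)$ — is handled correctly and is precisely what produces the extra terms $\Tr(\tr(F)\pr_0)-\Tr(F\pr_1)$ in the $m=2$ case.
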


 It follows that 
the difference of zeta functions $\zeta(s,\tr(F),\lap_0)-\zeta(s,F,\lap_1)$ is regular at $m/2$, and its first pole is located at $s=m/2-1$.

To work with the Laplacian on one forms, we will use smooth endomorphisms $F$ of the cotangent bundle.   
The smearing endomorphism $\tilde{F}=\tr(F){\rm I}_0 \oplus F\in C^\infty(\End(\bigwedge^\bullet M))$, where ${\rm I}_0$ denotes the identity map on functions, can be used to localize the heat kernel of the full Laplacian and
\begin{equation}\label{riccifunctionalgamma}
\ricfun(F)=a_2(\gamma \tilde{F},\lap).
\end{equation}
With the above notation,  one can express the Ricci functional as special values of the (localized) spectral zeta functions
\begin{equation}\label{Riccizetagamma}
\ricfun(F)=\begin{cases}
 \Gamma(\frac{m}{2}-1)\res_{s=\frac{m}{2}-1}\zeta(s,\tilde{F}\gamma,\lap)& m>2,\\&\\
\zeta(0,\gamma\tilde{F},\lap)+\Tr(\tr(F)\pr_0)-\Tr(F\pr_1)& m=2.
\end{cases}
\end{equation}

The flat de Rham spectral triple of the noncommutative two torus can be perturbed by a  Weyl factor $e^{-h}$ with $h\in \Ai$ a self adjoint element.
 This procedure gives rise to the de Rham spectral triple of a curved noncommutative torus. 
The Ricci functional  is defined in a similar fashion as above,  and it can be shown  that there exists an element $\ricden\in \Ai\otimes M_2(\mathbb{C})$, called the Ricci density, such that 
\begin{equation*}
\ricfun(F)=\frac{1}{\Im(\tau)}\varphi(\tr(F\ricden)e^{-h}), \quad F\in \Ai\otimes M_2(\mathbb{C}).
\end{equation*}
.

\subsection{The de Rham spectral triple for the noncommutative two torus}
 In this section, we describe the de Rham spectral triple of a  noncommutative two torus $\A$ equipped with a complex structure. This is a variation of the Dolbeault complex that we used in Section. Consider the vector space $W=\mathbb{R}^2$, and let $\tau$ be a complex number in the upper half plane. Let  $g_\tau$ be the positive definite symmetric bilinear form on $W$  given by 
\begin{equation}\label{flattaumetricontangent}
g_\tau=\frac{1}{\Im(\tau)^2}
\begin{pmatrix}
|\tau|^2 & -\Re(\tau)\\ 
-\Re(\tau) & 1 
\end{pmatrix}.
\end{equation} 
The inverse $g_\tau^{-1}=\begin{pmatrix}
1 & \Re(\tau)\\ 
\Re(\tau) & |\tau|^2 
\end{pmatrix}$ of $g_\tau$ is a metric on the dual of $W$. 
The entries of $g_\tau^{-1}$ will be denoted by $g^{jk}$.

Let  $\extp^\bullet  W^*_\mathbb{C}$ be the  exterior algebra of $W^*_\mathbb{C}=(W\otimes \mathbb{C})^*$.
The algebra $\Ai\otimes \extp^\bullet W^*_\mathbb{C}$
is  the algebra of differential forms on  the noncommutative two torus $\A$. 
In this framework, the Hilbert space of functions, denoted $\mathcal{H}^{(0)}$, is simply the Hilbert space given by the GNS construction of $\Ai$ with respect to 
$\frac{1}{\Im(\tau)}\varphi$. Additionally, the
Hilbert space of one forms, denoted $\mathcal{H}^{(1)}$, is the space $\mathcal{H}_0\otimes (\mathbb{C}^2,g_\tau^{-1})$ with inner product given by  
\begin{equation}\label{oneforminnerproduct}
\langle a_1\oplus a_2, b_1\oplus b_2  \rangle=  \frac{1}{\Im(\tau)}\sum_{j,k} g^{jk}\varphi(b_k^*a_j),\;\;a_i,\,b_i\in \Ai,
\end{equation}
while the Hilbert space of two forms, denoted $\mathcal{H}^{(2)}$, is given by the GNS construction of $\Ai$ with respect to $\Im(\tau)\, \varphi$.

The exterior derivative on elements of $\Ai$ is given by
\begin{equation}\label{donfunctionstau}
a\mapsto i\delta_1(a)\oplus i\delta_2(a),\quad a\in \Ai.
\end{equation}
It will be denoted by $d_0$, when considered as a densely defined operator from $\mathcal{H}^{(0)}$ to $\mathcal{H}^{(1)}$.
The operator $d_1: \mathcal{H}^{(1)}\to \mathcal{H}^{(2)}$ is defined on the elements of $\Ai\oplus\Ai$ as 
\begin{equation}\label{donformstau}
a\oplus b\mapsto i\delta_1(b)-i\delta_2(a),\qquad a,b\in\Ai.
\end{equation}
The adjoints of the operators $d_0: \mathcal{H}^{(0)}\to \mathcal{H}^{(1)}$ and $d_1:\mathcal{H}^{(1)}\to \mathcal{H}^{(2)}$ are then given by 
\begin{eqnarray*}
d_0^*(a\oplus b) &=& -i \delta_1(a) -i \Re(\tau) \delta_2(a)-i\Re(\tau) \delta_1(b)-i|\tau|^2 \delta_2(b),\\
d_1^*(a) &=& (i|\tau|^2 \delta_2(a)+i\Re(\tau) \delta_1(a))\oplus (-i\Re(\tau) \delta_2(a)-i\delta_1(a)),
\end{eqnarray*} for all $a,b\in \Ai$.
\begin{definition}\label{deRhamspectralflat}
 The (flat) de Rham spectral triple of $\A$ is the even spectral triple 
$(\A,\mathcal{H},D),$
where $\mathcal{H}=\mathcal{H}^{(0)}\oplus\mathcal{H}^{(2)} \oplus \mathcal{H}^{(1)}$, $ D=\begin{pmatrix}
0& d^*\\ d& 0
\end{pmatrix},$ and $d=d_0+d_1^*$.
\end{definition}
\noindent Note that the operator $d$ and its adjoint $d^*=d_1+d_0^*$ act on $\Ai\oplus \Ai$ as
\begin{equation}\label{nonpertddstar}
d=\begin{pmatrix}
i\delta_1 & i|\tau|^2 \delta_2+i\Re(\tau) \delta_1\\
i\delta_2 & -i\Re(\tau) \delta_2-i\delta_1
\end{pmatrix},\quad
d^*=\begin{pmatrix}
-i\delta_1-i\Re(\tau) \delta_2 &-i\Re(\tau) \delta_1- i|\tau|^2 \delta_2\\
-i\delta_2 & i\delta_1
\end{pmatrix}. 
\end{equation}
Note also that the de Rham spectral triple introduced in Definition \ref{deRhamspectralflat}, is isospectral to the de Rham complex spectral triple of the flat torus $\mathbb{T}^2$ with the metric given by \eqref{flattaumetricontangent}.\\

\subsection{The twisted de Rham spectral triple}
The conformal perturbation of the metric on the noncommutative two torus is implemented by changing the tracial state $\varphi$ by a noncommutative Weyl factor $e^{-h}$, where the dilaton $h$ is a selfadjoint smooth element of the noncommutative two torus, $h=h^*\in\Ai$.  
The conformal change of the metric by the Weyl factor $e^{-h}$ will change the inner product on functions and on two forms as follows.
On functions, the Hilbert space given by GNS construction of $\A$ with respect to the positive linear functional $\varphi_{0}(a)=\frac{1}{\Im(\tau)}\varphi(ae^{-h})$ will be denoted by $\mathcal{H}_h^{(0)}$. 
Therefore the inner product of $\mathcal{H}_h^{(0)}$ is given by   
\begin{equation*}
\langle a,b \rangle_0=\frac{1}{\Im(\tau)}\varphi(b^*ae^{-h}),\quad a,b\in \A.
\end{equation*}
On one forms, the Hilbert space will stay the same as in \eqref{oneforminnerproduct}, and will be denote by $\mathcal{H}^{(1)}_h$.
 On the other hand, the Hilbert space of two forms, denoted by $\mathcal{H}^{(2)}_h$, is the Hilbert space given by the GNS construction of $\A$ with respect to $\varphi_2(a)=\Im(\tau)\varphi(a e^{h})$. 
 Hence its inner product is given by
\begin{equation*}
\langle a,b \rangle_2={\Im(\tau)}\varphi(b^*ae^{h}),\quad a,b\in \A.
\end{equation*}
The  positive functional $a\mapsto \varphi(ae^{-h})$, called the conformal weight, is a twisted trace of which modular operator is given by  
$$\Delta(a)=e^{-h}ae^{h},\quad a \in\A.$$
The logarithm $\log\Delta$ of the modular operator will be denoted by $\nabla$, and is given by $\nabla(a)=-[h,a]$.  For more details the reader can check the previous sections.

The exterior derivatives are defined in the same way they were defined in the flat case \eqref{donfunctionstau} and \eqref{donformstau}. However, to emphasize that they are acting on different Hilbert spaces, we will denote them by $d_{h,0}:\mathcal{H}_h^{(0)}\to \mathcal{H}_h^{(1)}$ and $d_{h,1}:\mathcal{H}_h^{(1)}\to \mathcal{H}_h^{(2)}$.

Next, we consider the Hilbert spaces $\mathcal{H}^{+}_h=\mathcal{H}_h^{(0)}\oplus  \mathcal{H}_h^{(2)}$ and $\mathcal{H}^{-}_h=\mathcal{H}_h^{(1)}$, and the operator $d_h:\mathcal{H}^{+}_h\to \mathcal{H}^{-}_h$, $d_h=d_{h,0}+ d_{h,1}^*$. Therefore
\begin{equation*}
d_h=\begin{pmatrix}
i\delta_1 & \big(i|\tau|^2 \delta_2+i\Re(\tau) \delta_1\big)\comp R_{k^2}\\
i\delta_2 & \big(-i\Re(\tau) \delta_2-i\delta_1\big)\comp R_{k^2}
\end{pmatrix},
\end{equation*}
and its adjoint is given by
 \begin{equation*}
d_h^*=\begin{pmatrix}
R_{k^2}\comp \big( i\delta_1-i\Re(\tau) \delta_2\big) &R_{k^2}\comp\big(-i\Re(\tau) \delta_1- i|\tau|^2 \delta_2\big)\\
-i\delta_2 & i\delta_1
\end{pmatrix} .
\end{equation*}
We also consider the operator   
\begin{equation*}
D_h=\begin{pmatrix}
0 & d_h^*\\ d_h & 0
\end{pmatrix},
\end{equation*}
which acts on $\mathcal{H}_h= \mathcal{H}^{+}_h\oplus\mathcal{H}_h^{-}$. Define the Hilbert space $\mathcal{H}=\mathcal{H}_0\oplus\mathcal{H}_0\oplus\mathcal{H}_0\oplus\mathcal{H}_0$ and the unitary operator $W:\mathcal{H}\to \mathcal{H}_h,$ $$W=R_k\oplus R_{k^{-1}}\oplus {\rm {I}\sb{\mathcal{H}_0\oplus\mathcal{H}_0}}.$$ The operator $D_h$ can be transferred to an operator $\tilde{D}_h$ on $\mathcal{H}$ by the inner perturbation 
$$\tilde{D}_h:=W^* D_h W=\begin{pmatrix} 0 &  R_k \comp d^*\\ d\comp R_k & 0 \end{pmatrix}.$$

In order to define the twisted, or modular,  de Rham spectral triple for the noncommutative two torus, we employ the following constructions from \cite{MR3194491}. 
Let $(\mathcal{A},\mathcal{H}^+\oplus \mathcal{H}^-,D)$ be an even spectral triple with  grading operator $\gamma$, where $D=\begin{pmatrix}
0 & T^*\\ T & 0
\end{pmatrix}$  and $T:\mathcal{H}^+\to\mathcal{H}^-$ is an unbounded operator with adjoint $T^*$. If $f\in\mathcal{A}$ is positive and invertible,
then $(\mathcal{A,H},D_{(f,\gamma)})$ is a modular spectral triple with respect to the inner automorphism $\sigma(a)=faf^{-1}$, $a\in \mathcal{A}$  ( \cite[Lemma 1.1]{MR3194491}), where the Dirac operator is given by $$D_{(f,\gamma)}=\begin{pmatrix}
0 & f T^*\\ Tf & 0
\end{pmatrix}. $$
On the other hand, any modular spectral triple $(\mathcal{A,H},D)$ with an automorphism $\sigma$ admits a transposed modular spectral triple $(\mathcal{A}^{\rm op},\bar{\mathcal{H}}, D^t) $ \cite[Proposition 1.3]{MR3194491}, where $\mathcal{A}^{\rm op}$ is the opposite algebra of $\mathcal{A}$, $\bar{\mathcal{H}}$ is the dual Hilbert space, the action of $\mathcal{A}^{\rm op}$ on $\bar{\mathcal{H}}$ is the transpose of the the action of $\mathcal{A}$ on $\mathcal{H}$, $D^t$ is the transpose of $D$, and $\sigma'$ is the automorphism of $\mathcal{A}^{\rm op}$ given by $\sigma'(a^{\rm op})=(\sigma^{-1}(a))^{\rm op}$.

\begin{proposition}
Let $k=e^{h/2}$, where $h=h^*\in\Ai$.
 The triple $(\A^{\rm op},\mathcal{H},\tilde{D}_h)$  is a modular spectral triple, where the automorphism  of $\A^{\rm op}$ is given by
\begin{equation*}
a^{\rm op}\mapsto (k^{-1}ak)^{\rm op},\quad a\in \Ai,
\end{equation*} 
and the representation of $\A^{\rm op}$ on $\mathcal{H}$ is given by the right multiplication of $\A$ on $\mathcal{H}$.
Moreover, the transposed of the modular spectral triple $(\A^{\rm op},\mathcal{H},\tilde{D}_h)$ is isomorphic to the perturbed spectral triple 
\begin{equation}\label{finalspectraltriple}
(\A,\mathcal{H},\bar{D}_h),\quad 
\bar{D}_h=\begin{pmatrix}
0 & k d\\ d{}^* k & 0
\end{pmatrix},
\end{equation}
where the operators $d$ and $d^*$ are as in \eqref{nonpertddstar}.
\end{proposition}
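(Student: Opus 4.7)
The plan is to recognize $\tilde D_h$ as the output of the inner twist construction (Lemma~1.1 of \cite{MR3194491}) applied to the flat de Rham data viewed with the opposite algebra action, and then to pass to the transposed modular spectral triple via Proposition~1.3 of \cite{MR3194491}.

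First, I would start from the flat de Rham spectral triple $(\A, \mathcal{H}, D)$ of Definition~\ref{deRhamspectralflat} with $D = \begin{pmatrix} 0 & d^* \\ d & 0 \end{pmatrix}$, and observe that it can equally be viewed as an ordinary spectral triple $(\A^{\rm op}, \mathcal{H}, D)$ in which the opposite algebra acts by right multiplication. Indeed the commutators $[D, a^{\rm op}]$ are bounded because the entries of $d$ and $d^*$ in \eqref{nonpertddstar} are constant-coefficient combinations of the derivations $\delta_1, \delta_2$, and one checks directly that $[\delta_j, R_a] = R_{\delta_j(a)}$ is a bounded right multiplication.

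Next, I would apply Lemma~1.1 of \cite{MR3194491} to this triple with the positive invertible element $f = k^{\rm op}$, where $k = e^{h/2}$. Since $k^{\rm op}$ acts on $\mathcal{H}$ as $R_k$ and $T=d$, the recipe gives
\[
D_{(f,\gamma)} = \begin{pmatrix} 0 & R_k \comp d^* \\ d \comp R_k & 0 \end{pmatrix} = \tilde D_h,
\]
together with the inner twist $\sigma(a^{\rm op}) = (k^{\rm op})\, a^{\rm op}\, (k^{\rm op})^{-1}$. Unwinding the opposite multiplication produces $\sigma(a^{\rm op}) = (k^{-1}ak)^{\rm op}$, which establishes the first half of the proposition.

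For the second half, I would invoke Proposition~1.3 of \cite{MR3194491} to form the transposed modular spectral triple $((\A^{\rm op})^{\rm op}, \bar{\mathcal{H}}, \tilde D_h^{\,t}) = (\A, \bar{\mathcal{H}}, \tilde D_h^{\,t})$, whose twist automatically becomes $a \mapsto kak^{-1}$. To identify this triple with $(\A, \mathcal{H}, \bar D_h)$, I would use the canonical antilinear (Tomita-type) isometry $J : \mathcal{H} \to \bar{\mathcal{H}}$ built from the $\ast$-structure and the tracial states appearing on the summands $\mathcal{H}^{(0)}, \mathcal{H}^{(1)}, \mathcal{H}^{(2)}$. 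Under $J$, the right action of $\A^{\rm op}$ is intertwined with the standard left action of $\A$ on $\mathcal{H}$ and, by self-adjointness of $k$, $R_k$ is sent to $L_k$. The main obstacle is the careful verification that the conjugated operator equals $\bar D_h$: since each $\delta_j$ is a $\ast$-derivation one has $J\delta_j J^{-1} = \delta_j$, while antilinearity of $J$ reverses every $i$ in~\eqref{nonpertddstar}, and the transposition swaps the two off-diagonal blocks, which together interchange $d$ and $d^*$ and produce $\begin{pmatrix} 0 & kd \\ d^* k & 0 \end{pmatrix}$. The bookkeeping that has to be done most carefully is that the $\tau$-dependent constants appearing in $d$, $d^*$ and the weighted inner products on $\mathcal{H}^{(0)}$ and $\mathcal{H}^{(2)}$ all conspire correctly; this is a mechanical, though slightly tedious, block-matrix calculation once $J$ is fixed.
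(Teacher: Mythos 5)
Your strategy follows the same blueprint the paper itself cites: view the flat de Rham triple with $\A^{\rm op}$ acting by right multiplication, apply Lemma~1.1 of~\cite{MR3194491} with $f=k^{\rm op}$ (whose operator image is $R_k$), and then pass to the transposed modular spectral triple via Proposition~1.3 of~\cite{MR3194491}. The first half is correct and essentially exactly what the paper intends: $D_{(f,\gamma)}=\begin{pmatrix}0 & R_k d^*\\ d R_k & 0\end{pmatrix}=\tilde D_h$, and unwinding $k^{\rm op}\cdot_{\rm op} a^{\rm op}\cdot_{\rm op}(k^{-1})^{\rm op}=(k^{-1}ak)^{\rm op}$ gives the stated inner twist.

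The second half is where the sketch does not hold up. First, the claim that ``each $\delta_j$ is a $*$-derivation so $J\delta_j J^{-1}=\delta_j$'' is wrong under the conventions in force: the canonical derivations satisfy $\delta_j(a^*)=-\delta_j(a)^*$ (this is precisely what makes the paper's $d_0^*$ come out with the minus signs in~\eqref{nonpertddstar}), and hence $J_0\delta_j J_0^{-1}=-\delta_j$ for $J_0:a\mapsto a^*$. Combined with the antilinear reversal of $i$, one finds $J_0(i\delta_j)J_0^{-1}=i\delta_j$, so conjugating by the obvious Tomita-type $J=J_0^{\oplus 4}$ \emph{fixes} $d$ and $d^*$ rather than interchanging them. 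Second, the transpose of a self-adjoint operator pulled back through an antiunitary does not ``swap the off-diagonal blocks'': $\tilde D_h^t$ becomes $J\tilde D_h J^{-1}$, which for block-diagonal $J$ keeps the block structure. A direct computation therefore produces $J\tilde D_h J^{-1}=\begin{pmatrix}0 & L_kd^*\\ dL_k&0\end{pmatrix}$ (the $R_k$ correctly converts to $L_k$). This is \emph{not} $\bar D_h=\begin{pmatrix}0 & kd\\ d^*k & 0\end{pmatrix}$: not only does the stated $\bar D_h$ live on the $\pm$-pieces in the opposite order (so an explicit relabeling of $\mathcal{H}^{(1)}$ and $\mathcal{H}^{(0)}\oplus\mathcal{H}^{(2)}$ is part of the identification), but the placement of the conformal factor is on the wrong side of $d$, and $dL_k\neq L_kd$ since $\delta_j(k)\neq0$. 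So the intertwining unitary is not simply $J$ composed with an obvious shuffle; identifying it precisely and verifying it against~\eqref{nonpertddstar} is the actual content of the second half, and your sketch treats this as ``mechanical once $J$ is fixed'' without fixing $J$ or confronting the discrepancy. You should track the $\tau$-weighted inner products on the graded pieces and produce the specific intertwiner before claiming the conjugated operator equals $\bar D_h$.
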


\begin{definition}\label{modularspectraltriple}
The modular spectral triple $(\A,\mathcal{H},\bar{D}_h)$ in \eqref{finalspectraltriple} will be called the {modular de Rham spectral triple} of the noncommutative two torus with dilaton $h$.
\end{definition}

\subsection{Ricci functional and Ricci curvature  for the curved noncommutative torus}

Using  the  pseudodifferential calculus with symbols in  $\Ai\otimes M_4(\mathbb{C})$, 
 one shows  that the localized heat trace of  $\bar{D}_h^2$  has an asymptotic expansion  with coefficients of the  form  
\begin{equation*}
a_n(E,\bar D_h^2)=\varphi \circ \tr\left(E\, c_n(\bar D_h^2)\right), \qquad E\in \Ai\otimes M_4(\mathbb{C}),
\end{equation*} 
where $c_n(\bar D_h^2)\in \Ai\otimes \big(M_2(\mathbb{C})\oplus M_2(\mathbb{C})\big)$ and $\tr$ is the matrix trace.
The Ricci functional can now be defined:
\begin{definition}[\cite{arXiv1612.06688}]\label{riccifunctionalnc}
The { Ricci functional} of the modular de Rham spectral triple $(\A,\mathcal{H}, \bar D_h)$ is the functional on $\A\otimes M_2(\mathbb{C})$ defined as
\begin{equation*}
\ricfun(F)=a_2(\gamma \tilde{F},\bar D^2)=\zeta(0,\gamma\tilde{F},\bar {D}^2_h)+\Tr(\tr(F)\pr_0)-\Tr(F \pr_1), 
\end{equation*}
where $\tilde{F}=\tr(F)\oplus 0\oplus F$, and 
$\pr_j$ is  the orthogonal projection on the kernel of $\lap_{h,j}$, for $j=0,1$.
\end{definition}

\begin{lemma}\label{Riccidensitylemma} 
There exists an element $\ricden\in\Ai\otimes M_2(\mathbb{C})$ such that  for all $F\in \Ai\otimes M_2(\mathbb{C})$
\begin{equation*}
\ricfun(F)=\frac{1}{\Im(\tau)}\varphi(\tr(F\ricden)e^{-h}).
\end{equation*}
\end{lemma}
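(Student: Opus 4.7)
The plan is to exhibit $\ricden$ by showing that each of the three ingredients in the definition of $\ricfun(F)$, namely $\zeta(0,\gamma\tilde F,\bar D_h^2)$, $\Tr(\tr(F)\pr_0)$, and $\Tr(F\pr_1)$, is a linear functional of $F$ expressible in the form $F\mapsto \Im(\tau)^{-1}\varphi(\tr(F\cdot \beta)\, e^{-h})$ for a suitable $\beta\in \Ai\otimes M_2(\C)$, and then to add these elements together. Linearity in $F$ is clear from the definitions, so the statement reduces to producing the representing elements for each piece.

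For the zeta term, I would invoke the pseudodifferential calculus for $C^*$-dynamical systems reviewed in \S\ref{pseudoandheatexpsec}. Following the recursion \eqref{r0nctorusformula}--\eqref{rnnctorusformula} applied to $P=\bar D_h^2$ (an elliptic operator of order $2$ acting on $\Ai\otimes \C^4$), one produces a density $c_2(\bar D_h^2)\in \Ai\otimes \bigl(M_2(\C)\oplus M_2(\C)\bigr)$ by contour and angular integration as in \eqref{nctorusa2nformula}, so that $a_2(E,\bar D_h^2)=\varphi\circ \tr\bigl(E\,c_2(\bar D_h^2)\bigr)$ for every $E\in \Ai\otimes M_4(\C)$; the relation $\zeta(0,E,\bar D_h^2)=a_2(E,\bar D_h^2)-\Tr(E\pr)$, where $\pr$ is the projection onto $\ker \bar D_h^2$, then handles the value at the origin. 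Writing $c_2(\bar D_h^2)=c_2^{(0)}\oplus c_2^{(2)}\oplus c_2^{(1)}$ in the decomposition $\mathcal H^{(0)}_h\oplus \mathcal H^{(2)}_h\oplus \mathcal H^{(1)}_h$, with $c_2^{(0)},c_2^{(2)}\in \Ai$ and $c_2^{(1)}\in \Ai\otimes M_2(\C)$, and using $\gamma\tilde F=\tr(F)\oplus 0\oplus(-F)$, one obtains
\begin{equation*}
a_2(\gamma\tilde F,\bar D_h^2)=\varphi\bigl(\tr(F)\,c_2^{(0)}\bigr)-\varphi\circ \tr\bigl(F\,c_2^{(1)}\bigr)
=\varphi\circ \tr\!\Bigl(F\bigl(c_2^{(0)}I_2-c_2^{(1)}\bigr)\Bigr),
\end{equation*}
where in the last step I have used the traciality of $\varphi$ together with the fact that $\tr(F)\cdot x = \tr(F\cdot xI_2)$ for scalar $x$.

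The projection contributions are finite-rank perturbations and are handled by direct computation. By ergodicity of the $\T^2$-action, $\ker \lap_{h,0}$ consists of scalar multiples of $1$, so $\pr_0$ is the rank-one projection onto the span of $1$ with respect to $\langle \cdot,\cdot\rangle_0$, giving
\begin{equation*}
\Tr\bigl(\tr(F)\pr_0\bigr)=\frac{\langle 1,\tr(F)\rangle_0}{\langle 1,1\rangle_0}=\frac{\varphi(\tr(F)\,e^{-h})}{\varphi(e^{-h})}=\frac{1}{\Im(\tau)}\varphi\!\left(\tr\!\left(F\cdot \frac{\Im(\tau)}{\varphi(e^{-h})}I_2\right)e^{-h}\right).
\end{equation*}
The harmonic one-forms $\ker \lap_{h,1}$ form a finite-dimensional subspace of $\mathcal H^{(1)}_h$; picking an orthonormal basis and writing out the matrix elements of $\Tr(F\pr_1)$ in the same way produces an element $\beta_1\in \Ai\otimes M_2(\C)$ such that $\Tr(F\pr_1)=\Im(\tau)^{-1}\varphi(\tr(F\beta_1)e^{-h})$.

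Combining these, and using that for any $\rho\in \Ai\otimes M_2(\C)$ one has $\varphi\circ\tr(F\rho)=\Im(\tau)^{-1}\varphi(\tr(F\cdot \Im(\tau)\rho\,e^{h})\,e^{-h})$ by the trace property of $\varphi$ applied entry-wise, one sets
\begin{equation*}
\ricden = \Im(\tau)\bigl(c_2^{(0)}I_2-c_2^{(1)}\bigr)e^{h}+\frac{\Im(\tau)}{\varphi(e^{-h})}I_2-\beta_1,
\end{equation*}
which lies in $\Ai\otimes M_2(\C)$ and represents $\ricfun$ as claimed. The main obstacle I anticipate is the bookkeeping needed to confirm that $c_2^{(0)}$ and $c_2^{(1)}$ are genuinely smooth elements (as opposed to only Sobolev-class), which requires the smoothing properties of the pseudodifferential calculus of \cite{MR572645} together with uniform bounds on the contour integrals in \eqref{nctorusa2nformula}; once smoothness is established, the algebraic manipulation above produces $\ricden$ canonically.
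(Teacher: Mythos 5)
Your plan of splitting $\ricfun(F)$ into $\zeta(0,\gamma\tilde F,\bar D_h^2)$, $\Tr(\tr(F)\pr_0)$ and $-\Tr(F\pr_1)$ and representing each separately is not unreasonable, but your execution double-counts the projection terms. The source of the problem is the zeta piece: the density $c_2(\bar D_h^2)$ produced by the pseudodifferential calculus represents the \emph{full} heat-trace coefficient $a_2$, i.e.\ $a_2(E,\bar D_h^2)=\varphi\circ\tr\bigl(E\,c_2(\bar D_h^2)\bigr)$, and this $a_2$ already contains the contribution of $\ker\bar D_h^2$ --- precisely because, as you yourself recall, $\zeta(0,E,\bar D_h^2)=a_2(E,\bar D_h^2)-\Tr(E\,\pr)$. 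Consequently, the identity you derive, $a_2(\gamma\tilde F,\bar D_h^2)=\varphi\circ\tr\bigl(F(c_2^{(0)}I_2-c_2^{(1)})\bigr)$, is already the desired representation of $\ricfun(F)$, since Definition~\ref{riccifunctionalnc} defines $\ricfun(F)$ \emph{as} $a_2(\gamma\tilde F,\bar D^2)$. All that remains at that point is to insert $e^h e^{-h}$ and factor out $\Im(\tau)$, giving $\ricden=\Im(\tau)\bigl(c_2^{(0)}I_2-c_2^{(1)}\bigr)e^h$, which is exactly your first summand. By then also adding $\Im(\tau)\varphi(e^{-h})^{-1}I_2-\beta_1$, your final element represents $a_2(\gamma\tilde F,\bar D_h^2)+\Tr(\tr(F)\pr_0)-\Tr(F\pr_1)$, i.e.\ $\ricfun(F)$ plus an extra copy of the projection terms, not $\ricfun(F)$. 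Had you carried the split through consistently, you would first \emph{subtract} the projection densities from $\Im(\tau)\bigl(c_2^{(0)}I_2-c_2^{(1)}\bigr)e^h$ to obtain the representative of $\zeta(0,\gamma\tilde F,\bar D_h^2)$, and adding the projections back then cancels exactly, confirming that the detour buys nothing.

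The paper's proof sidesteps this entirely by never touching the zeta value or the projections. It works directly with $a_2$: the direct-sum structure of $\bar D_h^2$ together with $\gamma\tilde F=\tr(F)\oplus 0\oplus(-F)$ gives $a_2(\gamma\tilde F,\bar D^2)=a_2(\tr(F),\lap_{h,0})-a_2(F,\lap_{h,1})$; the first term is converted using the heat-trace identity $\tr(F)\,e^{-t\lap_{h,0}}=\tr\bigl(F\,e^{-t\lap_{h,0}\otimes I_2}\bigr)$ so that both terms are matrix traces localized by $F$; then both are expressed against densities and $\ricden=\Im(\tau)\bigl(c_2(\lap_{h,0})\otimes I_2-c_2(\lap_{h,1})\bigr)e^h$ is read off. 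Your first summand agrees with this; the other two should be deleted.
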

\begin{proof}
For any such $F$  we have
$$a_2(\gamma \tilde{F},\bar D^2 )=a_2(\tr(F),\lap_{h,0})-a_2(F,\lap_{h,1}).$$
Now  $\tr(F)e^{-t\lap_{h,0}}=\tr(Fe^{-t\lap_{h,0}\otimes {\rm I}_2})$,  and thus
$$a_2(\tr(F),\lap_{h,0})=a_2(F,\lap_{h,0}\otimes I_2).$$
As a result, we have
\begin{eqnarray*}
\ricden(F)&=&a_2(\tr(F),\lap_{h,0})-a_2(F,\lap_{h,1})\\
&=& \varphi\left(\tr \Big(F\big(c_2(\lap_{h,0})\otimes {\rm I}_2- c_2(\lap_{h,1})\big)\Big)\right)\\
&=& \frac{1}{\Im(\tau)} \varphi\left(\Im(\tau)\tr \Big(F\big(c_2(\lap_{h,0})\otimes {\rm I}_2- c_2(\lap_{h,1})\big)\Big)e^h e^{-h}\right).
\end{eqnarray*}
Hence, 
\begin{equation*}\label{ricdenexists}
\ricden=\Im(\tau)\Big(c_2(\lap_{h,0})\otimes {\rm I}_2- c_2(\lap_{h,1})\Big)e^h.
\end{equation*}
\end{proof}
\begin{definition}
The element $\ricden$  is called the {Ricci density} 
of the curved noncommutative torus  with dilaton $h$. 
\end{definition}

The terms $c_2(\lap_{h,j})$ can be computed  by integrating  the symbol of the parametrix of $\lap_{h,j}$. 
Since the operator $\lap_{h,1}$ is a  first order perturbation of $\lap_\varphi^{(0,1)}$,  we will only need to compute the difference
$c_2(\lap_{h,1})-c_2(\lap_\varphi^{(0,1)})\otimes {\rm I}_2$. 
The terms $c_2(\lap_{h,0})=c_2(k\lap_0k)$ and $c_2(\lap_\varphi^{(0,1)})$ are computed previously  in two places   by Connes-Moscovici and Fathizadeh-Khalkhali, and their  difference is given by
\begin{eqnarray*}\label{scalar}
   R^\gamma &=& \big(c_2(k\lap k)\otimes{\rm I}_2-c_2(\lap_\varphi^{(0,1)})\big) e^h\\\nonumber
   &=&-\frac{\pi}{\Im(\tau)}\Big(K_\gamma(\nabla)(\lap_0(\log k))+H_\gamma\big(\nabla_1,\nabla_2\big)\left(\square_\Re (\log k)\right)\\
   && \qquad\qquad\qquad\qquad\qquad\qquad+ S(\nabla_1,\nabla_2)
(\square_\Im (\log k))\Big)e^h.\notag
 \end{eqnarray*}
Here,  
\begin{eqnarray*}
\square_\Re (\ell) &=&
(\delta_1(\ell))^2+ \Re(\tau)\left(\delta_1(\ell)\delta_2(\ell)+\delta_2(\ell)\delta_1(\ell)\right)+|\tau|^2 (\delta_2(\ell))^2,\\
\square_\Im(\ell)&=& i\Im(\tau)(\delta_1(\ell)\delta_2(\ell)-\delta_2(\ell)\delta_1(\ell))
\end{eqnarray*}
with $\ell = \log k.$
Moreover, 
$$
K_\gamma(u) = \, \frac{\frac 12+\frac{\sinh(u/2)}{u}}{\cosh^2(u/4)}, 
$$
\begin{align*}
\begin{split}
&H_\gamma(s,t)= \big(1-\cosh((s+t)/2)\big) \times \\
&\frac{t (s+t) \cosh(s)-s (s+t) \cosh(t)+(s-t) (s+t+\sinh(s)+\sinh(t)-\sinh(s+t))}{s t (s+t) \sinh\left(\frac{s}{2}\right) \sinh\left(\frac{t}{2}\right) \sinh\left(\frac{s+t}{2}\right)^2},
\end{split}
\end{align*}
\begin{equation*}\label{Sfunction}
S(s,t)=\frac{(s+t-t\, \cosh(s)-s\, \cosh(t)-\sinh(s)-\sinh(t)+\sinh(s+t))}{s\, t\left(\sinh\left(\frac{s}{2}\right) \sinh\left(\frac{t}{2}\right) \sinh\left(\frac{s+t}{2}\right)\right)}.
\end{equation*}
The term $S$  coincides with the function $S$ found in \cite{MR3194491, MR3148618} for scalar curvature.\\

Now the main result of \cite{arXiv1612.06688} can be stated as follows. It computes the Ricci curvature density of a curved noncommutative two torus with a conformally flat metric. The proof of this theorem  is quite long and  complicated and will not be reproduced here.
\begin{theorem}[\cite{arXiv1612.06688}] \label{maintheorem}
Let $k=e^{h/2}$ with  $h\in \Ai$ a selfadjoint element.
Then the Ricci density of the modular de Rham spectral triple with dilaton $h$ is given by 
\begin{equation*}
\ricden= \frac{\Im(\tau)}{4\pi^2}R^\gamma\otimes {\rm I}_2- \frac{1}{4\pi} S(\nabla_1,\nabla_2)\big([\delta_1(\log k),\delta_2(\log k)]\big)e^h\otimes \begin{pmatrix}
i\Im(\tau) & \Im(\tau)^2  \\
 -1& i\Im(\tau)
\end{pmatrix}\,.
\end{equation*}\\
\end{theorem}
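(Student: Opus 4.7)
The plan is to reduce the theorem to a pseudodifferential computation and then extract the non-scalar piece by comparison with the already-known scalar curvature density. By Lemma \ref{Riccidensitylemma} we have
\[
\ricden \;=\; \Im(\tau)\,\bigl( c_2(\lap_{h,0}) \otimes {\rm I}_2 - c_2(\lap_{h,1}) \bigr)\, e^{h},
\]
so I would add and subtract the ``frozen'' Dolbeault Laplacian $\lap_\varphi^{(0,1)}$ to split
\[
c_2(\lap_{h,0}) \otimes {\rm I}_2 - c_2(\lap_{h,1})
= \bigl( c_2(\lap_{h,0}) \otimes {\rm I}_2 - c_2(\lap_\varphi^{(0,1)})\otimes{\rm I}_2 \bigr) - \bigl( c_2(\lap_{h,1}) - c_2(\lap_\varphi^{(0,1)})\otimes {\rm I}_2 \bigr).
\]
The first bracket is precisely $R^\gamma e^{-h}\otimes {\rm I}_2$ by the Connes--Moscovici, Fathizadeh--Khalkhali formula recalled just before the theorem, and after multiplying by $\Im(\tau)\,e^{h}$ produces the first summand $\frac{\Im(\tau)}{4\pi^{2}}R^\gamma \otimes {\rm I}_{2}$. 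Thus everything boils down to evaluating the second bracket, i.e.\ the matrix-valued heat coefficient $c_2$ of $\lap_{h,1}$ on $1$-forms, modulo the known diagonal/scalar contribution.

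To do so I would apply the pseudodifferential calculus of \S\ref{nctorusheatexpsubsec} to the operator $\lap_{h,1}=d_{h,1}^{*} d_{h,1}$ obtained from the formulas for $d_h$ and $d_h^{*}$. The leading symbol $p_2(\xi)$ is a scalar (it agrees with that of $\lap_\varphi^{(0,1)}$ tensored with ${\rm I}_2$), which guarantees ellipticity; the subleading symbols $p_1(\xi)$ and $p_0(\xi)$ are genuinely $M_2(\C)$-valued and involve $\delta_j(h)$ and $\delta_i\delta_j(h)$ through combinations inherited from $d$ and $d^{*}$. I would then compute $r_0,r_1,r_2$ recursively by the formulas \eqref{r0nctorusformula}--\eqref{rnnctorusformula}, extract
\[
c_2(\lap_{h,1}) \;=\; \frac{1}{2\pi i}\int_{\R^{2}}\!\!\int_\gamma e^{-\lambda}\, r_2(\xi,\lambda)\,d\lambda\,d\xi,
\]
and carry out the angular integration in polar coordinates, reducing to radial integrals of $C^{*}$-algebra-valued functions that are handled by the rearrangement Lemma \ref{rearrlemma}. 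After writing the answer in terms of functions of the modular derivation $\nabla=-[h,\cdot]$, I would isolate the scalar-valued piece (which must cancel against $c_2(\lap_\varphi^{(0,1)})\otimes {\rm I}_2$ by the Connes--Tretkoff--Cohen calculation) and collect the residual non-scalar $M_2(\C)$-valued contribution.

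The hard part will be this last identification: tracking the off-diagonal matrix structure through the recursive composition rule \eqref{pseudonctoruscompositionrule} generates many terms of the form $\delta_i(h)\cdot \delta_j(h)$ where the index pairing interacts non-trivially with the matrix indices from $d,d^{*}$. The anti-symmetric combination of these, once all the symmetric pieces have been absorbed into $R^\gamma$, produces precisely a commutator $[\delta_1(\log k),\delta_2(\log k)]$; the coefficient is a two-variable function that one expects to coincide with $S(\nabla_1,\nabla_2)$ appearing in the scalar curvature formulas, because the same local integrals $\int_{0}^{\infty} u^{|m|-2}(e^{h}u+1)^{-m_0}\prod \rho_j (e^{h}u+1)^{-m_j}\,du$ are responsible for both. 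The constant matrix
\[
\begin{pmatrix} i\Im(\tau) & \Im(\tau)^{2}\\ -1 & i\Im(\tau)\end{pmatrix}
\]
should arise directly from the angular integration against the components of $g_\tau^{-1}$, more concretely from the residues of $(p_{2}-\lambda)^{-1}$ paired against the off-diagonal entries of $p_{1}$; once the dependence on $\tau$ is traced through, verifying that this matrix is correct amounts to a bookkeeping check against the known $\tau=i$ case. I would finally multiply by $\Im(\tau)e^{h}$ as dictated by the lemma to obtain the claimed formula for $\ricden$.
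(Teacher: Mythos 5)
Your decomposition strategy matches exactly the route the paper indicates: start from Lemma~\ref{Riccidensitylemma}, insert a ``$c_2(\lap_\varphi^{(0,1)})\otimes{\rm I}_2$'' pivot, identify the scalar difference $c_2(\lap_{h,0})-c_2(\lap_\varphi^{(0,1)})$ with the known Connes--Moscovici/Fathizadeh--Khalkhali result $R^\gamma$, and reduce the remaining work to the first-order perturbation $c_2(\lap_{h,1})-c_2(\lap_\varphi^{(0,1)})\otimes{\rm I}_2$ via the symbol calculus and rearrangement lemma. That is precisely what the survey says is done (the full calculation being deferred to [arXiv:1612.06688]), so your plan is the paper's proof strategy reconstructed, not a new route.

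Two cautions on the details. First, your prefactor bookkeeping is internally inconsistent: if the first bracket equals $R^\gamma e^{-h}\otimes{\rm I}_2$ as you write, then right-multiplying by $\Im(\tau)e^{h}$ gives $\Im(\tau)\,R^\gamma\otimes{\rm I}_2$, not $\tfrac{\Im(\tau)}{4\pi^{2}}R^\gamma\otimes{\rm I}_2$. The missing $\tfrac{1}{4\pi^{2}}=\tfrac{1}{(2\pi)^{2}}$ is a genuine normalization discrepancy between the $c_2$ used in Lemma~\ref{Riccidensitylemma} (densities for the matrix operator $\bar D_h^2$, extracted with the $(2\pi)^{-\dim}$ factor implicit in \eqref{localterminbjs}) and the $c_2$ appearing in the displayed formula for $R^\gamma$, which is quoted from the scalar-curvature computations with a different convention; if you want a self-contained argument you must either adopt a single convention or exhibit the conversion factor explicitly, rather than asserting the constant. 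Second, your identification of the off-diagonal coefficient with $S(\nabla_1,\nabla_2)$ and of the constant $\tau$-matrix is offered only as an expectation; the matrix $\begin{pmatrix}i\Im(\tau)&\Im(\tau)^{2}\\-1&i\Im(\tau)\end{pmatrix}$ depends on $\tau$, so verifying it ``against the known $\tau=i$ case'' would not pin down the full $\tau$-dependence -- one really has to carry out the angular $\xi$-integration for general $\tau$ and watch how the entries of $g_\tau^{-1}$ enter the residue computation. Neither of these kills the approach, but both would need to be settled before the asserted formula is actually proved.
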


It is important to check the classical limit for consistency. In the commutative  limit   the Ricci density $\ricden$ is retrieved as
$\lim_{(s,t)\to (0,0)} \ricden$.
Since (cf. \cite{MR3194491}  for a proof)
$$\lim_{(s,t)\to (0,0)}R^\gamma=-\frac{\pi}{\Im(\tau)} \lap_0(\log k),$$
and  $[\delta_1(\log k),\delta_2(\log k)]=0$, we have
\begin{equation*}
\ricden|_{\theta=0}=\frac{-1}{4\pi} \lap_0(\log k)e^h\otimes {\rm I}_2.
\end{equation*}   
If we take into account  the normalization of the classical case that comes from the heat kernel coefficients,  this gives the formula for the Ricci operator in the classical case.

Unlike the commutative case, the Ricci density $\ricden$ in  the noncommutative case   is not  a symmetric matrix. 
Indeed, it has nonzero off diagonal terms,  which are multiples of 
$S(\nabla_1,\nabla_2)([\delta_1(\log k),\delta_2(\log k)])$. 
This phenomenon, observed in \cite{arXiv1612.06688}  for the first time, is obviously a consequence of the noncommutative nature of the space.   
It is an interesting feature of noncommutative geometry that, contrary to the commutative case, the Ricci curvature is not a multiple of the scalar curvature  even in dimension two. This manifests itself in the existence of off diagonal terms 
in the Ricci operator  $\ricden $ above.

It is clear that one can define in a similar fashion a Ricci curvature operator for higher dimensional noncommutative tori, as well as for noncommutative toric manifolds. Its computation in these cases poses an interesting   problem. This problem now is completely solved for noncommutative three tori is 
\cite{arXiv1808.02977}.  It would also be interesting to find the analogue of the Ricci flow based on our definition of Ricci curvature functional. It should be noted that for noncommutative two tori a  definition of Ricci flow, without a notion of Ricci curvature, is proposed in \cite{MR2947960}.

\section{{\bf Beyond conformally flat metrics and beyond dimension four}}

In  the study of spectral geometry of noncommutative tori   one is naturally interested in going beyond conformally flat metrics and beyond dimension four.  Even in the case of noncommutative two torus it is important to consider metrics which are not conformally flat. 
In fact while by uniformization theorem  we know that any metric on the two torus is conformally flat, there is strong evidence that this is not so in the noncommutative case. This is closely related
to the problem of classification of complex structures on the noncommutative two torus  via positive Hochschild cocycles, which is still unsolved. 

As far as higher dimensions go,  our  original methods does not allow us  to treat the dimension as a variable in the calculations and obtain explicit formulas in all dimensions
in a uniform manner.  This is in sharp contrast with the classical case where   formulas work in a uniform manner in all dimensions. In this section we report on a very recent development \cite{arXiv1811.04004}  where progress has been made on both fronts. 

In  the recent  paper \cite{arXiv1811.04004},
using a new strategy  based on Newton divided differences,  it is shown how  to consider non-conformal
metrics and how to  treat all higher dimensional noncommutative tori  in a uniform way. In fact based on older methods it was not clear how to extend the computation of the scalar curvature to a general higher dimensional case.   The class of non-conformal
metrics  introduced in \cite{arXiv1811.04004}  is quite large and leads to beautiful combinatorial identities for the
curvature via divided differences. In this section we shall briefly sketch the results obtained in \cite{arXiv1811.04004}, following closely its  organization of material.

\subsection{Rearrangement lemma revisited} To compute and effectively work with integrals of the form 

\begin{equation*}
\int_0^\infty (uk^2+1)^{-m}b(uk^2+1)u^{m} du,
\end{equation*}
the rearrangement lemma  was proved by Connes and Tretkoff  in \cite{MR2907006}. Here   $k=e^{h/2}$,  $h, b   \in C^\infty(\nctorus[2])$   and $h$  is   selfadjoint. The problem stems  from the fact that $h$ and $b$ need not commute. 
Later on this lemma was generalized, for the sake of curvature calculations,  for  more than one $b$ in   \cite{MR3194491,MR3148618}.
A  detailed study of this  lemma for more general integrands of the form  
\begin{equation*}
\int_0^\infty f_0(u,k)b_1 f_1(u,k)\rho_2\cdots b_n f_n(u,k) du,
\end{equation*}
was given by M. Lesch in \cite{MR3626561}, with a new proof and a new point of view. 
This approach uses the multiplication map 
$$\mu :a_1\otimes a_2\otimes \cdots\otimes a_n\mapsto a_1a_2\cdots a_n$$  from the projective tensor product $A^{\otimes_\gamma  n}$ to $A$. 
The above integral   is  expressed as  the contraction of the product of an element $F(k_{(0)},\cdots,k_{(n)})$ of $A^{\otimes_\gamma (n+1)}$, with  the element $b_1\otimes b_2\otimes \cdots b_n\otimes 1$ which is
\begin{equation*}
\mu\Big(F(k_{(0)},\cdots,k_{(n)})(b_1\otimes b_2\otimes \cdots b_n\otimes 1)\Big).
\end{equation*} 
 The above  element  is usually written in the so called {\it contraction form}
\begin{equation}\label{contractionform}
F(k_{(0)},\cdots,k_{(n)})(b_1\cdot b_2\cdot \cdots b_n).
\end{equation}

The following  version of the rearrangement lemma is stated  in  \cite{arXiv1811.04004}  with the domain of integration 
changed   from $[0,\infty)$ to any domain in $\mathbb{R}^N$.

\begin{lemma}[Rearrangement lemma  \cite{arXiv1811.04004}]\label{rearrangmentlemma}
Let $A$ be a unital $C^\ast$-algebra, $h\in A$ be a selfadjoint element, and $\Lambda$ be an open neighborhood of the spectrum of $h$ in $\mathbb{R}$.
For  a domain $U$ in $\mathbb{R}^N$, let 
$f_j:U \times \Lambda\to \mathbb{C}$, $0\leq j\leq n,$  be  smooth functions  such that $f(u,\lambda)=\prod_{j=0}^n f_j(u,\lambda_j)$ satisfies the following integrability condition:
for any compact subset $K\subset \Lambda^{n+1}$ and every given  multi-index $\alpha$ we have    
$$\int_U \sup\limits_{\lambda\in K}|\partial_\lambda^\alpha f(u,\lambda)|du<\infty.$$
Then,
\begin{equation}\label{formulaofrearrangement}
\int_U f_0(u,h)b_1f_1(u,h)\cdots b_n f_n(u,h)du=F(h_{(0)},h_{(1)},\cdots,h_{(n)})(b_1\cdot b_2\cdots b_n),
\end{equation}
where $F(\lambda)=\int_U f(u,\lambda)du$.\qed
\end{lemma}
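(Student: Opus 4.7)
The plan is to reduce the general identity to the elementary tensor case $f_j(u,\lambda) = g_j(u)\phi_j(\lambda)$ by combining Bochner integration in the projective tensor product $A^{\otimes_\gamma(n+1)}$ with a density argument. First I would unpack the contraction notation \eqref{contractionform}: for $\varphi$ of pure tensor form $\varphi(\lambda_0,\dots,\lambda_n)=\prod_{j=0}^n\varphi_j(\lambda_j)$ the element $\varphi(h_{(0)},\dots,h_{(n)})\in A^{\otimes_\gamma(n+1)}$ is by definition the elementary tensor $\varphi_0(h)\otimes\cdots\otimes\varphi_n(h)$, whose contraction against $b_1\otimes\cdots\otimes b_n\otimes 1$ under $\mu$ equals $\varphi_0(h)b_1\varphi_1(h)b_2\cdots b_n\varphi_n(h)$. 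The assignment $\varphi\mapsto\varphi(h_{(0)},\dots,h_{(n)})$ extends by linearity and continuity to any continuous function on $\sigma(h)^{n+1}$, so the right-hand side of \eqref{formulaofrearrangement} is well-defined.

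In the elementary case above, the integrand on the left factors as the scalar $\prod_j g_j(u)$ times the fixed operator $\phi_0(h)b_1\phi_1(h)\cdots b_n\phi_n(h)$, and $F(\lambda)=\bigl(\int_U\prod_j g_j(u)\,du\bigr)\prod_j\phi_j(\lambda_j)$ is again a pure tensor; the identity is immediate, and extends by multilinearity to finite sums of such tensors. For the general case, I introduce
\begin{equation*}
\Phi(u)\;:=\;f_0(u,h)\otimes\cdots\otimes f_n(u,h)\;\in\;A^{\otimes_\gamma(n+1)}.
\end{equation*}
Continuous functional calculus gives $\|f_j(u,h)\|\le\sup_{\lambda\in\sigma(h)}|f_j(u,\lambda)|$, so by the integrability hypothesis (with $\alpha=0$ and $K$ a compact neighborhood of $\sigma(h)^{n+1}$) the map $\Phi$ is Bochner-integrable. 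Applying the bounded linear contraction $T\colon a_0\otimes\cdots\otimes a_n\mapsto a_0b_1a_1b_2\cdots b_na_n$, which commutes with the Bochner integral, reduces the claim to the tensor identity
\begin{equation*}
\int_U\Phi(u)\,du\;=\;F(h_{(0)},\dots,h_{(n)})\quad\text{in }A^{\otimes_\gamma(n+1)}.
\end{equation*}

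To establish this tensor identity I would test against product linear functionals $\chi_0\otimes\cdots\otimes\chi_n$, where each $\chi_j$ is evaluation at a point $\lambda_j\in\sigma(h)$ via the isomorphism $C^*(h,\mathbf{1})\cong C(\sigma(h))$: on the left Fubini yields $\int_U\prod_j f_j(u,\lambda_j)\,du=F(\lambda)$, which matches the right by construction of the multivariable functional calculus. Since such product functionals separate points of $A^{\otimes_\gamma(n+1)}$, the two elements agree. The main obstacle is the rigorous identification in this last step for general (not tensor-factorized) $F$: the integrability condition on arbitrary multi-indices $\alpha$, stronger than what is needed for mere Bochner integrability, is the natural hypothesis under which $F$ can be uniformly approximated on compacta by tensor-factored symbols (for instance via Stone--Weierstrass or smooth Fourier expansions), so everything reduces to the elementary case together with a dominated-convergence passage. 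Ensuring this approximation is uniform in $u$ so that the limit commutes with $\int_U$ is the technical heart of the argument.
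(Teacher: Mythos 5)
Your structural skeleton is the right one: pass to the tensor-valued map $\Phi(u)=f_0(u,h)\otimes\cdots\otimes f_n(u,h)$, check Bochner integrability from the $\alpha=0$ integrability hypothesis, pull the bounded contraction $T$ through the integral, and then test the resulting tensor identity against elementary functionals coming from characters of $C^*(h,\mathbf{1})\cong C(\sigma(h))$ (which is legitimate since $C(\sigma(h))$ has the approximation property, so such functionals separate points of the projective tensor product). The genuine gap lies earlier, in the sentence asserting that $\varphi\mapsto\varphi(h_{(0)},\ldots,h_{(n)})$ ``extends by linearity and continuity to any continuous function on $\sigma(h)^{n+1}$.'' That is false: the projective tensor norm strictly dominates the supremum norm, so $C(\sigma(h))^{\otimes_\gamma(n+1)}$ is a \emph{proper} subalgebra of $C(\sigma(h)^{n+1})$, and a generic continuous $F$ does not correspond to any element of $A^{\otimes_\gamma(n+1)}$. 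Because of this the final density step is in the wrong topology --- uniform approximation on compacta by tensor-factored symbols gives no control whatsoever in the projective norm --- and, more basically, the right-hand side of \eqref{formulaofrearrangement} is a priori undefined until one shows the specific $F$ of the lemma lands in the good subalgebra.

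That is exactly what the full integrability condition on all $\partial_\lambda^\alpha f$ is there to guarantee, and it is the mechanism behind the actual proof (of which \eqref{integralformformula} in this survey is the trace). One extends $\lambda\mapsto f(u,\lambda)$ by a smooth cutoff equal to $1$ near $\sigma(h)^{n+1}$ to a compactly supported smooth function on $\mathbb{R}^{n+1}$, so that $\widehat{f(u,\cdot)}(\xi)$ decays rapidly in $\xi$ with constants bounded by the quantities $\sup_{\lambda\in K}\bigl|\partial_\lambda^\alpha f(u,\lambda)\bigr|$, and hence $\int_U\int_{\mathbb{R}^{n+1}}\bigl|\widehat{f(u,\cdot)}(\xi)\bigr|\,d\xi\,du<\infty$. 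Writing each $f_j(u,h)$ via Fourier inversion yields
\begin{equation*}
\int_U f_0(u,h)b_1\cdots b_n f_n(u,h)\,du
=\int_U\int_{\mathbb{R}^{n+1}} e^{i\xi_0 h}\,b_1\,e^{i\xi_1 h}\cdots b_n\,e^{i\xi_n h}\,\widehat{f(u,\cdot)}(\xi)\,d\xi\,du,
\end{equation*}
and Fubini (justified by the estimate above) interchanges the two integrals; since $\int_U\widehat{f(u,\cdot)}\,du=\widehat{F}$, the outcome is by definition $F(h_{(0)},\ldots,h_{(n)})(b_1\cdots b_n)$. No density or dominated-convergence step occurs. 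Your character test is a workable back-end once this Fourier-theoretic multivariable functional calculus is in place, but the technical core of the lemma is establishing that well-definedness, not commuting a tensor-approximation limit through $\int_U$.
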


In particular it follows that   every expression in the contraction form with a Schwartz function $F\in\mathcal{S}(\mathbb{R}^{n+1})$ used in the operator part can  be written as  an integral. 
In fact if we set   
$$f_n(\xi,\lambda)=\hat{f}(\xi)e^{i\xi_n \lambda},\quad f_j(\xi,\lambda)=e^{i\xi_j \lambda},\quad 0\leq j\leq n-1,$$ 
and $f(\xi,\lambda_0,\cdots,\lambda_n)=\prod_{j=0}^n f_j(\xi,\lambda_j)$,
by the Fourier inversion formula, we have 
$F(\lambda)=\int f(\xi,\lambda)d\xi$.
Then,  Lemma \ref{rearrangmentlemma}  gives the equality 
\begin{equation}\label{integralformformula}
F(h_{(0)},\cdots,h_{(n)})(b_1\cdot b_2\cdots  b_n)=\int_{\mathbb{R}^{n}}e^{i\xi_0 h}b_1e^{i\xi_1 h}b_2\cdots b_n e^{i\xi_n h}\hat{f}(\xi)d\xi.
\end{equation} 
This is crucial for calculations in  \cite{arXiv1811.04004}.

\subsection{A new idea}
As we saw in previous sections, to prove the Gauss-Bonnet theorem and to compute the scalar curvature of a curved noncommutative two  torus in  \cite{MR2907006, MR2956317} and      \cite{MR3194491, MR3148618},  
 the second density of the heat trace of the Laplacian $D^2$ of the Dirac operator had to be computed. 
 First, the  symbol of the  parametrix of $D^2$ was  computed,  next a contour integral coming from Cauchy's formula for the heat operator had to be computed, and finally  one had to integrate out the momentum variables. It was for this last step that the rearrangement lemma played an important role. Luckily, the contour integral could be avoided using a homogeneity argument.  

A key observation in \cite{arXiv1811.04004}  is that   one  need  not  wait  till  the  last  step to have elements in the contraction form.  It is just enough to start 
 off with operators whose symbol is  written in the contraction form
 \begin{equation*}
F(h_{(0)},\cdots,h_{(n)})(b_1\cdot b_2\cdots b_n).
\end{equation*} 
It is further noted  that the symbol calculus can be effectively applied to   differential operators whose symbols can be written in the contraction form.
These operators are called  {\it $h$-differential operators} in \cite{arXiv1811.04004}.  This is a new and larger class of differential operators that lends itself to precise spectral analysis. It is strictly larger than the class of Dirac Laplacians for conformally flat metrics on noncommutative tori which has been the subject of intensive studies lately.  
 
 Next, the Newton divided difference calculus was brought in  to find the   action   of derivations on elements in contracted form (Theorem \ref{derivationoffunctionofh} below). For example one has
\begin{gather*}
\delta_j\big(f(h_{(0)},h_{(1)})(b_1)\big)=\\
f(h_{(0)},h_{(1)})(\delta_j(b_1))
+\big[h_{(0)},h_{(1)};f(\cdot, h_{(2)} )(\delta_j(h)\cdot b)\big]
+\big[h_{(1)},h_{(2)};f(h_{(0)},\cdot)(b\cdot \delta_j(h))\big].
\end{gather*}
Using this  fact,  and applying the pseudodifferential calculus,   one can  compute the spectral densities of  positive $h$-differential operators whose principal symbol is given by a functional metric.
These  operators are called   {\it Laplace type $h$-differential operator} in \cite{arXiv1811.04004}. 

This change in order of the computations, i.e. writing symbols in the contraction form  first,  led to
  a smoother computation  symbolically, and played  a fundamental  in computing with   more general functional metrics. It also paved the way in  calculating  the curvature in all higher dimensions for conformally flat and twisted product of flat metrics.

\subsection{Newton divided differences}
A nice application of the rearrangement lemma  is to find   a formula for the differentials  of a  smooth element written in contraction form.  To this end,  Newton divided differences were used in \cite{arXiv1811.04004}.

Let $x_0, x_1,\cdots, x_n$ be distinct points in an interval $I\subset \mathbb{R}$ and let $f$ be a function on $I$. 
The {\it $n^{th}$-order Newton divided difference} of $f$, denoted by $[x_0,x_1,\cdots,x_n;f]$,
is the coefficient of $x^n$ in the interpolating polynomial of $f$ at the given points.
In other words, if the interpolating polynomial is $p(x)$ then 
\begin{equation*}
p(x)=p_{n-1}(x)+[x_0,x_1,\cdots,x_n;f](x-x_0)\cdots(x-x_{n-1}),
\end{equation*}
where $p_{n-1}(x)$ is a polynomial of    degree at most $n-1$.
There  is a  recursive formula for the divided difference which is given by
\begin{equation*}
\begin{aligned}[]
  [ x_0; f]&=f(x_0)\\
[x_0,x_1,\cdots,x_n;f]&=\frac{[x_1,\cdots,x_n;f]-[x_0,x_1,\cdots,x_{n-1};f]}{x_n-x_0}.
\end{aligned}
\end{equation*}
There is also an explicit formula for the  divided difference:
\begin{equation*}
[x_0,x_1,\cdots,x_n;f]=\sum_{j=0}^n \frac{f(x_j)}{\prod_{j\neq l} (x_j-x_l)}.
\end{equation*}
 The {\it Hermite-Genocchi formula} gives an integral represenation for the divided differences of  
an $n$ times continuously differentiable function $f$ as an integral over  the standard simplex:
\begin{equation}\label{HermiteGenocchi}
[x_0,\cdots,x_n;f]=\int_{\Sigma_n} f^{(n)}\Big(\sum_{j=0}^n s_j x_j\Big)ds.
\end{equation}

Let $\delta$ be a densely defined, unbounded and  closed derivation on a $C^\ast$-algebra $A$.
If $a\in {\rm Dom}(\delta)$, then $e^{za}\in {\rm Dom}(\delta)$ for any $z\in \mathbb{C}$,   and one has 
\begin{equation}\label{generalduahmmel}
\delta(e^{za})=z\int_0^1 e^{zsa}\delta(a) e^{z(1-s)a}ds.
\end{equation}
Using the rearrangement lemma, one can now express the differential of an smooth element  given in contraction form. This result generalizes the expansional
formula, also known as Feynman-Dyson formula,  for $e^{A+B}$, 
 and 
 not only for elements of the form $f(h)$,  but for any element  written in the contraction form.

\begin{theorem}\label{derivationoffunctionofh}
Let  $\delta$ be a closed derivation of a $C^\ast$-algebra $A$ and $h\in {\rm Dom}(\delta)$ be a selfadjoint element. 
Let $b_j\in {\rm Dom}(\delta)$,   $1\leq j \leq n$, and let $f:\mathbb{R}^{n+1}\to \mathbb{C}$ be a smooth function.
Then $f(h_{(0)},\cdots, h_{(n)})(b_1\cdot b_2\cdot \dots\cdot b_n)$ is in the domain of $\delta$  and  
\begin{equation*}
\begin{aligned}
&\delta(f(h_{(0)},\cdots,h_{(n)})(b_1\cdot b_2 \cdots  b_n))\\
&=\sum_{j=1}^{n} f(h_{(0)},\cdots,h_{(n)})\big(b_1\cdots b_{j-1}\cdot\delta(b_j)\cdot b_{j+1} \cdots  b_n\big)\\
&\quad +\sum_{j=0}^n f_j(h_{(0)},\cdots,h_{(n+1)})\big(b_1\cdots b_{j}\cdot\delta(h)\cdot b_{j+1} \cdots  b_n\big),
\end{aligned}
\end{equation*}
where $f_j(t_0,\cdots,t_{n+1})$, which we call the partial divided difference, is defined as 
$$f_j(t_0,\cdots,t_{n+1})=\left[t_j,t_{j+1};t\mapsto f(t_0,\cdots, t_{j-1},t,t_{j+2},\cdots,t_n)\right].$$\\
\end{theorem}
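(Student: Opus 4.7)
The plan is to derive the formula by starting from the Fourier integral representation \eqref{integralformformula} supplied by the rearrangement lemma, passing $\delta$ under the integral via its closedness, expanding with the Leibniz rule, and then identifying the resulting pieces through the Duhamel identity \eqref{generalduahmmel} and the Hermite--Genocchi formula \eqref{HermiteGenocchi}. It suffices to prove the identity when $\hat f$ has rapid decay so that \eqref{integralformformula} applies directly; the extension to a general smooth $f$ then follows by multiplying $f$ with a smooth cutoff equal to $1$ on a neighborhood of the spectrum of $h$ inside $\Lambda^{n+1}$, since both sides of the asserted identity depend on $f$ only through its values on such a neighborhood.

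Assuming $f\in\mathcal S(\mathbb R^{n+1})$, write
\[f(h_{(0)},\ldots,h_{(n)})(b_1\cdot b_2\cdots b_n)=\int_{\mathbb R^{n+1}} e^{i\xi_0 h}b_1 e^{i\xi_1 h}\cdots b_n e^{i\xi_n h}\hat f(\xi)\,d\xi.\]
Closedness of $\delta$, together with $h,b_j\in\mathrm{Dom}(\delta)$, allows $\delta$ to be applied under the integral. The Leibniz rule then produces $2n+1$ terms. The $n$ terms in which $\delta$ strikes some $b_j$ are, by the rearrangement lemma applied in reverse, equal to $f(h_{(0)},\ldots,h_{(n)})(b_1\cdots\delta(b_j)\cdots b_n)$, reproducing the first sum in the theorem.

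For each of the remaining $n+1$ terms, where $\delta$ hits $e^{i\xi_k h}$, invoke \eqref{generalduahmmel} with $z=i\xi_k$ and $a=h$ to write $\delta(e^{i\xi_k h})=i\xi_k\int_0^1 e^{is\xi_k h}\delta(h)e^{i(1-s)\xi_k h}\,ds$. Substituting and passing to joint spectral representations $e^{i\xi h}=\int e^{i\xi\lambda}\,dE(\lambda)$, the $\xi$-integration converts $i\xi_k\hat f(\xi)$ into $\partial_k f$ evaluated at the point $(\lambda_0,\ldots,\lambda_{k-1},\,s\lambda_k+(1-s)\lambda_{k+1},\,\lambda_{k+2},\ldots,\lambda_{n+1})$, with $\lambda_k$ and $\lambda_{k+1}$ the fresh spectral parameters placed on either side of the newly inserted $\delta(h)$. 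After swapping $s\leftrightarrow 1-s$, the remaining $s$-integral becomes
\[\int_0^1(\partial_k f)(\ldots,(1-s)\lambda_k+s\lambda_{k+1},\ldots)\,ds=\bigl[\lambda_k,\lambda_{k+1};\,t\mapsto f(\ldots,t,\ldots)\bigr]=f_k(\lambda_0,\ldots,\lambda_{n+1})\]
by Hermite--Genocchi, so the $k$-th such term equals $f_k(h_{(0)},\ldots,h_{(n+1)})(b_1\cdots b_k\cdot\delta(h)\cdot b_{k+1}\cdots b_n)$. Summation over $k=0,\ldots,n$ reproduces the second sum.

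The main obstacle will be the bookkeeping in this final identification: one must align the Duhamel parameter $s$, the Fourier multiplier $i\xi_k$, and the doubling of the $k$-th spectral position into adjacent parameters $\lambda_k,\lambda_{k+1}$ separated by $\delta(h)$, so that they combine precisely into the Hermite--Genocchi form of the first-order divided difference in the $k$-th slot. A secondary technical point is justifying the interchange of $\delta$ with the integral and the membership $f(h_{(0)},\ldots,h_{(n)})(b_1\cdots b_n)\in\mathrm{Dom}(\delta)$; both are handled uniformly on Schwartz $f$ by observing that the truncated Riemann sums approximating the integral lie in $\mathrm{Dom}(\delta)$ and converge in the graph norm of $\delta$.
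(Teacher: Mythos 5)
Your argument is correct and takes exactly the route the survey prepares just before the theorem: apply the Fourier/integral representation \eqref{integralformformula}, pass $\delta$ under the integral using closedness, use Duhamel's identity \eqref{generalduahmmel} on each exponential factor, and recognize the remaining $s$-integral as a first-order divided difference via the Hermite--Genocchi formula \eqref{HermiteGenocchi}. One small hygienic point worth making explicit in the Schwartz reduction is that the cutoff should be chosen identically equal to $1$ on $[a,b]^{n+1}$ for a single interval $[a,b]\supset\mathrm{sp}(h)$, so that the convex combinations $st_j+(1-s)t_{j+1}$ arising in the Hermite--Genocchi step never leave the region on which $f$ has been left unmodified.
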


\subsection{Laplace type $h$-differential operators and asymptotic expansions}\label{rawcomp} Let us first recall this class of differential operators  which is introduced in \cite{arXiv1811.04004}.  It  extends the previous classes of differential operators on noncommutative tori, in particular Dirac Laplacians of conformally flat metrics. 

\begin{definition}[\cite{arXiv1811.04004}]
Let $h\in C^{\infty}(\mathbb{T}_{\theta}^d)$ be a smooth selfadjoint element.
\begin{itemize}
\item[(i)] An {\it $h$-differential operator} on $ \mathbb{T}_{\theta}^d$, is a differential operator 
$P=\sum_{\vec{\alpha}}p_{\vec{\alpha}}\delta^{\vec{\alpha}},$ 
with $ C^{\infty}(\mathbb{T}_{\theta}^d)$-valued coefficients $p_{\vec{\alpha}}$ which can be written in the contraction form 
$$p_{\vec{\alpha}}=P_{\vec{\alpha},\vec{\alpha}_1,\cdots,\vec{\alpha}_k}(h_{(0)},\cdots,h_{(k)})(\delta^{\vec{\alpha}_1}(h)\cdots \delta^{\vec{\alpha}_k}(h)).$$
\item[(ii)] A second order $h$-differential operator  $P$ is called a {\it Laplace type $h$-differential operator} if its symbol is a sum of homogeneous parts $p_j$ of the form
\begin{equation*}\label{noncommutativelaplacetypesym}
\begin{aligned}
p_2&=P_2^{ij}(h)\xi_i\xi_j,\\
p_1&=  P_1^{ij}(h_{(0)},h_{(1)})(\delta_i(h)) \xi_j,\\
p_0&= P_{0,1}^{ij}(h_{(0)},h_{(1)})(\delta_i\delta_j(h))+P_{0,2}^{ij}(h_{(0)},h_{(1)},h_{(2)})(\delta_i(h)\cdot \delta_j(h)),
\end{aligned}
\end{equation*}
where the principal symbol $p_2:\mathbb{R}^n\to   C^{\infty}(\mathbb{T}_{\theta}^d)$ is a $C^{\infty}(\mathbb{T}_{\theta}^d)$-valued quadratic form such that $p_2(\xi)>0$ for all $\xi\in\mathbb{R}$. 
\end{itemize} 
\end{definition}

\noindent We can allow  the symbols to be matrix valued, that is  $p_j:   C^{\infty}(\mathbb{R}^d)\to       C^{\infty}(\mathbb{T}_{\theta}^d) \otimes M_n(\mathbb{C})$, provided that all $p_2(\xi)\in   C^{\infty}(\mathbb{T}_{\theta}^d)\otimes {\rm I}_n$ for all nonzero $\xi\in\mathbb{R}$.

Many of the elliptic second order differential operators on noncommutative tori which  were studied in the literature  are Laplace type $h$-differential operators. 
 For instance, the two differential operators on $\nctorus[2]$ whose spectral invariants are studied in 
 \cite{MR3194491, MR3148618} are indeed Laplace type $h$-differential operator.
 In fact with  $k=e^{h/2}$, 
these operators are given by
\begin{equation*}
k\lap k=k\delta\delta^* k,\qquad \lap_\varphi^{(0,1)}=\delta^* k^2 \delta,
\end{equation*}
where, $\delta=\delta_1+\bar{\tau}\delta_2$ and  $\delta^*=\delta_1+\tau\delta_2$ for some complex number $\tau$ in the upper half plane.

Let  $P$ be a positive Laplace type $h$-differential operator. 
 Using the Cauchy integral formula, one has 
\begin{equation*}
e^{-tP}=\frac{-1}{2\pi i}\int_\gamma e^{-t\lambda}(P-\lambda)^{-1}d\lambda,\quad t>0,
\end{equation*}
for a suitable contour $\gamma$. 
Expanding  the symbol of the paramatrix $\sigma((P-\lambda)^{-1})$, one obtains a short time  asymptotic expansion  for  localized heat trace  for any $  a\in C^{\infty}(\mathbb{T}_{\theta}^d)$:
\begin{equation*}
{\rm Tr}(ae^{-tP})\sim \sum_{n=0}^\infty c_n(a) t^{(n-\dim)/2}.
\end{equation*} 
Here,  $c_n(a) = \varphi(a b_n)$  with
\begin{equation}\label{localterminbjs}
b_n =\frac1{(2\pi)^\dim}\int_{\mathbb{R}^\dim}\frac{-1}{2\pi i}\int_\gamma b_n(\xi,\lambda)d\lambda d\xi.
\end{equation} 
Using the rearrangement lemma  (Lemma \ref{rearrangmentlemma}) and the fact that the contraction map and integration commute,
one obtains 

\begin{equation*}
\begin{aligned}
b_2 =&\left(\frac1{(2\pi)^\dim}\int_{\mathbb{R}^\dim}\frac{-1}{2\pi i}\int_\gamma B_{2,1}^{ij}(\xi,\lambda,h_{(0)},h_{(1)})\, e^{-\lambda} d\lambda d\xi\right)\big(\delta_i\delta_j(h)\big)\\
&+\left(\frac1{(2\pi)^\dim}\int_{\mathbb{R}^\dim}\frac{-1}{2\pi i}\int_\gamma B_{2,2}^{ij}(\xi,\lambda,,h_{(0)},h_{(1)},,h_{(2)})\, e^{-\lambda} d\lambda d\xi\right)\big(\delta_i(h)\cdot\delta_j(h)\big).
\end{aligned}
\end{equation*} 
The dependence of $B_{2,k}^{ij}$ on  $\lambda$ comes only from different powers of $B_0$ in its terms, while its dependence on  $\xi_j$'s is the result of appearance of $\xi_j$  as well as of $B_0$ in the terms. 
Therefore, the contour integral will only contain $e^{-\lambda}$ and 
product of powers of $B_0(t_j)$.
Hence, we need to deal with a certain kind of contour integral for which  we shall use the following notation and will call them {\it $T$-functions}:
\begin{equation}\label{Tfunctionsdef}
T_{\vec{n};\vec{\alpha}}(t_0,\cdots,t_n):=\frac{-1}{\pi^{\dim/2}}\int_{\mathbb{R}^{\dim}}\xi_{n_1}\cdots \xi_{n_{2|\vec{\alpha}|-4}}\frac{1}{2\pi i}\int_\gamma e^{-\lambda} B_0^{\alpha_0}(t_0)\cdots B_0^{\alpha_n}(t_n)  d\lambda d\xi,
\end{equation}
where $\vec{n}=(n_1,\cdots,n_{2|\vec{\alpha}|-4})$ and $\vec{\alpha}=(\alpha_0,\cdots,\alpha_n)$.
We recall the  $T$-functions and their properties a bit later.
There is an explicit formula for $b_2(P)$ which we now recall from \cite{arXiv1811.04004}:
\begin{proposition}\label{b2proposition}
For a positive   Laplace type $h$-differential operator $P$ with the symbol given by \eqref{noncommutativelaplacetypesym},  the term $b_2(P)$ in the contraction form is given by
\begin{equation*}
b_2(P)=(4\pi)^{-\dim/2}\left(B_{2,1}^{ij}(h_{(0)},h_{(1)})\big(\delta_i\delta_j(h)\big)+B_{2,2}^{ij}(h_{(0)},h_{(1)},h_{(2)})\big(\delta_i(h)\cdot\delta_j(h)\big)\right),
\end{equation*}
where the functions are defined by   
\begin{align*}
B_{2,1}^{ij}(t_0,t_1)
=& -T_{;1,1}(t_0,t_1)P_{0,1}^{ij}(t_0,t_1)
+2T_{k \ell;2,1}(t_0,t_1) P_2^{ik}(t_0)P_1^{j\ell}(t_0,t_1)\\
&+T_{k \ell;2,1}(t_0,t_1)\, P_2^{ij}(t_0) \,\big[t_0,t_1;P_2^{k\ell}\big]\\
&-4T_{k\ell  m n;3,1}(t_0,t_1) P_2^{ik}(t_0) P_2^{j\ell}(t_0)\,\big[t_0,t_1;P_2^{mn}\big],
\end{align*}
and
\begin{align*}
B_{2,2}^{ij}(t_0,t_1,t_2)=
&-T_{;1,1}(t_0,t_2) P_{0,2}^{ij}(t_0,t_1,t_2)\\
&+T_{k\ell;1,1,1}(t_0,t_1,t_2)\,   P_1^{ik}(t_0,t_1)P^{j\ell}_1(t_1,t_2)\\
&-2T_{k\ell mn;2,1,1}(t_0,t_1,t_2)\, P_2^{im}(t_0)\,\big[t_0,t_1;P_2^{k\ell}\big]\, P^{jn}_1(t_1,t_2) \\
&+2T_{k\ell;2,1}(t_0,t_2)\,  P_2^{ik}(t_0)\,\big[t_0,t_1;P_1^{j\ell}(\cdot,t_2)\big]\,\\
&+2T_{k\ell;2,1}(t_0,t_2)\, P_2^{jk}(t_0)\,\big[t_1,t_2;P_1^{i\ell}(t_0,\cdot)\big]\,\\
&+T_{k\ell;1,1,1}(t_0,t_1,t_2)\, P_1^{ij}(t_0,t_1)\,\big[t_1,t_2;P_2^{k\ell}\big]\, \\
& -2T_{k\ell mn;2,1,1}(t_0,t_1,t_2)\, P_2^{j\ell}(t_0) P_1^{ik}(t_0,t_1)\,\big[t_1,t_2;P_2^{mn}\big]\,  \\
&-2T_{k\ell mn;1,2,1}(t_0,t_1,t_2)\,  P_1^{ik}(t_0,t_1)P_2^{j\ell}(t_1)\,\big[t_1,t_2;P_2^{mn}\big]\,  \\
&- 2T_{k\ell mn;2,1,1}(t_0,t_1,t_2) \,  P_2^{ij}(t_0)\,\big[t_0,t_1;P_2^{\ell m}\big]\,\big[t_1,t_2;P_2^{kn}\big]\, \\ 
&- 4T_{k\ell mn;2,1,1}(t_0,t_1,t_2)\, P_2^{ik}(t_0)\,\big[t_0,t_1;P_2^{ j\ell}\big]\,\big[t_1,t_2;P_2^{mn}\big]\, \\
&+ 8T_{k\ell mnpq;3,1,1}(t_0,t_1,t_2)\, P_2^{ik}(t_0) P_2^{jn}(t_0)\,\big[t_0,t_1;P_2^{\ell m}\big]\,\big[t_1,t_2;P_2^{pq}\big]\, \\
&+ 4T_{k\ell m n p q;2,2,1}(t_0,t_1,t_2) P_2^{ik}(t_0)\,\big[t_0,t_1;P_2^{\ell m}\big]\,  P_2^{jn}(t_1)\,\big[t_1,t_2;P_2^{pq}\big]\\
&+2T_{k\ell;2,1}(t_0,t_2)\, P_2^{ij}(t_0)\,\big[t_0,t_1,t_2;P_2^{k\ell}\big]\,\\ 
&-8T_{k\ell mn;3,1}(t_0,t_2)\, P_2^{ik}(t_0) P_2^{j\ell}(t_0)[t_0,t_1,t_2;P_2^{mn}\big].
\end{align*} \qed
\end{proposition}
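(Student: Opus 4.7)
The plan is to apply the pseudodifferential calculus of \cite{MR572645} recalled in \S\ref{nctorusheatexpsubsec} directly to an $h$-differential operator, combined systematically with the rearrangement Lemma \ref{rearrangmentlemma} and the divided-difference derivation rule in Theorem \ref{derivationoffunctionofh}. Writing the Cauchy integral formula \eqref{CauchyIntformula} for $e^{-tP}$, the task reduces to extracting the degree $-4$ homogeneous piece $r_2(\xi,\lambda)$ in the parametrix expansion $\sigma((P-\lambda)^{-1})\sim r_0+r_1+r_2+\cdots$ and integrating it against $e^{-\lambda}$ over the contour $\gamma$ and over $\mathbb{R}^d$ in $\xi$, in accordance with \eqref{localterminbjs}.

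The first step is to notice that $p_2(\xi)=P_2^{ij}(h)\xi_i\xi_j$ is already in contraction form with a single $h$-slot, so the leading resolvent symbol can be written as $r_0(\xi,\lambda)=B_0(\xi,\lambda,h)$ with $B_0(\xi,\lambda,t)=(P_2^{ij}(t)\xi_i\xi_j-\lambda)^{-1}$. Then I would apply the recursion \eqref{rnnctorusformula} to compute $r_1$ (two contributions, from $p_1$ and from $\partial_\xi r_0\cdot\delta p_2$) and then $r_2$, the latter collecting the five types of contributions corresponding to $(j,k,|\alpha|)\in\{(0,0,0),(0,1,1),(1,1,0),(0,2,2),(1,2,1)\}$.

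The key conversion step is to push every resulting factor of $B_0(h)$ into proper contraction form with the correct multiplicity of $h$-slots. Each derivation $\delta_l$ acting on $B_0(h)$ or on a coefficient $P_k^{ij}(h)$ expands, by Theorem \ref{derivationoffunctionofh}, into a (partial) divided difference of the corresponding scalar function contracted against $\delta_l(h)$, adding one $h$-slot; analogously, each multiplication of $B_0(h)$ against a $p_k$-factor already in contraction form adds its slots via the product rule. Carrying the bookkeeping through, $r_2$ takes the shape
\begin{equation*}
r_2 = F^{ij}_{2,1}(\xi,\lambda,h_{(0)},h_{(1)})\bigl(\delta_i\delta_j(h)\bigr) + F^{ij}_{2,2}(\xi,\lambda,h_{(0)},h_{(1)},h_{(2)})\bigl(\delta_i(h)\cdot\delta_j(h)\bigr),
\end{equation*}
where each $F^{ij}_{2,s}$ is a finite sum of monomials in the $\xi_k$'s times products of powers of $B_0$ evaluated at the various $h_{(\cdot)}$, with coefficients built from the $P_k^{ij}$ and their divided differences.

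Finally, Lemma \ref{rearrangmentlemma} permits the $\xi$- and contour-integrations to be exchanged with contraction, so each integral of a product $B_0^{\alpha_0}(t_0)\cdots B_0^{\alpha_n}(t_n)$ times a $\xi$-monomial against $e^{-\lambda}$ is precisely a $T$-function from \eqref{Tfunctionsdef}. Collecting terms, the pieces involving $\delta_i\delta_j(h)$ assemble into $B_{2,1}^{ij}$ and those involving $\delta_i(h)\cdot\delta_j(h)$ assemble into $B_{2,2}^{ij}$, after symmetrizations and index relabelings; the overall $(4\pi)^{-d/2}$ emerges from the Gaussian normalization hidden in the $T$-functions. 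The main obstacle is combinatorial: identifying, for each term produced by the recursion, the correct $h$-slot into which each divided difference must be inserted and tracking the signs inherited from \eqref{rnnctorusformula}. The most delicate terms are those originating from $\partial_\xi r_1\cdot\delta p_2$, which are quadratic in divided differences of $P_2^{ij}$ and are responsible for the $T_{\cdots;3,1,1}$ and $T_{\cdots;2,2,1}$ entries in $B_{2,2}^{ij}$; matching these against the stated formula is where careful organization of the expansion really matters.
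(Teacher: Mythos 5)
Your proposal matches the paper's own argument: it works directly within the contraction form (the ``new idea'' driving \cite{arXiv1811.04004}), uses the recursion \eqref{rnnctorusformula} to produce exactly the five $(j,k,|\alpha|)$-contributions for $r_2$ that you list, expands every derivation of $B_0(h)$ or of a coefficient $P_k^{ij}(h)$ through the partial divided differences of Theorem~\ref{derivationoffunctionofh}, and closes by trading the $\xi$- and contour-integrals for the $T$-functions of \eqref{Tfunctionsdef} via Lemma~\ref{rearrangmentlemma}. Even your observation that the overall $(4\pi)^{-\dim/2}$ is just the Gaussian normalization folded into the $T$-functions is exactly what happens in \S\ref{rawcomp}, so this is essentially the route taken in the paper.
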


The computation of the higher heat trace densities for a Laplace type $h$-operator can be similarly carried out,  expecting many more terms in the results.
This would give a way to generalize results obtained for the conformally flat noncommutative two torus in         \cite{arXiv1611.09815}  where $b_4$ of the Laplacian $D^2$ of the Dirac operator $D$ is computed.  This problem won't be discussed further in this paper, but is certainly an interesting problem. 

Evaluating $T$-functions \eqref{Tfunctionsdef},  the only parts of formulas for $B_{2,1}^{i,j}$ and $B_{2,2}^{i,j}$ that need to be evaluated, is not always an easy task.
In \cite{arXiv1811.04004} a concise integral formula for $T$-functions  is given and   their properties is studied.  
For the contour integral in \eqref{Tfunctionsdef}. 
it is clear  that there are functions $f_{\alpha_1,\cdots,\alpha_{n}}$ such that
\begin{equation*}
\frac{-1}{2\pi i}\int_\gamma e^{-\lambda} B_0^{\alpha_0}(t_0)\cdots B_0^{\alpha_n}(t_n)  d\lambda
=f_{\alpha_0,\cdots,\alpha_{n}}\left(\|\xi\|_{t_0}^{2},\cdots,\|\xi\|_{t_n}^{2}\right).
\end{equation*}
Here, we denoted $P_2^{ij}(t_k)\xi_i\xi_j$ by $\|\xi\|_{t_k}^2$.
Examples of such functions are
\begin{equation*}
f_{1,1}(x_0,x_1)=-\frac{e^{-x_0}}{x_0-x_1}-\frac{e^{-x_1}}{x_1-x_0},\qquad 
f_{2,1}(x_0,x_1)=-\frac{e^{-x_0}}{x_0-x_1}- \frac{e^{-x_0}}{(x_0-x_1)^2}+\frac{e^{-x_1}}{(x_1-x_0)^2}.
\end{equation*}

\begin{lemma} [\cite{arXiv1811.04004}] \label{integralformofTna}
Let $P_2(t)$ be a positive definite $\dim\times \dim$ matrix of smooth real functions. 
Then
\begin{equation*}
T_{\vec{n};\vec{\alpha}}(t_0,\cdots,t_n)=\frac{1}{2^{|\vec{\alpha}|-2}\vec{\beta}!}\int_{\Sigma_n}\prod_{j=0}^ns_j^{\alpha_j-1} \, \frac{\underset{\vec{n}}{\sum \prod} P^{-1}(s)_{n_in_{\sigma(i)}}}{\sqrt{\det P(s)}}  ds, 
\end{equation*}
where $P(s)=\sum_{j=0}^n s_j P_2(t_j)$  and $\vec{\beta}=(\alpha_0-1,\cdots,\alpha_{n}-1)$. \qed \\
\end{lemma}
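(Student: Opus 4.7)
The plan is to reduce the contour-and-momentum integral defining $T_{\vec n;\vec\alpha}$ to a simple Gaussian moment by first collapsing the product of resolvents to a single one via Feynman parametrization, then performing the contour integral in closed form, and finally applying Wick's theorem to the resulting Gaussian integrand. The key identity to invoke is
\begin{equation*}
\prod_{j=0}^{n}A_j^{-\alpha_j}=\frac{\Gamma(|\vec\alpha|)}{\prod_j\Gamma(\alpha_j)}\int_{\Sigma_n}\frac{\prod_{j}s_j^{\alpha_j-1}}{\bigl(\sum_j s_jA_j\bigr)^{|\vec\alpha|}}\,ds,
\end{equation*}
applied with $A_j=\|\xi\|_{t_j}^{2}-\lambda$. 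Because $\sum_{j}s_j=1$, the denominator becomes $\xi^{T}P(s)\xi-\lambda$, with $P(s)=\sum_j s_j P_2(t_j)$ positive definite for $s\in\Sigma_n$. This is the move that replaces $n+1$ separate resolvents by a single power of one resolvent, and it is exactly where the simplex $\Sigma_n$ and the factor $\prod_j s_j^{\alpha_j-1}$ in the claimed formula come from.

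Next, I would compute the contour integral. Writing $(x-\lambda)^{-|\vec\alpha|}=\frac{(-1)^{|\vec\alpha|-1}}{(|\vec\alpha|-1)!}\,\frac{d^{|\vec\alpha|-1}}{dx^{|\vec\alpha|-1}}(x-\lambda)^{-1}$ and differentiating under the integral in the standard identity $\frac{-1}{2\pi i}\int_\gamma e^{-\lambda}(x-\lambda)^{-1}d\lambda=e^{-x}$ (valid because $\gamma$ surrounds the spectrum of $P_2(t)>0$), one obtains
\begin{equation*}
\frac{1}{2\pi i}\int_\gamma e^{-\lambda}\bigl(\xi^{T}P(s)\xi-\lambda\bigr)^{-|\vec\alpha|}d\lambda=-\frac{e^{-\xi^{T}P(s)\xi}}{(|\vec\alpha|-1)!}.
\end{equation*}
Combined with the Feynman step and the ratio of Gamma factors $\Gamma(|\vec\alpha|)/\prod_j\Gamma(\alpha_j)=(|\vec\alpha|-1)!/\vec\beta!$, this collapses the contour integration to
\begin{equation*}
\frac{1}{2\pi i}\int_\gamma e^{-\lambda}\prod_{j}B_0^{\alpha_j}(t_j)\,d\lambda=-\frac{1}{\vec\beta!}\int_{\Sigma_n}\prod_{j}s_j^{\alpha_j-1}\,e^{-\xi^{T}P(s)\xi}\,ds.
\end{equation*}

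Substituting into the definition of $T_{\vec n;\vec\alpha}$ turns the problem into a pure Gaussian moment calculation on $\mathbb{R}^{\dim}$. By Wick's (Isserlis') theorem applied to the positive definite quadratic form $\xi^{T}P(s)\xi$,
\begin{equation*}
\int_{\mathbb{R}^{\dim}}\xi_{n_1}\cdots\xi_{n_{2|\vec\alpha|-4}}\,e^{-\xi^{T}P(s)\xi}\,d\xi=\frac{\pi^{\dim/2}}{\sqrt{\det P(s)}}\cdot\frac{1}{2^{|\vec\alpha|-2}}\,\underset{\vec n}{\sum\prod}\,P^{-1}(s)_{n_i n_{\sigma(i)}},
\end{equation*}
where $\underset{\vec n}{\sum\prod}$ runs over perfect matchings of the $2|\vec\alpha|-4$ index positions. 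Inserting Fubini, swapping the (absolutely convergent) $d\xi$ and $ds$ integrations, and collecting constants yields precisely the stated formula; the two explicit sign factors ($-1/\pi^{\dim/2}$ and the $-1$ produced by the contour step) cancel, the $\pi^{\dim/2}$'s cancel, and one is left with the factor $1/(2^{|\vec\alpha|-2}\vec\beta!)$.

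The routine step is the final bookkeeping; the main technical point to take care of is Fubini's theorem for the triple integral over $\gamma\times\mathbb{R}^{\dim}\times\Sigma_n$. This is where positive definiteness of $P(s)$ for every $s\in\Sigma_n$ (which follows from positivity of each $P_2(t_j)$ and of the simplex weights) is essential, since it forces $\Re(\xi^{T}P(s)\xi-\lambda)$ to stay uniformly bounded away from $0$ along $\gamma$ and provides the Gaussian decay in $\xi$ needed to interchange orders of integration and to justify differentiation under the sign. After that, matching the Wick-pairing notation with the paper's $\underset{\vec n}{\sum\prod}$ notation is purely notational.
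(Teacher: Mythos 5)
Your proof is correct. Since the survey cites this lemma to \cite{arXiv1811.04004} without reproducing its proof I cannot compare line-by-line, but your chain of reductions -- Feynman parametrization with powers $\alpha_j$ to collapse the $n+1$ resolvent factors onto a single one at the cost of a simplex integral, a clean closed form for the contour integral $\frac{1}{2\pi i}\int_\gamma e^{-\lambda}(x-\lambda)^{-|\vec\alpha|}d\lambda$, and then Wick's theorem for the Gaussian moments -- is exactly the standard route such $T$-function identities are proved and almost certainly mirrors the original; the bookkeeping of the $\Gamma(|\vec\alpha|)/\prod\Gamma(\alpha_j)=(|\vec\alpha|-1)!/\vec\beta!$, the two cancelling minus signs, and the $2^{|\vec\alpha|-2}$ all checks out, and your Fubini remark about positive definiteness of $P(s)$ on the simplex is the right justification for the interchange of integrals.
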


\subsection{Functional metrics and scalar curvature}
A natural question is if there exists a  large  class of noncommutative metrics whose Laplacians  are  $h$-differential operators and hence amenable to the spectral analysis developed  in the last section?  As we saw, conformally flat metrics on noncommutative tori is such a class. But there are more. One of the interesting concepts develped in \cite{arXiv1811.04004} is the notion of a {\it functional  metric} which is a much larger class than conformally flat metrics and whose Laplacian is still an $h$-differential operator. In this section we shall first recall this concept and reproduce the scalar curvature formula for these metrics developed in \cite{arXiv1811.04004}.

\begin{definition}
Let $h$ be a selfadjoint smooth element of a  noncommutative $ d$-torus
and let $g_{ij}:\mathbb{R}\to \mathbb{R}$,  $1\leq i,j\leq \dim$, be smooth functions such that the matrix $\big(g_{ij}(t)\big)$ is a positive definite matrix for every $t$ in a neighborhood of the spectrum of $h$.
We shall refer to $g_{ij}(h)$ as a {\it functional metric} on $\A^{d}$.
\end{definition}

The  construction of  the Laplacian on functions on $\nctorus[\dim]$ equipped with a functional metric $g=g_{ij}(h)$, follows the same pattern as in  previous sections. Details can be found in \cite{arXiv1811.04004}, where the following crucial result  is also proved. 
The Laplacian  $\delta^*\delta:\mathcal{H}_{0,g}\to \mathcal{H}_{0,g}$ on elements of $  C^{\infty}(\mathbb{T}_{\theta}^d)$ is given by
\begin{equation*}
\delta_j(a)g^{jk}(h)\delta_k(|g|^{\frac12}(h))|g|^{-\frac12}(h)+\delta_j(a)\delta_k(g^{jk}(h))+i\delta_k(\delta_j(a))g^{jk}(h).
\end{equation*}
To carry the spectral analysis of the Laplacian $\delta^*\delta:\mathcal{H}_{0,g}\to \mathcal{H}_{0,g}$, we switch to the antiunitary equivalent setting as follows.
Let $\mathcal{H}_0$ be the Hilbert space obtained by the GNS construction from $\A^d$ using the nonperturbed tracial state $\varphi$.

  \begin{proposition}\label{laplacianforg}
The operator $\delta^*\delta:\mathcal{H}_{0,g}\to \mathcal{H}_{0,g}$ is antiunitary equivalent to  a  Laplace type $h$-differential operator $\lap_{0,g}:\mathcal{H}_0\to \mathcal{H}_0$ whose symbol, when expressed in the contraction form, has the functional parts given by 
\begin{align*}
&P_2^{jk}(t_0)= g^{jk}(t_0),\\
&P_1^{jk}(t_0,t_1)
=|g|^{-\frac14}(t_0)\big[t_0,t_1;|g|^{\frac14}\big]g^{jk}(t_1)+\big[t_0,t_1;g^{jk}\big]+|g|^{\frac14}(t_0)g^{jk}(t_0)\big[t_0,t_1;|g|^{-\frac14}\big],\\
&P_{0,1}^{jk}(t_0,t_1)
=|g|^{\frac14}(t_0)g^{jk}(t_0)\big[t_0,t_1;|g|^{-\frac14}\big],\\
&P_{0,2}^{jk}(t_0,t_1,t_2)
=|g|^{-\frac14}(t_0)\big[t_0,t_1;g^{jk}|g|^{\frac12}\big] \big[t_1,t_2;|g|^{-\frac14}\big]+2|g|^{\frac14}(t_0)g^{jk}(t_0)\big[t_0,t_1,t_2;|g|^{-\frac14}\big].
\end{align*}
 \end{proposition}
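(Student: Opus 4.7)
The plan is to begin with the explicit expression for $\delta^{*}\delta$ on $\mathcal{H}_{0,g}$ displayed immediately above the proposition, transfer it to the unweighted GNS Hilbert space $\mathcal{H}_0$ by an antiunitary equivalence built from the volume element $|g|^{1/4}(h)$, and then rewrite the resulting operator in contraction form with the help of the divided-difference calculus of Theorem \ref{derivationoffunctionofh}. Setting $k:=|g|^{1/4}(h)$, the antilinear map $J:\mathcal{H}_{0,g}\to\mathcal{H}_0$, $J(a)=k\,a^{*}$ (or its right-handed variant, depending on the convention used for the inner product on $\mathcal{H}_{0,g}$), is checked to be isometric using the trace property of $\varphi$ together with $|g|^{1/2}=k^{2}$, and therefore is antiunitary with $J^{-1}(b)=b^{*}k^{-1}$. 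The operator $\lap_{0,g}:=J\,\delta^{*}\delta\,J^{-1}$ is then a linear second-order differential operator on $\mathcal{H}_0$, and it remains to compute its symbol.

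The core step is an algebraic rewriting. Substituting $\tilde a=J^{-1}(a)=a^{*}k^{-1}$ into the three summands of $\delta^{*}\delta(\tilde a)$, expanding each $\delta_{j}(\tilde a)$ and $\delta_{j}\delta_{k}(\tilde a)$ by the ordinary Leibniz rule, and then applying the antilinear $J$ to the outcome produces a finite sum of monomials of the form $F(h)\cdot\delta_{j}\delta_{k}(a)\cdot G(h)$, $F(h)\cdot\delta_{j}(a)\cdot G(h)$, and $F(h)\cdot a\cdot G(h)$, still carrying extra factors $\delta_{\ell}(k^{\pm 1})$ and $\delta_{\ell}(g^{ij}(h))$. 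Each such extra factor is then replaced by a contraction-form expression via Theorem \ref{derivationoffunctionofh}, the coefficients being one- and two-variable divided differences of the scalar functions $|g|^{\pm 1/4}$ and $g^{ij}$, paired with $\delta_{\ell}(h)$.

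Collecting the resulting terms according to which derivative of $a$ they carry yields the four symbol pieces. The coefficient of $\delta_{j}\delta_{k}(a)$ is simply $g^{jk}(t_{0})$, giving $P_{2}^{jk}$ at once. The coefficient of $\delta_{j}(a)$ is a sum of three contributions, coming from the derivation $\delta_{k}$ acting respectively on the leftmost $|g|^{1/4}$, on the middle $g^{jk}(h)$, and on the rightmost $|g|^{-1/4}$; these are exactly the three summands in the stated expression for $P_{1}^{jk}$. The remaining terms, with no derivative of $a$, split naturally into a piece proportional to $\delta_{j}\delta_{k}(h)$ and a piece proportional to $\delta_{j}(h)\cdot\delta_{k}(h)$, which, after a further application of Theorem \ref{derivationoffunctionofh} to handle iterated derivations of the functional calculus, assemble into the asserted formulas for $P_{0,1}^{jk}$ and $P_{0,2}^{jk}$.

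The main obstacle is not conceptual but combinatorial: the same term often appears in two superficially different divided-difference forms, and their equivalence rests on identities such as the Leibniz rule $[t_{0},t_{1};fg]=[t_{0},t_{1};f]\,g(t_{1})+f(t_{0})\,[t_{0},t_{1};g]$, together with three-point analogues that arise when pairing $\delta_{\ell}(h)\cdot\delta_{m}(h)$ contributions coming from different orderings of the intermediate multiplications. The Hermite-Genocchi representation \eqref{HermiteGenocchi} provides the cleanest verification of all such identities, after which the target formulas for $P_{2}^{jk}$, $P_{1}^{jk}$, $P_{0,1}^{jk}$, and $P_{0,2}^{jk}$ drop out directly.
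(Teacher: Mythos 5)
Your approach is correct and is the argument the paper intends (the survey gives no proof here, only the pre-proposition display of $\delta^{*}\delta$ and a citation to \cite{arXiv1811.04004}): conjugate by the antiunitary $J(a)=|g|^{1/4}(h)\,a^{*}$, Leibniz-expand, and rewrite each $\delta_{\ell}$-derivative of a functional-calculus element in contraction form via Theorem \ref{derivationoffunctionofh}. One caveat on your first-order step: the three raw contributions actually come out as $|g|^{-1/4}(t_0)\bigl[t_0,t_1;|g|^{1/2}\bigr]g^{jk}(t_1)|g|^{-1/4}(t_1)$, $|g|^{1/4}(t_0)\bigl[t_0,t_1;g^{jk}\bigr]|g|^{-1/4}(t_1)$, and $2\,|g|^{1/4}(t_0)g^{jk}(t_0)\bigl[t_0,t_1;|g|^{-1/4}\bigr]$, which collapse to the three stated summands of $P_1^{jk}$ (with the factor $2$ absorbed) only after the divided-difference Leibniz identity you mention is applied — so the term-by-term identification in your sketch is a bit too quick, though the end result agrees.
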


An important case of the functional   metric  is the conformally flat metric
\begin{equation}\label{conflatmetric}
g_{ij}(t)=f(t)^{-1}g_{ij},
\end{equation}
 where $f$ is a positive smooth function and $g_{ij}$'s are the entries of a constant metric on $\mathbb{R}^\dim$.
The functions given by Proposition \ref{laplacianforg}, for the conformally flat metrics, gives us the following:
\begin{equation}\label{symfunconformal}
\begin{aligned}
&P_2^{jk}(t_0)= g^{jk}f(t_0),\\
&P_1^{jk}(t_0,t_1)
=g^{jk}\Big(f(t_0)^{\frac{\dim}4}\big[t_0,t_1;f^{1-\frac{\dim}4}\big]+f(t_0)^{1-\frac{\dim}4}\big[t_0,t_1;f^{\frac{\dim}4}\big]\Big),\\
&P_{0,1}^{jk}(t_0,t_1)=g^{jk}f(t_0)^{1-\frac{\dim}4}\big[t_0,t_1;f^{\frac{\dim}4}\big],\\
&P_{0,2}^{jk}(t_0,t_1,t_2)= 
g^{jk}\Big(f(t_0)^{\frac{\dim}4}\big[t_0,t_1;f^{1-\frac{\dim}2}\big]\big[t_1,t_2;f^{\frac{\dim}4}\big]+2f(t_0)^{1-\frac{\dim}4}\big[t_0,t_1,t_2;f^{\frac{\dim}4}\big]\Big).
\end{aligned}
\end{equation}
A careful examination of  formula \eqref{symfunconformal} shows that  for any  function $P^{ij}_{\bullet}$ there exist a function $P_{\bullet}$ such that 
$P^{ij}_{\bullet}=g^{ij}P_{\bullet}.$
We have similar situation with the $T$-functions for conformally flat metrics. 
\begin{lemma}[\cite{arXiv1811.04004}]\label{Tfunctionsconfomallyflatlemma}
Let  $\vec{\alpha}$ and $\vec{n}=(n_1,\cdots,n_{2|\vec{\alpha}|-4})$ be two multi-indicies.
Then the $T$-function $T_{\vec{n,\alpha}}$ for the conformally flat metric \eqref{conflatmetric} is of the form  
$$T_{\vec{n,\alpha}}(t_0,\cdots,t_n)=\sqrt{|g|}\underset{\vec{n}}{\sum \prod}g_{n_in_{\sigma(i)}} T_{\vec{\alpha}}(t_0,\cdots,t_n).$$ 
The function $T_{\vec{\alpha}}$ in dimension $d\neq 2$ is given by 
\begin{equation}\label{Tfunctionsconfomallyflat}
T_{\vec{\alpha}}(t_0,\cdots,t_n)
=\frac{(-1)^{|\vec{\alpha}|-1}\Gamma(\frac{\dim}2-1)}{\Gamma(\frac{\dim}2+|\vec{\alpha}|-2)} \left.\partial_x^{\vec{\beta}} \big[x_0,\cdots,x_n;u^{1-\frac{d}2}\big]\right|_{x_j=f(t_j)},
\end{equation}
where $\vec{\beta}=(\alpha_0-1,\cdots,\alpha_{n}-1)$.
\end{lemma}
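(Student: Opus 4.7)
The plan is to reduce the general $T$-function integral of Lemma \ref{integralformofTna} to the conformally flat setting, then reinterpret the resulting scalar simplex integral as a derivative of a Newton divided difference via the Hermite-Genocchi formula \eqref{HermiteGenocchi}.

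First, I would exploit the very special structure of the principal symbol for a conformally flat metric: with $g_{ij}(t)=f(t)^{-1}g_{ij}$, one has $P_2^{jk}(t)=g^{jk}f(t)$, so the symmetric matrix $P(s)=\sum_{j=0}^n s_j P_2(t_j)$ factorizes as $\bigl(\sum_j s_j f(t_j)\bigr)G^{-1}$, where $G=(g_{ij})$ is the constant background metric. This yields $\sqrt{\det P(s)}=\bigl(\sum_j s_j f(t_j)\bigr)^{d/2}/\sqrt{|g|}$ and $P(s)^{-1}_{ab}=g_{ab}/\sum_j s_j f(t_j)$. The pairing sum $\underset{\vec n}{\sum\prod}P^{-1}(s)_{n_i n_{\sigma(i)}}$ has $|\vec\alpha|-2$ factors per pairing, so after substitution into Lemma \ref{integralformofTna} the constant combination $\underset{\vec n}{\sum\prod}g_{n_i n_{\sigma(i)}}$ and the factor $\sqrt{|g|}$ pull out of the simplex integral, leaving a scalar integrand proportional to $\prod_j s_j^{\alpha_j-1}\bigl(\sum_j s_j f(t_j)\bigr)^{2-d/2-|\vec\alpha|}$. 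This produces the claimed factorization $T_{\vec n;\vec\alpha}=\sqrt{|g|}\,\bigl(\underset{\vec n}{\sum\prod}g_{n_in_{\sigma(i)}}\bigr)\,T_{\vec\alpha}$ and identifies $T_{\vec\alpha}$ with an explicit scalar integral over $\Sigma_n$.

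Second, I would match this scalar simplex integral against $\partial_x^{\vec\beta}[x_0,\ldots,x_n;u^{1-d/2}]\bigm|_{x_j=f(t_j)}$. Apply \eqref{HermiteGenocchi} to $\phi(u)=u^{1-d/2}$ to write
\begin{equation*}
[x_0,\ldots,x_n;\phi]=\int_{\Sigma_n}\phi^{(n)}\bigl(\textstyle\sum_j s_j x_j\bigr)\,ds,
\end{equation*}
and then differentiate $\beta_j=\alpha_j-1$ times in each $x_j$ under the integral sign. Each differentiation brings down a factor $s_j$ and raises the order of the derivative on $\phi$ by one, so evaluating at $x_j=f(t_j)$ leaves an integrand of the form $\prod_j s_j^{\alpha_j-1}\,\phi^{(|\vec\alpha|-1)}\bigl(\sum_j s_j f(t_j)\bigr)$. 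A direct Pochhammer calculation gives $\phi^{(m)}(u)=(-1)^m\,\Gamma(d/2-1+m)/\Gamma(d/2-1)\cdot u^{1-d/2-m}$, and with $m=n+|\vec\beta|=|\vec\alpha|-1$ this supplies exactly the sign $(-1)^{|\vec\alpha|-1}$ and the Gamma ratio $\Gamma(d/2+|\vec\alpha|-2)/\Gamma(d/2-1)$ appearing in the statement. Solving for the simplex integral yields the closed form for $T_{\vec\alpha}$.

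The restriction $d\neq 2$ is essential at the final step because $\Gamma(d/2-1)$ has a simple pole at $d=2$ (equivalently, $u^{1-d/2}$ degenerates to a constant there and its divided differences vanish), so the two-dimensional case must be handled separately by replacing $u^{1-d/2}$ with $\log u$. The main technical obstacle is bookkeeping: one has to align the combinatorial prefactor $(2^{|\vec\alpha|-2}\vec\beta!)^{-1}$ from Lemma \ref{integralformofTna} with the factorial weights produced by the iterated $\partial_{x_j}$ differentiations of the Hermite-Genocchi integrand, and to verify that signs, Gamma quotients and the Wick-style pairing sum factor consistently across the two representations.
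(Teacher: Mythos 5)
Your strategy---factor the constant background metric out of $P(s)$ using the conformally flat structure, reduce the integral of Lemma~\ref{integralformofTna} to a scalar simplex integral, and then recognize that integral as a derivative of a Newton divided difference via Hermite--Genocchi~\eqref{HermiteGenocchi}---is the natural and almost certainly intended route. The matrix reduction is right: $P(s)=\bigl(\sum_j s_j f(t_j)\bigr)G^{-1}$ gives $\sqrt{\det P(s)}=\bigl(\sum_j s_j f(t_j)\bigr)^{d/2}/\sqrt{|g|}$ and $P(s)^{-1}_{ab}=g_{ab}/\sum_j s_j f(t_j)$, so $\sqrt{|g|}$ and the pairing sum $\underset{\vec n}{\sum\prod}g_{n_in_{\sigma(i)}}$ pull out and the remaining scalar integrand is $\prod_j s_j^{\alpha_j-1}\bigl(\sum_j s_j f(t_j)\bigr)^{2-d/2-|\vec\alpha|}$; and the Pochhammer identity $\phi^{(m)}(u)=(-1)^m\frac{\Gamma(d/2-1+m)}{\Gamma(d/2-1)}u^{1-d/2-m}$ with $m=n+|\vec\beta|=|\vec\alpha|-1$ produces the stated sign and Gamma ratio. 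Your remark about $d=2$ is also correct.

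The gap is the ``bookkeeping'' that you flag at the end and leave unresolved---and it is not a formality you can wave at. Differentiating the Hermite--Genocchi integrand $\partial_{x_j}^{\beta_j}$ times brings down only the monomial $s_j^{\beta_j}$: no factorials and no powers of $2$ are generated on that side. Consequently the prefactor $\bigl(2^{|\vec\alpha|-2}\vec\beta!\bigr)^{-1}$ of Lemma~\ref{integralformofTna} does not cancel, and your computation, carried through honestly, yields
\begin{equation*}
T_{\vec\alpha}(t_0,\dots,t_n)=\frac{1}{2^{|\vec\alpha|-2}\,\vec\beta!}\cdot\frac{(-1)^{|\vec\alpha|-1}\,\Gamma\bigl(\tfrac d2-1\bigr)}{\Gamma\bigl(\tfrac d2+|\vec\alpha|-2\bigr)}\left.\partial_x^{\vec\beta}\bigl[x_0,\dots,x_n;u^{1-d/2}\bigr]\right|_{x_j=f(t_j)},
\end{equation*}
which differs from~\eqref{Tfunctionsconfomallyflat} as reproduced here by the factor $\bigl(2^{|\vec\alpha|-2}\vec\beta!\bigr)^{-1}$. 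This is not a defect of your argument; it is consistent with the explicit $T_{\alpha,1}$ formula printed just after the lemma, where for $\vec\alpha=(\alpha,1)$ the overall prefactor is $(-1)^{|\vec\alpha|-1}\Gamma(d/2-1)/\bigl(2^{|\vec\alpha|-2}\Gamma(d/2+|\vec\alpha|-2)\bigr)$ and a direct check (try $\alpha=2,3$) shows the bracketed quantity equals $\frac{1}{\vec\beta!}\partial_{x_0}^{\alpha-1}\bigl[x_0,x_1;u^{1-d/2}\bigr]$ rather than $\partial_{x_0}^{\alpha-1}\bigl[x_0,x_1;u^{1-d/2}\bigr]$. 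So instead of promising to ``verify consistency'' you should finish the bookkeeping, notice the residual factor, and state the normalization explicitly; as written, the proof stops short of the step that actually determines the constant in front.
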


As an example, we have
\begin{equation*}
\begin{aligned}
T_{\alpha,1}(t_0,t_1)=&\frac{(-1)^{\alpha}\Gamma(\frac{\dim}2-1)}{2^{\alpha-1}\Gamma(\frac{\dim}2+\alpha-1)}\times \\
&\Big(\frac{f(t_1)^{1-\frac{\dim}2}}{(f(t_1)-f(t_0))^{\alpha}}-\sum_{m=0}^{\alpha-1}\frac{(-1)^m \Gamma(\frac{\dim}2+m-1)}{\Gamma(\frac{\dim}2-1)m!} \frac{f(t_0)^{-\frac{\dim}2-m+1}}{(f(t_1)-f(t_0))^{\alpha-m}}\Big).
\end{aligned}
\end{equation*} 

Note that  for dimension two, $T_{\alpha,1}(t_0,t_1)$ can be obtained by taking the limit of \eqref{Tfunctionsconfomallyflat} as  $\dim$ approaches 2. 
When $f(t)=t$, we have  
\begin{equation*}
T_{\alpha,1}(t_0,t_1)=\frac{(-1)^{\alpha-1}}{2^{\alpha-1} \Gamma(\alpha)^2} \partial_{t_0}^{\alpha-1}\big[t_0,t_1;\log(u)\big].
\end{equation*}

 Recall that  the {\it scalar curvature density} of a given functional metric is defined by  
\begin{equation*}\label{scalarcurvaturedef}
 R =(4\pi)^{\frac\dim2}b_2(\lap_{0,g}).
\end{equation*}    
This scalar curvature density is computed  for two classes of examples  in all dimensions: conformally flat metrics  and twisted products of conformally  flat metrics. Let us recall this result:

\begin{theorem}[\cite{arXiv1811.04004}]\label{scalarcrvatureconformal}
The scalar curvature of the $\dim$-dimensional noncommutative tori $\mathbb{T}_\theta^\dim$ equipped with the metric $f(h)^{-1}g_{ij}$ is given by
\begin{equation*}
R=\sqrt{|g|} \Big(K_{\dim}(h_{(0)},h_{(1)})(\triangle(h))+H_{\dim}(h_{(0)},h_{(1)},h_{(2)})(\Box(h))\Big),
\end{equation*}
where  $\lap(h)=g^{ij}\delta_i\delta_j(h)$, $\Box(h)=g^{ij}\delta_i(h)\cdot\delta_j(h)$. 
The functions $K_d$ and $H_d$ are given by
\begin{equation}\label{HKfunctions}
\begin{aligned}
K_{\dim}(t_0,t_1)=&K_{\dim}^t(f(t_0),f(t_1))\big[t_0,t_1;f\big],\\
H_{\dim}(t_0,t_1,t_2)=&H_{\dim}^t(f(t_0),f(t_1),f(t_2))\big[t_0,t_1;f\big]\big[t_1,t_2;f\big]\\
&+2K_{\dim}^t(f(t_0),f(t_2))\big[t_0,t_1,t_2;f\big],
\end{aligned}
\end{equation}
Where $K^t_d$ and $H^t_d$ are the functions $K_d$ and $H_d$ when $f(t)=t$.
For $\dim\neq 2$, they can be computed to be
\begin{align*}
K_{\dim}^t(x,y)=\frac{4\, x^{2-\frac{3 \dim }{4}} y^{2-\frac{3 \dim }{4}} }{\dim(\dim -2) (x-y)^3}\left((\dim -1) x^{\frac\dim 2} y^{\frac{\dim }{2}-1}-(\dim -1) x^{\frac{\dim }{2}-1} y^{\frac \dim 2}-x^{\dim -1}+y^{\dim -1}\right),
\end{align*}
and
\begin{align*}
H_d^t(x,y,z)=&\frac{2 x^{-\frac{3 \dim }{4}} y^{-\dim } z^{-\frac{3 \dim }{4}}}{(\dim -2) \dim  (x-y)^2 (x-z)^3 (y-z)^2}\times\\
 \Big(
 & x^{\dim } y^{\dim } z^2(x-y) \left(3 x^2 y-2 x^2 z-4 x y^2+4 x y z-2 x z^2+y z^2\right)
\\
 &+x^{\dim } y^{\frac{\dim }{2}+1} z^{\frac{\dim }{2}+1} (x-z)^2 (z-y) (\dim  x+(1-\dim ) y)
\\
 &
 +x^{\dim }y^3 z^{\dim } (z-x)^3
 + x^{\frac{\dim }{2}+1} y^{\frac{3 \dim }{2}}z^2 (x-y) (x-z)^2
\\
 &
 +2 (\dim -1) x^{\frac{\dim }{2}+1} y^{\dim } z^{\frac{\dim }{2}+1} (x-y) (x-z) (z-y) (x-2 y+z)
\\
 &-x^{\frac{\dim }{2}+1} y^{\frac{\dim }{2}+1} z^{\dim } (x-y) (x-z)^2 ((1-\dim ) y+\dim  z)
 -x^2 y^{\frac{3 \dim }{2}} z^{\frac{\dim }{2}+1} (x-z)^2 (z-y)
\\
 &
 +x^2 y^{\dim } z^{\dim } (y-z) \left(x^2 y-2 x^2 z+4 x y z-2 x z^2-4 y^2 z+3 y z^2\right)\Big).
\end{align*}
These functions for the dimension two are given by
\begin{align*}
K_2^t(x,y)=&-\frac{\sqrt{x} \sqrt{y} }{(x-y)^3}((x+y) \log (x/y)+2(y-x)),\\
H_2^t(x,y,z)=&\frac{2 \sqrt{x} \sqrt{z}}{(x-y)^2 (x-z)^3 (y-z)^2}\times\\
&\Big(-(x-y) (x-z) (y-z) (x-2 y+z) +y (x-z)^3 \log (y)  \\
&\,\,+(y-z)^2 (-2 x^2+x y+y z)\log (x) -(x-y)^2 (x y+zy-2 z^2)\log (z))\Big).
\end{align*}
\end{theorem}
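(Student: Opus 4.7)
The plan is to specialize the general formula for $b_2$ given in Proposition~\ref{b2proposition} to the Laplace-type $h$-differential operator $\triangle_{0,g}$ associated with the conformally flat metric $g_{ij}(t)=f(t)^{-1}g_{ij}$, and then re-express the resulting contracted element so that the operator part is a linear combination of $\triangle(h)$ and $\Box(h)$ only. First I would record the symbol functions of $\triangle_{0,g}$ for this metric using Proposition~\ref{laplacianforg}: a direct substitution of $|g_{ij}(t)|=f(t)^{-d}|g|$ and $g^{ij}(t)=f(t)g^{ij}$ yields \eqref{symfunconformal}, and a key observation is that every one of $P_2^{jk}$, $P_1^{jk}$, $P_{0,1}^{jk}$, $P_{0,2}^{jk}$ factors as $g^{jk}$ times a scalar function of the spectral variables. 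Next, Lemma~\ref{Tfunctionsconfomallyflatlemma} shows that the $T$-functions appearing in Proposition~\ref{b2proposition} also factor as $\sqrt{|g|}$ times a metric contraction $\sum\prod g_{n_i n_{\sigma(i)}}$ times a scalar $T_{\vec\alpha}$ which depends only on $f(t_0),\dots,f(t_n)$.

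Once these two factorizations are carried out, every term inside $B_{2,1}^{ij}$ contracts the matrix pair $(g^{ij},\,g_{n_i n_{\sigma(i)}})$ and reduces, upon pairing with $\delta_i\delta_j(h)$, to $\sqrt{|g|}$ times a scalar function of $h_{(0)},h_{(1)}$ acting on $\triangle(h)=g^{ij}\delta_i\delta_j(h)$. Similarly all terms inside $B_{2,2}^{ij}$, pairing with $\delta_i(h)\cdot\delta_j(h)$, collapse to $\sqrt{|g|}$ times a scalar function of $h_{(0)},h_{(1)},h_{(2)}$ acting on $\Box(h)=g^{ij}\delta_i(h)\cdot\delta_j(h)$. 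This gives an expression of the form
\begin{equation*}
R=\sqrt{|g|}\Bigl(\widetilde K_d(h_{(0)},h_{(1)})(\triangle(h))+\widetilde H_d(h_{(0)},h_{(1)},h_{(2)})(\Box(h))\Bigr),
\end{equation*}
where $\widetilde K_d$ and $\widetilde H_d$ are scalar functions built from the scalar $T_{\vec\alpha}$ of \eqref{Tfunctionsconfomallyflat}, from the scalar reductions of $P_\bullet^{jk}$ in \eqref{symfunconformal}, and from various divided differences of those scalar reductions. The normalization factor $(4\pi)^{d/2}$ in the definition of $R$ cancels the $(4\pi)^{-d/2}$ in Proposition~\ref{b2proposition}.

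The final step is to pass from the $h$-variables $(t_0,\dots,t_n)$ to the $f$-variables by means of the chain rule for divided differences, namely $[t_0,t_1;\varphi\circ f]=[f(t_0),f(t_1);\varphi]\,[t_0,t_1;f]$ and the second-order analogue
\begin{equation*}
[t_0,t_1,t_2;\varphi\circ f]=[f(t_0),f(t_1),f(t_2);\varphi]\,[t_0,t_1;f][t_1,t_2;f]+[f(t_0),f(t_2);\varphi]\,[t_0,t_1,t_2;f].
\end{equation*}
Applying these identities to the reductions of $P_\bullet^{jk}$ above, whose scalar parts are themselves polynomial expressions in divided differences of powers of $f$, factorizes out $[t_0,t_1;f]$ from $\widetilde K_d$ and $[t_0,t_1;f][t_1,t_2;f]$ or $[t_0,t_1,t_2;f]$ from $\widetilde H_d$, producing exactly the structural formulas \eqref{HKfunctions} for $K_d$ and $H_d$ in terms of the $f(t)=t$ specializations $K_d^t$ and $H_d^t$. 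These $K_d^t$ and $H_d^t$ are then computed explicitly by plugging \eqref{Tfunctionsconfomallyflat} into the assembled expression and evaluating the resulting divided differences of $u^{1-d/2}$; the closed forms follow because $[x_0,\dots,x_n;u^{1-d/2}]$ has an elementary formula via its symmetric-function expansion. For $d=2$ the functions are obtained either by taking the limit $d\to 2$ or, equivalently, by replacing $u^{1-d/2}$ with $\log u$ as indicated in Lemma~\ref{Tfunctionsconfomallyflatlemma}.

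The main obstacle is the middle step: the raw formulas for $B_{2,1}^{ij}$ and $B_{2,2}^{ij}$ in Proposition~\ref{b2proposition} contain many terms, and although the factorizations described above reduce the problem to a single scalar identity in each case, the combinatorial bookkeeping required to see the massive cancellations that yield the compact expressions for $K_d^t$ and $H_d^t$ is delicate. A consistency check is that the result must be symmetric in the sense dictated by the trace relations $\varphi_0\circ\delta_i=0$, and that it must reproduce in the commutative limit the classical formula $R_{\text{cl}}=(d-1)f^{d/2-2}\triangle(h)+\bigl(\tfrac{d}{4}-1\bigr)(d-1)f^{d/2-3}|\nabla f|^2$ for the scalar curvature of $f^{-1}g$; verifying these constraints at intermediate stages is what keeps the heavy symbolic simplification tractable, and almost certainly requires computer algebra to carry out in full.
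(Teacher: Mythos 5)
Your plan coincides with the strategy outlined for \cite{arXiv1811.04004} that this survey summarizes: factorize $P_\bullet^{jk}=g^{jk}P_\bullet$ via formula \eqref{symfunconformal}, factorize the $T$-functions via Lemma~\ref{Tfunctionsconfomallyflatlemma}, substitute into Proposition~\ref{b2proposition}, and pass to the $f(t)=t$ normal forms $K_d^t,H_d^t$ by the divided-difference chain rule (whose two-variable form you state correctly), with $d=2$ obtained as a limit. The one slip is your commutative-limit sanity formula $R_{\mathrm{cl}}$, which has the wrong powers of $f$ and the wrong coefficient on the gradient term (the paper works from $\tilde{R}=-2(d-1)e^{-2h}g^{jk}\partial_j\partial_k(h)-(d-1)(d-2)e^{-2h}g^{jk}\partial_j(h)\partial_k(h)$ with $f=e^{-2h}$, and also recalls that $\delta_j\to -i\partial_j$ flips signs in the limit); this is an aside, though, and does not affect the validity of your argument.
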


Note that the Function $K_\dim^t(x,y)$ is the symmetric part of the function
\begin{equation*}
\frac{8 x^{2-\frac{ \dim }{4}} y^{2-\frac{3 \dim }{4}}}{\dim(\dim -2)   (x-y)^3} \left((\dim -1)  y^{\frac{\dim }{2}-1}-x^{\frac{\dim}2 -1}\right).
\end{equation*}
Similarly, $H_\dim^t(x,y,z)$ is equal to $(F_d(x,y,z)+F_d(z,y,x))/2$ where
\begin{align*}
F_\dim(x,y,z)=
&\frac{4 x^{-\frac{3 \dim }{4}} y^{-\dim } z^{-\frac{3 \dim }{4}}}{\dim(\dim -2)   (x-y)^2 (x-z)^3 (y-z)^2}\times
\\
 &\Big(
 x^{\dim } y^{\dim } z^2 (x-y) (3 x^2 y-2 x^2 z-4 x y^2+4 x y z-2 x z^2+y z^2)
\\
 &\, \, +x^{\dim } y^{\frac{\dim }{2}+1} z^{\frac{\dim }{2}+1} (x-z)^2 (z-y) (\dim  x+(1-\dim ) y)
 +\frac{1}{2}  x^{\dim } y^3 z^{\dim } (z-x)^3
\\
  &\,\, +x^{\frac{\dim }{2}+1} y^{\frac{3 \dim }{2}} z^2 (x-y) (x-z)^2
 +2 (\dim -1) x^{\frac{\dim }{2}+1} y^{\dim } z^{\frac{\dim }{2}+1} (x-y)^2 (x-z) (z-y)
\Big).
\end{align*}

As we recalled in earlier sections, in low dimensions   two, three, and four,   the curvature of the conformally flat metrics was  computed  in \cite{MR3194491,  MR3148618, MR3369894, MR3359018, arXiv1808.02977}.
It is shown in \cite{arXiv1811.04004} that the above general formula reproduces those results.  
We should first note that the functions found in all the aforementioned works are written in terms of  the commutator $[h,\cdot]$, denoted by $\Delta$. 
To produce those functions from our result,  a linear  substitution of the variables $t_j$ in terms of new variables $s_j$  is needed.  
On the other hand, it is important to  note that the functions $K_d^t(x,y)$ and $H_d^t(x,y,z)$ are homogeneous rational functions  of order $-\frac{d}2$ and  $-\frac{\dim}2-1$ respectively.
Using formula \eqref{HKfunctions}, it is clear that the functions $K_d(t_0,t_1)$ and $H_d(t_0,t_1,t_2)$ are homogeneous of order $1-\frac{\dim}2$ in $f(t_j)$'s.
This is the reason that for function $f(t)=e^t$  and a linear substitution such as  $t_j=\sum_{m=0}^js_m$, a factor of some power of $e^{s_0}$ comes out.
This term can be replaced by a power of $e^h$ multiplied from the left to the final outcome.
This explains how the functions in the aforementioned papers have one less variable than our functions.
In other words, we have
\begin{equation*}
K_{\dim}(s_0,s_0+s_1)=e^{(1-\frac{d}{2})s_0}K_{\dim}(s_1),\qquad 
H_{\dim}(s_0,s_0+s_1,s_0+s_1+s_2)=e^{(1-\frac{d}{2})s_0}H_{\dim}(s_1,s_2). 
\end{equation*} 
For instance, function $K_d(s)$ is given by
\begin{equation*}
K_{\dim}(s_1)=\frac{8 e^{\frac{\dim+2}{4} s_1} \left((\dim-1) \sinh \left(\frac{s_1}{2}\right)+\sinh \left(\frac{(1-\dim)s_1}{2}\right)\right)}{\dim(\dim-2) d \left(e^{s_1}-1\right)^2 s_1}.
\end{equation*}

Now, we can obtain functions in  dimension two:
\begin{equation*}
\begin{aligned}
H_2(s_1)=&-\frac{e^{\frac{s_1}{2}} \left(e^{s_1} \left(s_1-2\right)+s_1+2\right)}{\left(e^{s_1}-1\right){}^2 s_1},\\
K_2(s_1,s_2)=&\Big(s_1(s_1\hspace{-0.5mm}+\hspace{-0.5mm}s_2)\cosh(s_2)\hspace{-0.5mm}-(s_1\hspace{-0.5mm}-\hspace{-0.5mm}s_2)\left(s_1+s_2+\sinh(s_1)+\sinh(s_2)-\sinh(s_1+s_2)\right)\\
&
\quad -s_2(s_1+s_2) \cosh(s_1)\Big){\rm csch}(\frac{s_1}{2}) {\rm csch}(\frac{s_2}{2}){\rm csch}^2(\frac{s_1+s_2}{2})/(4 s_1 s_2 (s_1+s_2)).
\end{aligned}
\end{equation*}
we have $-4H_2=H$ and $-2K_2=K$ where $K$ and $H$ are the functions found in \cite{MR3194491, MR3148618} . 
The difference   is coming from the fact that the noncommutative parts of the results in \cite[Section 5.1]{MR3369894} are $\lap(\log(e^{h/2}))=\frac12\lap(h)$ and $\Box(\log(e^{h/2}))=\frac14\Box(h)$.

 The functions for dimension four, with the same conformal factor $f(t)=e^t$  and substitution $t_j=\sum_{m=0}^js_m$, gives the following  which up to a negative sign are in complete agreement with the results from  our papers \cite{MR3359018, MR3369894}:
 \begin{equation*}
\begin{aligned}
H_4(s_1)=\frac{1-e^{s_1}}{2e^{s_1} s_1},\quad 
K_4(s_1,s_2)=\frac{\left(e^{s_1}-1\right) \left(3 e^{s_2}+1\right) s_2-\left(e^{s_1}+3\right) \left(e^{s_2}-1\right) s_1}{4 e^{s_1+s_2}s_1 s_2 \left(s_1+s_2\right)}.
\end{aligned}
\end{equation*}

To recover the functions for curvature of a noncommutative three torus equipped with a conformally flat metric obtained in \cite{arXiv1610.04740, arXiv1808.02977}, we need to set  $f(t)=e^{2t}$ and $t_0=s_0,\, t_1=s_0 + s_1/3$ and $t_2=s_0 + (s_1+s_2)/3$. 
Then up to a factor of $e^{-s_0}$, we have
 \begin{equation*}
\begin{aligned}
H_3(s_1)=\frac{4-4e^{\frac{s_1}{3}}}{e^{\frac{s_1}{6}}(s_1 e^{\frac{s_1}{3}}+1) },\quad 
K_3(s_1,s_2)=\frac{6(e^{\frac{s_1}{3}}-1) (3 e^{\frac{s_2}{3}}+1) s_2-6 (e^{\frac{s_1}{3}}+3)(e^{\frac{s_2}{3}}-1) s_1}{ e^{\frac{s_1+s_2}6}(e^{\frac{s_1+s_2}{3}}+1) s_1 s_2 (s_1+s_2)}.
\end{aligned}
\end{equation*}

Finally, one needs to  check the classical limit of these formulas  as $\theta\to 0$.
In the commutative case,  the scalar curvature of a conformally flat metric   $\tilde{g}=e^{2h}g$ on a $\dim$-dimensional space reads 
\begin{equation*} 
 {\tilde {R}}=-2(\dim-1)e^{-2h}g^{jk}\partial_j \partial_k(h) -(\dim-2)(\dim-1)e^{-2h} g^{jk}\partial_j(h)\partial_k(h).
 \end{equation*}
For  $f(t)=e^{-2t}$,  the limit is
\begin{equation*}
\begin{aligned}
\lim_{t_0,t_1\to t} K_{\dim}(t_0,t_1)&=\frac{1}{3} (d-1) e^{(\dim-2 )t},\quad \lim_{t_0,t_1,t_2\to t}H_{\dim} (t_0,t_1,t_2)&=\frac{1}{6} (\dim-2) (\dim-1) e^{ (\dim-2) t}.
\end{aligned}
\end{equation*}
We should also add that since $\delta_j\to -i \partial_j$ as $\theta\to 0$, we have $\lap(h)\to -g^{jk}\partial_j \partial_k(h) $ and $\Box(h)\to -g^{jk}\partial_j(h)\partial_k(h)$.
Therefore, these results recovers the classical result up to a  factor of $\sqrt{|g|}e^{\dim h}/6 $.
The factor $\sqrt{|g|}e^{\dim h}$ represents the volume form in the scalar curvature density and the factor $1/6$ is due to  the choice of normalization in \eqref{scalarcurvaturedef}.

\subsection{Twisted product, warped product, and   scalar curvature}\label{twistedmetricsec}
In this  section, following \cite{arXiv1811.04004}, we shall recall the computation of the   scalar  curvature density of a noncommutative $\dim$-torus equipped with a class of functional metrics, which  
is  called a twisted product metric.
\begin{definition}[\cite{arXiv1811.04004}] 
Let $g$ be an $r\times r$ and $\tilde{g}$ be a $(\dim-r)\times (\dim-r)$ positive definite real symmetric  matrices and assume $f$ is a positive function on the real line. 
The functional metric 
\begin{equation}\label{twistedmetric}
G=f(t)^{-1}g\oplus \tilde{g},
\end{equation}
is called a  {\it twisted product functional metric} with the twisting element $f(h)^{-1}$.
\end{definition}
Some examples of the twisted product metrics on noncommutative tori were already studied.
The asymmetric two torus whose Dirac oeprator and spectral invariants are studied  in \cite{MR3402793} \ is a twisted product metric for $r=1$.
The scalar and Ricci curvature of noncommutative three torus of twisted product metrics with $r=2$ are studied in \cite{arXiv1808.02977}.
It is worth mentioning  that conformally flat metrics as well as warped  metrics  are two special cases of twisted product functional metrics. The following theorem is proved in \cite{arXiv1811.04004}.

\begin{theorem}\label{scalartwisted}
The scalar curvature density of the $\dim$-dimensional noncommutative tori $\nctorus[\dim]$ equipped with the twisted product functional metric \eqref{twistedmetric} with the twisting element $f(h)^{-1}$   is given by
\begin{equation*}
\begin{aligned}
R=\sqrt{|g||\tilde{g}|} \Big(& K_{r}(h_{(0)},h_{(1)})(\lap(h))+H_{r}(h_{(0)},h_{(1)},h_{(2)})(\Box(h))\\
&+\tilde{K}_{r}(h_{(0)},h_{(1)})(\tilde{\lap}(h))+\tilde{H}_{r}(h_{(0)},h_{(1)},h_{(2)})(\tilde{\Box}(h))\Big),
\end{aligned}
\end{equation*} 
where $\tilde{\lap}(h)=\sum_{r<i,j}\tilde{g}^{ij}\delta_i\delta_j(h)$ and $\tilde{\Box}(h)=\sum_{r<i,j} \tilde{g}^{ij}\delta_i(h)\delta_j(h)$ and  $\lap$, $\Box$, $K_r$ and $H_r$ are  given by Theorem \ref{scalarcrvatureconformal}. 
The functions $\tilde{K}_{r}$ and $\tilde{H}_{r}$ for $r\neq 2, 4$ are  given by
$$\tilde{K}_{r}(t_0,t_1)=\tilde{K}_{r}^t(f(t_0),f(t_1))\big[t_0,t_1;f\big],$$
$$\tilde{H}_{r}(t_0,t_1,t_2)=\tilde{H}_{r}^t(f(t_0),f(t_1),f(t_2))\big[t_0,t_1;f\big]\big[t_1,t_2;f\big]+2\tilde{K}_{r}^t(f(t_0),f(t_2))\big[t_0,t_1,t_2;f\big].$$
The functions $\tilde{K}_{r}^t$ and $\tilde{H}_{r}^t$ are 
\begin{align*}
\tilde{K}_{r}^t(x,y)=\frac{(2r-4)(x^2-y^2) x^{\frac{r}2} y^{\frac{r}2}+4 x^2 y^r-4 x^r y^2 }{(r-4) (r-2)x^{\frac34 r} y^{\frac34 r} (x-y)^3},
\end{align*}
and
\begin{align*}
&\tilde{H}_{r}^t(x,y,z)=\frac{2 x^{-\frac{3 r}{4}} y^{-r} z^{-\frac{3 r}{4}}}{(r-4) (r-2) (x-y)^2 (x-z)^3 (y-z)^2}\times\\
&\Big(x^r y^r z^2 (x-y) \left(x^2+2 x (y-2 z)-4 y^2+6 y z-z^2\right)\\
&\,\, +x^r y^{\frac{r}2} z^{\frac{r}2} (x-z)^2 (y-z) \big((r-3) y z-x ((r-3) z+y)\big)\\
&\,\,- x^r y^2 z^r (x-z)^3+ x^{\frac{r}2} y^{\frac{3 r}{2}}z^2 (x-y) (x-z)^2\\
&\,\,-x^{\frac{r}2} y^r z^{\frac{r}2} (x\hspace{-0.55mm}-\hspace{-0.55mm}y) (x\hspace{-0.55mm}-\hspace{-0.55mm}z) (y\hspace{-0.55mm}-\hspace{-0.55mm}z) \big(\hspace{-0.75mm}(r\hspace{-0.55mm}-\hspace{-0.55mm}3) x^2\hspace{-0.55mm}-2 x ((r\hspace{-0.55mm}-\hspace{-0.55mm}2) y+(1\hspace{-0.55mm}-r)z)+z ((r\hspace{-0.55mm}-3) z\hspace{-0.55mm}-2 (r\hspace{-0.55mm}-2) y)\hspace{-0.75mm}\big)\\
&\,\,+x^{\frac{r}2} y^{\frac{r}2} z^r (y-x) (x-z)^2 (y z-(r-3) x (y-z))\\
&\,\,+x^2 y^{\frac{3 r}{2}} z^{\frac{r}2} (x-z)^2 (y-z)
-x^2 y^r z^r (y-z) \left(x^2-6 x y+4 x z+4 y^2-2 y z-z^2\right)\Big).
\end{align*}
\end{theorem}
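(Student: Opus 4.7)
The plan is to feed the twisted product metric into the general machinery developed in Proposition \ref{laplacianforg} and Proposition \ref{b2proposition}, exploit the block-diagonal structure of $G$, and then evaluate the resulting $T$-functions using Lemma \ref{integralformofTna}. First I would apply Proposition \ref{laplacianforg} with $g_{ij}(t) = f(t)^{-1} g_{ij}$ if $i,j \leq r$ and $g_{ij}(t) = \tilde{g}_{ij}$ if $i,j > r$ (and zero otherwise). Because $\det G(t) = f(t)^{-r} |g| \, |\tilde g|$, the factor $|g|^{\pm 1/4}(t_0)$ in Proposition \ref{laplacianforg} becomes $(|g||\tilde g|)^{\pm 1/4} f(t_0)^{\mp r/4}$. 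So the $\tilde g$-block of the symbol contributes: $P_2^{jk}(t_0) = \tilde g^{jk}$ for $j,k>r$ (independent of $t_0$), and the lower-order pieces $P_1^{jk}, P_{0,1}^{jk}, P_{0,2}^{jk}$ are $\tilde g^{jk}$ multiplied by the divided differences of the \emph{scalar} function $f(t)^{-r/4}$, exactly as in the conformally flat case but with exponent $r/4$ in place of $d/4$. The $g$-block, meanwhile, reproduces the conformally flat symbol of Theorem \ref{scalarcrvatureconformal} restricted to indices $j,k\leq r$, with the additional fixed factor $|\tilde g|^{1/4}$ absorbed into the volume element.

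The key structural observation is that since $G^{jk}(t)$ has no mixed (block off-diagonal) entries, the contraction $P_2^{ij}(t_0)$ appearing everywhere in Proposition \ref{b2proposition} splits the sums into three families of terms: both indices in the first $r$ directions, both indices in the last $d-r$ directions, and mixed pairs. I would then handle each family separately. For the all-first-block terms, the calculation is essentially a re-run of the proof of Theorem \ref{scalarcrvatureconformal} for the $r$-dimensional conformally flat metric $f(t)^{-1} g$, but the $T$-functions are now evaluated with the \emph{full} $d$-dimensional momentum integration. Using Lemma \ref{integralformofTna}, $\det P(s) = f(s)^{-r} |g| \, |\tilde g|$ (where $f(s) = \sum s_j f(t_j)^{-1}$ on the first block), so after the Gaussian integral collapses onto $\mathbb{S}^{d-1}$ via polar coordinates, the $\tilde g$ contribution is just an overall $\sqrt{|\tilde g|}$ and a dimension shift. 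The net effect is that every exponent of $f$ originating from $\det P(s)^{-1/2}$ in the conformally flat proof gets rescaled: what was $d/2$ there becomes $r/2$ here. This is precisely why $K_r, H_r$ in the first piece and $\tilde K_r, \tilde H_r$ in the second piece carry the parameter $r$ (not $d$).

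For the all-second-block indices, the functional dependence on $h$ now enters only through $|g|^{\pm 1/4}(t_0) = |g|^{\pm 1/4} f(t_0)^{\mp r/4}$, so the relevant symbol components are $P_{0,1}^{jk}(t_0,t_1) = \tilde g^{jk} f(t_0)^{r/4} [t_0,t_1; f^{-r/4}]$, etc. Plugging these into the $B_{2,1}$ and $B_{2,2}$ formulas of Proposition \ref{b2proposition}, most terms vanish because $P_2^{jk}(t_0) = \tilde g^{jk}$ is constant in $t_0$, so all divided differences $[t_0,t_1; P_2^{k\ell}]$ in the $\tilde g$-block are zero. What survives is a short list: $-T_{;1,1} P_{0,1}$, $-T_{;1,1} P_{0,2}$, and the single $T_{k\ell;1,1,1} P_1 P_1$ term. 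After applying Lemma \ref{integralformofTna} and using the Hermite--Genocchi representation \eqref{HermiteGenocchi} to rewrite the simplex integrals of $f(s)^{-(r+2)/2}$ as divided differences of $u^{1 - r/2}$, and finally rescaling divided differences of $f^{-r/4}$ back to divided differences of $f$ itself via the chain-type identity \eqref{HKfunctions}, the functions $\tilde K_r^t$ and $\tilde H_r^t$ fall out in the stated closed form.

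The remaining step is to dispose of the mixed-block contributions, and this is where I expect the main obstacle. Terms with one index in each block do appear through $P_1^{jk}$ and $P_{0,2}^{jk}$, but they should vanish by a parity-type argument: under the $T$-function integration, an odd power of a first-block momentum $\xi_i$ ($i \leq r$) paired with an odd power of a second-block momentum $\xi_j$ ($j > r$) integrates to zero against the Gaussian factor $e^{-\lambda B_0}$, because the quadratic form $\|\xi\|_{t_k}^2 = g^{ij}(t_k) \xi_i \xi_j + \tilde g^{ij} \xi_i \xi_j$ is block-diagonal. I would verify this vanishing term by term in Proposition \ref{b2proposition}, using Lemma \ref{integralformofTna} and the block structure of $P^{-1}(s)$. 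The special values $r=2$ and $r=4$ must be treated as limits of the rational expressions, analogously to the dimension-two case in Theorem \ref{scalarcrvatureconformal}. A consistency check against the known conformally flat case (take $\tilde g$ empty, so $r = d$) and the product metric case (take $f \equiv 1$, which should yield only the $\tilde K_r, \tilde H_r$ terms vanishing and the first part reducing to the $r$-dimensional conformally flat curvature) would confirm the result, along with the classical limit $\theta \to 0$ reproducing the warped-product scalar curvature of Riemannian geometry.
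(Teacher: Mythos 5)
Your overall setup is sound and matches the paper's strategy: plug the block-diagonal functional metric into Proposition~\ref{laplacianforg} and Proposition~\ref{b2proposition}, and evaluate the resulting $T$-functions via Lemma~\ref{integralformofTna}. Your analysis of the $g$-block is also essentially correct---the dimension shift $d \to r$ comes from $\det P(s) \propto (\sum s_j f(t_j))^r$, and the $\tilde g$-block contributes an overall $\sqrt{|\tilde g|}$, so for $i,j \le r$ one re-runs the conformally flat argument of Theorem~\ref{scalarcrvatureconformal} with $d$ replaced by $r$. The genuine gap is in your third paragraph, where you claim that for $i,j > r$ ``most terms vanish because $P_2^{jk}(t_0) = \tilde g^{jk}$ is constant in $t_0$, so all divided differences $[t_0,t_1; P_2^{k\ell}]$ in the $\tilde g$-block are zero,'' leaving only three contributions. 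The contracted indices $k,\ell,m,n$ in $B_{2,1}^{ij}$ and $B_{2,2}^{ij}$ of Proposition~\ref{b2proposition} are summed over all of $\{1,\ldots,d\}$, not just the block containing $i,j$, so the relevant divided differences need not be in the $\tilde g$-block at all.

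Concretely, for $i,j > r$ the term $T_{k\ell;2,1}(t_0,t_1)\,P_2^{ij}(t_0)\,[t_0,t_1;P_2^{k\ell}]$ receives contributions from $k,\ell \le r$: there $P_2^{ij}(t_0)=\tilde g^{ij}$ is constant (fine), but $[t_0,t_1;P_2^{k\ell}] = [t_0,t_1;f]\,g^{k\ell} \neq 0$, and Lemma~\ref{integralformofTna} shows $T_{k\ell;2,1}$ carries the factor $P^{-1}(s)_{k\ell} = (\sum s_m f(t_m))^{-1} g_{k\ell}$, so the contraction $g_{k\ell}g^{k\ell} = r$ is nonzero. Similarly $T_{k\ell mn;3,1}\,P_2^{ik}P_2^{j\ell}\,[t_0,t_1;P_2^{mn}]$ with $k,\ell>r$ and $m,n\le r$ survives: block-diagonality of $P^{-1}(s)$ kills the cross-block Wick pairings $\{km\}\{\ell n\}$ and $\{kn\}\{\ell m\}$, but leaves $\{k\ell\}\{mn\}$ intact. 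In fact all four terms of $B_{2,1}^{ij}$ and all twelve of $B_{2,2}^{ij}$ contribute for $i,j>r$---which is why $\tilde K_r^t$ and $\tilde H_r^t$ have complexity comparable to $K_r^t$ and $H_r^t$, and why the explicit $r$-dependence shows up beyond the overall $(\sum s_jf)^{-r/2}$ factor. Your parity argument in the final paragraph is correct in the narrow sense that a single Wick pairing crossing blocks vanishes (since $P^{-1}(s)_{ij}=0$ for $i\le r<j$), but it does not kill these block-consistent cross terms where $i,j$ sit in one block and $k,\ell$ in the other; dropping them would yield a wrong answer. (Two minor slips: $P_*^{jk}$ is identically zero for mixed $j,k$, so those never ``appear'' in the first place; and $P_{0,1}^{jk}$ for $j,k>r$ should be $\tilde g^{jk}f(t_0)^{-r/4}[t_0,t_1;f^{r/4}]$, since $|G|^{1/4}(t)\propto f(t)^{-r/4}$---you have the two exponents interchanged.)
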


When the selfadjoint element $h\in \A^d$ has the property that  $\delta_j(h)=0$ for $1\leq j\leq r$, 
we call the twisted product functional metric \eqref{twistedmetric} a {\it warped functional metric} with the warping element $1/f(h)$. 
\begin{corollary}
The scalar curvature density of a warped product of $\tilde{g}$ and $g$ with the warping element $1/f(h)$  is given by
\begin{equation*}
R=\sqrt{|g||\tilde{g}|} \Big(\tilde{K}_{r}(h_{(0)},h_{(1)})(\tilde{\lap}(h))+\tilde{H}_{r}(h_{(0)},h_{(1)},h_{(2)})(\tilde{\Box}(h))\Big).
\end{equation*} 
\end{corollary}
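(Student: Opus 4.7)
The plan is to deduce the corollary immediately from Theorem~\ref{scalartwisted} by imposing the warping hypothesis and checking that two of the four contraction terms drop out. Theorem~\ref{scalartwisted} asserts that for any twisted product functional metric $G=f(h)^{-1} g \oplus \tilde g$ on $\nctm$, the scalar curvature density splits as
\begin{equation*}
R=\sqrt{|g||\tilde g|}\Big(K_r(h_{(0)},h_{(1)})(\lap(h))+H_r(h_{(0)},h_{(1)},h_{(2)})(\Box(h))+\tilde K_r(h_{(0)},h_{(1)})(\tilde\lap(h))+\tilde H_r(h_{(0)},h_{(1)},h_{(2)})(\tilde\Box(h))\Big),
\end{equation*}
where $\lap(h)=g^{ij}\delta_i\delta_j(h)$ and $\Box(h)=g^{ij}\delta_i(h)\cdot\delta_j(h)$ are built from the conformal block of indices $1,\dots,r$, while $\tilde\lap(h)$ and $\tilde\Box(h)$ are built from the complementary block $r<i,j\le\dim$. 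The warped case singles out dilatons satisfying $\delta_j(h)=0$ for $1\le j\le r$.

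Next I would verify that this hypothesis forces both of the first-block terms to vanish identically. For $\Box(h)$ this is immediate: every summand $g^{ij}\delta_i(h)\delta_j(h)$ with $1\le i,j\le r$ contains a factor $\delta_i(h)$ or $\delta_j(h)$ that is zero. For $\lap(h)$ I would use the standard fact that the canonical derivations $\delta_1,\dots,\delta_\dim$ on $\snctm$ pairwise commute, so that $\delta_i\delta_j(h)=\delta_i(\delta_j(h))$; since $\delta_j(h)=0$ for every $j\le r$, each summand $g^{ij}\delta_i\delta_j(h)$ with $1\le i,j\le r$ vanishes, and thus $\lap(h)=0$. Consequently, the elements $K_r(h_{(0)},h_{(1)})(\lap(h))$ and $H_r(h_{(0)},h_{(1)},h_{(2)})(\Box(h))$ are zero in $\snctm$, regardless of how the two-variable and three-variable contraction functions are evaluated.

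Substituting these vanishings into the formula of Theorem~\ref{scalartwisted} leaves only the $\tilde K_r$ and $\tilde H_r$ contributions, which is exactly the claimed expression for $R$. There is no genuine obstacle: the corollary is a pure specialization, and the only structural point to articulate is that commutativity of the $\delta_j$'s propagates the vanishing of $\delta_j(h)$ from first-order to second-order derivatives, killing $\lap(h)$ as well as $\Box(h)$.
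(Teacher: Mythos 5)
Your proof is correct and follows exactly the same route as the paper, which disposes of the corollary with the one-line observation that $\lap(h)$ and $\Box(h)$ vanish for the warped metric; you simply spell out the two elementary verifications (the $\delta_i(h)$ factor kills $\Box(h)$, and commutativity of the $\delta_j$ kills $\lap(h)$) that the paper leaves implicit.
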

\begin{proof}
It is enough to see that  $\lap(h)$ and $\Box(h)$ vanish for the warped metric.
\end{proof}

For $r=2$ and $r=4$, functions $\tilde{H}_r$ and $\tilde{K}_r$ are the limit of the functions given in Theorem \ref{scalartwisted} as $r$ approache $2$ or $4$. 
This is because of the fact that for these values of $r$, some of $T_{\vec{\alpha}}^k$ functions are the  limit case of formulas found earlier. 
For $r=2$ we have
\begin{equation*}
\tilde{K}_2(x,y)=\frac{-x^2+y^2+2 x y \log (\frac{x}{y})}{\sqrt{xy} (x-y)^3},
\end{equation*}
and  
\begin{equation*}
\begin{aligned}
\tilde{H}_2(x,y,z)=&\frac{1}{2 y\sqrt{xz}  (x-y)^2 (x-z)^3 (y-z)^2}\Big(-y (x+y) (x-z)^3 (y+z) \log (y)\\
&-z (x-y)^2  \left(-3 x^2 y+x^2 z-8 x y^2+10 x y z-2 x z^2+y z^2+z^3\right)\log (z)\\
&+x  (y-z)^2 \left(x^3+x^2 y-2 x^2 z+10 x y z+x z^2-8 y^2 z-3 y z^2\right) \log (x)\\
&+ 2 y (y-x) (x-z) (x+z) (z-y) (x-2 y+z)\Big).
\end{aligned}
\end{equation*}
For $r=4$, we have
\begin{equation*}
\tilde{K}_4(x,y)=\frac{x^2-y^2-(x^2+y^2) \log (\frac{x}{y})}{x y (x-y)^3},
\end{equation*}  
and
\begin{align*}
&\tilde{H}_4(x,y,z)=\frac1{2 x (x - y)^2 y^2 (x - z)^3 (y - z)^2 z}\times\\
& \Big(\left(x^2+y^2\right) (x-z)^3 \left(y^2+z^2\right) \log (y)\\
&\quad\,\, +\log (x) (y-z)^2 \left(x^4 y+x^4 z-6 x^3 y^2-2 x^3 y z-2 x^3 z^2+3 x^2 y^3+x^2 y^2 z+x^2 y z^2\right.\\
&\quad\,\, \qquad \qquad\qquad\qquad\left.+x^2 z^3+2 x y^3 z -4 x y^2 z^2+3 y^3 z^2+y^2 z^3\right)\\
&\quad\,\, -\log (z) (x-y)^2\left(x^3 y^2+x^3 z^2+3 x^2 y^3-4 x^2 y^2 z+x^2 y z^2-2 x^2 z^3+2 x y^3 z+x y^2 z^2\right.\\
&\quad\,\, \qquad \qquad\qquad\qquad\left.-2 x y z^3+x z^4+3 y^3 z^2-6 y^2 z^3+y z^4\right)\\
&\quad\,\, -2 (x-y) (x-z) (y-z) \left(x^3 z+x^2 y^2-2 x^2 z^2-2 x y^3+2 x y^2 z+x z^3-2 y^3 z+y^2 z^2\right)\Big).
\end{align*}

In \cite[section 4.1]{arXiv1808.02977}, the scalar curvature density of twisted product functional metric on noncommutative three torus for $f(t)=e^{2t}$ and $r=2$ is found. 
This result can be recovered from our formulas given in Theorem \ref{scalartwisted} by setting $t_0=s_0$, $t_1=s_0+s_1/2$ and $t_2=s_0+s_1/2+s_2/2$.

\subsection{Dimension two and Gauss-Bonnet theorem}\label{GBdim2sec}
The following result which is proved in  \cite{arXiv1811.04004} shows that the total scalar curvature of a noncommutative two torus equipped with a functional metric $g$ is independent of $g$. This result extends the Gauss-Bonnet theorem of  \cite{MR2907006, MR2956317} earlier proved for  conformally flat metrics. 
This is done by a careful  study of the functions $F_S^{ij}$ in dimension two, where it is 
 shown  that these functions vanish  for the noncommutative two torus equipped with a functional metric $g$.
This means that the total scalar curvature of  $(\nctorus,g)$ is independent of $g$. 
Similar to the case of conformally flat metrics, we call this result the Gauss-Bonnet theorem for functional metrics.
\begin{theorem}[Gauss-Bonnet Theorem  \cite{arXiv1811.04004}]\label{GBindim2} 
The total scalar curvature $\varphi(R)$ of the noncommutative two tori equipped with a functional  metric  vanishes, hence it is independent of the metric.
\end{theorem}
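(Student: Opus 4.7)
The plan is to reduce $\varphi(R)$ to an expression of the form $\varphi(F_S^{ij}(h_{(0)},h_{(1)})(\delta_i(h) \cdot \delta_j(h)))$ and to show that the two-variable functions $F_S^{ij}$ vanish identically when $d=2$. First, I would apply Proposition \ref{laplacianforg} to exhibit the Laplacian $\triangle_{0,g}$ of the functional metric $g_{ij}(h)$ on $\nctorus[2]$ as a Laplace type $h$-differential operator with explicit symbol data $P_2^{jk}, P_1^{jk}, P_{0,1}^{jk}, P_{0,2}^{jk}$, and feed these into the formulas of Proposition \ref{b2proposition}. This produces the scalar curvature density in contraction form
\begin{equation*}
R = \sqrt{|g(h)|}\bigl(F_1^{ij}(h_{(0)},h_{(1)})(\delta_i\delta_j(h)) + F_2^{ij}(h_{(0)},h_{(1)},h_{(2)})(\delta_i(h) \cdot \delta_j(h))\bigr),
\end{equation*}
with $F_1^{ij}$ and $F_2^{ij}$ explicit in $g_{ij}$, $|g|^{\pm 1/4}$, and the dimension-two $T$-functions of Lemma \ref{integralformofTna}.

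Second, I would apply $\varphi$ and use the noncommutative integration-by-parts identity $\varphi \circ \delta_j = 0$, which gives $\varphi(a\,\delta_i\delta_j(h)) = -\varphi(\delta_j(a)\,\delta_i(h))$. Applied with $a$ of the form $\sqrt{|g|}\,F_1^{ij}(h_{(0)},h_{(1)})(\cdot)$, Theorem \ref{derivationoffunctionofh} expands $\delta_j(a)$ as a sum of contraction-form terms with $\delta_j(h)$ inserted. After combining with the $F_2^{ij}$-contribution and using the cyclic property of $\varphi$ to collapse three-variable contraction forms into two-variable ones (by cycling one of the tensor slots under the trace), the total curvature reshapes into
\begin{equation*}
\varphi(R) = \varphi\bigl(F_S^{ij}(h_{(0)},h_{(1)})(\delta_i(h) \cdot \delta_j(h))\bigr).
\end{equation*}
The theorem is then equivalent to the identity $F_S^{ij}(t_0,t_1) \equiv 0$ for $d=2$.

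The main obstacle is verifying this vanishing, since the expressions for $B_{2,1}^{ij}$ and $B_{2,2}^{ij}$ contain numerous terms involving divided differences of $g_{ij}$ and $|g|^{\pm 1/4}$. I would attack this by exploiting two dimension-two simplifications: first, the algebraic identity $|g|\,g^{ij} = \operatorname{cof}(g)^{ij}$, which is polynomial (of degree one) in the entries of $g$, allowing one to systematically rewrite nested contractions of the form $g^{ij}g^{kl}\,\delta_i(h)\cdot\delta_k(h)\cdots$ as $|g|^{-1}$-factors times polynomial expressions; and second, the Hermite--Genocchi formula \eqref{HermiteGenocchi}, which expresses each $T$-function as a simplex integral and hence as a divided difference of an elementary function of $\|\xi\|_t^2$. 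Iterated use of the Leibniz rule $[t_0,t_1;fg] = f(t_0)[t_0,t_1;g] + [t_0,t_1;f]g(t_1)$ for divided differences should then force the terms in $F_S^{ij}$ to collapse pairwise to zero. As a consistency check, the computation must specialize for $g_{ij}(t) = f(t)^{-1}g_{ij}$ to the known conformally flat Gauss--Bonnet theorem of \cite{MR2907006,MR2956317} recalled in Section \ref{GBSCnc2section}.
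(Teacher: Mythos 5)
Your approach matches the paper's: Section \ref{GBdim2sec} states that the theorem is proved precisely by reducing $\varphi(R)$ to contraction forms governed by two-variable functions $F_S^{ij}$ and showing, by a careful study in dimension two, that these functions vanish identically --- which is exactly your reduction via integration by parts $\varphi\circ\delta_j=0$, the divided-difference derivation formula of Theorem \ref{derivationoffunctionofh}, and cyclicity of $\varphi$. Your suggested tools for establishing the final vanishing (the cofactor identity $|g|\,g^{ij}=\operatorname{cof}(g)^{ij}$ in dimension two, the Hermite--Genocchi representation of the $T$-functions, and the Leibniz rule for divided differences) are fully consistent with the machinery the paper develops, though the survey itself does not spell those computational details out.
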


Let us  summarize the results obtained in  \cite{arXiv1811.04004} where   a new family of metrics, called functional metrics, on noncommutative tori is introduced and  their spectral geometry is studied. 
A class of  Laplace type operators for these metrics is introduced and their   spectral invariants are obtained from the heat trace asymptotics. 
A formula  for the second density of  the heat trace is also obtained.
In particular, the scalar curvature density and the total scalar curvature of  functional metrics are explicitly computed    in all dimensions for   certain classes of metrics including  conformally flat metrics and twisted product of flat metrics.
Finally a Gauss-Bonnet type theorem for a noncommutative two torus equipped  with a general functional metric is proved.

\section{{\bf Matrix Gauss-Bonnet}}

As we emphasized in the previous section, it is quite important to go beyond  conformally flat metrics, go beyond noncommutative tori,  and  beyond dimension four. For example, one naturally needs  to consider  noncommutative algebras that would represent higher genus noncommutative curves and other noncommutative manifolds. As far as  noncommutative  higher genus curves go, there is as yet no satisfactory theory, even at a topological level, and much less at a metric or spectral level.  This is a largely uninvestigated area and we expect new methods and ideas will be needed to make further progress with these objects.  

A reasonable class of noncommutative algebras are algebras of matrix valued functions on a smooth manifold. Now topologically they are Morita equivalent to commutative algebras and not so interesting, but their spectral geometry poses interesting questions. A first step was taken in \cite{MR3815127} to address this question. In this paper  a new class of noncommutative algebras  that are  amenable   to spectral   analysis,  namely algebras of matrix valued  functions on a Riemann surface of arbitrary genus, are studied.  The Dirac operator is 
conformally rescaled by a  diagonalizable matrix and  a Gauss-Bonnet theorem is proved for them. This is the matrix Gauss-Bonnet  in the title of this section. When the surface has genus one, scalar curvature is explicitly computed. We shall  briefly sketch these results in this section.

Let $M$ be a two dimensional closed spin Riemannian manifold and consider the algebra  of smooth matrix valued functions on $M$:
$$\mathcal{A} = C^\infty(M,  M_n(\mathbb{C})).$$
The Dirac operator of $M$,  $D: L^2(S) \to L^2(S)$    acts  on the Hilbert space  
of spinors.   The  algebra $\mathcal{A}$ 
acts diagonally on the Hilbert space $\mathcal{H} = L^2(S) \otimes \mathbb{C}^n$ and we have an spectral triple. 

  Let $h \in \mathcal{A}$ be a positive
element. We use $h$ to perturb the spectral triple of $\mathcal{A}$
in the following way. Consider the operator $D_h = h D h$ as 
a conformally rescaled Dirac operator. Now  $D_h$ does not
have bounded commutators with the elements of $\mathcal{A}$, but we still have a twisted spectral triple. This is similar to the situation with curved noncommutative tori.   The question is if    the Gauss-Bonnet theorem holds  for $D_h$. One is also interested in knowing if the scalar curvature can be computed explicitly.  The answer is positive as we sketch now.

To simplify the matters a bit,   it is  assumed  that the Weyl  conformal factor $h$ is diagonalizable, that is  $ h = U H U^*, $ where $U$ is unitary and $H$ is diagonal. 
Then we have 
$$ h D h = U H U^* D U H U^* = U \left( H \left( D + U^* [D,U] \right) H \right) U^*,$$
which shows that  the spectrum of $D_h$  and $D_{A,H} = H (D + A) H $, are equal. Here  
 $A = U^* [D,U]$ is a matrix valued one-form on $M$ and $D+A$ represents a fluctuation of the geometry 
 represented by $D$.  It is shown in \cite{MR3815127} 
 that the Gauss-Bonnet theorem  
holds for the family of conformally rescaled Dirac operators with 
possible fluctuations $D_{A,H} = H (D + A) H $  as above. Local expressions for the 
scalar curvature are computed  as well. The results demonstrate that unlike the case of 
higher residues in \cite{MR3623648}, the expressions for the value of the 
$\zeta$ function at $0$ are complicated also in the matrix 
case.

Let us consider first the canonical spectral triple for a flat torus 
$M= \mathbb{R}^2/\mathbb{Z}^2.$  Its spin structure is defined
by the Pauli spin matrices $\sigma^1, \sigma^2$ and its  Dirac operator is 
$$ D = \sigma^1 \delta_1 + \sigma^2 \delta_2. $$
Here $\delta_1, \delta_2$ are the partial derivatives $\frac{1}{i}\frac{\partial}{\partial x}$  and  $\frac{1}{i}\frac{\partial}{\partial y}.$ To compute the resolvent kernel we work in the algebra of matrix valued pseudodifferential operators obtained by tensoring the algebra  ${\bf \Psi}$   of pseudodifferential operators  on a smooth manifold $M$ by the algebra of $n$ by $n$ matrices. \\

{\bf The resolvent:}  The symbol of the Bochner Laplacian $D_{A,H}^2 = H (D+A) H^2 (D+A) H $ is given by  $ \sigma_{D_{A,H}^2} = a_2 + a_1 + a_0, $ where
\begin{equation*}
\begin{aligned}
a_2 =& H^4 \xi^2, \\
a_1 =& i \epsilon_{ij} \sigma^3 2 H^3 \delta_i(H) \xi^j 
      + 4 H^3 \delta_i(H) \xi^i 
      - i \epsilon_{ij} \sigma^3 H^3 A_i H \xi^j \\ 
     & + H^3 A_i H \xi^i 
      + i \epsilon_{ij} \sigma^3 H A_i H^3 \xi^j 
      + H A_i H^3 \xi^i, \\ 
a_0 =&   H^4 (\Delta H)
       + H^3 A_j \delta_i(H)   
       - H^3 i \sigma^3 \epsilon_{ij} A_i \delta_j(H)
       + H^3 \delta_i(A_i) H
       + i \sigma^3 H^3 \epsilon_{ij} \delta_j(A_i) H \\
     & + 2 H^2 \delta_i(H) \delta_i(H)  
       + 2 H^2 \delta_i(H) A_i H
       + 2 H A_i H^2 \delta_i(H) 
       + 2 i \sigma^3 H^2 \epsilon_{ij} \delta_i(H) A_j H \\
     & + i \sigma^3 \epsilon_{ij} H A_i H^2 \delta_i(H)  
       + i \sigma^3 \epsilon_{ij} H A_i H^2 A_j 
       + H A_i H^2 A_i H.         
\end{aligned}
\label{bsym}
\end{equation*} 
The first three terms of the symbols of $(D_{H,a})^{-2} = b_0 + b_1 +b_2 + \cdots$
are:

\begin{equation*}
\begin{aligned}
&b_0  = (a_2+1)^{-1},\\
&b_1  = -  \left( b_0 a_1 +  \partial_k(b_0) \delta_k(a_2) \right) b_0, \\
&b_2  = - \left(  b_1 a_1 + b_0 a_0 +  \partial_k(b_0) \delta_k(a_1) 
+  \partial_k(b_1) \delta_k(a_2) + \frac{1}{2} \partial_k \partial_j (b_0) \delta_k \delta_j (a_2) \right) b_0.
\end{aligned}
\end{equation*}

\subsection{Matrix curvature} Let us call 
a matrix-valued function $R: \mathbb{T}^2 \to M_n(\mathbb{C})$ the scalar curvature if  for any matrix valued function  $f \in \mathcal{A}$   we have:
$$ \zeta_{f,D}(0) = \int_{\mathbb{T}^2} \hbox{Tr} \, f R, $$
where the localized  spectral zeta function is defined by
$$ \zeta_{f,D} (s) = \hbox{Tr} \; f |D|^{-s}.$$
 It is  found that four terms contribute to the scalar curvature $R$ \cite{MR3815127}:\\

\noindent {\bf Terms not depending on $A$.}  They  depend only on $H$ and derivatives of $H$, and since  they
commute with each other, they can be  computed  as in the classical case.

\begin{equation}
\begin{aligned}
b_2(H, \xi) = & 96\, b_0^5 \delta_i(H)\delta_i(H) H^{14} (\xi^2)^3 
                        - 136\,  b_0^4 \delta_i(H)\delta_i(H) H^{10} (\xi^2)^2 \\
& + 46\, b_0^3 \delta_i(H)\delta_i(H) H^{6} (\xi^2)
- 2\, b_0^2 \delta_i(H)\delta_i(H) H^{2}  \\
& - 8 \, b_0^4 \Delta(H) H^{11} (\xi^2)^2
  + 8 \, b_0^3 \Delta(H) H^{7} (\xi^2) 
  - \, b_0^2 \Delta(H) H^3,
\end{aligned}
\end{equation} 

To integrate  over the $\xi$ space, we can use  the formula 
$$ \int_0^\infty \frac{r^{2k+1} dr}{  (1+a^2 r^2)^{2k+3} } = \frac{1}{2(k+1)a^{2(k+1)}}, $$ 
and obtain

\begin{equation}
R(H) = - \pi \left( \frac{1}{3} H^{-2} \delta_i(H) \delta_i(H) + \frac{1}{3} H^{-1} \Delta(H) \right).
\label{RH}
\end{equation}
To continue,  we use the rearrangement lemma of 
\cite{MR3626561}.  Let 
$$ \Delta(x) = H^{-4} x H^4.$$

\noindent {\bf Terms linear in $A$.}  We have:
$$
\begin{aligned}
b_2^{(1)}(H,A) = & 
    - b_0 h A_i  b_0 \delta_i(H)  H^2 
+ 5 b_0 h A_i b_0^2 \delta_i(H) H^6 \xi^2 
- 4 b_0 h A_i b_0^3 \delta_i(H) H^{10} (\xi^2)^2 \\
&   - b_0 H^3 A_i b_0 \delta_i(H)
+ 7 b_0 H^3 A_i b_0^2 \delta_i(H) H^4 \xi^2 
- 4 b_0 H^3 A_i b_0^3 \delta_i(H) H^8 (\xi^2)^2 \\
& + 3 b_0^2 H^5 A_i b_0 \delta_i(H)  H^2 \xi^2
 - 4 b_0^2 H^5 A_i b_0^2 \delta_i(H)  H^6 (\xi^2)^2 \\
& + b_0^2 H^7 A_i b_0 \delta_i(H) \xi^2 
- 4  b_0^2 H^7 A_i b_0^2 \delta_i(H) H^4 (\xi^2)^2 \\ 
\end{aligned}
$$
and
$$
\begin{aligned}
b_2^{(1)}(H,A) &= - 2 b_0 \delta_i(H)  H^2 A_i b_0 H 
    + 2 b_0^2 \delta_i(H)  H^4 A_i b_0 H^3 \xi^2  \\
& + 6 b_0^2 \delta_i(H)  H^6 A_i b_0 H \xi^2
   -  4 b_0^3 \delta_i(H)  H^8 A_i b_0 H^3 (\xi^2)^2 \\
&-  4 b_0^3 \delta_i(H)  H^{10} A_i b_0 H (\xi^2)^2.
\end{aligned}
$$
Explicit computations give
$$ R^{(1)}(H,A) =  \sum_{i=1,2} 2 \pi H G(\Delta)(A_i) \delta_i(H), $$
where $G$ is the following function:
$$ G(s) = 
{ (1+\sqrt{s})\sqrt{s} \over (s-1)^3 } 
\big( (s+1) \ln (s)  - 2 (s-1) \big), $$
and a second term,
$$ R^{(2)}(H,A) =  \sum_{i=1,2} -2 \pi H^{-2} \delta_i(H) G(\Delta)(A_i) H, $$
 with the same function $G(s)$.
After taking the trace these  terms cancel each other, and we get
$$ \hbox{Tr\ } \left( R^{(1)}(H,A)  + R^{(2)}(H,A)  \right) = 0. $$

\noindent {\bf Terms linear in $\delta_i(A_i)$.}  In this case we have:

$$
\begin{aligned}
b_2(H,\delta_i(A_j)) & =
   - b_0 H^3 \delta_i(A_i) b_0 H
   + b_0^2 H^5 \delta_i(A_i) b_0 H^3 \xi^2 \\
&+ b_0^2 H^7 \delta_i(A_i) b_0 H \xi^2,
\end{aligned}
$$
and  integrating out the   $\xi$ variables, we get 
$ \pi H^{-1} F(\Delta) (\delta_i(A_i)) H,$
where
$$F = - \frac{(1+\sqrt{s}) \sqrt{s}}{(s-1)^2} \ln(s) + \frac{\sqrt{s}+1}{s-1}. $$
Again, it is not difficult to check that $F(1)=0$ and the expression vanishes
after taking  the trace:
$$ \hbox{Tr\ } \left( R(H,\delta_i(A_j)) \right) = 0. $$

\noindent {\bf Quadratic terms in $A_i$.} We have:

$$ 
\begin{aligned}
b_2(H,A^2) &= - b_0 H A_i H^2 A_i b_0 H 
    + b_0 H A_i b_0 H^6 A_i b_0 H \xi^2 \\
& + b_0 H^3 A_i b_0 H^2 A_i b_0 H^3 \xi^2 .
\end{aligned}
$$
Integrating over $\xi$ we obtain:
$$
R(H,A^2) = - \pi H^{-1} Q(\Delta^{(1)},\Delta^{(2)})(A_i \cdot A_i) H
$$
where
$$
Q(s,t) = { \sqrt{s} (\sqrt{t} + s) \over (s-1) )(s-t) } \; \ln s 
- { \sqrt{s} \sqrt{s} \over (s-t) \sqrt{t} } \; \ln t .
$$
To compute the trace, let 
$$ F(s) = Q(s,1) = { (s+1) \ln s + 2 (1-s) \over  (s-1)^2 }, $$
and observe that due to the trace property:
$$ 
\begin{aligned}
\hbox{Tr} \left( H^{-1} F(\Delta)(A_i) A_i H \right) &= 
\hbox{Tr} \left( A_i  F(\Delta)(A_i)  \right) \\
& = \hbox{Tr} \left(  F(\Delta^{-1})(A_i) A_i  \right). 
 \end{aligned}
$$ 
Now since 
$$ 
\begin{aligned}
F(\frac{1}{s})
&= { - (\frac{1}{s}+1) \ln s + 2 (1-\frac{1}{s}) \over  (\frac{1}{s}-1)^2 } \\
&= { - (s+1) \ln s - 2 (1-s) \over (s-1)^2} \\
&= - F(s),
\end{aligned} 
$$
one gets  
$$  \hbox{Tr\ } R(H,A^2) = 0, $$
and so the quadratic term vanishes as well.

\subsection{The Gauss-Bonnet theorem}

The term which does not depend on $A$  is a total derivative term:

$$ \frac{1}{3} \delta_i \left( H^{-1} \delta_i(H) \right). $$
Using Stokes  theorem,  this term is seen to  vanish as well after integration.  Putting it all together, one thus obtains:

\begin{proposition}
For the matrix conformally rescaled Dirac operator on the 
two-dimensional torus,
$D_h = h D h$, where $h$ is a globally diagonalizable positive matrix,
the Gauss-Bonnet theorem holds:
$$ \zeta_{D_h}(0) = \zeta_{D}(0). $$
\end{proposition}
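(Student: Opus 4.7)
The plan is to assemble the preceding computations into a closed argument. Since $h = UHU^*$ with $U$ unitary and $H$ a positive diagonal matrix, $D_h$ is unitarily conjugate to $D_{A,H} = H(D+A)H$ with $A = U^*[D,U]$, so $\zeta_{D_h}(s) = \zeta_{D_{A,H}}(s)$ and it suffices to show $\zeta_{D_{A,H}}(0) = \zeta_D(0)$. By the standard identification of $\zeta(0)$ of a Dirac-type operator with the $a_2$ heat coefficient of its square (corrected by kernel dimensions, which are topological and hence stable under the smooth deformation $(H,A) \rightsquigarrow (\mathrm{id}, 0)$), this reduces to verifying that
\[
\int_{\mathbb{T}^2} \Tr\bigl(R(H, A)\bigr)\, dx \;=\; 0,
\]
where $R$ is the local scalar curvature density already extracted from the parametrix symbol $b_2$ of $D_{A,H}^2$.

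Next, I would split $R$ into four groups according to its $A$-content, exactly as above: (i) the $A$-independent piece $R(H)$; (ii) the terms linear in $A$ with no derivatives of $A$, namely $R^{(1)} + R^{(2)}$; (iii) the term linear in $\delta_i(A_j)$; (iv) the quadratic term $R(H, A^2)$. Group (i) reduces, thanks to pointwise commutativity of the diagonal factors $H$, $H^{-1}$, $\delta_i(H)$, to the total divergence $(\pi/3)\,\delta_i\bigl(H^{-1}\delta_i(H)\bigr)$ identified in the excerpt, whose integral over the closed torus vanishes by Stokes.

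For group (ii), the two contributions $R^{(1)}$ and $R^{(2)}$ share the same modular function $G(\Delta)$, with $\Delta(x) = H^{-4}xH^4$, but they insert $A_i$ and $\delta_i(H)$ on opposite sides of $H$; cyclicity of the matrix trace combined with the trace property applied through the rearrangement lemma shows that $\Tr(R^{(1)} + R^{(2)}) = 0$ pointwise. For group (iii), the density is $\pi H^{-1} F(\Delta)(\delta_i A_i) H$ with $F(1) = 0$, and only the component of modular weight zero survives under $\Tr$, forcing the trace to vanish. For group (iv), the two-variable kernel $Q(s,t)$ reduces under $\Tr$ to $F(s) = Q(s,1) = \bigl((s+1)\log s + 2(1-s)\bigr)/(s-1)^2$; the identity $F(1/s) = -F(s)$ combined with the fact that cyclicity swaps $\Delta$ with $\Delta^{-1}$ under $\Tr$ forces $\Tr\bigl(H^{-1} F(\Delta)(A_i)\, A_i\, H\bigr)$ to equal its own negative, hence to vanish. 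Summing the four vanishing pieces yields $\int_{\mathbb{T}^2} \Tr(R)\, dx = 0$ and completes the proof.

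The main obstacle is not the algebraic manipulation, which is dictated by cyclicity and by the symmetries of the modular kernels $G$, $F$, $Q$ already exhibited, but rather the careful bookkeeping: one must ensure that the rearrangement lemma is applied consistently at the matrix level (where $\Delta$ is a well-defined inner automorphism precisely because $H$ is diagonal), and one must confirm that the passage from the heat expansion to $\zeta_{D_{A,H}}(0)$ correctly accounts for the contribution of $\ker D_{A,H}$, whose dimension should be independent of the positive factor $H$ and of the gauge $A = U^*[D,U]$. The diagonalizability hypothesis on $h$ is exactly what keeps the modular calculus within the reach of the rearrangement lemma in its existing form, and it is the conceptual linchpin of the whole argument.
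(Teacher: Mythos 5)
Your proposal reconstructs the paper's proof essentially line by line: same fourfold grouping of the scalar curvature density by its $A$-content, same vanishing mechanism for each group (Stokes for the $A$-independent total divergence, cyclic cancellation of $R^{(1)}+R^{(2)}$, the value $F(1)=0$ killing the $\delta_i(A_i)$ piece under trace, and the antisymmetry $F(1/s)=-F(s)$ killing the quadratic piece), and the same exploitation of global diagonalizability of $h$ to make the modular calculus manageable. This is the paper's argument, correctly reproduced.

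One small imprecision: you justify matching the kernel contribution to $\zeta(0)$ by appealing to topological stability of $\dim\ker$ under smooth deformation. Kernel dimensions are not in general deformation invariants (that is the whole point of spectral flow). What actually holds here is an exact equality $\dim\ker D_h = \dim\ker D$, because $h$ is invertible and $D_h x = 0 \iff hDhx = 0 \iff hx \in \ker D$, so $\ker D_h = h^{-1}\ker D$; equivalently $\ker D_{A,H} = H^{-1}U^*\ker D$ since $D+A = U^*DU$. The conclusion you draw is correct, but for this more direct reason rather than any homotopy-invariance argument.
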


\subsection{Higher genus  matrix Gauss-Bonnet} Now we look at the general case where 
 $M$ is a closed Riemann surface   with a spin structure and a
 Dirac operator  $D$. 
Consider the operator
$$ D_{H,A} = H(D + A)H, $$
for $H$ a diagonal matrix valued function on $M$ and
$A$ a matrix-valued one-form,  identified here with its  Clifford
image. 

We must now compute  the value of $\zeta_{D_{H,A}^2}(0)$ 
using  methods of pseudodifferential calculus. Let us
denote the symbols of $D^2_H$ as:
$$ D_H^2 = (HDH)^2 = a_2^H + a_1^H + a_0^H, $$
and the symbol  of $D^2$ as 
$$ D^2 = a_2^o + a_1^o +a_0^o. $$
As  in the case of the torus,  the computation is divided 
into the cases of terms not depending on $A$, linear in $A$
and quadratic in $A$.\\

\noindent {\bf Terms independent of $A$. } Since  $H$ is globally diagonalizable,  we can  assume 
it is scalar. Thus we are reduced  to a conformal rescaling
of the classical Dirac operator. 
Since in this case the Gauss-Bonnet theorem holds, it remains
only to see that the contribution to the Gauss-Bonnet term from
the linear and quadratic terms in $A$ vanish.\\

\noindent {\bf Terms linear in $A$.}
Linear terms do arise in $b_2$ from the following terms:
$$ 
\begin{aligned}
&b_0 a_1^H b_0 a_1(A) b_0  
+ \partial_k^\xi(b_0) \partial^x_k(a_2^H) b_0 a_1(A) b_0
+ b_0 a_1(A) b_0 a_1^H b_0 \\
&- b_0 a_0(A) b_0 
- \partial_k^\xi(b_0) \partial^x_k(a_1(A)) b_0 
- \partial_k^\xi(b_0 a_1(A) b_0) \partial^x_k(a_2^H) b_0. 
\end{aligned}
$$  
where  $a_1(A), a_0(A)$  denote terms linear in $A$. 
Now, one  can use  normal coordinates at a given point $x$  of $ M$. The terms without derivatives reduce 
easily to the torus case. The only difficulty arises
from terms with derivatives in $x$, that is, $ \partial^x_k(a_2^H)$.
and $\partial^x_k(a_1(A))$. Since $a_2^H = H^4 g_{ij} \xi^i \xi^j$, and  in normal
coordinates the first derivatives of the metric vanish at the 
point $x$,  we see that the only remaining term would be with the 
derivative of $H^4$, and again this term would be reduced to
the term linear in $A$ from the torus case.

Similar argument works also for the other term, $a_1(A)$, which is
$$ \left( H^3 A_i H + H A_k H^3 \right) \sigma^k \sigma^i \xi_i, $$
and since in  $\partial_k^\xi(b_0) \partial^x_k(a_1(A)) b_0 $
there are no further $\sigma$ matrices,  one  can compute first the 
trace over the Clifford algebra and write  it  as
$$\frac{1}{2} \left( H^3 A_i H + H A_k H^3 \right) g^{ki} \xi_i.$$
Thus, in normal coordinates a around   $x$ the expression is  identical 
to the one for the flat torus. 
Therefore, the integration over $\xi$ would yield the same result, and 
the density of the linear $A$ contribution to the trace of $b_2$ vanishes 
at $x$. Consequently, the contribution to the Gauss-Bonnet term linear 
in $A$ also vanishes.\\

\noindent {\bf Quadratic terms.}
The  quadratic terms in $A$ are 
$$ b_0 a_1(A) b_0 a_1(A) b_0 - b_0 a_0(A^2) b_0.$$
It is easy to see that in normal coordinates we have:
\begin{eqnarray*}
  a_1(A) & = &\left( H^3 \sigma^j \xi_j (\sigma^i A_i) H 
             + \sigma^i H A_i H^3 \sigma^j \xi_j \right),\\
              a_0(A) & = &(\sigma^i H A_i H) (\sigma^k H A_k H). 
\end{eqnarray*}
Using normal coordinates, one  
reduces the $\xi$ -integral to 
the situation already considered for the torus. Hence the 
density of Gauss-Bonnet term  with quadratic contributions 
from $A$ identically vanishes as well. This finishes the proof of Gauss-Bonnet for higher genus matrix valued functions with a general Dirac operator with fluctuation. This result was obtained in  \cite{MR3815127}.

\section{{\bf Curvature of the determinant line bundle}}

 It  would be interesting to know how far  our hard analytic methods like pseudodifferential operators, spectral analysis and heat equation techniques, can be pushed in  the  noncommutative  realm, at least for noncommutative tori and  and toric manifolds. So far we have seen  that these analytic techniques, suitably modified and enhanced,  has been quite successful in dealing with scalar and Ricci curvature. Along this idea, in  \cite{MR3590385}
 the curvature of the determinant line bundle on a family of Dirac operators for a noncommutative two torus
 is computed.  Following Quillen's original construction for Riemann surfaces \cite{MR783704} and using zeta regularized determinant of Laplacians,  the determinant line bundle is endowed with a natural Hermitian metric. By defining  an analogue of Kontsevich-Vishik canonical trace, defined on Connes' algebra of classical pseudodifferential symbols for the noncommutative two torus, the curvature form of the determinant line bundle  is  computed  through the second variation $\delta_w \delta_{\bar{w}} \log \det(\Delta).$ Calculus of symbols and the canonical trace  were effectively used  to bypass    local calculations  involving  Green functions   in \cite{MR783704}
     which is  not applicable in the  noncommutative case.   
  In a sequel paper  \cite{arXiv1504.01174},
the spectral eta function for certain families of Dirac operators on noncommutative 3-torus is  studied  and its regularity at zero is proved. By using variational techniques,  it  is shown  that the eta function $\eta_D(0)$ is a conformal invariant. By studying the Laurent expansion at zero of $\text{TR}(|D|^{-z})$, the conformal invariance of $\zeta'_{|D|}(0)$ for noncommutative 3-torus is proved. Finally, for the coupled Dirac operator, a local formula for the variation $\partial_A \eta_{D+A} (0)$ is derived which is the analogue of the so called induced Chern-Simons term in quantum field theory literature.
 
 In this section we shall recall and comment on results obtained in \cite{MR3590385} on the curvature of the  determinant   line bundle on a noncommutative torus.

\subsection{The determinant line bundle}

Let $ \mathcal{F} = {\rm Fred} (\mathcal{H}_0, \mathcal{H}_1)$  denote the set of Fredholm operators between Hilbert spaces $\mathcal{H}_0$ and $ \mathcal{H}_1$. It is an open subset, in  norm topology,  in the complex Banach space of all bounded linear operators  between  $\mathcal{H}_0$ and $ \mathcal{H}_1$. The index map $ index : \mathcal{F}  \to \mathbb{Z}$ is a homotopy invariant and in fact defines a bijection between connected components of 
$\mathcal{F}$ and  the set of integers $\mathbb{Z}$. 
 It is well known that $\mathcal{F}$ is a classifying space for $K$-theory (Atiyah-Janich): for any compact space $X$ we have a natural ring  isomorphism 
$ K^0 (X) = [ X, \mathcal{F}]$
between the $K$-theory of $X$ and the set of homotopy classes of continuous maps from $X$ to 
$\mathcal{F}.$

In \cite{MR783704} Quillen defines a holomorphic line bundle $\text{DET} \to \mathcal{F}$ over the space of Fredholm operators such that for any $T \in  \mathcal{F}$ 
$${\rm DET}_T = \Lambda^{max}({\rm ker}(T))^* \otimes  \Lambda^{max}({\rm coker}(T)).$$
This is remarkable if we notice that ${\rm ker}(T)$ and ${\rm coker}(T)$ are not vector bundles due to  
discontinuities in their dimensions as $T$ varies within $\mathcal{F}$. 

 It is tempting to think that since $c_1 (\text{DET})$ is the generator of $H^2(\mathcal{F}_0, \mathbb{Z})
  \cong \mathbb{Z}$, $\mathcal{F}_0$  being the index zero operators,  there might exits  a natural Hermitian metric on DET  whose curvature 2-form would be a representative of this   generator. One problem is that the induced metric from   ${\rm ker}(T)$ and ${\rm ker}(T^*)$  on  DET is not even continuous.  
 In   \cite{MR783704} Quillen shows that  for  families  of Cauchy-Riemann operators  on a Riemann surface one can correct the Hilbert space metric by  multiplying it by   zeta regularized determinant and in this way one obtains a smooth Hermitian metric on the induced determinant line bundle. In  \cite{MR3590385} a   similar construction  for the noncommutative two torus is given as we explain later in this section.

\subsection{The canonical trace and noncommutative residue} To carry the calculations, 
 an analogue of the canonical trace of   \cite{MR1373003} for the  noncommutative torus is constructed in \cite{MR3590385}.  First we need to extend  our original algebra of   pseudodifferential operatprs to {\it classical} pseudodifferential operators.

A  smooth map $\sigma:\mathbb{R}^2\to \mathcal{A}_{\theta}$ is called a classical symbol of order $\alpha \in \mathbb{C}$  
if for any $N$ and each $0\leq j\leq N$ there exist $\sigma_{\alpha-j}:\mathbb{R}^2\backslash\{0\}\to\mathcal{A}_{\theta}$ positive homogeneous of degree $\alpha-j$,  and a symbol $\sigma^N\in\mathcal{S}^{\Re(\alpha)-N-1}(\mathcal{A}_{\theta})$, such that
 \begin{equation}
 \label{sigma}
\sigma (\xi)=\sum_{j=0}^{N}\chi(\xi)\sigma_{\alpha-j}(\xi)+\sigma^N(\xi)\quad\xi\in\mathbb{R}^2.
 \end{equation}
 Here  $\chi$ is a smooth cut off function on $\mathbb{R}^2$ which is equal to zero on a small ball around the origin,  and is equal to one outside the unit ball.
  It can be shown that the homogeneous terms in the expansion are uniquely determined by $\sigma$. 
 We denote the set of classical symbols of order $\alpha$ by $\mathcal{S}^{\alpha}_{cl}(\mathcal{A}_{\theta})$ and the associated classical pseudodifferential operators by $\Psi_{cl}^{\alpha}(\mathcal{A}_{\theta})$.

 The space of classical symbols $\mathcal{S}_{cl}(\mathcal{A}_{\theta})$ is equipped with a Fr\'{e}chet topology induced by the semi-norms
\begin{equation}\label{frechet}
p_{\alpha,\beta}(\sigma)=\sup_{\xi\in\mathbb{R}^2}(1+|\xi|)^{-m+|\beta|}||\delta^{\alpha}\partial^{\beta}\sigma(\xi)||.
\end{equation}

The analogue of the Wodzicki residue for classical pseudodifferential operators on the noncommutative torus is defined in \cite{MR2831659}.
 \begin{definition}
 The Wodzicki residue of a classical pseudodifferential operator $P_{\sigma}$ is defined as
 $$\Res(P_{\sigma})=\varphi_0\left(\res (P_\sigma)\right),$$
where $\res(P_\sigma):=\int_{|\xi|=1}\sigma_{-2}(\xi)d\xi$. 
 \end{definition}
It is evident from its  definition that Wodzicki residue vanishes on differential operators and on non-integer order classical pseudodifferential operators.
 
 To  define the analogue of  the canonical trace   on non-integer order pseudodifferential operators on the noncommutative torus,   one needs   the existence of the so called cut-off integral for classical symbols.
 
\begin{proposition}\label{expansioninR}
 Let $\sigma\in\mathcal{S}_{cl}^{\alpha}(\mathcal{A}_{\theta})$ and $B(R)$ be a disk of radius $R$ around the origin. One has the following asymptotic expansion as $R\rightarrow\infty$
 $$\int_{B(R)}\sigma(\xi)d\xi \,\sim \sum_{j=0,\alpha-j+2\neq0}^{\infty}\alpha_j(\sigma)R^{\alpha-j+2}+\beta(\sigma)\log R+c(\sigma),$$
 where $\beta(\sigma)=\int_{|\xi|=1}\sigma_{-2}(\xi)d\xi$
 and the constant term in the expansion, $c(\sigma)$, is given by 
\begin{equation}\label{cutoffinexpansion}
\int_{\mathbb{R}^n}\sigma^N+\sum_{j=0}^{N}\int_{B(1)}\chi(\xi)\sigma_{\alpha-j}(\xi)d\xi-\sum_{j=0,\alpha-j+2\neq0}^N \frac{1}{\alpha-j+2}\int_{|\xi|=1}\sigma_{\alpha-j}(\omega)d\omega.
\end{equation}
  \end{proposition}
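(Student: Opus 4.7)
The plan is to substitute the classical symbol decomposition \eqref{sigma} into $\int_{B(R)}\sigma(\xi)\,d\xi$, analyze each piece separately, and then collect the $R\to\infty$ contributions. Fix $N$ so large that $\Re(\alpha)-N-1 < -2$, so that the remainder $\sigma^N \in \mathcal{S}^{\Re(\alpha)-N-1}(\mathcal{A}_\theta)$ is absolutely (Bochner-)integrable on $\mathbb{R}^2$. Then
\[
\int_{B(R)}\sigma^N(\xi)\,d\xi \;=\; \int_{\mathbb{R}^2}\sigma^N(\xi)\,d\xi \;+\; O(R^{\Re(\alpha)-N})
\]
as $R\to\infty$, contributing only to the constant term up to a negligible error that will be absorbed into the higher-$j$ contributions once we let $N\to\infty$ in the asymptotic sense.

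Next, for each homogeneous piece I would split $\int_{B(R)}\chi(\xi)\sigma_{\alpha-j}(\xi)\,d\xi$ as $\int_{B(1)} + \int_{B(R)\setminus B(1)}$. The first integral is independent of $R$ and contributes to $c(\sigma)$. On the annulus $1 \le |\xi| \le R$, the cutoff $\chi$ equals $1$, so passing to polar coordinates $\xi = r\omega$ with $r\in[1,R]$, $\omega\in S^1$, and using positive homogeneity $\sigma_{\alpha-j}(r\omega) = r^{\alpha-j}\sigma_{\alpha-j}(\omega)$, I obtain
\[
\int_{B(R)\setminus B(1)} \sigma_{\alpha-j}(\xi)\,d\xi \;=\; \Big(\int_{|\omega|=1}\sigma_{\alpha-j}(\omega)\,d\omega\Big)\int_1^R r^{\alpha-j+1}\,dr.
\]
When $\alpha-j+2 \ne 0$ the inner integral equals $(R^{\alpha-j+2}-1)/(\alpha-j+2)$, producing both a term $\alpha_j(\sigma)R^{\alpha-j+2}$ and a constant contribution $-\tfrac{1}{\alpha-j+2}\int_{|\omega|=1}\sigma_{\alpha-j}(\omega)\,d\omega$. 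When $\alpha-j+2 = 0$ (which can happen for at most one $j$, given integer-spaced orders), the inner integral equals $\log R$, yielding the coefficient $\beta(\sigma)=\int_{|\xi|=1}\sigma_{-2}(\xi)\,d\xi$. Assembling these pieces gives exactly the claimed expansion with the explicit constant term \eqref{cutoffinexpansion}.

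The one subtle point, and in my view the main thing to verify carefully, is that the formula for $c(\sigma)$ in \eqref{cutoffinexpansion} is independent of the truncation parameter $N$ used in \eqref{sigma}. Passing from $N$ to $N+1$ replaces $\sigma^N$ by $\sigma^{N+1} + \chi\,\sigma_{\alpha-N-1}$; the contribution of the additional homogeneous term $\chi\,\sigma_{\alpha-N-1}$ to $c(\sigma)$ via its own splitting into $\int_{B(1)}$ and the polar-coordinate annulus integral exactly matches (with opposite sign) the change in $\int_{\mathbb{R}^2}\sigma^N$ when one further subtracts $\chi\,\sigma_{\alpha-N-1}$. Checking this cancellation amounts to the identity
\[
\int_{\mathbb{R}^2}\chi(\xi)\sigma_{\alpha-N-1}(\xi)\,d\xi \;=\; \int_{B(1)}\chi\,\sigma_{\alpha-N-1} \;-\; \frac{1}{\alpha-N+1}\int_{|\omega|=1}\sigma_{\alpha-N-1}(\omega)\,d\omega
\]
whenever $\alpha-N+1 < 0$, which in turn follows from the same polar-coordinate computation performed on the complement of $B(1)$. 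Since $\varphi_0$ and the various integrals commute with Bochner integration into $\mathcal{A}_\theta$, the noncommutativity of the target algebra plays no role beyond requiring one to keep the $\mathcal{A}_\theta$-valued integrands on the appropriate side throughout; this is the only place where the noncommutative setting differs from the classical Kontsevich--Vishik cut-off construction, and it causes no genuine difficulty.
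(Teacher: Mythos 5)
Your proof is correct and reproduces essentially the argument of \cite{MR3590385}, which adapts the Kontsevich--Vishik/Paycha--Scott cut-off integral construction to $\mathcal{A}_\theta$-valued symbols: decompose $\sigma$ via \eqref{sigma}, pass to polar coordinates on $B(R)\setminus B(1)$ where $\chi\equiv 1$, extract the $R^{\alpha-j+2}$ and $\log R$ contributions from $\int_1^R r^{\alpha-j+1}\,dr$, collect the $R$-independent parts into $c(\sigma)$, and verify $N$-independence through the identity you state (which, as in the classical case, is where the formula for $c(\sigma)$ earns its well-definedness). The one small slip is the remainder estimate: since $\|\sigma^N(\xi)\|\leq C(1+|\xi|)^{\Re(\alpha)-N-1}$ and the planar area element contributes a factor of $r$, the tail $\int_{|\xi|>R}\|\sigma^N\|\,d\xi$ is $O(R^{\Re(\alpha)-N+1})$ rather than $O(R^{\Re(\alpha)-N})$; this is still of strictly lower order than every retained power $R^{\alpha-j+2}$ for $0\le j\le N$, so the asymptotic expansion is unaffected.
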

 
\begin{definition} 
 The cut-off integral of a symbol $\sigma\in\mathcal{S}_{cl}^{\alpha}(\mathcal{A}_{\theta})$ is defined to be the constant term in the above asymptotic expansion, and we denote it by $\Int \sigma(\xi)d\xi$.
\end{definition}
 The cut-off integral of a symbol is independent of  the choice of $N$. It is also independent of the choice of the cut-off function 
  $\chi$.

 \begin{definition}
 The canonical trace of a classical pseudodifferential operator $P\in\Psi^{\alpha}_{cl}(\mathcal{A}_{\theta})$ of non-integral order $\alpha$ is defined as 
 \begin{equation*}
 \TR(P):=\varphi_0\left(\Int \sigma_P(\xi)d\xi\right).
 \end{equation*}
 \end{definition}
Note that any pseudodifferential operator $P$  of order less that  $-2$, is a trace-class operator on $\mathcal{H}_0$ and its trace is given by
 $$\Tr(P)=\varphi_0\left(\int_{\mathbb{R}^2}\sigma_{P}(\xi)d\xi\right).$$
 On the other hand, for such operators the symbol is integrable and we have
\begin{equation}\label{TRTr}
\Int\sigma_P(\xi)d\xi=\int_{\mathbb{R}^2}\sigma_P(\xi)d\xi.
\end{equation}
 Therefore, the $\TR$-functional and operator trace coincide on classical pseudodifferential operators of order less than $-2$. 
 
 The canonical trace  $\TR$  is an analytic continuation of the operator trace and using this fact  one can prove that it is actually a trace.

 \begin{proposition}\label{laurentofholo}
 Given a holomorphic family $\sigma(z)\in\mathcal{S}_{cl}^{\alpha(z)}(\mathcal{A}_{\theta})$, $z\in W \subset\mathbb{C}$, the map 
 $$z\mapsto \Int\sigma(z)(\xi)d\xi,$$
 is meromorphic with at most simple poles.
 Its  residues   are given by
 $$\Res_{z=z_0} \Int\sigma(z)(\xi)d\xi=-\frac{1}{\alpha'(z_0)}\int_{|\xi|=1}\sigma(z_0)_{-2}d\xi.$$
 \end{proposition}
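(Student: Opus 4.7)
The plan is to read everything off from the explicit decomposition supplied by Proposition~\ref{expansioninR}. Fix a relatively compact open set $W_0 \Subset W$ and choose $N$ so large that $\Re(\alpha(z)) - N - 1 < -2$ for every $z \in W_0$. Then for such $z$ the cut-off integral admits the closed formula
\begin{equation*}
\Int \sigma(z)(\xi)\, d\xi
= \int_{\mathbb{R}^2} \sigma^N(z)(\xi)\, d\xi
+ \sum_{j=0}^{N} \int_{B(1)} \chi(\xi)\, \sigma(z)_{\alpha(z)-j}(\xi)\, d\xi
- \sum_{\substack{0\le j\le N,\\ \alpha(z)-j+2 \neq 0}} \frac{1}{\alpha(z)-j+2} \int_{|\omega|=1} \sigma(z)_{\alpha(z)-j}(\omega)\, d\omega,
\end{equation*}
and the task reduces to tracking the $z$-dependence of each of the three families of summands.

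Since $z \mapsto \sigma(z)$ is holomorphic into the Fr\'echet space of classical symbols, the remainder $\sigma^N(z)$ inherits holomorphy as a family in $\mathcal{S}^{\Re(\alpha(z))-N-1}(\mathcal{A}_\theta)$ with uniform seminorm control on $W_0$. By the choice of $N$ and the bounds \eqref{frechet}, the $\mathcal{A}_\theta$-norm of the integrand is dominated on compact subsets of $W_0$ by an integrable majorant, and differentiation under the integral then produces an $\mathcal{A}_\theta$-valued holomorphic function of $z$. The same reasoning, applied on the bounded region $B(1)$, handles the second family: each homogeneous component $\sigma(z)_{\alpha(z)-j}$ depends holomorphically on $z$ (it is extracted from the expansion \eqref{sigma} by the standard recursive procedure), and multiplication by $\chi$ removes the singularity at the origin, so each integral is manifestly holomorphic.

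The only possible poles therefore arise from the third family. Each numerator $z \mapsto \int_{|\omega|=1} \sigma(z)_{\alpha(z)-j}(\omega)\, d\omega$ is again holomorphic by the same argument (integration of a holomorphic family of smooth sections over a compact sphere), while the denominator $\alpha(z)-j+2$ vanishes at each point $z_0$ with $\alpha(z_0)=j-2$. Since $\alpha'(z_0) \neq 0$ (the case implicit in the residue formula), the zero is simple and produces at most a simple pole. At a fixed $z_0$ only the single index $j_0 = \alpha(z_0)+2$ contributes, and its residue is
\begin{equation*}
\Res_{z=z_0}\left(-\frac{1}{\alpha(z)-j_0+2}\int_{|\omega|=1}\sigma(z)_{\alpha(z)-j_0}(\omega)\, d\omega\right)
= -\frac{1}{\alpha'(z_0)}\int_{|\omega|=1}\sigma(z_0)_{-2}(\omega)\, d\omega,
\end{equation*}
where $\alpha(z_0)-j_0 = -2$ has been used. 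This is precisely the asserted formula, and meromorphy with at most simple poles follows from the decomposition.

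The main obstacle will be the careful justification of holomorphy in the Fr\'echet topology of classical symbols: one must show that $z \mapsto \sigma^N(z)$ and $z \mapsto \sigma(z)_{\alpha(z)-j}$ are genuinely holomorphic (and not merely smooth) with seminorm estimates uniform on compact subsets of $W_0$, so that differentiation in $z$ commutes with integration in $\xi$. Once these technical points are secured, the residue computation is a direct bookkeeping over the three explicit summands, and no new analytic input beyond Proposition~\ref{expansioninR} is required.
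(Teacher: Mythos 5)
The paper does not actually prove this proposition; it is stated immediately after Proposition~\ref{expansioninR}, and (as the remark preceding Proposition~\ref{Laurentat0} makes explicit) the intended argument is the standard Kontsevich--Vishik/Paycha--Scott Laurent expansion of cut-off integrals of holomorphic families. Your proposal carries out exactly that argument, reading the poles and residues off formula \eqref{cutoffinexpansion} term by term, and it is correct; the one technical point you defer --- holomorphy, uniformly on compacts, of $z\mapsto\sigma(z)_{\alpha(z)-j}$ and of the remainder $z\mapsto\sigma^N(z)$ in the Fr\'echet topology \eqref{frechet} --- is precisely what is built into the definition of a holomorphic family of classical symbols, so nothing further is needed.
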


Using the above result one can show that 
 if  $A\in\Psi ^{\alpha}_{cl}(\mathcal{A}_{\theta})$  is  of order $\alpha\in\mathbb{Z}$ and  $Q$  is   a positive elliptic classical pseudodifferential operator of positive order $q$, then 
 $$\Res_{z=0}\TR(AQ^{-z})= \frac{1}{q}\Res(A).$$
 Using this and the uniqueness of analytic continuation one can prove the trace property of $\TR$. That is 
 $\TR(AB)=\TR(BA)$ for any $A,B\in\Psi_{cl}(\mathcal{A}_{\theta})$, provided that $\ord(A)+\ord(B)\notin\mathbb{Z}$.

 \subsection{Log-polyhomogeneous symbols}  
  In general, $z$-derivatives of a classical holomorphic family of symbols is not classical anymore and therefore one needs to  introduce log-polyhomogeneous symbols which include the $z$-derivatives of the symbols of the holomorphic family $\sigma(AQ^{-z})$. 

\begin{definition}
A symbol $\sigma$ is called a log-polyhomogeneous symbol if it has the following form
\begin{equation}\label{logpolysym}
\sigma(\xi) \sim \sum_{j\geq 0}\sum_{l=0}^\infty\sigma_{\alpha-j,l}(\xi)\log^l|\xi|\quad |\xi|>0,
\end{equation}
with $ \sigma_{\alpha-j,l}$ positively homogeneous in $\xi$ of degree $\alpha - j$.
\end{definition}

 A  prototypical  example  of an operator with such a symbol is $\log Q$ where $Q\in\Psi^q_{cl}(\mathcal{A}_{\theta})$ is a positive elliptic pseudodifferential operator of order $q>0$.
 The logarithm of $Q$ can be  defined by
 $$\log Q
 =Q\left.\frac{d}{dz}\right|_{z=0}Q^{z-1}
 =Q\left.\frac{d}{dz}\right|_{z=0}\frac{i}{2\pi}\int_C\lambda^{z-1}(Q-\lambda)^{-1}d\lambda.$$

  For an operator  $A$ with log-polyhomogeneous symbol as \eqref{logpolysym} we define
 $$\res(A)=\int_{|\xi|=1}\sigma_{-2,0}(\xi)d\xi.$$

The following result can be proved along the lines of its classical counterpart in  \cite{MR2322493}.
 
 \begin{proposition}\label{Laurentat0}
Let $A\in\Psi_{cl}^\alpha(\mathcal{A}_{\theta})$ and $Q$ be a positive , in general an admissible, elliptic pseudodifferential operator of positive order $q$. If $\alpha\in P$ then $0$ is a possible simple pole for the function $z\mapsto \TR(AQ^{-z})$ with the following Laurent expansion around zero. 
\begin{align*}
\TR(AQ^{-z})&=\frac{1}{q}\Res(A)\frac{1}{z}\\
 &+\varphi_0\left(\Int\sigma(A)(\xi) d\xi- \frac{1}{q}\res(A\log Q)\right)-\Tr(A\Pi_Q)\\
& +\sum_{k=1}^K(-1)^k\frac{(z)^k}{k!} \\
&\times \left(\varphi_0\left( \Int\sigma(A(\log Q)^k)(\xi)d\xi-\frac{1}{q(k+1)}\res(A(\log Q)^{k+1})\right)-\Tr(A(\log Q)^k \Pi_Q)\right)\\
&+o(z^{K}).
\end{align*}
Where $\Pi_Q$ is the projection on the kernel of $Q$.
\end{proposition}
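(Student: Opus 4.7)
The strategy is to apply Proposition~\ref{laurentofholo} to the holomorphic family $\sigma(z) = \sigma(AQ^{-z})$ and then Taylor expand the regular part around $z=0$. Since $Q^{-z}$ has order $-qz$, the family $AQ^{-z}$ consists of classical symbols of order $\alpha(z) = \alpha - qz$ with $\alpha'(0) = -q$. Proposition~\ref{laurentofholo} then immediately gives that the pole at $z=0$ is at worst simple, and the residue equals
\[
\mathrm{Res}_{z=0}\TR(AQ^{-z}) = -\frac{1}{-q}\,\varphi_0\!\left(\int_{|\xi|=1}\sigma(A)_{-2}\,d\xi\right) = \frac{1}{q}\Res(A),
\]
which yields the leading Laurent coefficient. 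Because $Q$ may have a non-trivial kernel, I would first replace $Q^{-z}$ by its restriction to $(\ker Q)^\perp$ and separate off the finite-rank contribution $\Tr(A\Pi_Q)$ (and analogously $\Tr(A(\log Q)^k \Pi_Q)$ at higher orders), which accounts for the trace terms appearing in the statement.

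Next, to identify the regular part, I would analyze the Laurent expansion of the cut-off integral $\Int\sigma(AQ^{-z})(\xi)\,d\xi$ directly using the asymptotic formula of Proposition~\ref{expansioninR}. The key computation is that $\partial_z|_{z=0}\sigma(AQ^{-z})$ has a log-polyhomogeneous component arising from $\partial_z\sigma(Q^{-z})|_{z=0} = -\sigma(\log Q)$, and the rearrangement in \eqref{cutoffinexpansion} of the $\log R$-coefficient against the genuine constant term produces the defect $-\frac{1}{q}\res(A\log Q)$. More precisely, writing the symbol expansion
\[
\sigma(AQ^{-z})(\xi) \sim \sum_{j\geq 0}\sigma_{\alpha - qz - j}(z,\xi)
\]
and differentiating $k$ times in $z$ at $z=0$ produces log-polyhomogeneous symbols whose cut-off integrals are governed by Proposition~\ref{expansioninR} applied with $\log^\ell|\xi|$ factors; the term of degree $-2$ in $\xi$ and degree $\ell = k+1$ in $\log|\xi|$ contributes the universal constant $-\frac{1}{q(k+1)}\res(A(\log Q)^{k+1})$ to the finite part, while the degree $\ell = k$ log term recombines into $\Int\sigma(A(\log Q)^k)(\xi)\,d\xi$.

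Finally, I would combine the two computations: applying $\varphi_0$ to the cut-off-integral expansion and adding back the $-\Tr(A(\log Q)^k\Pi_Q)$ correction gives, order by order in $z$, precisely the claimed Laurent coefficients. The interchange of the Taylor expansion in $z$ with the cut-off integral is justified because the asymptotic expansion of $\sigma(AQ^{-z})$ is uniform in $z$ on small disks around $0$, so the meromorphic extension of Proposition~\ref{laurentofholo} agrees with the termwise differentiated expansion.

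\textbf{Main obstacle.}  The delicate point is the bookkeeping for log-polyhomogeneous symbols: the $z$-derivatives of the classical family $AQ^{-z}$ leave the classical category, and one must rigorously track how the $\log R$-coefficient in Proposition~\ref{expansioninR} feeds into the constant term after Taylor expansion.  Concretely, the identity relating the $k$-th Taylor coefficient to $\Int\sigma(A(\log Q)^k)(\xi)\,d\xi - \frac{1}{q(k+1)}\res(A(\log Q)^{k+1})$ requires a careful computation of the symbol of $(\log Q)^{k+1}$ modulo lower-order terms, using $\log Q = Q\,\partial_z|_{z=0}Q^{z-1}$ and an induction on $k$ to identify the residual spherical integral with $\res(A(\log Q)^{k+1})$; this is the step that replaces the explicit Green-function arguments available in the commutative setting and will consume most of the technical work.
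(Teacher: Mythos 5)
Your plan reconstructs precisely the argument the paper has in mind: the paper offers no proof of its own but states that the result ``can be proved along the lines of its classical counterpart in [Paycha--Scott]'', and your steps --- extracting the simple-pole residue from Proposition~\ref{laurentofholo} via $\alpha'(0)=-q$, splitting off the finite-rank $\Pi_Q$ contribution, Taylor-expanding $Q^{-z}=e^{-z\log Q}$ to obtain $(-1)^k\sigma(A(\log Q)^k)/k!$ at order $k$, and then tracking how the $\log R$ coefficient in Proposition~\ref{expansioninR} for the resulting log-polyhomogeneous symbols produces the $-\frac{1}{q(k+1)}\res(A(\log Q)^{k+1})$ correction --- is exactly the Paycha--Scott calculus transplanted to the noncommutative torus. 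The only misstatement is peripheral: the classical counterpart here is already a purely symbolic argument, not a Green-function one, so the final remark about ``replacing Green-function arguments'' conflates this with the Quillen-style computations elsewhere in the paper.
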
 

For  operators $A$ and $Q$ as  in the previous Proposition, 
the {\it generalized zeta function}  is defined by 
\begin{equation}\label{zetafunction}
\zeta(A,Q,z)=\TR(AQ^{-z}).
\end{equation}
From Proposition \ref{laurentofholo}, it follows that $\zeta(A,Q,z)$ is a  meromorphic function with simple poles.
Moreover,  $\zeta(A,Q,z)$ is the analytic continuation of the spectral zeta function $\Tr(AQ^{-z})$. 
If $A$ is a differential operator,  the   zeta function \eqref{zetafunction} is regular at $z=0$ with a  value 
$$\varphi_0\left(\Int\sigma(A)(\xi)d\xi- \frac{1}{q}\res(A\log Q)\right)-\Tr(A\Pi_Q).$$

\subsection{Cauchy-Riemann operators on noncommutative tori}

As we did  before,     we can fix a complex structure on $\A$ by a complex number $\tau$ in the upper half plane.  Consider  the spectral triple
\begin{equation}\label{firstspec}
(\A, \mathcal{H}_{0}\oplus\mathcal{H}^{0,1},D_0=\left(\begin{array}{ll} 0 & \bar{\partial}^*\\ \bar{\partial}&0\end{array}\right)),
\end{equation}
where $\bar{\partial}:\A\to \A$ is given by $\bar\partial=\delta_1+\tau\delta_2$.  The Hilbert space $\mathcal{H}_0$ is  defined  by the GNS construction from $\A$ using the trace $\varphi_0$ and $\bar{\partial}^*$ is the adjoint of the operator $\bar{\partial}$.

As in the classical case, the  Cauchy-Riemann operator on $\A$   is the positive part of the twisted Dirac operator. All such operators  define  spectral triples of the  form
$$(\A, \mathcal{H}_{0}\oplus\mathcal{H}^{0,1},D_A=\left(\begin{array}{ll} 0 & \bar{\partial}^*+\alpha^*\\ \bar{\partial}+\alpha&0\end{array}\right)),$$
where $\alpha\in \A$ is the positive part of a selfadjoint element 
$$A=\left(\begin{array}{ll} 0 & \alpha^*\\ \alpha&0\end{array}\right)\in\Omega^1_{D_0}(\A).$$
We recall that $\Omega^1_{D_0}(\A)$ is the space of quantized one forms 
 consisting of the elements $\sum a_i[D_0, b_i]$ where $a_i,b_i\in\A$ \cite{MR1303779}.
Note that the in this case, the space  $\mathcal{A}$ of Cauchy-Riemann operators is the space of $(0,1)$-forms on $\A$.

 \subsection{The curvature of the determinant line bundle for $\A$}\label{sec:computation}
For any $\alpha\in\mathcal{A}$,  the Cauchy-Riemann operator $$\bar{\partial}_{\alpha}=\bar{\partial}+\alpha:\mathcal{H}_0\to\mathcal{H}^{0,1}$$ is a Fredholm operator.
We  pull back the determinant line bundle DET on  the space of Fredholm operators  ${\rm Fred}(\mathcal{H}_0,\mathcal{H}^{0,1}),$ to get a line bundle $\mathcal{L}$ on $\mathcal{A}$. Following Quillen \cite{MR783704},  one can  define a Hermitian metric on $\mathcal{L}$ and the problem is to compute its curvature.  Let us define a metric on  the fiber 
$$\mathcal{L}_{\alpha} = \Lambda^{max} ({\rm ker} \,\bar{\partial}_{\alpha})^*\otimes \Lambda^{max}({\rm ker} \,\bar{\partial}_{\alpha}^*)$$ 
as the product of the induced metrics on $\Lambda^{max} ({\rm ker} \,\bar{\partial}_{\alpha})^* $ and  $\Lambda^{max}({\rm ker} \,\bar{\partial}_{\alpha}^*)$, 
 with the zeta regularized determinant $e^{-\zeta'_{\Delta_{\alpha}}(0)}$.
Here we define   the Laplacian  as $\Delta_{\alpha}=\bar{\partial}_{\alpha}^*\bar{\partial}_{\alpha}:\mathcal{H}_0\to \mathcal{H}_0$,  and its zeta function by
\begin{align*}
&\zeta(z)=\TR(\Delta_{\alpha}^{-z}).
\end{align*}
It is a meromorphic function and  is regular at $z=0$ .
Similar proof as in \cite{MR783704} shows that this defines a smooth Hermitian metric on the determinant line bundle $\mathcal{L}$. 

On the open set of invertible  operators  each fiber of $\mathcal{L}$  is canonically isomorphic to $\mathbb{C}$ and the  nonzero holomorphic section $\sigma=1$  gives a trivialization. Also, according to the definition of the Hermitian metric, the norm of this section is given by 
\begin{equation}
\|\sigma\|^2=e^{-\zeta'_{\Delta_\alpha}(0)}.
\end{equation} 

\subsection{Variations of LogDet and curvature form}
 A  holomorphic line bundle equipped with a  Hermitian  inner product has a canonical  connection compatible with the two structures. This is also known as the Chern connection. The curvature form of this connection is given  by  $\bar{\partial} \partial \log\|\sigma\|^2,$ 
where $\sigma$  is any non-zero local holomorphic section.

In the case at hand,   the second variation 
$ \bar{\partial} \partial \log \|\sigma\|^2$
on the open set of invertible Cauchy-Riemann operators must be computed. Let us  consider a holomorphic family of invertible  Cauchy-Riemann operators $D_w=\bar{\partial}+\alpha_w$, where $\alpha_w$ depends holomorphically on the complex variable $w$. The second variation of logdet, that is   
$ \delta_{\bar{w}} \delta_w \zeta'_\Delta(0)$, is computed in  \cite{MR3590385} as we recall now.

 \begin{lemma}\label{secondvarnotsim} 
 For the holomorphic family of Cauchy-Riemann operators $D_w$, the second variation of $\zeta'(0)$ is given by
 $$\delta_{\bar w}\delta_{w}\zeta'(0)=\frac{1}{2}\varphi_0\left(\delta_w D\delta_{\bar{w}}{\rm res}(\log\Delta\,D^{-1})\right).$$
 \qed
 \end{lemma}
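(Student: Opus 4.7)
The plan is to compute $\delta_w\zeta'(0)$ first in closed form and then apply $\delta_{\bar w}$, systematically exploiting the fact that $D=D_w=\bar\partial+\alpha_w$ depends holomorphically on $w$ (so $\delta_w D^*=0$ and $\delta_{\bar w}D=0$). Starting from the standard variational identity on the meromorphic extension $\delta_w\zeta(z) = -z\,\TR(\delta_w\Delta\cdot\Delta^{-z-1})$, holomorphy gives $\delta_w\Delta = D^*\delta_w D$, and the intertwining relation $D^*f(DD^*)=f(D^*D)D^*$ from functional calculus gives $(D^*D)^{-z-1}D^*=\Delta^{-z}D^{-1}$. Using the cyclicity of $\TR$ on non-integer order classical symbols (and analytic continuation), I rewrite this as $\delta_w\zeta(z) = -z\,\TR(A\,\Delta^{-z})$, where $A:=D^{-1}\delta_w D$ is classical of order $-1$.

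Next, I apply the expansion of Proposition~\ref{Laurentat0} with $Q=\Delta$, $q=\ord(\Delta)=2$, and $\Pi_\Delta=0$ (since $D$ is invertible on the open set in question):
\[
\TR(A\Delta^{-z}) = \tfrac{1}{2}\Res(A)\tfrac{1}{z} + \varphi_0\!\Bigl(\Int\sigma(A)(\xi)\,d\xi\Bigr) - \tfrac{1}{2}\varphi_0\!\bigl(\res(A\log\Delta)\bigr) + O(z).
\]
The factor $-z$ converts the simple pole into a constant that vanishes after differentiating at $z=0$, leaving
\[
\delta_w\zeta'(0) = -\varphi_0\!\Bigl(\Int\sigma(A)(\xi)\,d\xi\Bigr) + \tfrac{1}{2}\varphi_0\!\bigl(\res(A\log\Delta)\bigr).
\]
Using $\log(D^*D)\cdot D^{-1}=D^{-1}\log(DD^*)$ (a consequence of $D^*D=D^{-1}(DD^*)D$ and functional calculus), the tracial property of $\varphi_0\circ\res$, and the fact that the $0$-th order element $\delta_w D$ can be pulled out of the cosphere integral defining $\res$, the residue summand is recast as $\tfrac{1}{2}\varphi_0\bigl(\delta_w D\cdot\res(\log\Delta\cdot D^{-1})\bigr)$.

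Finally, I apply $\delta_{\bar w}$. Because $D_w$ is holomorphic in $w$, the operator $A=D^{-1}\delta_w D$ and its full classical symbol depend only on $w$, so $\delta_{\bar w}$ annihilates the cut-off integral term. In the surviving residue term, $\delta_w D$ is likewise independent of $\bar w$ and commutes past $\delta_{\bar w}$, yielding the stated formula. The main technical obstacle is analytic rather than algebraic: one must check that the symbol of the parametrix of $\Delta$ (constructed recursively from the holomorphic family of symbols of $\Delta$) genuinely depends holomorphically on $w$, so that the cut-off integral term is truly killed by $\delta_{\bar w}$, and one must also justify that the parameter derivatives $\delta_w,\delta_{\bar w}$ commute with the meromorphic extension of $z\mapsto\TR(A\Delta^{-z})$ near $z=0$, so that extracting the finite part and differentiating in $w,\bar w$ can be interchanged.
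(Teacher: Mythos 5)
Your reconstruction is correct and follows the same route as the proof in \cite{MR3590385} (to which the paper defers with its bare \qed): starting from $\delta_w\zeta(z)=-z\,\TR(\delta_w\Delta\cdot\Delta^{-z-1})$, using $\delta_w\Delta=D^*\delta_w D$ together with cyclicity of $\TR$ (valid for non-integer total order) and $(D^*D)^{-1}D^*=D^{-1}$ to land on $-z\,\TR(A\Delta^{-z})$ with $A=D^{-1}\delta_w D$, invoking Proposition~\ref{Laurentat0} with $Q=\Delta$, $q=2$, $\Pi_\Delta=0$, and then recognizing that the $-z$ prefactor turns the constant Laurent term into the coefficient of $z$ in $\delta_w\zeta(z)$, so that
\[
\delta_w\zeta'(0)=-\varphi_0\Bigl(\Int\sigma(A)(\xi)\,d\xi\Bigr)+\tfrac12\,\varphi_0\bigl(\res(A\log\Delta)\bigr),
\]
whereupon the cut-off integral term is killed by $\delta_{\bar w}$ by holomorphy. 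This is exactly the mechanism the canonical-trace formalism of this section was built to deliver, bypassing Quillen's Green-function computation.

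Two small points of hygiene. First, the technical caveat you flag should be phrased in terms of $D^{-1}$ rather than the ``parametrix of $\Delta$'': the operator $\Delta=D^*D$ depends on both $w$ and $\bar w$, so its parametrix symbol is \emph{not} holomorphic. What is holomorphic, and what actually enters the cut-off term, is the symbol of $A=D^{-1}\delta_w D$: the parametrix of the holomorphic family $D_w=\bar\partial+\alpha_w$ depends holomorphically on $w$ because the recursive symbol construction involves only the (holomorphic) symbol of $D_w$, and $\delta_w D_w=\delta_w\alpha_w$ is again holomorphic. Second, in recasting $\tfrac12\varphi_0(\res(A\log\Delta))$ as $\tfrac12\varphi_0\bigl(\delta_w D\cdot\res(\log\Delta\,D^{-1})\bigr)$, the identity $\log(D^*D)D^{-1}=D^{-1}\log(DD^*)$ is true but not needed; what you are actually using is the trace property of $\Res=\varphi_0\circ\res$ on the algebra generated by classical symbols and $\log\Delta$, to move $D^{-1}$ cyclically, followed by the observation that the zeroth-order multiplication operator $\delta_w D=\delta_w\alpha_w$ has constant symbol and so factors out of the cosphere integral defining $\res$. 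That trace property on log-polyhomogeneous symbols deserves a citation or a sentence of justification if you were writing this up in full.
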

The final step is  to compute $\delta_{\bar{w}}{\res}(\log\Delta\,D^{-1})$. 
This combined with the above lemma will show that  the curvature  form of the determinant line bundle equals the K\"ahler form on the space of connections. We refer 
the reader to \cite{MR3590385} for the proof which is long and technical. We emphasize that the original Quillen proof, based on Green function calculations, cannot be be extended to the noncommutative case. 
 \begin{lemma}\label{secondvarsim} 
 With above definitions and notations,  we have
\begin{align*}
 \sigma_{-2,0}(\log\Delta\, D^{-1})&=\frac{(\alpha+\alpha^*)\xi_1+(\bar\tau\alpha+\tau\alpha^*)\xi_2}{(\xi_1^2+2\Re(\tau) \xi_1\xi_2+ |\tau|^2\xi_2^2)(\xi_1+\tau\xi_2) }\\ \\
 &-\log \left( \frac{\xi_1^2+2\Re(\tau) \xi_1\xi_2+ |\tau|^2\xi_2^2}{|\xi|^2}\right) \frac{\alpha}{\xi_1+\tau\xi_2},
 \end{align*}
 and
 $$\delta_{\bar{w}}{\rm res}(\log(\Delta) D^{-1})=\frac{1}{2\pi\Im(\tau)}(\delta_{{w}}D)^*.$$
  \end{lemma}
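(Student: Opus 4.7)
The plan splits into two essentially separate tasks: first compute the order $-2$ log-free component of the symbol of $\log\Delta \cdot D^{-1}$, and then differentiate in $\bar w$ and integrate over the cosphere to extract the residue.

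For the first identity, I would start by recalling the symbol of $\Delta = D^*D$. With $\eta = \xi_1 + \tau \xi_2$ and $\bar\eta = \xi_1 + \bar\tau \xi_2$, a direct application of the noncommutative torus composition formula gives $a_2 = \eta\bar\eta = |\xi|_\tau^2$, $a_1 = \bar\eta\, \alpha + \alpha^* \eta$, and $a_0 = \alpha^*\alpha + (\delta_1 + \bar\tau\delta_2)(\alpha)$. Next I would obtain the symbol of $\log \Delta$ via $\log \Delta = \frac{d}{dz}\bigm|_{z=0} \Delta^z$ with $\Delta^z = \frac{i}{2\pi}\int_\gamma \lambda^z (\Delta - \lambda)^{-1}\, d\lambda$. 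The resolvent symbol $\sigma((\Delta-\lambda)^{-1}) = r_{-2} + r_{-3} + \cdots$ has the standard recursive form $r_{-2} = (a_2 - \lambda)^{-1}$ and $r_{-3} = -r_{-2}(a_1 r_{-2} + \partial_{\xi_k} a_2 \cdot \delta_k r_{-2})$. Taking the contour integral and differentiating in $z$ at $0$ yields a log-polyhomogeneous expansion
\[
\sigma(\log\Delta) = 2\log|\xi| + \ell_0(\xi) + \ell_{-1}(\xi) + O(|\xi|^{-2}),
\]
with $\ell_0 = \log(|\xi|_\tau^2/|\xi|^2)$ (homogeneous of degree $0$) and $\ell_{-1}$ an explicit classical order $-1$ symbol built from $a_1$ and $\partial_\xi a_2$. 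The symbol of $D^{-1}$ is computed in parallel: $b_{-1} = 1/\eta$ and $b_{-2} = -\alpha/\eta^2$; a crucial simplification is that $b_{-1}$ is scalar-valued, so all derivations $\delta_k b_{-1}$ vanish, killing most star-product cross terms.

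Composing via $\sigma(\log\Delta \cdot D^{-1}) \sim \sum_\gamma \frac{1}{\gamma!}\partial_\xi^\gamma \sigma(\log\Delta)\, \delta^\gamma \sigma(D^{-1})$ and tracking only the order $-2$, log-free pieces, the only surviving contributions are $\ell_0 \cdot b_{-2}$, $\ell_{-1}\cdot b_{-1}$, and the derivative term $\partial_{\xi_k}(2\log|\xi|)\cdot \delta_k b_{-2}$ (which contributes nontrivially since $\delta_k b_{-2}$ involves $\delta_k(\alpha)$). Assembling these three contributions and using the identities $\ell_0 = \log(|\xi|_\tau^2/|\xi|^2)$ together with the explicit form of $\ell_{-1}$ should collapse, after elementary algebraic manipulation with $\eta,\bar\eta$, into exactly the displayed formula for $\sigma_{-2,0}(\log\Delta\cdot D^{-1})$.

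For the second identity I would use that $D_w = \bar\partial + \alpha_w$ depends holomorphically on $w$, so $\delta_{\bar w}\alpha = 0$ while $\delta_{\bar w}\alpha^* = (\delta_w\alpha)^* = (\delta_w D)^*$. Applying $\delta_{\bar w}$ term by term to the formula for $\sigma_{-2,0}$ annihilates the logarithmic term (which contains $\alpha$ but not $\alpha^*$) and leaves only the $\alpha^*$-pieces of the first fraction:
\[
\delta_{\bar w}\sigma_{-2,0} = \frac{(\delta_w D)^* \xi_1 + \tau (\delta_w D)^* \xi_2}{|\xi|_\tau^2\,\eta} = \frac{(\delta_w D)^*}{|\xi|_\tau^2}.
\]
The residue is then the integral over the unit circle, and the standard angular integral $\int_{|\xi|=1}|\xi|_\tau^{-2} d\xi$ evaluates (parametrize $\xi = (\cos\theta,\sin\theta)$ and use a contour computation) to $\frac{1}{2\pi\, \Im(\tau)}$ in the measure normalization adopted in the paper, yielding the claimed $\delta_{\bar w}\mathrm{res}(\log\Delta\, D^{-1}) = \frac{1}{2\pi \Im(\tau)}(\delta_w D)^*$.

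\paragraph{Main obstacle.} The bookkeeping for $\ell_{-1}$ is the main technical hurdle: one must carry out the contour integration $\frac{d}{dz}\bigm|_{z=0}\frac{i}{2\pi}\int_\gamma \lambda^z r_{-3}(\xi,\lambda) d\lambda$ explicitly, keeping track of noncommutativity between $\alpha, \alpha^*$ and $\eta, \bar\eta$, and check that the algebraic combination with $\ell_0 b_{-2}$ and the $\partial_\xi(\log|\xi|)$ derivative term reorganizes cleanly into the asymmetric expression $((\alpha+\alpha^*)\xi_1 + (\bar\tau\alpha + \tau\alpha^*)\xi_2)/(|\xi|_\tau^2\eta)$. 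Once this assembly is complete, the remainder of the argument reduces to the formal manipulations and the one elementary angular integral indicated above.
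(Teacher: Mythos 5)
Your general plan is correct and is presumably the one in the source paper (the survey only cites the reference and gives no proof here): compute the symbols of $\Delta = D^*D$ and of a parametrix of $D$, obtain $\sigma(\log\Delta)$ from $\left.\frac{d}{dz}\right|_{z=0}\frac{i}{2\pi}\int_{\gamma}\lambda^{z}(\Delta-\lambda)^{-1}\,d\lambda$, compose, extract the degree $-2$ log-free coefficient, apply $\delta_{\bar w}$, and integrate over the cosphere. Your identifications $a_2 = \bar\eta\eta$, $a_1 = \bar\eta\alpha + \alpha^*\eta$, $r_{-3} = -r_{-2}a_1 r_{-2}$ (valid because $a_2$ and hence $r_{-2}$ are central, so the derivative terms drop out of the recursion), $\ell_0 = \log(a_2/|\xi|^2)$, $\ell_{-1} = a_1/a_2 = \alpha/\eta + \alpha^*/\bar\eta$, $b_{-1} = \eta^{-1}$, $b_{-2} = -\alpha/\eta^2$ are all right, and indeed $\ell_{-1}b_{-1} = (\alpha\bar\eta + \alpha^*\eta)/(\bar\eta\eta^2)$ recovers the first displayed fraction, while $\ell_0 b_{-2}$ produces the log term; your treatment of $\delta_{\bar w}$ using holomorphic dependence on $w$ is also correct.

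However, your order $-2$ bookkeeping contains a genuine error that would break the assembly if carried through. You list $\partial_{\xi_k}(2\log|\xi|)\cdot\delta_k b_{-2}$ as a third contribution to $\sigma_{-2,0}$, asserting that it contributes nontrivially. But $\partial_{\xi_k}(2\log|\xi|) = 2\xi_k/|\xi|^2$ is homogeneous of degree $-1$ and $\delta_k b_{-2}$ of degree $-2$, so the product lives at degree $-3$ and makes no contribution to $\sigma_{-2,0}$ whatsoever. The only $\gamma=1$ term that could land at degree $-2$ would involve $\delta_k b_{-1}$, which vanishes as you already noted. So the assembly consists of exactly the two terms $\ell_0 b_{-2} + \ell_{-1}b_{-1}$ and nothing else; if you insert your spurious Leibniz correction (which would bring derivatives $\delta_k(\alpha)$ into the answer) the formula will not close to the lemma. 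A smaller gap: the closing constant $1/(2\pi\Im\tau)$ is not ``standard'' without care, since $\int_{|\xi|=1}|\xi|_\tau^{-2}$ is $2\pi/\Im\tau$ in the unnormalized angular measure, and recovering the stated constant requires tracking both the $(2\pi)^{-m/2}$ normalization of $d\xi$ fixed in \S3 and the $\frac{i}{2\pi}$ convention in the contour representation of $\log\Delta$; you assert the value without tracing these conventions.
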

  
Now we can state the main result of \cite{MR3590385} which  computes the curvature of the determinant line bundle in terms of the natural K\"{a}hler form on the space of connections.
\begin{theorem}
The curvature of the determinant line bundle for the  noncommutative two torus is given by
\begin{equation}\label{formulaofmainthm}
\delta_{\bar w}\delta_{w}\zeta'(0)=\frac{1}{4\pi\Im(\tau)}\varphi_0\left(\delta_w D(\delta_w D)^*\right).
\end{equation}
\end{theorem}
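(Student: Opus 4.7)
The plan is to derive the stated curvature formula as an immediate corollary of the two preceding lemmas, once the geometric identification of the curvature form with the mixed second variation of $\zeta'(0)$ is in place. On the open set of invertible Cauchy-Riemann operators, the canonical holomorphic trivializing section $\sigma=1$ of $\mathcal{L}$ has squared norm $\|\sigma\|^{2}=e^{-\zeta'_{\Delta_{\alpha}}(0)}$; since the curvature two-form of the Chern connection on a Hermitian holomorphic line bundle is $\bar{\partial}\partial\log\|\sigma\|^{2}$ expressed through any nonvanishing local holomorphic section, it suffices to compute $-\delta_{\bar{w}}\delta_{w}\zeta'(0)$ along an arbitrary holomorphic family $D_{w}=\bar{\partial}+\alpha_{w}$ of invertible Cauchy-Riemann operators.

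First I would invoke the lemma on the non-simplified second variation, which, after unpacking the variational identity for $\zeta'(0)$, gives
$$\delta_{\bar{w}}\delta_{w}\zeta'(0)=\tfrac{1}{2}\,\vphi_{0}\!\left(\delta_{w}D\cdot\delta_{\bar{w}}\res(\log\Delta\,D^{-1})\right).$$
This reduces everything to understanding the dependence of the noncommutative residue $\res(\log\Delta\,D^{-1})$ on the antiholomorphic parameter $\bar w$.

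Next I would apply the simplification lemma, which performs a direct symbol computation for the log-polyhomogeneous operator $\log\Delta\cdot D^{-1}$ followed by angular integration over $|\xi|=1$ and yields
$$\delta_{\bar{w}}\res(\log\Delta\,D^{-1})=\frac{1}{2\pi\,\Im(\tau)}(\delta_{w}D)^{*}.$$
Substituting this into the previous identity and collapsing the constants $\tfrac{1}{2}\cdot\tfrac{1}{2\pi\Im(\tau)}=\tfrac{1}{4\pi\Im(\tau)}$ gives
$$\delta_{\bar{w}}\delta_{w}\zeta'(0)=\frac{1}{4\pi\,\Im(\tau)}\,\vphi_{0}\!\left(\delta_{w}D\,(\delta_{w}D)^{*}\right),$$
which is exactly the claimed formula. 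The right hand side is recognized as the natural K\"ahler form on the affine space $\mathcal{A}$ of $(0,1)$-connections, so the statement is the noncommutative analog of Quillen's theorem identifying the curvature of the determinant line bundle with that K\"ahler form.

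The main obstacle in this program is not the final assembly, which is pure bookkeeping, but rather the simplification lemma itself: one must control the full symbol of $\log\Delta\cdot D^{-1}$, which is log-polyhomogeneous rather than classical, and verify that upon extracting the homogeneous component of order $-2$ and integrating it over $|\xi|=1$ with respect to $\vphi_0$, the genuinely logarithmic contributions cancel to leave the clean expression $(2\pi\Im(\tau))^{-1}(\delta_w D)^{*}$. This is precisely where the pseudodifferential calculus of \cite{MR572645}, the canonical trace $\TR$, and the Laurent expansion of $\TR(AQ^{-z})$ developed earlier in this section do the heavy lifting and allow one to bypass the Green's-function-based arguments of Quillen's original proof on Riemann surfaces, which have no obvious noncommutative counterpart.
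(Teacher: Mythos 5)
Your proposal is correct and matches the paper's argument exactly: the theorem is obtained by substituting Lemma~\ref{secondvarsim} into Lemma~\ref{secondvarnotsim} and collapsing the constants $\tfrac12\cdot\tfrac{1}{2\pi\Im(\tau)}=\tfrac{1}{4\pi\Im(\tau)}$. You also correctly locate the real analytic content of the theorem in the computation of $\delta_{\bar w}\,\mathrm{res}(\log\Delta\cdot D^{-1})$ via the log-polyhomogeneous symbol calculus, which is precisely the part the paper defers to \cite{MR3590385} as ``long and technical.''
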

In order to recover the classical result of Quillen  in the classical limit of  $\theta=0$, one  has  to notice that   the volume form has  changed due to a change  of the metric. This means we just need  to  multiply the  above  result by $\Im(\tau)$.

\section{{\bf Open problems}}
In this final section we formulate  some of the open problems that we think are  worthy of study  for further understanding of local invariants of noncommutative manifolds.

\begin{enumerate}

\item Beyond dimension two and beyond conformally flat. The class of conformally flat metrics in dimensions bigger than two cover only a small part of all possible metrics. It would be very important to formulate large classes of metrics that are not conformally flat, but at the same time lend themselves to spectral analysis and to heat asymptotics techniques. It is also very important to have curvature formulas that work uniformly in all dimensions. The largest such class so far is the class of the so called functional metrics introduced in \cite{arXiv1811.04004} and surveyed in Section 9 of this paper. It is an interesting problem to further enlarge this class.

\item To extend the definition of curvature invariants to noncommutative spaces with non-itegral dimension, including zero dimensional spaces.
This would require rethinking the heat trace asymptotic expansion, and the nature of its leading and sub leading terms. In particular since quantum spheres are zero dimensional, its spectrum is of exponential growth  and does not satisfy the usual Weyl's asymptotic law. A first step would be to see how to formulate a  Gauss-Bonnet typle theorem  for quantum spheres.

\item Weyl tensor and full curvature tensor. It is not clear that the classical differential geometry would,  or should, give us a blueprint in the noncommutative case.  One should be prepared for new phenomena. Having that in mind, one should still look for analogues of  Weyl  and  full Riemann curvature tensors. The problem is that the components of these tensors are quite entangled in the heat trace expansion and separating and identifying their different components seem to be a hard task, if not impossible. One needs new ideas to make progress here.

\item Gauss-Bonnet terms in higher dimensions. The Gauss Bonnet density in 2 dimension is particularly simple and is in fact equal to the scalar curvature multiplied by the volume form. In dimensions 4 and above this term is classically more complicated, being the Pfaffian of the curvature tensor. In dimension four it is a linear combination of norms of the Riemann tensor, the Ricci tensor and the Ricci scalar. It is not clear how this can be expressed in terms of the heat kernel coefficients.

\item Higher genus noncommutative Riemann surfaces. It is highly desirable to define noncommutative Riemann surfaces of higher genus quipped with a spectral triple and check the Gauss -Bonnet theorem for them. This would greatly extend our understanding of local invariants of noncommutative spaces.

\item Noncommutative uniformization theorem. The study of curved noncommutative 2-tori suggests a natural problem in noncommutative geometry. At least for the class of noncommutative 2-tori it is desirable to know to what extent the uniformization theorem holds, or what form and shape it would take.

\item Analytic versus algebraic curvature. In classical differential geometry, as we saw in this paper, there are algebraic as well as analytic techniques (based on the heat equation)   to define the scalar and Ricci curvature. The two approaches give the same  results. This is not so in the noncommutative case. For noncommutative tori, when the deformation parameter satisfies some diophantine condition, Rosenberg in \cite{Ros} proved a Levi-Civita type theorem and hence gets an algebraic definition of curvature. The resulting formula is very different from the formula of Connes-Moscovici-Fathizadeh-Khalkhali \cite{MR3194491, MR3359018} surveyed  in   this paper. It is important to see if there is any relation at all between these formulas and what this means for the study of curved noncommutative tori.

\end{enumerate}

 \section*{{\bf Acknowledgements}} 
F.F.  acknowledges support from the 
Marie Curie/SER Cymru II Cofund 
Research Fellowship  663830-SU-008. M.K. would like to thank Azadeh Erfanian for providing the original picture used in page four, Saman Khalkhali for his support and care,  and Asghar Ghorbanpour for many discussions on questions related to this article. \\

\bibliographystyle{abbrv}




$ $\\
Department of Mathematics,  Computational Foundry, Swansea University, \\
Swansea University Bay Campus, Swansea, SA1 8EN, United Kingdom; \\
Max Planck Institute for Biological Cybernetics, \\
 Max-Planck-Ring 8-14, 72076 T\"ubingen, Germany\\ 
{\it E-mail}: farzad.fathizadeh@swansea.ac.uk \\

$ $\\
 Department of Mathematics, 
University of Western Ontario\\
 London, Ontario, Canada, N6A 5B7\\
 {\it E-mail}: masoud@uwo.ca

\end{document}